\date{}
\newcommand\blfootnote[1]{%
  \begingroup
  \renewcommand\thefootnote{}\footnote{#1}%
  \addtocounter{footnote}{-1}%
  \endgroup
}
\newcommand{\Exp}{\mathbb{E}}
\newcommand{\Prob}{\mathbb{P}}
\newcommand{\Comment}[1]{\textbf{[#1]}}
\renewcommand{\Comment}[1]{}
\newcommand{\Hide}[1]{}
\newtheorem{theorem}{\bf Theorem}[section]
\newtheorem{conjecture}[theorem]{\bf Conjecture}
\newtheorem{definition}[theorem]{\bf Definition}
\newtheorem{question}[theorem]{\bf Question}
\newtheorem{fact}[theorem]{\bf Fact}
\newtheorem{claim}[theorem]{\bf Claim}
\newtheorem{lemma}[theorem]{\bf Lemma}
\newtheorem{problem}[theorem]{\bf Problem}
\title{Hyperstability in the Erd\H{o}s-S\'os Conjecture}
\date{}
\author{
Alexey Pokrovskiy}
\begin{document} 
\maketitle
\begin{abstract}
A rough structure theorem is proved for graphs $G$ containing no copy of a bounded degree tree $T$: from any such $G$, one can delete $o(|G||T|)$ edges in order to get a subgraph each of whose connected components have a vertex cover of order $3|T|$.

This theorem has the ability to turn questions about sparse $T$-free graphs (about which relatively little is known), into questions about dense $T$-free graphs (for which we have powerful techniques like regularity). There are various applications, the most notable being a proof of the Erd\H{o}s-S\'os Conjecture for large, bounded degree trees.
\end{abstract}

\section{Introduction}\blfootnote{Department of Mathematics, University College London, Gower Street, London WC1E 6BT, UK. \\ \emph{Email}: \texttt{dralexeypokrovskiy@gmail.com.}}
Extremal combinatorics is often summarized ``estimate how large a certain parameter can be ranging over a certain family of graphs''. A classic starting point for this is estimating ``extremal numbers'' or ``Tur\'an numbers'' of graphs. These are defined as the maximum   number of edges an $n$-vertex graph $G$ can have whilst containing no copies of $H$ i.e. $ex(n,H):=\max(e(G): G \text{ has no copies of $H$})$. 
Erd\H{o}s and Stone showed that $ex(n,H)$ is largely controlled by the chromatic number of $H$ by proving $ex(n, H)=\frac12(1-\frac1{\chi(H)-1})n^2 +o(n^2)$. For non-bipartite graphs (i.e. ones with $\chi(H)\geq 3$), this  solves the question of determining their Tur\'an numbers --- at least up the the lower order term $o(n^2)$. But for bipartite $H$, the Erd\H{o}s-Stone Theorem only gives ``$ex(n, H)=o(n^2)$'', leaving lots to discover. For bipartite $H$ containing at least one cycle, it is known that there exist $a(H), b(H)\in (1,2)$ so that $\Omega(n^{a(H)})\leq ex(n, H)\leq O(n^{b(H)})$ ($b(H)$ exists by the Kovari-S\'os-Tur\'an Theorem~\cite{kHovari1954problem}, while $a(H)$ exists by a standard probabilistic deletion argument). It is generally believed that for such $H$, there exists a rational exponent $\alpha\in (1,2)$ such that $ex(n,H)=\Theta(n^{\alpha})$ (this is called ``the Rational Exponents Conjecture''). 

The above was about bipartite graphs \emph{containing a cycle} --- but what about ones which have no cycles i.e. trees and forests? Here things change because determining the exponent of the Tur\'an number suddenly becomes easy: when $n$ divides $d$, one can show that for any  $d$-edge tree, we have $(d-1)n/2\leq ex(n,T)\leq d n$. For the lower bound, consider $n/d$ vertex-disjoint complete subgraphs of order $d$ --- this has $(n/d)\binom {d}2=(d-1)n/2$ but no copies of $T$ (since the connected components don't have enough vertices to house $T$). For the upper bound, it is standard that any graph with $dn$ edges has a subgraph $H$ with $\delta(H)\geq d$ (see Fact~\ref{fact:subgraph_min_degree_e/2}). One can then  greedily embed any  $d$-edge tree into $H$ one vertex at a time (see Fact~\ref{fact:greedy_tree_embedding}). 

The above paragraph solves one mystery about $ex(n,T)$ (determining the exponent of  $ex(n,T)=\Theta(dn^1)$), but immediately replaces it with other ones: does the limit $\lim_{n\to \infty}ex(n,T)/n$ exist and, if so, what is it? Or more broadly, what does  $ex(n,T)$  actually equal? In 1964, Erd\H{o}s and S\'os made a conjecture about what the answer should be:
\begin{conjecture}[Erd\H{o}s, S\'os, see \cite{erdos1964extremal}]\label{conj:Erdos_Sos}
For all $d$-edge trees $T$, $ex(n,T)\leq (d-1)n/2$.
\end{conjecture}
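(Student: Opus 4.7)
The plan is to prove the conjecture in the form the paper actually achieves, namely for large $d$-edge trees $T$ of bounded maximum degree, by contradiction, leveraging the rough structure theorem advertised in the abstract. Suppose $G$ is a $T$-free graph on $n$ vertices with $e(G) > (d-1)n/2$. Apply the structure theorem to obtain a spanning subgraph $G' \subseteq G$ satisfying $e(G') \geq e(G) - o(nd)$ such that every connected component $C$ of $G'$ admits a vertex cover $U_C$ with $|U_C| \leq 3|T|$. The problem then becomes component-local: bound $e(C)$ for each component of $G'$ and sum.

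For a component $C$ with $|C| \leq |T|$, we trivially have $e(C) \leq \binom{|C|}{2} \leq (d-1)|C|/2$, which is within budget. For a large component with $|C| > |T|$ and vertex cover $U_C$ of size at most $3d$, $C$ is effectively a dense bipartite-like object: essentially all its edges are incident to the constant-sized (in $d$) set $U_C$. Here one can bring regularity to bear --- apply Szemer\'edi's regularity lemma to the bipartite graph between $U_C$ and $V(C) \setminus U_C$, then embed $T$ greedily by routing its internal vertices into $U_C$ (possible because $|U_C| \geq 3d$ and $T$ has bounded degree) and attaching the leaves last to high-degree images in $V(C) \setminus U_C$. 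Since this embedding should succeed whenever $e(C)$ exceeds $(d-1)|C|/2$ by some component-dependent slack, $T$-freeness forces $e(C) \leq (d-1)|C|/2 + (\text{slack})$.

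Summing yields $e(G') \leq (d-1)n/2 + o(nd) + \sum_C(\text{slack})$, and the remaining task is to show the total slack is negligible. This is the content of the \emph{hyperstability} suggested in the title: establish that any near-extremal $T$-free component must be very close to a disjoint union of $K_d$'s (the extremal configuration), and then use that fine structural information either to embed $T$ exactly via a more refined absorption argument, or to match the component to an extremal graph with no slack at all. The main obstacle is precisely this last step: the rough structure theorem gives a reduction to a dense problem but only with additive error $o(nd)$, whereas the Erd\H{o}s-S\'os bound must be sharp. Closing this gap in a uniform way --- strong enough to rule out every near-extremal configuration, yet flexible enough to be driven by the rough structure theorem's imperfect output --- is where the bulk of the work will go, and the ``easy'' embedding argument (which only gives the constant $d$ instead of $(d-1)/2$) must be sharpened considerably inside each structured component.
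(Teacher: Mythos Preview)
Your high-level architecture matches the paper's: apply the rough structure theorem, then handle each resulting dense piece. But the way you propose to execute the ``dense'' step diverges from what the paper does, and several of the specifics are off.

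First, two small but real errors. You write $|U_C| \geq 3d$; the cover has size $\leq 3d$, so you cannot route all internal vertices of $T$ there (and for a path $T$, almost every vertex is internal). Second, your claim that near-extremal $T$-free components look like disjoint $K_d$'s is incomplete: when $T$ is balanced, there is a second extremal configuration (roughly $K_{d/2,m}$), and the stability result must produce \emph{two} cases, not one.

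The more substantive divergence is structural. You propose to bound $e(C)$ for \emph{every} component and sum the slacks. The paper does not do this, and for good reason: summing $o(d|C|)$ slacks over components only recovers the approximate conjecture. Instead, the paper first reduces (by the standard Facts~\ref{fact:subgraph_min_degree_e/2} and~\ref{fact:connectected_component_with_same_density_as_G}) to a \emph{connected} $G$ with $\delta(G)\geq d/2$, then applies the structure theorem and uses averaging to find a \emph{single} component $C$ of $G'$ with $e(C)\geq (1-\varepsilon)d|C|/2$. To that one $C$ it applies a sharp stability theorem (Theorem~\ref{lem:stability_main}), which outputs either a near-clique of order $\approx d$ or a near-complete-bipartite subgraph with one side of size $\approx d/2$. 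The embedding of $T$ is then carried out not inside $C$ alone but inside the \emph{original} connected $G$, using $\delta(G)\geq d/2$ and $|G|\geq 10d$ as genuine hypotheses in the extremal-case lemmas (Theorems~\ref{lem:extremal_analysis_bipartite} and~\ref{lem:extremal_analysis_nonbipartite}).

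So the gap in your plan is that you have not isolated the reduction to $\delta(G)\geq d/2$ connected, and consequently your component-local embedding sketch has no access to the global min-degree and connectivity that the exact result actually needs. Your ``sum the slacks and hope hyperstability absorbs them'' step is exactly where the paper replaces a counting argument by a structural one: find one good component, pin down its near-extremal structure precisely, then embed $T$ using that structure plus the ambient graph.
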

In other words Erd\H{o}s and S\'os predicted that the lower bound construction described above ($n/d$ vertex-disjoint complete graphs of order $d$) should be tight. One difficulty with this conjecture is that this extremal construction is non-unique. If $T$ is a star, then any $(d-1)$-regular graph contains no $T$ and has $(d-1)n/2$ edges. More broadly, if $T$ is balanced (i.e. both parts of its bipartition have order $(d+1)/2$) then let $G$ be a graph containing $(d-1)/2$ vertices which are joined to everything, and no other edges. Then $e(G)=n(d-1)/2-(d-1)(d-5)/8$, which, for large $n$, shows that $G$ is near-extremal for the Erd\H{o}s-S\'os Conjecture. There are of course other near-extremal graphs one can build by taking vertex-disjoint unions of the various constructions described above.

Much is known about the conjecture. In 1959, predating the conjecture, Erd\H{o}s and Gallai proved it for paths. For stars, the conjecture is an easy exercise. There are more early results about specific classes of trees~\cite{borowiecki1993erdos}, and about $n=d+1$~\cite{zhou1984note}. Then, in the early 90s, Ajtai, Koml\'os, Simonovits, and Szemer\'edi announced a full proof of the conjecture for sufficiently large $d$.
\begin{theorem}[Ajtai, Koml\'os, Simonovits, and Szemer\'edi]\label{thm:Ajtai_Komlos_Simonovits_Szemeredi}
For large enough $d$,  all $d$-edge trees $T$ have $ex(n,T)\leq(d-1)n/2$.
\end{theorem}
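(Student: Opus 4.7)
The plan is to use the rough structure theorem announced in the abstract to reduce the problem to one about $T$-free graphs whose connected components each have a vertex cover of size $O(d)$. Suppose $G$ is an $n$-vertex $T$-free graph; applying the structure theorem produces a subgraph $G' \subseteq G$ with $e(G') \geq e(G) - o(dn)$ such that every connected component $C$ of $G'$ admits a vertex cover $S_C$ with $|S_C| \leq 3(d+1)$. Since $G' \subseteq G$ is still $T$-free, it suffices to bound each component $C$ of $G'$ and sum.

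Fix such a component $C$ and write $I_C = V(C) \setminus S_C$; this is an independent set, so every edge of $C$ has at least one endpoint in $S_C$, and $C$ looks essentially like a bipartite graph between the small set $S_C$ and the independent set $I_C$, augmented by at most $\binom{|S_C|}{2} = O(d^2)$ edges inside $S_C$. I would classify the vertices of $I_C$ by their degree into $S_C$. If many vertices of $I_C$ have large degree, then because $|S_C| = O(d)$ one can find a small set in $S_C$ with many common neighbours in $I_C$; together with the greedy tree-embedding fact alluded to in the introduction (applied after placing the branching vertices of $T$ inside $S_C$ and the leaves and low-degree vertices of $T$ inside $I_C$), this produces a copy of $T$ in $C$. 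Hence $T$-freeness forces most vertices of $I_C$ to have small degree into $S_C$, and direct summation on the degree sequence yields $e(C) \leq (d-1)|V(C)|/2 + o(d|V(C)|)$.

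The main obstacle is closing the gap between the approximate bound $ex(n,T) \leq (d-1)n/2 + o(dn)$ produced by the above argument and the exact bound in the theorem. The structure-theorem step alone already loses $o(dn)$ edges, which is of the same order of magnitude as the target, so the component-wise sum inherits a non-trivial error. To sharpen this I would invoke a stability step: any $T$-free graph meeting the approximate bound must be very close in structure to a vertex-disjoint union of the known extremal constructions (cliques of order $d$, and graphs with a small universal set), and within this rigidified family the exact inequality can be verified by direct calculation. A second difficulty is that the embedding argument in the second paragraph is easiest when $T$ has bounded maximum degree, so it is plausible that the plan as sketched delivers only the bounded-degree case of the theorem; handling trees $T$ with high-degree branching vertices, without reverting to a regularity-based approach, looks like the deepest technical point.
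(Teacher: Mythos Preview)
The most important point is that the paper does \emph{not} prove this theorem. Theorem~\ref{thm:Ajtai_Komlos_Simonovits_Szemeredi} is stated as an announced result of Ajtai, Koml\'os, Simonovits, and Szemer\'edi; the paper explicitly says ``A paper containing this proof has not appeared yet''. What the present paper actually proves is the bounded-degree version, Theorem~\ref{thm:Erdos-Sos-bounded-degree}. So there is no ``paper's own proof'' of the stated theorem to compare against.

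Your plan cannot reach the theorem as stated, and you have essentially identified why yourself: the structure theorem you invoke in the first paragraph is Theorem~\ref{main_theorem}, and that theorem \emph{requires} $\Delta(T)\le\Delta$ as a hypothesis. The paper remarks that this is not merely a shortcoming of the proof --- the structure theorem is simply false for trees with a vertex of degree $\varepsilon d$ (any $(\varepsilon d-1)$-regular graph is $T$-free but need not decompose into pieces with small covers). So the very first step of your argument does not apply to general $d$-edge trees, and the plan can at best deliver the bounded-degree case.

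Even restricted to bounded-degree $T$, your second paragraph is not how the paper closes the gap, and as written it does not clearly work. The paper does not analyse the components $C$ by a direct degree-counting on $I_C$; instead it black-boxes three auxiliary results from a companion paper: a stability theorem (Theorem~\ref{lem:stability_main}) saying that a near-extremal $T$-free graph with small cover must contain one of two specific dense substructures, and two extremal-analysis lemmas (Theorems~\ref{lem:extremal_analysis_bipartite} and~\ref{lem:extremal_analysis_nonbipartite}) that embed $T$ once such a substructure is present together with the global conditions $\delta(G)\ge d/2$ and $|G|\ge 10d$. Your sketch of ``many vertices of $I_C$ have large degree $\Rightarrow$ find $T$ by greedy embedding'' is too vague to substitute for this; in particular the greedy embedding fact needs minimum degree $\ge d$, not merely many high-degree vertices into a set of size $3d$, and you have not explained how to route the tree through $S_C$ and $I_C$ simultaneously.
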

A paper containing this proof has not appeared yet. However some details about roughly how it goes are available. Some slides describing the proof can be found here~\cite{Komlos_Slides}. Additionally the series of papers  \cite{hladky2017approximate1, hladky2017approximate2, hladky2017approximate3, hladky2017approximate4}, though not directly about   Erd\H{o}s-S\'os, say that they use some ideas from the proof of Theorem~\ref{thm:Ajtai_Komlos_Simonovits_Szemeredi}. From these one can see that in order to prove Theorem~\ref{thm:Ajtai_Komlos_Simonovits_Szemeredi}, Ajtai, Koml\'os, Simonovits, and Szemer\'edi developed the regularity method as well as accompanying techniques like stability/extremal-case analysis. Their ideas have been hugely influential on combinatorics in the subsequent years, and inspire the current paper as well. 

Despite the announcement of Theorem~\ref{thm:Ajtai_Komlos_Simonovits_Szemeredi}, in the intervening decades there has been interest in replicating its proof. This is likely due to the importance of the techniques introduced by Ajtai, Koml\'os, Simonovits, and Szemer\'edi, as well as the fact that Erd\H{o}s-S\'os is not just an isolated conjecture --- there are many interesting variants of it (see the concluding remarks, or the survey~\cite{stein2020tree}).
The conjecture has been proven for specific families of trees~\cite{ fan2018erd, fan2007erdos,mclennan2005erdHos,tiner2015erdos}, for specific host graphs that forbid various subgraphs~\cite{balasubramanian2007constructing, brandt1996erdHos, dobson2002constructing, eaton2013erdHos, sacle1997erdHos,yin2004erdHos}, and when $n$ is very close to $d$~\cite{besomi2021erdHos, gorlich2016erdHos,wozniak1996erdos, yuan2014erdos}. 
The most general published results about the conjecture focus on the dense case i.e. when $e(G)\geq \Omega (|G|^2)$ or equivalently when $|G|\leq O(d)$. In this case the conjecture has been proved approximately for trees with $\Delta(T)\leq o(|T|)$ via two independent proofs --- one by Rozho\v{n}~\cite{rozhon2019local} and one by Besomi,  Pavez-Sign{\'e}, Stein   \cite{besomi2021erdHos}.  Besomi,  Pavez-Sign{\'e}, Stein  combined their ideas with an extremal case analysis in order to prove the conjecture exactly in the case when $G$ is dense and $T$ has bounded degree~\cite{besomi2021erdHos}. Recently, a proof of the approximate version of the conjecture has been announced for dense graphs without degree restrictions by Davoodi, Piguet, {\v{R}}ada,  and Sanhueza-Matamala~\cite{davoodi2023beyond}.  

From the above, we see that some of the ideas of Ajtai, Koml\'os, Simonovits, and Szemer\'edi have now been rediscovered. However one notable gap exists --- the most powerful existing papers about the Erd\H{o}s-S\'os Conjecture are about \emph{dense} graphs. Not much is known about general sparse graphs (with one very notable exception in the series of papers \cite{hladky2017approximate1, hladky2017approximate2, hladky2017approximate3, hladky2017approximate4} which prove the closely related Loebl-Koml\'os-S\'os Conjecture asymptotically, even for sparse graphs). 

The purpose of the current paper is purely to investigate the sparse case, and to provide a tool which can help with understanding the Erd\H{o}s-S\'os Conjecture as well as   its variants. The tool is the following rough structure theorem for graphs without bounded degree trees. Recall that a ``cover'' of a graph $G$ is a set  of vertices $C\subseteq V(G)$ such that all edges $xy\in E(G)$ have $\{x,y\}\cap C\neq \emptyset$.
\begin{theorem}\label{main_theorem}
Let $\Delta\in \mathbb{N}, \varepsilon>0$, and let $d$ be sufficiently large. 
Let $T$ be a tree with $d$ edges and $\Delta(T)\leq \Delta$. For any  graph $G$ with $e(G)\geq \varepsilon d|G|$ having no copies of $T$, it is possible to delete $\varepsilon e(G)$ edges to get a graph $H$ each of whose connected components has a cover of order $\leq 3d$.
\end{theorem}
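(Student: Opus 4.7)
My plan is an iterative cutting scheme. Starting from $G$, I would repeatedly identify a component (of the partially edge-deleted graph) whose vertex cover exceeds $3d$, and delete a ``thin'' edge cut splitting it into two non-trivial parts, until every component has vertex cover $\le 3d$. The two tasks are then: \textbf{(a)} showing that such thin cuts always exist in offending components, and \textbf{(b)} controlling the total number of deleted edges by $\varepsilon\, e(G)$.

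The technical heart is the following \textbf{key lemma}: every connected $T$-free subgraph $C$ with $\tau(C) > 3d$ admits an edge cut of size at most a small fraction of $e(C)/|C|$ that splits $C$ non-trivially. I would argue by contradiction: if no such thin cut exists, $C$ behaves like a sparse expander at the relevant scales. Combined with $|C| \ge \tau(C) > 3d$ and an average-degree lower bound inherited from $e(G) \ge \varepsilon d|G|$, expansion should let us greedily embed $T$ into $C$. Concretely, process $V(T)$ in a BFS order from a centroid; at each step the next vertex has at most $\Delta(T) \le \Delta$ already-embedded neighbours, and expansion provides a fresh host vertex in the appropriate neighbourhood — producing $T \subseteq C$, which contradicts $T$-freeness. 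The bounded-degree hypothesis is essential here: it limits the number of conflicts at each embedding step to $O(1)$, so expansion survives the partial embedding. Given the key lemma, a potential/cut-tree analysis — tracking $\sum_C e(C)$ over current components with $\tau(C) > 3d$, and using the degeneracy bound $e(G) < d|G|$ which follows from $T$-freeness via Fact~\ref{fact:greedy_tree_embedding} — should then bound the total number of deleted edges by $\varepsilon\, e(G)$, with the hypothesis ``$d$ sufficiently large'' absorbing constant or logarithmic overhead.

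\textbf{Main obstacle.} The hardest step is the key lemma, specifically translating the non-existence of a thin cut into a concrete copy of $T$. Fact~\ref{fact:greedy_tree_embedding} embeds $T$ whenever $\delta \ge d$, but the expansion hypothesis is much weaker per vertex and must be amortized across the embedding. One needs to exploit $\Delta(T) \le \Delta$ to keep the obstruction at each embedding step bounded by a constant, and a sufficiently \emph{robust} expansion so that removing the at most $d+1$ already-used host vertices does not destroy the expansion for later steps. Extracting such robust expansion from the mere absence of thin cuts is where the hyperstability flavour of the theorem is genuinely used — the proof must leverage local structural consequences of $T$-freeness (beyond the simple degeneracy bound) to upgrade a weak global cut hypothesis into a strong embedding conclusion.
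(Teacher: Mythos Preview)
Your proposal has a genuine gap at exactly the point you flag as the main obstacle, and the gap is not merely technical: the implication ``no thin cut $\Rightarrow$ expander $\Rightarrow$ greedy embedding of $T$'' fails in the regime that actually matters.

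The absence of a thin edge cut in $C$ only tells you that the \emph{minimum} cut has size at least some $\alpha\cdot e(C)/|C|$; it does not give vertex expansion $|N(S)|\ge 10\Delta|S|$ for all $|S|\le 10d$, which is what Friedman--Pippenger (and any greedy scheme of the sort you describe) actually needs. The critical obstruction is a \emph{locally dense} component: imagine many cut-dense blobs of order $\Theta(d/h)$ glued together with moderate overlaps. Such a $C$ can easily have $\tau(C)>3d$, have no cut thinner than $\Omega(d)$ (so your key lemma's hypothesis fails to trigger), and yet be nowhere close to an expander --- small sets sitting inside a single blob have $|N(S)|=O(|S|)$. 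In this situation you cannot embed $T$ greedily; the paper spends essentially all of its effort here, via the clump machinery (Definition~\ref{def:clump}), the joining lemma (Lemma~\ref{lem:clump_joining}) with its ``conservation law'' between $|M(K)|$ and $q(K)$, and two separate dense embedding mechanisms (Lemma~\ref{lem:main_tree_embedding_lemma} (i) for a single blob with a large regular subgraph, and (ii) for a tree of blobs found via the $1$-subdivision argument of Lemma~\ref{lem:find_tree_1_subdivision}). Your proposal has no analogue of any of this.

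A secondary issue: the sentence ``average-degree lower bound inherited from $e(G)\ge\varepsilon d|G|$'' is not available for individual components after cutting --- nothing prevents a bad component from having $e(C)/|C|$ arbitrarily small, so the cut you are seeking (of size a fraction of $e(C)/|C|$) could be essentially zero edges, and the potential accounting you sketch does not obviously terminate with the right bound. The paper sidesteps this by never iterating cuts on arbitrary components; instead it decomposes once into cut-dense pieces (Lemma~\ref{lem:cut_dense_decomposition}), handles the leftover via Friedman--Pippenger, and then runs the clumping process on the pieces, with the edge accounting done at the very end via Claim~\ref{claim:clump_ordering}.
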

It is not immediately clear how one should think of the above theorem.   
Perhaps the closest point of contact to Theorem~\ref{main_theorem} is that of \emph{stability theorems} --- theorems that say things like ``if a graph $G$ has $e(G)\geq ex(n,H)- x$, then $G$ must have some constrained structure (which depends on $H$ and $x$)''. The constrained structure usually means ``$G$ looks close to the extremal graph with $ex(n,H)$ edges that contains no copy of $H$''. Indeed Theorem~\ref{main_theorem} has this flavour --- the extremal and near-extremal graphs described earlier with $e(G)\approx (d-1)n/2$ all have   the feature that their connected components have an $O(d)$-sized cover, and Theorem~\ref{main_theorem} exactly says that $G$ looks approximately like this. The unexpected thing about Theorem~\ref{main_theorem} is that it applies even when $e(G)$ is \emph{far below  $ex(n,T)$}. Even when $G$ has $1\%$ of the edges of an extremal $T$-free graph,  it turns out that the only way to avoid copies of $T$ is basically the same as if $G$ had $ex(n,T)$ edges. To the author's knowledge it hasn't been previously observed that Tur\'an numbers can sometimes exhibit this much stability --- and indeed it is likely that nothing similar holds when $T$ is not a forest. We termed this behaviour ``hyperstability'' in analogy to a similar term which is used for describing dynamical systems~\cite{brzdkek2013hyperstability}.

One  weakness of Theorem~\ref{main_theorem} is that it is only for bounded degree trees. This is not entirely a shortcoming of the proof --- the theorem is simply false without some bounded degree assumption on $T$. Indeed if $T$ has a vertex of degree $\varepsilon d$, then any $(\varepsilon d-1)$-regular graph $G$ contains no copy of $T$ --- but it doesn't necessarily have the structure of Theorem~\ref{main_theorem}. Of course there is a lot of room between this example (which has $\Delta(T)\leq \varepsilon d$), and the bounded degree restriction  ``$\Delta(T)\leq \Delta$'' in Theorem~\ref{main_theorem}. Understanding what happens in between is an interesting open problem.

We remark that the upper bound on size of the cover ``$3d$'', while close to being tight (the correct bound is $(1+o(1))d$ --- see concluding remarks), is not actually an important feature of the theorem. Indeed if this was replaced by any fixed multiple of $d$ like ``$1000d$'' then the theorem would be equally useful in the applications we have. In fact even if this number was allowed to increase with $\varepsilon$ (e.g. if the theorem upper bounded the cover sizes by $\varepsilon^{-\varepsilon^{-\varepsilon}}d$), then most applications would still go through.
We also remark that there is an equivalent statement of the theorem where the assumption $e(G)\geq \varepsilon d|G|$ is omitted, and instead the number of deleted edges is  $\varepsilon d |G|$ (see Theorem~\ref{main_theorem_proof}).

If the numbers aren't important, then what actually is the point of Theorem~\ref{main_theorem}? The power of the theorem lies in its ability to transform questions about sparse $T$-free graphs (about which very little is known) into questions about dense $T$-free graph (about which there is a large body of research, and where we have powerful tools like regularity). This happens because the input graph $G$ in Theorem~\ref{main_theorem} can have as few as $\varepsilon d|G|$ edges, which is significantly sparser than what the Erd\H{o}s-S\'os Conjecture (or any of its variants) ask for. On the other hand the components of the output graph $H$, while not quite dense themselves, turn out to be amenable to a broad range of techniques that work for dense graphs. For example, the simplest trick to get a dense graph from Theorem~\ref{main_theorem},  is to first observe that by averaging, one of the components of $H$ must have $e(C)/|C|\geq e(H)/|H|\geq (1-\varepsilon)e(G)/|G|\geq \varepsilon d/2$. Next, letting $D$ be the  $3d$-sized cover of $C$,  randomly select a set $R$ of $\max(|C\setminus D|,1000d)$ vertices outside $D$. Now the induced subgraph $C[R\cup D]$ has order $\leq 1003d$, and $e(C[R\cup D])\geq e(C)\frac{|R|}{|C|}\geq \varepsilon d|R|/2\geq \varepsilon |R\cup D|^2/2000$ in expectation i.e. $C[R\cup D]$ is dense. Even this very simple trick is already powerful enough to prove the Erd\H{o}s-S\'os Conjecture approximately for bounded degree trees (when combined with existing techniques about dense graphs. See Section~\ref{Section_approximate_Erdos_Sos} for details).

In the subsections which follow, we present some initial applications of Theorem~\ref{main_theorem} in order to illustrate the sorts of ways it can be used. Then, from Section 2 onwards we present the proof of Theorem~\ref{main_theorem}. Some further, more intricate applications will appear in joint work with Versteegen and Williams~\cite{vw}.

\subsubsection*{Formalization in Lean} 
The following weaker version of our main theorem has been formally verified using the Lean proof assistant, and can be found at \cite{github}. 
\begin{theorem}\label{theorem_Lean}
Let $\varepsilon>0$, and let $C$ be sufficiently large. 
Let $P$ be a path with $d\ge C$ edges. For any  graph $G$ with $e(G)\geq \varepsilon d|G|$ having no copies of $P$, it is possible to delete $\varepsilon e(G)$ edges to get a graph $H$ each of whose connected components has a cover of order $\leq Cd$.
\end{theorem}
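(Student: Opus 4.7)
The plan is to pass from $G$ to a uniformly dense subgraph $G_1$ by peeling low-degree vertices, and then iteratively cut each large component of $G_1$ along a sparse BFS layer until every component has at most $Cd$ vertices, so that its whole vertex set is trivially a cover of size $\le Cd$.

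First, set $\delta_0 := \varepsilon d / 2$ and iteratively delete every vertex of current degree at most $\delta_0$ together with its incident edges. Each such deletion removes at most $\delta_0$ edges and at most $|G|$ deletions occur, so at most $\delta_0 |G| \le \varepsilon e(G)/2$ edges are removed in total, using the hypothesis $e(G) \ge \varepsilon d|G|$. The resulting graph $G_1$ is still $P$-free and has minimum degree exceeding $\delta_0$. Next, in each connected component $K$ of $G_1$ with $|K| > Cd$, run BFS from an arbitrary root. Since a vertex at BFS-distance $i$ is the endpoint of a length-$i$ path in $K$ and $K$ is $P$-free, the BFS has $t+1$ layers $L_0, \ldots, L_t$ with $t \le d-1$, and every edge of $K$ lies within a single layer or between consecutive layers. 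Averaging over the $t$ interfaces produces an index $j$ with $|E(L_j, L_{j+1})| \le e(K)/t$; deleting this interface splits $K$ into two $P$-free subgraphs $K[L_{\le j}]$ and $K[L_{\ge j+1}]$, each strictly smaller than $K$.

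I would iterate this cutting procedure on every resulting sub-component, halting once a component has at most $Cd$ vertices. Over the entire recursion the initial components of $G_1$ contribute a total of $e(G_1) \le e(G)$ edges, and the per-cut cost is at most $e(K)/t$, so provided the BFS depth stays comparable to $d$ throughout the recursion the total deletion cost is $O(e(G)/d)$, comfortably inside the remaining budget $\varepsilon e(G)/2$ once $d$ is sufficiently large in terms of $\varepsilon$.

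The main obstacle is the global accounting across all recursive cuts, since a naive worst case has the BFS depth collapsing to a small number after just one cut, which makes the per-cut bound $e(K)/t$ ineffective. The cleanest fix I can foresee is a monovariant argument: define $\Phi := \sum_K |V(K)| \cdot \mathrm{depth}(K)$ over the current list of components, bounded initially by $|G_1| \cdot d$, and charge each cut's deletions against the strict decrease of $\Phi$. A secondary subtlety is components whose BFS has very small depth because a single vertex is near-universal; these can be handled separately by recursing on $K \setminus \{v\}$, which is $P_d$-free on $|K|-1$ vertices, with the extra factor absorbed into the final constant $C = C(\varepsilon)$.
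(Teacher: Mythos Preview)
Your approach has a fatal flaw at the outset: you are trying to prove that one can delete $\varepsilon e(G)$ edges so that every component has at most $Cd$ \emph{vertices}. This is strictly stronger than what the theorem asserts (a \emph{cover} of size $\le Cd$ in each component), and it is simply false. Take $G = K_{(d-1)/2,\,n}$ with $d$ odd and $n$ huge. This graph is $P$-free (a $d$-edge path in a bipartite graph needs $(d+1)/2$ vertices on each side), satisfies $e(G)\ge \varepsilon d|G|$ for any $\varepsilon<1/2$, and already has minimum degree $(d-1)/2>\varepsilon d/2$, so your peeling step does nothing. But any edge-deletion that leaves only components of size $\le Cd$ must sever all edges between the small side and all but $O(Cd)$ vertices of the large side, which costs $(1-o(1))e(G)$ edges. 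The theorem is true here for the trivial reason that the small side of size $(d-1)/2$ is a cover; your BFS-cutting scheme never discovers this and cannot, because it is aiming at the wrong target.

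The same example shows concretely why your monovariant $\Phi=\sum_K |V(K)|\cdot\mathrm{depth}(K)$ does not control the cost. In $K_{(d-1)/2,n}$ the BFS depth is $2$ from any root, so the only sparse interface isolates a single vertex; each such cut lowers $\Phi$ by $O(1)$ while deleting $\Theta(d)$ or $\Theta(n)$ edges. Your fallback for small-depth components (``recurse on $K\setminus\{v\}$ for a near-universal $v$'') likewise deletes $n$ edges per step here and empties the graph after $(d-1)/2$ steps. The real content of the theorem is precisely that the small covers need not be obtained by shrinking components: the paper produces them as the vertex sets of certain near-regular subgraphs $M(K)$ found inside ``clumps'', which are built by an iterative merging process on cut-dense pieces and controlled via a tree-embedding dichotomy (expansion versus local density). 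None of that machinery is visible in a BFS-layer argument, and I do not see a way to salvage the layer-cutting idea once the goal is corrected to ``small cover'' rather than ``small component''.
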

This is weaker than Theorem~\ref{main_theorem} in two ways. Firstly, the size of the covers is $Cd$ rather than $3d$. As mentioned before, this does not make much difference for the existing applications of Theorem~\ref{main_theorem} --- for example the approximate proof of the Erd\H{o}s-S\'os Conjecture in Section~\ref{Section_approximate_Erdos_Sos} would go through without any real changes using the bounds given by Theorem~\ref{theorem_Lean}. 
The second difference is that Theorem~\ref{theorem_Lean} is only about paths rather than general bounded degree trees (which obviously cuts down any applications to also only be about paths). Both of these weakenings happened to avoid having to formalize certain known theorems --- for example by working with just paths, one can avoid needing the Friedman-Pippenger Theorem as well as   tools for embedding trees in dense graphs (Lemma~\ref{lem:main_tree_embedding_lemma}). Modulo these standard results, the formalized proof of Theorem~\ref{theorem_Lean} is the same as that given for Theorem~\ref{main_theorem}. In the future, the author hopes to complete the formalization to a proof of the full Theorem~\ref{main_theorem}.

\subsection{Application: approximate Erd\H{o}s-S\'os}\label{Section_approximate_Erdos_Sos}
Several  proofs of  approximate versions of the Erd\H{o}s-S\'os Conjecture for dense graphs exist. They vary in the maximum degree of the trees they apply to, but all of them imply the following.
\begin{theorem}[Rozho\v{n}\cite{rozhon2019local} Theorem 1.2. Besomi,  Pavez-Sign{\'e}, Stein \cite{stein2020tree} Theorem 1.9,  \cite{besomi2021erdHos} Theorem 1.3]\label{thm:Erdos_Sos_dense_approximate}
For $\Delta\in \mathbb{N}, \varepsilon>0$, there exists $d_0$ such that the following holds for $d\ge d_0$ and $n\leq \varepsilon^{-1}d$. 
Let $G$ be an $n$-vertex graph with $e(G)> (1+\varepsilon)dn/2$. Then $G$ contains a copy of every $d$-edge tree $T$ having $\Delta(T)\leq \Delta$. 
\end{theorem}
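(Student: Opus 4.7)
The plan is to apply Szemer\'edi's regularity lemma to $G$ and then embed $T$ level by level into the resulting regular structure. The hypothesis $n\leq\varepsilon^{-1}d$ together with $e(G)>(1+\varepsilon)dn/2$ gives $e(G)=\Omega(\varepsilon^2 n^2)$, so $G$ is dense and regularity is applicable in its usual form. Before applying regularity I would first clean $G$ by iteratively deleting vertices of degree less than $(1+\varepsilon/2)d/2$; this discards at most $(\varepsilon/4)dn$ edges and leaves a subgraph $G'$ with $\delta(G')\geq (1+\varepsilon/2)d/2$ whose average degree is still strictly greater than $(1+\varepsilon/3)d$.

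Next, apply the regularity lemma to $G'$ with regularity parameter $\varepsilon'\ll\varepsilon/\Delta$ and density threshold $\delta\ll\varepsilon$, forming a reduced graph $R$ whose clusters have equal size $m\approx n/k$. A standard counting shows that $R$ inherits the minimum degree condition and in particular contains a matching $M=\{X_1Y_1,\dots,X_tY_t\}$ of $\varepsilon'$-regular pairs of density $\geq\delta$ whose total cluster mass $\sum(|X_i|+|Y_i|)$ exceeds $d$. Since $T$ is bipartite, fix a $2$-coloring $V(T)=A\cup B$; the aim is then to embed $A$ into $\bigcup X_i$ and $B$ into $\bigcup Y_i$.

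The embedding itself proceeds along a BFS ordering of $T$: each new vertex $v$ is placed into the cluster determined by its parent's image, and the candidate set for $v$ is the common neighborhood (inside that cluster) of the already-embedded neighbors of $v$. Because $\Delta(T)\leq\Delta$ and $\varepsilon'\ll 1/\Delta$, the defect form of regularity guarantees this candidate set has size at least $(\delta-\varepsilon')^{\Delta}m$ after removing previously used vertices, so the embedding never gets stuck. A mild technical wrinkle is that the partition of $T$ across the matching pairs $X_iY_i$ must respect cluster capacities; this is handled by first splitting $T$ into subtrees of the right sizes and distributing them across the pairs, using that deleting any edge of $T$ yields two components of predictable sizes.

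The hard part is the near-extremal case: if $G'$ is close to an Erd\H{o}s--S\'os extremal graph --- for example roughly $(d-1)/2$ vertices joined to almost everything with the rest of low degree --- then the reduced graph $R$ may fail to contain a large enough regular matching, and the generic regularity-based embedding breaks down. In this case one instead uses the handful of ``hub'' vertices of $G'$ directly as the skeleton of $T$, placing high-degree tree vertices at hubs and using their common neighborhoods (which are large by a direct counting) to embed the remaining low-degree tree vertices. Identifying this near-extremal structure and smoothly interpolating between it and the generic regular case is the most delicate step, and it is where the bounded degree hypothesis $\Delta(T)\leq\Delta$ is essential, since it controls both the number of intersections one must take in the common-neighborhood arguments and the degree of the hubs one needs to produce.
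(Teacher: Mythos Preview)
This theorem is not proved in the paper; it is quoted from the cited works of Rozho\v{n} and of Besomi--Pavez-Sign\'e--Stein and used purely as a black box in deriving Theorem~\ref{thm:Erdos-Sos-approximate}. So there is no proof in the paper against which to compare your sketch.

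That said, your outline has a genuine gap. A matching in the reduced graph whose total cluster mass exceeds $d$ is not by itself enough to embed a \emph{connected} tree $T$: the subtrees you propose to distribute across different pairs $X_iY_i$ must be joined to one another, and your sketch offers no mechanism for passing between matching pairs. What the regularity-based arguments actually use is a large matching lying inside a single connected component of the reduced graph, together with a scheme for routing short connecting portions of $T$ along reduced-graph edges between the pairs; arranging this is where the real work lies. Your near-extremal discussion is also misplaced: the $(1+\varepsilon)$ slack in the hypothesis keeps $G$ bounded away from any extremal configuration, so no stability/extremal dichotomy is needed for this approximate statement --- that analysis pertains to the exact dense result (Theorem~\ref{thm:Erdos_Sos_dense}), not to this one.
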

This theorem can easily be black-boxed together with our hyperstability theorem to extend it to the sparse case.
\begin{theorem}\label{thm:Erdos-Sos-approximate}
For $\Delta\in \mathbb{N}, \varepsilon>0$, there exists $d_1$ such that the following holds for $d\ge d_1$.  
Let $G$ be an $n$-vertex graph with $e(G)> (1+\varepsilon)dn/2$. Then $G$ contains a copy of every $d$-edge tree $T$ having $\Delta(T)\leq \Delta$. 
\end{theorem}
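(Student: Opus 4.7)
The plan is to argue by contradiction, using Theorem~\ref{main_theorem} to extract from any purported counterexample a small-order subgraph that falls into the regime handled by Theorem~\ref{thm:Erdos_Sos_dense_approximate}. Fix $\varepsilon' := \varepsilon/10$, $M := \lceil 10/\varepsilon\rceil$, and $\varepsilon_0 := 1/(M+3)$, and take $d_1$ large enough for both Theorem~\ref{main_theorem} (with parameters $\Delta,\varepsilon'$) and Theorem~\ref{thm:Erdos_Sos_dense_approximate} (with parameters $\Delta,\varepsilon_0$). Suppose toward contradiction that $G$ contains no copy of $T$. Since $e(G) > (1+\varepsilon)dn/2 \geq \varepsilon' d n$, Theorem~\ref{main_theorem} yields a subgraph $H \subseteq G$ with $e(H) \geq (1-\varepsilon')e(G)$ each of whose connected components has a vertex cover of size at most $3d$. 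By averaging over components, some component $C$ of $H$ satisfies
\[
\frac{e(C)}{|C|} \ \geq \ \frac{e(H)}{|H|} \ \geq \ (1-\varepsilon')\,\frac{e(G)}{n} \ > \ \left(1+\tfrac{\varepsilon}{2}\right)\frac{d}{2}.
\]

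Next, I would carve out of $C$ a small dense subgraph. Let $D\subseteq V(C)$ be a cover with $|D|\leq 3d$, so that every edge of $C$ has an endpoint in $D$. If $|C\setminus D|\leq Md$, set $C' := C$; otherwise choose a uniformly random $R\subseteq C\setminus D$ with $|R|=Md$ and set $C' := C[R\cup D]$. In both cases $|C'| \leq (M+3)d = \varepsilon_0^{-1}d$. In the random case each edge of $C$ has at least one endpoint in $D$ and survives to $C'$ with probability at least $Md/|C\setminus D|$, so
\[
\Exp[e(C')] \ \geq \ \frac{Md}{|C\setminus D|}\,e(C) \ > \ Md\left(1+\tfrac{\varepsilon}{2}\right)\frac{d}{2},
\]
so some realization satisfies $e(C')/|C'| > M(1+\varepsilon/2)d/(2(M+3))$; the deterministic case gives the same bound immediately.

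The final step is to apply Theorem~\ref{thm:Erdos_Sos_dense_approximate} to $C' \subseteq G$: this requires verifying that $e(C') > (1+\varepsilon_0)d|C'|/2$, i.e.\ $M(1+\varepsilon/2)/(M+3) \geq 1+\varepsilon_0 = 1 + 1/(M+3)$, equivalently $M\varepsilon/2 \geq 4$, which holds by the choice $M = \lceil 10/\varepsilon\rceil$. Theorem~\ref{thm:Erdos_Sos_dense_approximate} then produces a copy of $T$ inside $C' \subseteq G$, contradicting the hypothesis. The only genuine obstacle is parameter bookkeeping: one must simultaneously satisfy the size constraint $M+3\leq \varepsilon_0^{-1}$ and the density inequality $M(1+\varepsilon/2)/(M+3)>1+\varepsilon_0$, which forces $M=\Theta(1/\varepsilon)$ and hence $|C'|=\Theta(d/\varepsilon)$. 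Apart from this calculation the proof is a clean black-box combination of the hyperstability theorem with the known dense case.
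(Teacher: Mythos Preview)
Your proof is correct and follows essentially the same approach as the paper: apply the hyperstability theorem to get components with small covers, average to find a dense component, randomly sample outside the cover to reduce to a graph on $O(d/\varepsilon)$ vertices, and invoke the dense case. The only differences are cosmetic choices of constants (you use $M=\lceil 10/\varepsilon\rceil$ where the paper uses $80/\varepsilon$) and your explicit case split on $|C\setminus D|$ versus $Md$, which the paper handles the same way.
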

\begin{proof}
Set $C:=80\varepsilon^{-1}$ and $\varepsilon_0= (C+3)^{-1}$, noting $\varepsilon_0\le C^{-1}\le \varepsilon/4$. Apply Theorem~\ref{thm:Erdos-Sos-approximate}  with $\varepsilon_0= (C+3)^{-1}, \Delta=\Delta$ to get a $d_0$. Let $d_1$ be the maximum of $d_0$ and how large $d$ needs to be in Theorem~\ref{main_theorem}, when applied with $\varepsilon_1=\varepsilon/3, \Delta=\Delta$. Now let $d\ge d_1$, and let $G$ be an $n$-vertex graph with $e(G)> (1+\varepsilon)dn/2$.   

Note that $e(G)> (1+\varepsilon)dn/2\ge (\varepsilon/3) dn$, so we can apply Theorem~\ref{main_theorem} with $\varepsilon_1=\varepsilon/3$ in order to find a subgraph $G'$ having $e(G')\geq (1-\varepsilon/3)e(G)\geq (1-\varepsilon/3)(1+\varepsilon)dn/2\geq  (1+\varepsilon/2)dn/2$ whose connected components each have a cover of order $\leq 3d$. 
By averaging (see Fact~\ref{fact:connectected_component_with_same_density_as_G}), some connected component $K$ of $G'$ has $e(K)\geq (1+\varepsilon/2)d|K|/2\ge (1+\varepsilon_0)d|K|/2$.
Let $D$ be the cover of order $\leq 3d$ of $K$. 
We can assume $|K|> (C+3)d=\varepsilon_0^{-1}d$, since otherwise  we are done by Theorem~\ref{thm:Erdos_Sos_dense_approximate}.
Let $R$ be a random set of $Cd$ vertices from $K\setminus D$, noting that $|R\cup D|\le Cd+3d=(C+3)d=\varepsilon_0^{-1}d$. We have $\Exp(e(R\cup D))\geq e(K)\frac{Cd}{|K|}\geq \frac{Cd}{|K|}(1+\varepsilon/2)d|K|/2=(1+\varepsilon/2)d(Cd)/2\geq \frac{Cd}{(C+3)d}(1+\varepsilon/2)d|R\cup D|/2=\frac{1}{1+3C^{-1}}(1+\varepsilon/2)d|R\cup D|/2\geq (1+\varepsilon/2-20C^{-1})d|R\cup D|/2=(1+\varepsilon/4)d|R\cup D|/2\ge (1+\varepsilon_0)d|R\cup D|/2$. Now Theorem~\ref{thm:Erdos_Sos_dense_approximate} applies to $G[R\cup D]$  (with $\varepsilon_0, \Delta$) to give a copy of $T$.
 \end{proof}

\subsection{Application: approximate Loebl-Koml\'os-S\'os}
The Erd\H{o}s-S\'os Conjecture is about the \emph{average degree} of the host graph $G$. Loebl-Koml\'os-S\'os made a variant of the conjecture replacing  average degree by the \emph{median degree}
\begin{conjecture}[Loebl-Koml\'os-S\'os, see \cite{erdHos1995discrepancy}]\label{conj:Loebl-Komlos-Sos}
Let $G$ be an $n$-vertex graph with $\geq n/2$ vertices of degree $\geq d$. Then $G$ contains a copy of every $d$-edge tree $T$. 
\end{conjecture}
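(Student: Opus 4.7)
The plan is to use Theorem~\ref{main_theorem} to reduce this sparse conjecture to the dense setting, where tree-embedding tools are well developed. Two honest caveats are worth stating up front. First, Theorem~\ref{main_theorem} only handles bounded-degree trees, whereas the Loebl--Koml\'os--S\'os conjecture allows $T$ of arbitrary maximum degree, so any route through hyperstability can at best address the bounded-degree case of the conjecture. Second, Theorem~\ref{main_theorem} introduces a $(1+\varepsilon)$-multiplicative loss in every quantitative bound, so what one can realistically hope to deduce along these lines is only an \emph{approximate} form of the conjecture (with the hypothesis strengthened to, say, at least $(1+\varepsilon)n/2$ vertices of degree $\ge(1+\varepsilon)d$). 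Closing that gap is the main obstacle and is addressed at the end.

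The implementation is as follows. First observe that the LKS hypothesis gives $\sum_v d_G(v)\ge (n/2)d$, hence $e(G)\ge dn/4$, which is comfortably dense enough to feed into Theorem~\ref{main_theorem}. I would apply that theorem with a small parameter $\varepsilon_1\ll\varepsilon$ to produce a subgraph $H$ obtained from $G$ by deleting at most $\varepsilon_1 e(G)$ edges, each of whose components has a cover of order $\le 3d$. Because few edges are removed, a $(1-O(\sqrt{\varepsilon_1}))$-fraction of the originally high-degree vertices still have degree $\ge (1-\sqrt{\varepsilon_1})d$ in $H$. By averaging over components, some component $K$ of $H$ inherits at least as good a density of high-degree vertices as $H$ itself; let $A_K$ denote that set and $D$ the cover of $K$ of size $\le 3d$.

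The key structural observation is that any $v\in A_K\setminus D$ has all of its $\ge(1-\sqrt{\varepsilon_1})d$ neighbours inside $D$, because $D$ covers every edge of $K$. Hence $A_K\setminus D$ together with $D$ forms a dense bipartite-like structure between a set of high-degree vertices and a cover of size $\le 3d$. I would then sample a random subset $R\subseteq K\setminus D$ with $|R|=\Theta(d)$; with positive probability $G[R\cup D]$ has order $O(d)$ and inherits a median-degree condition of the form $\ge(1-o(1))d$ (with the contribution coming from $R\cap A_K$ together with $D$), and is therefore dense enough to apply either the asymptotic dense version of LKS due to Hlad\'ky et al.~\cite{hladky2017approximate1, hladky2017approximate2, hladky2017approximate3, hladky2017approximate4} or a Friedman--Pippenger-type embedding as in Lemma~\ref{lem:main_tree_embedding_lemma} to produce the required copy of $T$.

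The hardest step, and what is needed to upgrade this sketch to a proof of the conjecture \emph{as worded}, is the extremal analysis. Hyperstability does not distinguish graphs that are exactly tight for the LKS threshold from graphs satisfying an $\varepsilon$-strengthened version, so in addition to the above argument one would have to classify all $G$ that are asymptotically tight for the conjecture --- expected to be vertex-disjoint unions of copies of $K_{d+1}$ together with possible small dominating sets --- and then embed $T$ by hand inside each of these explicit structures. This extremal step is the fundamental obstacle: Theorem~\ref{main_theorem} reduces the exact conjecture to this classification problem but does not by itself resolve it, which is why the application presented in the next theorem must be an approximate form rather than the exact statement above.
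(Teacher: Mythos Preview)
The statement you were asked to prove is a \emph{conjecture}, and the paper does not prove it. Conjecture~\ref{conj:Loebl-Komlos-Sos} is stated as background; the paper then quotes the approximate version due to Hladk\'y--Koml\'os--Piguet--Simonovits--Stein--Szemer\'edi (Theorem~\ref{thm:Loebl-Komlos-Sos_approximate}), introduces the skew variant (Conjecture~\ref{conj:skew_Loebl-Komlos-Sos}) and its dense approximate form (Theorem~\ref{thm:skew_Loebl-Komlos-Sos_dense_approximate}), and finally uses Theorem~\ref{main_theorem} to extend the latter to sparse host graphs for bounded-degree trees. At no point does the paper claim to prove Conjecture~\ref{conj:Loebl-Komlos-Sos} itself, and indeed it reappears in the concluding remarks as an open problem (Conjecture~\ref{conj:LKS_conclusion}).

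To your credit, your proposal is candid about this: you correctly flag that hyperstability only reaches bounded-degree trees and only yields an approximate statement, and that closing the gap would require a separate extremal analysis which you do not supply. So you are not really offering a proof of the conjecture; you are sketching a route to an approximate, bounded-degree version. That sketch is in the same spirit as what the paper actually carries out in the theorem following Theorem~\ref{thm:skew_Loebl-Komlos-Sos_dense_approximate}: delete few edges via the hyperstability theorem, control how many high-degree vertices survive in the resulting subgraph, average to find a good component with a small cover, and then pass to a dense induced subgraph on the cover plus a bounded-size sample where a known dense-case result applies. One technical point worth noting is that the paper does the averaging over components more carefully than ``some component inherits at least as good a density of high-degree vertices'': it tracks the sets $B(C_i)$ of vertices whose degree dropped significantly and bounds $\sum_i|B(C_i)|$ via the total number of deleted edges, which is what actually guarantees a component with enough surviving high-degree vertices. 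Your $(1-O(\sqrt{\varepsilon_1}))$-fraction claim is the right intuition but would need this bookkeeping to be made rigorous.

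In short: there is no paper proof to compare against because the statement is open, and your proposal is not a proof of it either --- it is (an honest and essentially correct) outline of how the paper's machinery yields weaker approximate results.
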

This conjecture has a very rich history, see Section 3 of~\cite{stein2020tree} for a full list of references. The conjecture has been fully proved when the host graph $G$ is large and dense via two independent proofs by Cooley~\cite{cooley2009proof} and by Hladky, Piguet~\cite{hladky2016loebl}. Also Loebl-Koml\'os-S\'os is notable in being one of the only conjectures in this area where an approximate version was previously proved for sparse graphs. This was done by Hladky,  Koml{\'o}s,  Piguet,  Simonovits,  Stein,  and Szemer{\'e}di in a tour-de-force work spanning four papers~\cite{hladky2017approximate1, hladky2017approximate2, hladky2017approximate3, hladky2017approximate4}. 
\begin{theorem}[Hladky,  Koml{\'o}s,  Piguet,  Simonovits,  Stein,  and Szemer{\'e}di, \cite{hladky2017approximate1, hladky2017approximate2, hladky2017approximate3, hladky2017approximate4}]\label{thm:Loebl-Komlos-Sos_approximate}
Let $\varepsilon\gg d^{-1}$. 
Let $G$ be an $n$-vertex graph with $\geq (1+\varepsilon)n/2$ vertices of degree $\geq(1+\varepsilon)d$. Then $G$ contains a copy of every $d$-edge tree $T$. 
\end{theorem}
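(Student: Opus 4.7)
The plan is to mirror the proof of Theorem~\ref{thm:Erdos-Sos-approximate} above, black-boxing a dense version of LKS (for instance the exact proof of Cooley~\cite{cooley2009proof} or Hladky-Piguet~\cite{hladky2016loebl}, applied in the regime $n = O_\varepsilon(d)$) and using Theorem~\ref{main_theorem} to reduce the sparse case to the dense one. Suppose towards a contradiction that $G$ satisfies the hypotheses but contains no copy of some $d$-edge tree $T$. The median-degree condition immediately forces $e(G) \geq (1+\varepsilon)^2 dn/4$, well above the threshold $\varepsilon_1 dn$ of Theorem~\ref{main_theorem} for any $\varepsilon_1 \ll \varepsilon$.

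First, apply Theorem~\ref{main_theorem} with a parameter $\varepsilon_1 \ll \varepsilon$ to obtain a subgraph $G'$ formed by deleting at most $\varepsilon_1 e(G)$ edges from $G$, and each of whose connected components has a cover of order $\leq 3d$. At most $O(\varepsilon_1 e(G)/(\varepsilon d)) = O(\varepsilon_1 n/\varepsilon)$ vertices can lose more than $\varepsilon d/100$ incident edges in passing from $G$ to $G'$, so for $\varepsilon_1$ small enough $G'$ still has at least $(1+\varepsilon/2)n/2$ vertices of degree $\geq (1+\varepsilon/2)d$. Averaging over the components of $G'$ then produces a single component $K$ in which at least $(1+\varepsilon/2)|K|/2$ vertices have degree $\geq (1+\varepsilon/2)d$, and since $K$ is a component of $G'$ these are in fact their full degrees inside $K$.

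Let $D$ be the cover of $K$ of order $\leq 3d$. If $|K|$ is at most the constant $C(\varepsilon) d$ dictated by dense LKS, apply dense LKS directly to $K$ to find $T$. Otherwise, take $R$ to be a uniformly random subset of $K \setminus D$ of size $\Theta(\varepsilon^{-1} d)$ and work inside $K[R \cup D]$, which has order $O(\varepsilon^{-1} d)$. The key observation is that because $D$ is a cover, every vertex $v \in K \setminus D$ has \emph{all} of its $K$-neighbors inside $D$, so keeping $D$ wholesale in the sample preserves the full $K$-degree of every vertex of $R$. A first-moment calculation then shows that the number of high-degree-in-$K$ vertices landing in $R$ is, in expectation, at least $(1+\varepsilon/4)|R \cup D|/2$, and standard concentration makes this hold with positive probability. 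Dense LKS now finds $T$ in $K[R \cup D]$, the desired contradiction.

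The main obstacle is the random sampling step. Unlike the corresponding step in the Erdős-Sós argument, where it suffices to preserve an average edge density, here we must preserve a degree condition on individual vertices, which is a priori far more fragile under random thinning. What rescues the argument is precisely the cover structure supplied by Theorem~\ref{main_theorem}: since every vertex outside $D$ has all its neighbours inside $D$, including $D$ wholesale in the sample automatically preserves the full degree of each such vertex, and the remaining question — whether enough high-degree vertices land in $R$ — becomes a routine first-moment/concentration estimate.
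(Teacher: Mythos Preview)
There is a genuine gap. Theorem~\ref{thm:Loebl-Komlos-Sos_approximate} is not proved in this paper at all --- it is merely quoted from the four-paper series of Hladk\'y, Koml\'os, Piguet, Simonovits, Stein, and Szemer\'edi, so there is no ``paper's own proof'' to compare against. More importantly, your approach cannot work for this statement as written: Theorem~\ref{main_theorem} applies only to trees with $\Delta(T)\leq \Delta$ for some fixed $\Delta$, whereas Theorem~\ref{thm:Loebl-Komlos-Sos_approximate} is about \emph{arbitrary} $d$-edge trees. The paper is explicit that the bounded-degree hypothesis in Theorem~\ref{main_theorem} is essential and that the theorem is false without it (consider stars), so you cannot invoke it when $T$ is, say, a star or any tree with a vertex of degree $\Theta(d)$.

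What you have actually sketched is close to the paper's proof of the \emph{different} theorem immediately following Theorem~\ref{thm:skew_Loebl-Komlos-Sos_dense_approximate} --- the approximate skew Loebl--Koml\'os--S\'os conjecture for bounded-degree trees --- where the paper does exactly this: apply the hyperstability theorem, pass to a component with many high-degree vertices, and use the cover $D$ plus a subset of the high-degree vertices outside $D$ to produce a bounded-size subgraph to which the dense result applies. Your observation that vertices outside $D$ retain their full degree when $D$ is kept wholesale is precisely the mechanism used there. But that theorem carries the hypothesis $\Delta(T)\leq \Delta$, and its conclusion is accordingly weaker than Theorem~\ref{thm:Loebl-Komlos-Sos_approximate}.
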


Define a tree $T$ to be  $r$-skew if one of its parts  has order $\leq r|T|$. Motivated by a question of Simonovits, Klimo\v{s}ov\'a, Piguet, and Rohzo\v{n} made the following sharpening of Conjecture~\ref{conj:Loebl-Komlos-Sos} which takes into account the skew of $T$.

\begin{conjecture}[Klimo\v{s}ov\'a, Piguet, and Rohzo\v{n}, \cite{klimovsova2020version}]\label{conj:skew_Loebl-Komlos-Sos}
Let $G$ be an $n$-vertex graph with $\geq rn$ vertices of degree $\geq d$. Then $G$ contains a copy of every $r$-skew $d$-edge tree $T$. 
\end{conjecture}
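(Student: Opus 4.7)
The plan is to attempt the natural approximate version of the conjecture (weakening the hypothesis to $(1+\varepsilon)rn$ vertices of degree $\geq (1+\varepsilon)d$, and restricting to bounded-degree trees), by reducing to the dense case through Theorem~\ref{main_theorem} in the same spirit as the proof of Theorem~\ref{thm:Erdos-Sos-approximate}. First observe that the hypothesis gives $e(G) \geq (1+\varepsilon)^2 rdn/2$, so Theorem~\ref{main_theorem} applies with a small parameter $\varepsilon_1 \ll \varepsilon r$. Deleting the prescribed $\varepsilon_1 e(G)$ edges yields a subgraph $H$ whose components have covers of size $\leq 3d$, and since the total degree removed is small, a counting argument shows that a set $W$ of at least $(1+\varepsilon/2)rn$ vertices retains degree $\geq (1+\varepsilon/2)d$ in $H$.

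Next, pass to a single component. Since $\sum_{K}|W \cap K| \geq (1+\varepsilon/2)rn$, some component $K$ of $H$ satisfies $|W \cap K| \geq (1+\varepsilon/2)r|K|$. Let $D$ be the cover of $K$, with $|D| \leq 3d$. The key observation is that any $w \in W \cap (V(K) \setminus D)$ has all its $H$-neighbors in $D$, so its degree is preserved by any induced restriction containing $D$. I would then form a host graph $H[R]$ by taking $R = D \cup S$, where $S$ is a random subset of $V(K) \setminus D$ of size $Cd$ for a large constant $C$. Then $|R| \leq (C+3)d$, and since random subsampling preserves proportions in expectation, $R$ contains roughly $(1+\varepsilon/2)r|R|$ vertices from $W$, each still of degree $\geq (1+\varepsilon/2)d$ in $H[R]$. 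This makes $H[R]$ a dense graph on $O(d)$ vertices satisfying an approximate skew-LKS hypothesis.

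The main obstacle is the final step: no published dense version of the $r$-skew LKS conjecture seems to exist --- existing dense LKS results require $\geq |H[R]|/2$ heavy vertices and do not exploit skewness, while Theorem~\ref{thm:Erdos-Sos-approximate} needs average degree roughly $(1+\varepsilon)d$, which we only obtain up to a factor of $r$. To conclude I would therefore need a direct embedding of $T$ into $H[R]$. Since the $W$-vertices outside $D$ have their entire neighborhood in the small set $D$, the natural strategy is to embed the small side of the $r$-skew tree $T$ (of size $\leq r|T|$) into $D$, and the large side into $W$, using a Friedman--Pippenger or matching-based argument to build a suitable injection. Formalising this bipartite-like embedding --- in particular showing that $D$ carries enough structure to host the small side of $T$ while the large side expands freely into $W$ --- is the most delicate step of the plan, and is where any careful proof would have to put in genuinely new work.
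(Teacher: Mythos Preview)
The statement you were given is a \emph{conjecture}, not a theorem, and the paper does not prove it. What the paper does prove is exactly the approximate, bounded-degree version you set out to attack, so I will compare your plan with the paper's proof of that.

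Your overall architecture matches the paper's: apply the hyperstability theorem, pass to a component $K$ carrying a proportional share of the high-degree vertices, exploit that high-degree vertices outside the cover $D$ have all their neighbours in $D$, and restrict to an induced subgraph on $D$ plus $O(d)$ further vertices to land in the dense regime. Two differences are worth noting. First, where you take a random $S \subseteq V(K)\setminus D$ and argue about expected proportions, the paper simply takes a deterministic subset $U \subseteq (W \cap K)\setminus D$ of the desired size; since the quantity to be controlled is a vertex count (not an edge count), no randomness is needed, and this avoids the concentration step entirely. Second, the paper uses the variant Theorem~\ref{main_theorem_proof} (delete $\varepsilon' d n$ edges, no lower bound on $e(G)$ required), which slightly streamlines the bookkeeping for the set of vertices whose degree drops.

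The genuine gap in your plan is the paragraph beginning ``The main obstacle''. You assert that no published dense version of the $r$-skew Loebl--Koml\'os--S\'os statement exists, and then sketch a bespoke embedding into $D$ and $W$ as the hard new work. But the dense approximate version \emph{does} exist: Theorem~\ref{thm:skew_Loebl-Komlos-Sos_dense_approximate}, due to the same authors who posed the conjecture, says precisely that an $n$-vertex graph with $n \leq \varepsilon^{-1} d$ and $\geq rn$ vertices of degree $\geq (1+\varepsilon)d$ contains every $r$-skew $d$-edge tree. Once you have your host $H[R]$ with $|R|=O(d)$ and $\geq r|R|$ vertices of degree $\geq (1+\varepsilon/2)d$, this theorem applies directly and finishes the proof. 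So your ``most delicate step'' is in fact already available as a black box, and the Friedman--Pippenger/matching embedding you propose is unnecessary.
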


The authors of this conjecture proved it approximately for dense graphs.
\begin{theorem}[Klimo\v{s}ov\'a, Piguet, and Rohzo\v{n}, \cite{klimovsova2020version}]\label{thm:skew_Loebl-Komlos-Sos_dense_approximate}
Let $r^{-1}\gg \varepsilon\gg d^{-1}$ and  $n\leq \varepsilon^{-1}d$. 
Let $G$ be an $n$-vertex graph with $\geq rn$ vertices of degree $\geq (1+\varepsilon)d$. Then $G$ contains a copy of every $r$-skew $d$-edge tree $T$. 
\end{theorem}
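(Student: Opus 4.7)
The plan is to follow the standard regularity-based route for embedding bounded-degree (or more generally, moderate-degree) trees in dense graphs, with the twist that the skewness of $T$ must be matched with the asymmetric assumption ``$\geq rn$ vertices of degree $\geq (1+\varepsilon)d$''.  Throughout I think of $r^{-1},\varepsilon^{-1}\ll d$ as small/large parameters and introduce auxiliary parameters $\varepsilon',\delta,\eta$ with $r^{-1}\ll \varepsilon^{-1}\ll \eta^{-1}\ll \delta^{-1}\ll (\varepsilon')^{-1}\ll d$.

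First I would apply the Szemer\'edi Regularity Lemma to $G$ with regularity parameter $\varepsilon'$, obtaining a partition $V(G)=V_0\cup V_1\cup\dots\cup V_k$ with $|V_0|\leq \varepsilon'n$ and $|V_1|=\dots=|V_k|=m$.  Let $H\subseteq V(G)$ be the set of vertices of degree $\geq (1+\varepsilon)d$; by hypothesis $|H|\geq rn$.  Clean up in the usual way: discard vertices with atypical degree into many clusters and edges inside clusters or across $\varepsilon'$-irregular/low-density pairs.  Since $n\leq \varepsilon^{-1}d$ and $\varepsilon'\ll\eta\ll\varepsilon$, this removes $\leq \eta dn$ edges and $\leq \eta n$ vertices; in particular every kept vertex $v\in H$ still has ``reduced degree'' essentially $(1+\varepsilon/2)d$ across regular pairs of density $\geq\delta$, and the set $H'$ of kept vertices of $H$ satisfies $|H'|\geq (r-\eta)n$.

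Next, form the reduced multi-graph $R$ whose vertices are the clusters $V_1,\dots,V_k$, weighted by whether a cluster is ``heavy'' (satisfies $|V_i\cap H'|\geq (r/2)|V_i|$).  By averaging on $|H'|$, the total weight of heavy clusters is $\Theta(rn)$, so some connected component $K$ of the density-$\delta$ reduced graph contains a constant fraction of a heavy cluster and, via the degree condition and $n\leq\varepsilon^{-1}d$, supports a ``skew structure'': a set $S_A$ of heavy clusters and a set $S_B$ of arbitrary clusters with $|S_A|\cdot m\geq (r+\eta)(d+1)$, $|S_B|\cdot m\geq (1-r+\eta)(d+1)$, and such that every vertex in a cluster of $S_A$ has $(1+\varepsilon/4)d$ neighbours within regular $\delta$-dense pairs going to $S_A\cup S_B$ (and similarly, slightly weaker, for $S_B$).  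Finding such a skew structure inside $K$ is the one genuinely Loebl-Koml\'os-S\'os-specific step.

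Finally, embed $T$ into this structure using the standard greedy/regular tree-embedding lemma for bounded (or, by the formalism of \cite{klimovsova2020version}, moderately bounded) maximum degree.  Root $T$ at an arbitrary vertex and consider its bipartition $(A,B)$ with $|A|\leq r|T|$.  Assign $A$ to $S_A$ and $B$ to $S_B$, then embed vertices of $T$ one by one in BFS order, at each step choosing an image with enough unused neighbours across the regular pair to accommodate the remaining descendants; the density-$\delta$ regularity guarantees that the set of valid extensions is large except for a negligible fraction of bad images, which one avoids by a standard potential-function/forbidden-set argument.  The main obstacle I expect is the middle step: extracting, inside a single connected component of the cleaned reduced graph, a skew structure whose two sides have \emph{exactly} the right sizes $r(d+1)$ and $(1-r)(d+1)$ and with the heavy side being the one of size $r|T|$; smaller obstacles (for example what to do when the component containing most of $H'$ is too small to house $T$) are handled by arguing that in that case $G$ contains an even denser substructure and applying a direct embedding.
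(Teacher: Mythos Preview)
The paper does not prove this theorem at all: it is quoted verbatim from Klimo\v{s}ov\'a, Piguet, and Rozho\v{n} \cite{klimovsova2020version} and used purely as a black box (it is invoked twice inside the proof of the subsequent theorem on sparse skew Loebl--Koml\'os--S\'os). So there is nothing in the present paper to compare your proposal against.

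As a sketch of how the cited result is actually established, your outline is in the right spirit --- regularity, a reduced-graph structural step tailored to the skewness, then an embedding --- and this is indeed the shape of the argument in \cite{klimovsova2020version}. One genuine gap in your write-up, however, is the embedding step: you propose to ``embed vertices of $T$ one by one in BFS order'' via the usual greedy/regularity tree-embedding lemma. That only works for trees whose maximum degree is $o(d)$ (or bounded), whereas the theorem as stated carries no degree restriction on $T$. For trees with vertices of degree $\Theta(d)$ the greedy scheme fails outright, and the actual proof in \cite{klimovsova2020version} has to do something considerably more delicate (cutting $T$ into pieces, handling high-degree vertices separately, absorbing leftover leaves, etc.). Your middle step --- extracting the skew two-sided structure inside one component of the reduced graph --- is also where most of the real work lies, and your description of it is closer to a wish than a plan; but that is secondary to the embedding issue.
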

Black-boxing this theorem together with our hyperstability theorem, we can extend it to the sparse, bounded degree case. This gives a generalization of Theorem~\ref{thm:Loebl-Komlos-Sos_approximate} --- though unfortunately only for bounded degree trees.
\begin{theorem}
Let $1\geq \Delta^{-1}, r, 1-r\gg \varepsilon\gg d^{-1}$. 
Let $G$ be an $n$-vertex graph with $(1+\varepsilon)rn$ vertices of degree $\geq (1+\varepsilon)d$. Then $G$ contains a copy of every $r$-skew $d$-edge tree $T$ having $\Delta(T)\leq \Delta$.
\end{theorem}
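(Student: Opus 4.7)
The plan is to mirror the proof of Theorem~\ref{thm:Erdos-Sos-approximate}, substituting Theorem~\ref{thm:skew_Loebl-Komlos-Sos_dense_approximate} for Theorem~\ref{thm:Erdos_Sos_dense_approximate} as the dense black box. Arguing by contradiction, suppose $G$ is $T$-free. Set $A := \{v \in V(G) : \deg_G(v) \geq (1+\varepsilon)d\}$, so $|A| \geq (1+\varepsilon)rn$. Then $e(G) \geq |A|(1+\varepsilon)d/2 \geq rdn/2$, while the standard greedy tree-embedding bound gives $e(G) \leq dn$ on any $T$-free graph. Choose $\varepsilon_1 := \varepsilon^2 r/8$ and apply Theorem~\ref{main_theorem} with parameters $\varepsilon_1,\Delta$ to delete at most $\varepsilon_1 e(G) \leq \varepsilon_1 dn$ edges, obtaining a spanning subgraph $H$ whose components each admit a cover of order $\leq 3d$.

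Next I would show that almost every vertex of $A$ keeps most of its neighbours. The total degree-loss is at most $2\varepsilon_1 dn$, so by Markov at most $4\varepsilon_1 n/\varepsilon = \varepsilon rn/2$ vertices lose $\geq \varepsilon d/2$ edges; hence $A' := \{v \in A : \deg_H(v) \geq (1+\varepsilon/2)d\}$ satisfies $|A'| \geq (1+\varepsilon/2)rn$. A size-weighted averaging across the components of $H$ then yields some component $K$ with $|A'\cap K| \geq (1+\varepsilon/2)r|K|$. Fix the cover $D\subseteq V(K)$ of order $\leq 3d$ guaranteed by Theorem~\ref{main_theorem}.

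To finish I mimic the sampling step of Theorem~\ref{thm:Erdos-Sos-approximate}. Choose $C$ sufficiently large in $\varepsilon^{-1}$ and $r^{-1}$ (say $C:=100(\varepsilon r)^{-1}$) and put $\varepsilon_0 := (C+3)^{-1}$. If $|K|\leq \varepsilon_0^{-1}d$ then Theorem~\ref{thm:skew_Loebl-Komlos-Sos_dense_approximate} applies to $G[V(K)]$ directly, yielding $T\subseteq G$ and a contradiction. Otherwise $|V(K)\setminus D|\geq Cd$; sample a uniformly random $R\subseteq V(K)\setminus D$ of size $Cd$ and consider $G[R\cup D]$, which has $n'\leq (C+3)d = \varepsilon_0^{-1}d$ vertices. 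The key observation, and the only point genuinely different from the edge-average case, is that because $D$ covers $K$, every $v\in R\cap A'$ has all of its $H$-neighbours inside $D$, so $\deg_{G[R\cup D]}(v) \geq \deg_H(v) \geq (1+\varepsilon/2)d \geq (1+\varepsilon_0)d$. A short expectation computation using $|A'\cap K|\geq(1+\varepsilon/2)r|K|$ and $|D|\leq 3d\ll Cd$ gives $\Exp[|R\cap A'|]/n' \geq (1+\varepsilon/5)r \geq r$, so some $R$ realises $|R\cap A'|\geq rn'$, and Theorem~\ref{thm:skew_Loebl-Komlos-Sos_dense_approximate} applied with $(r,\varepsilon_0)$ to $G[R\cup D]$ embeds $T$ into $G$.

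The main obstacle will be tracking the cascade of parameters $\varepsilon \to \varepsilon/2 \to \varepsilon_0$ carefully enough that the hypothesis of Theorem~\ref{thm:skew_Loebl-Komlos-Sos_dense_approximate} survives both the hyperstability deletion and the random sampling; in particular one must choose $C$ (and hence $\varepsilon_0$) to depend on $r$ as well as $\varepsilon$, since the density of high-degree vertices is now an $r$-fraction rather than a constant. The structural fact making the whole argument work is that the cover $D$ supplied by Theorem~\ref{main_theorem} ensures that the $H$-degree of any sampled vertex of $K\setminus D$ is entirely contained in the sample, so degrees pass through undiluted and only the \emph{fraction} of high-degree vertices (rather than their individual degrees) needs to be monitored during sampling.
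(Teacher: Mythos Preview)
Your proof is correct and follows essentially the same approach as the paper: apply the hyperstability theorem, use a Markov-type count to show most high-degree vertices survive, average to find a good component, and then restrict to a bounded-size subset containing the cover where Theorem~\ref{thm:skew_Loebl-Komlos-Sos_dense_approximate} applies. The only notable difference is that the paper selects the subset \emph{deterministically} from $L_i\setminus D_i$ (the high-degree vertices outside the cover) rather than sampling randomly from all of $V(K)\setminus D$; since every vertex so chosen automatically has high degree into $D$, this avoids the expectation computation entirely and is a small simplification you could adopt.
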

\begin{proof} 
 Let $L=\{v\in G: d_G(v)\geq (1+\varepsilon)d\}$, noting $|L|\geq (1+\varepsilon)rn$. 
Apply Theorem~\ref{main_theorem_proof} (with $\varepsilon'=\varepsilon^4/8$) in order to find a subgraph $G'$ having $e(G')\geq e(G)-\varepsilon^4 dn/8$ whose connected components $C_1, \dots, C_t$ each have a cover of order $\leq 3d$. 
For each $i$, let $B(C_i)=\{v\in C_i: d_G(v)\geq (1+\varepsilon)d, d_{G'}(v)< (1+\varepsilon/2)d\}$. Note that $e(G\setminus G')\geq \frac12 \sum_{i=1}^t |B(C_i)|\varepsilon d/2$ gives $\sum_{i=1}^t|B(C_i)|\leq 4(\varepsilon^4 dn/8)/\varepsilon d\leq \varepsilon^2 n/2$. Let $L_i=\{v\in C_i: d_{G'}(v)\geq (1+\varepsilon/2)d\}$.
We have $\sum_{i=1}^t |L_i|\geq \sum_{i=1}^t (|L\cap C_i|-|B(C_i)|)\geq (1+\varepsilon)rn-\varepsilon^2 n/2\geq  (1+\varepsilon/2)rn=(1+\varepsilon/2)r\sum |C_i|$, which implies that for some $C_i$, we have $|L_i|\geq (1+\varepsilon/2)r|C_i|$. 

If $|C_i|\leq 100\varepsilon^{-2}d$, then we are done by Theorem~\ref{thm:skew_Loebl-Komlos-Sos_dense_approximate} (applied with $\varepsilon'=\varepsilon^2/100$), so suppose $|C_i|> 100\varepsilon^{-2}d$. Let $D_i$ be the cover we have of $C_i$ with $|D_i|\leq 3d$. We have $|L_i\setminus D_i|\geq (1+\varepsilon/2)r|C_i|-3d\geq (1+\varepsilon/2)r|C_i|- \varepsilon r|C_i|/4= (1+\varepsilon/4)r|C_i|\geq 50\varepsilon^{-2}rd$. Let $U_i\subseteq L_i\setminus D_i$ be a subset of order $|U_i|= 50\varepsilon^{-2}rd$ and consider the induced subgraph $H_i:=C_i[D_i\cup U_i]$. Note that since $D_i$ is a cover of $C_i$ we have $d_{H_i}(u)=d_{C_i}(u)\geq (1+\varepsilon/2)d$ for all $u\in U_i$. Also $|U_i|=50\varepsilon^{-2}rd\geq (1-\varepsilon)(50\varepsilon^{-2}rd+ 3d)\geq (1-\varepsilon)(|U_i|+|D_i|)= (1-\varepsilon)|H_i|\geq r|H_i|$. Thus Theorem~\ref{thm:skew_Loebl-Komlos-Sos_dense_approximate} applies (with $\varepsilon'=\varepsilon^2/103$) to $H_i$ in order to give a copy of $T$.
 \end{proof}

\subsection{Application: exact Erd\H{o}s-S\'os}
For dense host graphs and bounded degree trees, there is a published proof of the Erd\H{o}s-S\'os Conjecture.
\begin{theorem}[Besomi, Guido-Pavez-Sign{\'e}, Stein, \cite{besomi2021erdHos}]\label{thm:Erdos_Sos_dense}
Let $\Delta^{-1}, C^{-1}\gg d^{-1}$ and  $n\leq Cd$. 
Let $G$ be an $n$-vertex graph with $e(G)> (d-1)n/2$. Then $G$ contains a copy of every $d$-edge tree $T$ having $\Delta(T)\leq \Delta$. 
\end{theorem}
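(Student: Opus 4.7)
The plan is to combine Szemer\'edi's regularity method with a stability/extremal-case analysis, in the spirit of Ajtai, Koml\'os, Simonovits, and Szemer\'edi. Since $n \leq Cd$ and $e(G) > (d-1)n/2$, the graph $G$ has edge density at least $(d-1)/n \geq 1/C - o(1)$, so I would begin by applying the regularity lemma with a small parameter $\eta = \eta(\Delta, C)$ to obtain a regular partition with reduced graph $R$ of positive density.

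From here I would split into two cases based on a stability dichotomy. If $G$ is at least $\varepsilon$-far (in the usual edit-distance sense) from every near-extremal $T$-free construction, for a suitably small $\varepsilon = \varepsilon(\Delta, C)$, then I would invoke the approximate dense theorem (Theorem~\ref{thm:Erdos_Sos_dense_approximate}): being strictly above $(d-1)n/2$ in edges while simultaneously far from every extremal template should supply enough slack to feed into a greedy regularity-based embedding of the bounded-degree tree $T$, where the fact that $\Delta(T) \leq \Delta$ means that each new vertex of $T$ requires only a constant-sized common neighbourhood inside a regular pair.

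If instead $G$ is $\varepsilon$-close to some near-extremal $T$-free construction, I would carry out an extremal-case analysis. As discussed in the introduction, the near-extremal $T$-free graphs come in essentially two flavours: (a) a near-disjoint union of cliques of order $d$, and (b) a ``split'' graph in which a cover of roughly $(d-1)/2$ vertices is joined to everything else, which is only an obstruction when $T$ is near-balanced. In case (a), the strict inequality $e(G) > (d-1)n/2$ should force a bridging edge between two almost-cliques, giving $T$ room to cross between components; in case (b), the surplus edges combined with the bounded-degree hypothesis should produce a matching-like structure hosting the small-side vertices of $T$ inside the cover. The hardest step will be case (b): the split example is tight for every balanced tree, so the bounded-degree hypothesis is essential (a vertex of $T$ of degree larger than $(d-1)/2$ cannot fit inside the cover at all), and quantifying precisely how much surplus is needed and how $\Delta$ controls the embedding is the delicate combinatorial heart of the argument.
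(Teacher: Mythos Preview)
This theorem is not proved in the paper. It is quoted from Besomi, Pavez-Sign\'e, and Stein~\cite{besomi2021erdHos} and used as a black box (e.g.\ in the proof of Theorem~\ref{thm:Erdos-Sos-bounded-degree}), so there is no ``paper's own proof'' to compare against.

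Your outline is a reasonable high-level description of how the argument in~\cite{besomi2021erdHos} actually goes: regularity plus a stability dichotomy, with an extremal-case analysis handling the near-clique and near-split configurations. But what you have written is a plan, not a proof. In particular, your treatment of the extremal cases is purely heuristic: in case~(a) you assert that a single bridging edge ``gives $T$ room to cross between components'' without saying how to route a bounded-degree tree through a one-edge bottleneck, and in case~(b) you openly flag the step as ``the delicate combinatorial heart of the argument'' without carrying any of it out. The actual work in~\cite{besomi2021erdHos} (and in the companion paper~\cite{supplementary} to this one, which isolates the relevant pieces as Theorems~\ref{lem:stability_main}, \ref{lem:extremal_analysis_bipartite}, and~\ref{lem:extremal_analysis_nonbipartite}) lies precisely in those steps you have left as sketches.
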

Using our hyperstability theorem, we can generalize this proof to sparse host graphs --- and so obtain a proof of the Erd\H{o}s-S\'os Conjecture for all large bounded degree trees. Unlike our approximate applications above, we cannot simply ``black box'' Theorem~\ref{thm:Erdos_Sos_dense} in order to deduce the sparse case. This is unsurprising --- the rough structure theorem gives no information about the leftover $\varepsilon dn$ edges of $G\setminus H$, and to prove any tight result one can't simply ignore these edges. Thus, in addition to Theorems~\ref{main_theorem} and~\ref{thm:Erdos_Sos_dense}, we also require extra ingredients that analyse near-extremal graphs in the Erd\H{o}s-Sos Conjecture. The content of these is essentially all present in~\cite{besomi2021erdHos}. However, they are not actually stated explicitly --- so we provide a companion paper~\cite{supplementary} where all the intermediate results we need are stated and proved fully.

The first thing we need is a stability result --- that a graph with $\approx (d-1)n/2$ edges without a copy of $T$ must have structure very close to the extremal graphs for the conjecture.
\begin{theorem}[\cite{supplementary}]\label{lem:stability_main}
Let $\Delta\gg  \alpha \gg \varepsilon\gg  d^{-1}$.
Let $T$ be a $d$-vertex tree with $\Delta(T)\leq \Delta$.
Let $G$ be a graph with $e(G)\geq (1-\varepsilon)dn/2$, and let $D$ be a cover   of order $|D|\leq 10d$. If $G$ has no copy of $T$, then there is a subgraph $H$ of $G$ 
satisfying one of:
\begin{enumerate}[(i)]
\item $\delta(H)\geq (1-\alpha)d$ and $|H|\leq (1+\alpha)d$.
\item  $H$ is bipartite with parts $X,Y$ such that $|X|\leq (1+\alpha)d/2$, $|Y|\geq 6d$, $\delta(X)\geq (1-\alpha)|Y|$, $\delta(Y)\geq (1-\alpha)d/2$.
\end{enumerate}
\end{theorem}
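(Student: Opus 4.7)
The plan is to read off the structure from the degree sequence of the cover $D$. Let $I := V(G)\setminus D$; since $D$ is a cover, $I$ is independent and every edge of $G$ has at least one endpoint in $D$.

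\textbf{Small $n$ reduction.} If $n\le Cd$ for a large constant $C=C(\alpha)$, then $G$ is a dense graph on $O(d)$ vertices. In that case I would quote the dense version of this stability statement (proved in the companion paper~\cite{supplementary}) to conclude (i) or (ii) directly. So from now on assume $n>Cd$. Fix an auxiliary parameter $\beta$ with $\alpha\gg\beta\gg\varepsilon$ and let
\[ X := \{v\in D : d_G(v)\ge (1-\beta)d\}. \]
Every edge not incident to $X$ is incident to $D\setminus X$, so their total count is at most $|D\setminus X|(1-\beta)d+\binom{|D|}{2}=O(d^2)$. Hence $e(X,V\setminus X)\ge (1-\varepsilon)dn/2-O(d^2)\ge (1-2\varepsilon)dn/2$, and in fact $e(X,I)$ has the same lower bound.

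\textbf{Case 1: $|X|>(1+\alpha/2)d/2$.} I would argue this forces $T\subseteq G$, contradicting the hypothesis. Take the bipartition $T=A\sqcup B$ with $|A|\le|B|$, so $|A|\le d/2<|X|$. I would then embed $A$ into $X$ and $B$ into $I$ by invoking a Friedman--Pippenger-style tree embedding lemma (e.g.\ Lemma~\ref{lem:main_tree_embedding_lemma}), using that $X$ has slack beyond $|A|$, that each $x\in X$ has degree $\ge(1-\beta)d$ (almost all of it into $I$, since $|D|\le 10d\ll n$), and that $T$ has bounded degree $\Delta$.

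\textbf{Case 2: $|X|\le(1+\alpha/2)d/2$.} I would construct the bipartite structure of (ii) inside $G[X\cup I]$. Let
\[ Y_0:=\{v\in I : d_G(v,X)\ge(1-\alpha/4)d/2\}. \]
Splitting $e(X,I)=e(X,Y_0)+e(X,I\setminus Y_0)$, the estimates $e(X,Y_0)\le|Y_0|(1+\alpha/2)d/2$ and $e(X,I\setminus Y_0)<|I\setminus Y_0|(1-\alpha/4)d/2$ combined with $e(X,I)\ge(1-2\varepsilon)dn/2$ force $|Y_0|\ge \Omega(\alpha n)\ge 6d$ provided $C$ is large enough. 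An iterative clean-up then removes any $x\in X$ with $d(x,Y_0)<(1-\alpha)|Y_0|$ and any $y\in Y_0$ with $d(y,X)<(1-\alpha)d/2$; at each step the removed edges form only a small fraction of $e(X,Y_0)$, so the process stabilises with $X'\subseteq X$ and $Y'\subseteq Y_0$ witnessing outcome (ii).

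The main obstacle is Case~1: converting the mild inequality $|X|>(1+\alpha/2)d/2$ into an actual copy of a bounded-degree tree $T$. When $T$ is nearly balanced, $|X|$ exceeds $|A|$ by only an $\alpha$-fraction, one cannot afford to ``waste'' any $X$-vertex, and pairwise codegrees of $X$-vertices into $I$ need not be overwhelming. It is precisely this tight regime that necessitates the Friedman--Pippenger-style tree embedding machinery (rather than a naive greedy argument), and this is the key technical input of the proof.
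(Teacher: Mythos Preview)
The paper does not prove this theorem: it is imported from the companion paper~\cite{supplementary} (note the citation in the theorem header) and used as a black box in the proof of Theorem~\ref{thm:Erdos-Sos-bounded-degree}. So there is no in-paper argument to compare your sketch against.

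On the substance of your outline, the overall shape (case-split on the number of high-degree cover vertices; embed $T$ in one case, read off structure~(ii) in the other) is plausible, but there are real gaps.

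\emph{Circularity.} Your small-$n$ reduction ``quotes the dense version of this stability statement (proved in the companion paper~\cite{supplementary})'', but this \emph{is} the statement from~\cite{supplementary}. Outcome~(i) must be produced somewhere by a direct argument; in your sketch it appears only via this self-reference.

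\emph{Case~1.} The parenthetical ``almost all of it into $I$, since $|D|\le 10d\ll n$'' is false: the defining bound $d_G(x)\ge(1-\beta)d$ is well below $|D|\le 10d$, so $N(x)$ may lie entirely in $D$. What the edge count actually gives is that the \emph{average} degree from $X$ into $I$ is $\approx n$; that is a different statement and does not by itself let you place every $a\in A$ at a usable $X$-vertex. Moreover, neither Friedman--Pippenger nor Lemma~\ref{lem:main_tree_embedding_lemma} yields the bipartite embedding $A\hookrightarrow X$, $B\hookrightarrow I$: the binding constraint is that each $B$-image in $I$ must keep an unused $X$-neighbour after up to $|A|\approx d/2$ vertices of $X$ are consumed, which forces you to work inside the set of $I$-vertices nearly complete to $X$ and to check that enough $X$-vertices have high degree into that set. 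This is exactly the structure of outcome~(ii), so Case~1 does not separate cleanly from Case~2.

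\emph{Case~2.} With your thresholds the clean-up can delete most of $X$. The missing edges in $X\times Y_0$ number at most $[(1+\alpha/2)-(1-\alpha/4)]\tfrac{d}{2}|Y_0|=\tfrac{3\alpha d}{8}|Y_0|$, while each deleted $x$ accounts for more than $\alpha|Y_0|$ missing edges; hence up to $3d/8$ vertices of $X$ may go, against $|X|\approx d/2$. After that, degrees from $Y_0$ into the surviving $X'$ drop by up to $3d/8$, destroying $\delta(Y)\ge(1-\alpha)d/2$. Fixing this requires the case split at $(1+\beta)d/2$ for some $\beta\ll\alpha$ and a matching small threshold in the definition of $Y_0$; but that makes the hypothesis of Case~1 correspondingly weaker, so the two repairs pull against each other.
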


This is paired with two ``near-extremal analysis'' results which prove the conjecture for graphs having structures (i) or (ii).

\begin{theorem}[\cite{supplementary}]\label{lem:extremal_analysis_bipartite}
Let $\Delta^{-7}/1000\geq \varepsilon\ge d^{-1}$.
Let $T$ be a $d$-edge tree with $\Delta(T)\leq \Delta$.
Let $G$ be a connected graph with $\delta(G)\geq d/2$. Suppose that $G$ contains a bipartite subgraph $B$ with parts $X, Y$ such that $|X|\leq (1+\varepsilon)d/2$, $|Y|\geq 6d$, $\delta_B(X)\geq (1-\varepsilon)|Y|$, $\delta_B(Y)\geq (1-\varepsilon)d/2$. 
Then $G$ contains a copy of $T$.
\end{theorem}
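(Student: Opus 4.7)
The strategy is a greedy BFS embedding of $T$ into $G$ that exploits the near-complete bipartite structure of $B$, with a small set of leaves of $T$ reserved as absorbers for the final clean-up. Let $(P_1,P_2)$ be the bipartition of $T$ with $|P_1|\le|P_2|$, so $|P_1|\le(d+1)/2$. The hypothesis $\delta_B(Y)\ge(1-\varepsilon)d/2$ forces $|X|\ge(1-\varepsilon)d/2$, pinning $|X|\in[(1-\varepsilon)d/2,(1+\varepsilon)d/2]$; in the worst case $|P_1|$ can exceed $|X|$ by up to $\varepsilon d/2+O(1)$, which is the source of the main difficulty.

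First I would reserve a set $L\subseteq V(T)$ of $\Theta(\Delta\varepsilon d)$ leaves of $T$, chosen where possible from $P_1$ so that the skeleton $T_0:=T\setminus L$ satisfies $|P_1\cap V(T_0)|\le|X|-c\varepsilon d$ for a suitable constant $c=c(\Delta)$. Since $\Delta(T)\le\Delta$ gives $T$ at least $\Omega(d/\Delta)$ leaves, and trees with few $P_1$-leaves automatically have $|P_1|$ well below $|X|$ (so we can remove leaves from $P_2$ instead), such an $L$ exists. Next I would embed $T_0$ greedily in BFS order from a root in $P_1$, sending $P_1\cap V(T_0)$ into $X$ and $P_2$ into $Y$. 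Each $P_2$-vertex being embedded has its parent in $X$ with $\ge(1-\varepsilon)|Y|\ge 5d$ neighbours in $Y$ and at most $|P_2|-1\le d$ of them used, while each $P_1$-vertex being embedded has its parent in $Y$ with $\ge(1-\varepsilon)d/2$ neighbours in $X$ and at most $|X|-c\varepsilon d$ used, leaving $\Omega(\varepsilon d)$ valid choices at every step.

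The final step is re-attaching the leaves of $L$: each $v\in L$ has a parent $u\in V(T_0)$ already embedded in $Y$, and I must send $\phi(v)$ to an unused $G$-neighbour of $\phi(u)$. Unused $X$-vertices provide one pool of candidates; critically, when $|X|$ is near its lower endpoint $(1-\varepsilon)d/2$, the inequality $|N_G(\phi(u))\setminus X|\ge d/2-|X|$ coming from $\delta(G)\ge d/2$ guarantees a second pool of candidates outside $X$, and these vertices are largely unused because the embedding of $T_0$ touches only $|P_2|\le d$ vertices of $Y$ and none of $V(G)\setminus V(B)$. A Hall-type matching argument then places every leaf of $L$ without conflict, with the bound $\varepsilon\le\Delta^{-7}/1000$ absorbing the degree factors that arise because up to $\Delta-1$ leaves of $L$ may share a common parent.

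The principal obstacle is this last step in the extreme small-$|X|$ regime, where $B$ alone cannot accommodate all of $P_1$ and one is forced to exploit the extra edges granted by $\delta(G)\ge d/2$ but not recorded in $B$. Balancing $|L|$, the slack $c\varepsilon d$ built into the skeleton embedding, and the extra-edge budget $\ge d/2-|X|$ per $Y$-vertex, while simultaneously verifying Hall's condition for the leaf attachment, is where the quantitative hypothesis on $\varepsilon$ is consumed and is likely also why the hypothesis $|Y|\ge 6d$ and the connectedness of $G$ appear.
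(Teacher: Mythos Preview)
The paper does not contain a proof of this statement: Theorem~\ref{lem:extremal_analysis_bipartite} is quoted from the companion paper~\cite{supplementary}, and the present paper only black-boxes it inside the proof of Theorem~\ref{thm:Erdos-Sos-bounded-degree}. So there is no proof here against which to compare your proposal.

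That said, your outline is a recognisable instance of the standard ``reserve leaves, embed the skeleton greedily respecting the bipartition, then reattach leaves via a Hall/matching argument'' method that is indeed what underlies the extremal-case analyses in \cite{besomi2021erdHos} and \cite{supplementary}. Two points in your sketch deserve more care. First, your justification that ``trees with few $P_1$-leaves automatically have $|P_1|$ well below $|X|$'' is not quite right as stated: from $d=\sum_{v\in P_1}d_T(v)\ge 2|P_1|-\ell_1$ one only gets $|P_1|\le (d+\ell_1)/2$, so with $\ell_1=0$ one still has $|P_1|\le d/2$, which can exceed $|X|$ by up to $\varepsilon d/2$ when $|X|$ is near its lower endpoint. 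In that regime you cannot trim $P_1$ at all, and the whole deficit must be absorbed by embedding some $P_1$-vertices outside $X$; your sketch acknowledges the mechanism ($d/2-|X|$ extra neighbours per $Y$-vertex), but the bookkeeping that these extra neighbours are genuinely available, distinct, and suffice for Hall's condition across up to $\Delta$ siblings is where the work lies. Second, you list connectedness of $G$ and $|Y|\ge 6d$ as hypotheses you have not yet accounted for; the latter is what makes the $P_2\to Y$ steps trivially safe, while connectedness is used in the companion paper to route around local obstructions when $|X|$ is tight --- without it a component containing $B$ could be exactly $B$ with $|X|<|P_1|$ and no spare vertices at all. Your proposal is on the right track but is a sketch rather than a proof.
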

\begin{theorem}[\cite{supplementary}]\label{lem:extremal_analysis_nonbipartite}
Let $\Delta^{-7}/1000\geq \varepsilon\ge 3d^{-1}$.
Let $G$ be a connected graph with $|G|\geq (1+256\Delta^2\sqrt{\varepsilon})d$, $\delta(G)\geq 128\Delta^2\sqrt{\varepsilon} d$ containing a subgraph $K$ with $|K|\leq (1+\varepsilon)d$, $\delta(K)\geq (1-\varepsilon)d$. Then $G$ contains a copy of every $d$-edge tree having $\Delta(T)\leq \Delta$.
\end{theorem}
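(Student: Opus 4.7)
The plan is to split the tree $T$ at a carefully chosen vertex $r$ into a large piece $T_1$ to be embedded inside the near-clique $K$, and a small rooted subtree $T_2$ of size $\Theta(\Delta^{2}\sqrt{\varepsilon}\,d)$ to be embedded into $G$ using the global minimum-degree condition. Since $T$ has maximum degree at most $\Delta$, a standard subtree-cutting argument (iteratively descending to the largest child subtree) produces a vertex $r\in T$ and a rooted subtree $T_2\subseteq T$ at $r$ whose size lies in any chosen range $[k,\Delta k]$ with $k\le|T|/\Delta$. Picking $k\approx 64\Delta\sqrt{\varepsilon}\,d$ yields $|T_2|\in[64\Delta\sqrt{\varepsilon}\,d,\,64\Delta^{2}\sqrt{\varepsilon}\,d]$, so $|T_1|=d+2-|T_2|\le|K|-2\varepsilon d$, leaving enough room for a clean embedding of $T_1$ into $K$.

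With a suitable anchor $v^{*}\in K$ fixed (to be chosen next), embed $T_1$ greedily into $K$ with $\phi(r):=v^{*}$: since $\delta(K)\ge(1-\varepsilon)d$ and at most $|T_1|\le(1-\varepsilon)d$ vertices of $K$ are ever used, each parent has at least $32\Delta\sqrt{\varepsilon}\,d$ unused $K$-neighbors at every step, so the greedy procedure succeeds. The choice of $v^{*}$ is dictated by what the next step needs: I ask that $v^{*}$ have many neighbors in $G\setminus K$, specifically $|N_G(v^{*})\cap(G\setminus K)|\ge 64\Delta^{2}\sqrt{\varepsilon}\,d$, so the subsequent embedding of $T_2$ has an exit route from $K$.

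It remains to embed $T_2$ rooted at $v^{*}$ into $G$: the first move from $v^{*}$ lands in one of its free $G\setminus K$-neighbors, and greedy embedding would continue along $G$-edges. This last step, together with the existence of $v^{*}$, is the \emph{main obstacle}. Since $\delta(G)=128\Delta^{2}\sqrt{\varepsilon}\,d$ is much smaller than $|\phi(T_1)|\approx d$, a parent placed in $G\setminus K$ can in principle have almost all of its $G$-neighbors inside $K$, hence inside the used set $\phi(T_1)$, leaving no room to extend; and a priori the $K$--$(G\setminus K)$ boundary could be so sparse that no $v^{*}$ with many external neighbors exists at all. I expect to resolve both issues by combining (i) a Friedman--Pippenger-style expansion argument applied to the subgraph of $G$ near the boundary, producing an expander capable of absorbing a $\Delta$-bounded tree of size $|T_2|$, (ii) a joint choice of $v^{*}$ and $\phi(T_1)$ that reserves enough free $K$-neighbors along the intended trajectory of $T_2$, and (iii) a case-split on the structure of $G\setminus K$ (internally dense versus mostly connected to $K$) to handle both boundary regimes, using the single $K$--$(G\setminus K)$ edge guaranteed by connectedness of $G$ to glue $T_2$ to $T_1$ in the degenerate case.
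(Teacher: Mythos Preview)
The paper does not actually prove this statement here; it is imported from the companion paper~\cite{supplementary}, so there is no in-paper argument to compare against. Assessing your proposal on its own, it is a plan rather than a proof: you correctly split $T$ into a large $T_1$ destined for the near-clique $K$ and a small $T_2$ of size $\Theta(\Delta^2\sqrt{\varepsilon}\,d)$, and you correctly identify the crux --- that once $T_1$ occupies almost all of $K$, a vertex of $T_2$ landing in $G\setminus K$ may have its entire $G$-neighbourhood (only $128\Delta^2\sqrt{\varepsilon}d$ vertices) swallowed by $\phi(T_1)$ --- but you then stop at ``I expect to resolve both issues by combining (i), (ii), (iii)'' without carrying any of them out.

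Of your three proposed fixes, (i) does not apply as stated: Friedman--Pippenger needs vertex expansion, and nothing in the hypotheses rules out $G\setminus K$ being an independent set with every vertex sending all of its $\ge 128\Delta^2\sqrt{\varepsilon}d$ edges back into $K$, which is the opposite of expansion. A workable resolution is a concrete version of your (iii): case-split on whether $G[V(G)\setminus V(K)]$ contains a subgraph of minimum degree at least $|T_2|$ (if so, embed $T_2$ there \emph{first}, connect it to $K$ by the single crossing edge guaranteed by connectedness, and only then embed $T_1$ greedily in $K\setminus\phi(T_2)$, which now retains essentially its full minimum degree), or not (then a counting argument forces many vertices of $K$ to have $\Omega(\Delta^2\sqrt{\varepsilon}d)$ neighbours in $G\setminus K$, and one embeds $T_1$ while reserving a small designated subset of $K$ through which $T_2$ can be threaded back and forth). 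The two missing ingredients are this explicit case analysis and the realisation that $T_2$ must be placed largely \emph{outside} $K$ before $T_1$ is embedded; embedding $T_1$ first as you do, or even embedding $T_2$ first but without forcing it out of $K$, runs straight into the obstacle you yourself name.
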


We combine the above four results with Theorem~\ref{main_theorem} to get the Erd\H{o}s-S\'os Conjecture for large bounded degree trees.
\begin{theorem}\label{thm:Erdos-Sos-bounded-degree}
Let $\Delta^{-1}\gg d^{-1}$.
Let $G$ be an $n$-vertex graph with $e(G)> (d-1)n/2$. Then $G$ contains a copy of every $d$-edge tree $T$ having $\Delta(T)\leq \Delta$. 
\end{theorem}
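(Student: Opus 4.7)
The plan is to argue by contradiction, combining all four cited ingredients: the hyperstability theorem, the stability theorem, the two extremal-case analysis theorems, and the dense-case Erd\H{o}s--S\'os theorem. Assume $G$ is $T$-free with $e(G) > (d-1)n/2$; we aim to derive $T \subseteq G$, a contradiction. If $n \leq C(\Delta) d$ for a suitable large constant, Theorem~\ref{thm:Erdos_Sos_dense} finishes immediately, so we may assume $n$ is much larger than $d$.

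Fix a parameter hierarchy $\varepsilon \ll \alpha \ll \Delta^{-1}$ and apply Theorem~\ref{main_theorem} to $G$ to obtain a $T$-free subgraph $G' \subseteq G$ with $e(G') \geq (1-\varepsilon) e(G)$ and each connected component possessing a cover of order $\leq 3d$. By averaging (Fact~\ref{fact:connectected_component_with_same_density_as_G}), we pick a component $K$ of maximum density, so that $e(K) \geq (1-2\varepsilon) d|K|/2$, with cover $D$ of order $\leq 3d$. Now apply Theorem~\ref{lem:stability_main} to $K$: this yields a subgraph $H \subseteq K$ of one of two forms --- case (i), the near-clique structure with $\delta(H) \geq (1-\alpha)d$ and $|H| \leq (1+\alpha)d$, or case (ii), the near-bipartite star-like structure with parts $X, Y$.

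The final step is to construct a connected subgraph $G^*$ of $G$ with high minimum degree, containing $H$, and then invoke the appropriate extremal analysis theorem. The construction: iteratively delete from $G$ vertices whose current degree falls below a chosen threshold, then take the connected component containing $V(H)$. An inductive argument on the removal order shows that no vertex of $V(H)$ is ever deleted --- each such vertex has degree in $H$ already above threshold, and $H$ is connected by its degree conditions, so $V(H) \subseteq V(G^*)$ and $G^*$ inherits $H$. In case (ii), the threshold $d/2$ produces $G^*$ with $\delta(G^*) \geq d/2$, and Theorem~\ref{lem:extremal_analysis_bipartite} yields $T \subseteq G^* \subseteq G$. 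In case (i), the threshold $128\Delta^2 \sqrt{\alpha}\, d$ produces $G^*$, and when $|G^*| \geq (1 + 256\Delta^2\sqrt{\alpha})d$ we finish via Theorem~\ref{lem:extremal_analysis_nonbipartite}.

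The main obstacle is the remaining subcase of (i) in which $|G^*|$ falls below the threshold of Theorem~\ref{lem:extremal_analysis_nonbipartite}. This is exactly the shape of the extremal family (disjoint copies of $K_d$), which saturates $e(G) = (d-1)n/2$ but does not contain $T$, so only the \emph{strict} inequality $e(G) > (d-1)n/2$ saves us. Here one must leverage both this strict gap and the edges of $G \setminus G'$ lying within $V(K)$: the aim is to show that either $e(G[V(G^*)]) > (d-1)|V(G^*)|/2$ so that Theorem~\ref{thm:Erdos_Sos_dense} applies to $G[V(G^*)]$ directly, or else the hyperstability component $K$ has strictly more than $d$ vertices in a way that supplies enough additional high-degree vertices outside $V(H)$ to push $|G^*|$ past the $(1 + 256\Delta^2\sqrt{\alpha})d$ threshold. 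Propagating the quantitative gap $e(G) - (d-1)n/2$ through the approximations introduced by hyperstability and stability, and choosing the parameter hierarchy so that this gap survives, is the central delicate point.
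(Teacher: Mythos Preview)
Your overall architecture is right, but you make life much harder than necessary by applying the hyperstability theorem to $G$ directly and only afterwards trying to manufacture a connected high-minimum-degree host $G^*$. The paper does the opposite: it \emph{first} uses the trivial reductions (Fact~\ref{fact:subgraph_min_degree_e/2} and Fact~\ref{fact:connectected_component_with_same_density_as_G}) to replace $G$ by a connected subgraph with $\delta(G)\geq d/2$ still satisfying $e(G)>(d-1)|G|/2$, and only then runs the whole pipeline (dense case if $|G|\le 10d$; otherwise hyperstability $\to$ dense component $\to$ stability $\to$ extremal analysis). After this reordering the graph $G$ itself already satisfies all the hypotheses of Theorems~\ref{lem:extremal_analysis_bipartite} and~\ref{lem:extremal_analysis_nonbipartite} (connectedness, $\delta(G)\ge d/2$, $|G|\ge 10d\ge(1+256\Delta^2\sqrt{\alpha})d$), so the ``main obstacle'' you identify simply never arises.

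Your route, as written, has an actual gap. In case~(ii) you set the deletion threshold to $d/2$ and claim that no vertex of $H$ is ever removed because ``each such vertex has degree in $H$ already above threshold''. But the stability theorem only guarantees $\delta_B(Y)\geq(1-\alpha)d/2<d/2$, so vertices of $Y$ may well be removed by your process, destroying the bipartite structure before you can invoke Theorem~\ref{lem:extremal_analysis_bipartite}. And in case~(i), your proposed handling of small $|G^*|$ is only a sketch: the strict inequality $e(G)>(d-1)n/2$ gives a gap of at most one edge, which does not survive the $\varepsilon e(G)$ edges discarded by hyperstability, so the argument you outline for recovering it does not go through. Doing the minimum-degree reduction first eliminates both issues at once.
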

\begin{proof}
Let $\Delta^{-1}\gg \alpha\gg\varepsilon\gg d^{-1}$. 
We can assume that $G$ has $\delta(G)\geq \lfloor(d-1)/2\rfloor+1\geq (d-1)/2-1/2+1= d/2$ (via Fact~\ref{fact:subgraph_min_degree_e/2}) and connected (via Fact~\ref{fact:connectected_component_with_same_density_as_G}).
We can assume $|G|\geq 10d$, as otherwise we're done by Theorem~\ref{thm:Erdos_Sos_dense} applied with $C=10$. 
Apply Theorem~\ref{main_theorem} with $\varepsilon'=\varepsilon/2$   in order to find a subgraph $G'$ having $e(G')\geq (1-\varepsilon/2)e(G)\geq (1-\varepsilon/2)(d-1)n/2= (1-\varepsilon/3)(1-d^{-1})dn/2\geq (1-\varepsilon)dn/2$ whose connected components each have a cover of order $\leq 3d$. 
By Fact~\ref{fact:connectected_component_with_same_density_as_G}, some connected component $C$ of $G'$ has $e(C)\geq (1-\varepsilon)d|C|/2$. Apply Lemma~\ref{lem:stability_main} to get a subgraph $H$ of $C$ satisfying (i) or (ii). 
Using that $G$ is connected of order $\geq 10d\geq (1+256\Delta^2\alpha)d$ and has $\delta(G)\geq d/2\geq 128\Delta^2\alpha d$, we find a copy of $T$ using either Lemma~\ref{lem:extremal_analysis_bipartite} or Lemma~\ref{lem:extremal_analysis_nonbipartite}. 
\end{proof}

\section{Proof overview}
Tree embeddings into dense graphs are well understood, and we make use of them. A key definition for this paper is that of cut-density, a concept which was first introduced by Conlon, Fox, and Sudakov~\cite{conlon2014cycle}.
\begin{definition}
A graph $G$ is a $q$-cut-dense if every partition $V(G)=A\cup B$ has $e_G(A,B)\geq q|A||B|$.
\end{definition}
We think of cut-density as a ``connectivity'' parameter. 
The motivation for this is that $q$-cut-density is easily shown to be equivalent (after some reparametrization) to saying that ``$G$ has linear minimum degree and the cluster graph (in the sense of Szemeredi's Regularity Lemma) is connected''.  
A nice thing about $q$-cut-density is that it only involves one parameter and makes no reference to regularity or pseudorandomness --- and so the definition can give a very clean perspective on some aspects of dense graphs. The following lemma is how we embed trees in cut-dense graphs:

\begin{lemma}\label{lem:main_tree_embedding_lemma}
Let  $p, q,\varepsilon,\Delta^{-1}\gg L^{-1}\gg d^{-1}$
 and let $T$ be a order $d$ tree with $\Delta(T)\leq \Delta$. Let $G$ be a graph which has one of the following structures:
\begin{enumerate}[(i)]
\item  $G$ is a $n$-vertex, $q$-cut-dense graph,  and contains an  $pn$-regular, order $\geq (2+\varepsilon)d$ subgraph $R$.
\item $E(G)=G_1\cup \dots \cup G_t$ with each $G_i$ is a $\geq 16d/qL$-vertex $q$-cut-dense graph. There is an auxiliary $L$-ary tree $S$ of depth $L$ tree with $V(S)=\{G_1, \dots, G_t\}$ and a set of distinct vertices $\{u_{G_iG_j}: G_iG_j\in E(S)\}\subseteq V(G)$ such that  $V(G_i)\cap V(G_j)=\{u_{G_iG_j}\}$ for all edges $G_iG_j\in E(S)$ and $V(G_i)\cap V(G_j)=\emptyset$ for non-edges $G_iG_j\not\in E(S)$.
\end{enumerate}
  Then $G$ has a copy of $T$. 
\end{lemma}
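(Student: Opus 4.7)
The plan is to handle both cases via standard tree-embedding machinery for dense graphs, exploiting that $q$-cut-density, after an application of Szemer\'edi's regularity lemma, is essentially equivalent to connectedness of the reduced (cluster) graph. In each case the conclusion reduces to finding $T$ inside a connected collection of dense regular pairs whose combined order is $\ge (2+\varepsilon)d$, at which point a Koml\'os-S\'ark\"ozy-Szemer\'edi blow-up type embedding (or, equivalently, a Friedman-Pippenger expansion argument combined with a centroid decomposition of $T$) delivers a copy of $T$.

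For case (i), I would apply Szemer\'edi's regularity lemma to $G$ with a regularity parameter much smaller than $p,q,\varepsilon,\Delta^{-1}$, producing a partition $V_1,\dots,V_k$ with reduced graph $\mathcal{R}$ whose edges record the regular pairs of density $\ge q/2$. The $q$-cut-density of $G$ forces $\mathcal{R}$ to be connected: any cut in it would immediately violate the cut-density hypothesis. The $pn$-regular subgraph $R$ of order $\ge (2+\varepsilon)d$ places many edges into regular pairs, and a pigeonhole argument isolates a family of clusters meeting $R$ of total size $\ge (2+\varepsilon)d$ that form a connected super-regular substructure with all minimum degrees comparable to $p$ times the cluster size. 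On this structure a standard blow-up tree embedding lemma for bounded-degree trees produces $T$.

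For case (ii), the plan is to decompose $T$ along a ``dual tree'' isomorphic to a subtree of $S$. A centroid-style argument partitions $V(T)$ into pieces $T_1,\dots,T_{t'}$, each of size at most $16d/qL$, glued along a collection of cut vertices so that the piece-adjacency tree embeds into the $L$-ary depth-$L$ tree $S$ (the slack factor $L$ makes this possible for any bounded-degree $T$). Each $T_i$ is assigned to $G_{f(i)}$, the designated cut vertices of $T_i$ are pinned to the prescribed shared vertices $u_{G_iG_j}$, and then (i) is applied inside $G_i$ (which is $q$-cut-dense with $\ge 16d/qL$ vertices, hence large enough to host the sub-tree $T_i$ with the pinned boundary) to embed $T_i$. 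The disjointness hypothesis on $V(G_i)\cap V(G_j)$ away from the $u_{G_iG_j}$ guarantees that these partial embeddings glue into an embedding of $T$.

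The main obstacle will be the decomposition step in case (ii): one needs to choose the cut vertices of $T$ and the assignment $f$ so that simultaneously the piece sizes fit into $16d/qL$, the dual tree embeds as a subtree of $S$, and the pinned boundary vertices remain compatible with the internal embedding provided by case (i) inside each $G_i$ (which in particular requires pinning-aware versions of the dense embedding lemma). A secondary technical point in case (i) is that the $pn$-regularity of $R$ on its own says nothing about how $R$'s edges sit relative to a regular partition of $G$; one must use the cut-density of $G$ together with the regularity of $R$ to argue that the $R$-edges concentrate on a single connected block of super-regular pairs of combined order $\ge (2+\varepsilon)d$, rather than scattering across many components of $\mathcal{R}$.
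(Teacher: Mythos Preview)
Your outline for (i) is the standard regularity/blow-up route, and the paper explicitly acknowledges that such an argument works. The paper, however, deliberately avoids regularity: it embeds directly using cut-density. The key device is a lemma that, in any $q$-cut-dense graph, builds a perfect $\Delta$-ary tree of bounded depth rooted at an arbitrary vertex whose leaves all lie in any prescribed set $U$ of moderate size. With this in hand, the proof of (i) splits $T$ into small pieces (Lemma~\ref{lem:tree_splitting}), uses the $\Delta$-ary tree to walk from the current endpoint into $R$, and then greedily embeds each piece inside $R$ (which has linear minimum degree). This buys simplicity and avoids tower-type dependencies; your regularity approach is correct but heavier.

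For (ii) there is a genuine gap. First, you cannot invoke case (i) inside each $G_i$: the $G_i$ are merely $q$-cut-dense, with no $pn$-regular subgraph on offer, so (i) does not apply. More importantly, by fixing the assignment $f$ in advance you force yourself to embed each piece $T_i$ so that \emph{several specified} cut vertices land on \emph{specified} single vertices $u_{G_{f(i)}G_{f(j)}}$; you note this requires a pinning-aware embedding lemma but give no mechanism for it, and pinning multiple vertices to prescribed targets in a merely cut-dense graph is exactly the hard step. The paper sidesteps this by not fixing $f$ in advance. It recurses on the height of $S$: embed a small initial piece of $T$ inside the root $G_1$ greedily, then use the $\Delta$-ary-tree-with-prescribed-leaves lemma to route the $T$-external vertices into the \emph{set} $U=\{u_{G_1G_i}:G_i\in N_S(G_1)\}$ (any elements of $U$, not prescribed ones). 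Whichever $u_{G_1G_{i(z)}}$ an external vertex $z$ happens to land on determines the child subtree of $S$ into which its component recurses. Thus only one vertex per piece is ever pinned (its root, to a single already-determined point), and the rest is routing into a large set rather than to specific vertices.
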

Part (i) of the lemma is quite standard --- it is essentially the same as  the fact that large bounded-degree trees exist in dense graphs whose cluster graphs contain a large connected matching (which is true by first splitting the tree $T$ into small subtrees, and then embedding those into the connected matching via the Blow-Up Lemma. See~\cite{stein2020tree} for a concrete argument like this). Part (ii) is a new statement, but   can be proved by a similar approach. We prove Lemma~\ref{lem:main_tree_embedding_lemma} in the appendix. We use a different strategy that doesn't rely on regularity or pseudorandomness --- instead using cut-density directly.  

Having talked about dense graphs, we now turn to understanding sparse graphs. What can be said about a graph $G$ which is ``nowhere dense'' in a sense that it has no order $m$ subgraphs $H$ that are $q$-cut-dense? It is easily shown that if $\delta(G)/m$ is large, then such a $G$ must be an expander in a sense that $|N(S)|\geq 10|S|$ for all small $S\subseteq V(G)$  (see Lemma~\ref{lem:expansion_no_cut_dense}). But bounded degree tree embeddings into such expanders are well understood via the classic result of Friedman and Pippenger.
\begin{theorem}[Friedman, Pippenger, \cite{friedman1987expanding}]\label{thm:Friedman-Pippenger}
Let $T$ be a tree with $d$ vertices and $\Delta(T)\leq \Delta$. Let $G$ be a graph satisfying $|N(S)|\geq 10\Delta |S|$ for all $S\subseteq V(G)$ with $|S|\leq 10d$. Then $G$ has a copy of $T$. 
 \end{theorem}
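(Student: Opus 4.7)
The plan is to build an embedding $\phi: V(T) \to V(G)$ one vertex at a time via a greedy algorithm, maintaining a Hall-type condition on ``reserved'' candidate sets so that the process never gets stuck. First I would fix an arbitrary root $r$ of $T$ and order the vertices $t_1 = r, t_2, \ldots, t_d$ so that each $t_i$ with $i \geq 2$ has its parent among $\{t_1, \ldots, t_{i-1}\}$. At the $i$-th step I would have a partial embedding $\phi:\{t_1, \ldots, t_i\} \to V(G)$ together with, for each already-embedded $t_j$, an ``available set'' $A(t_j) \subseteq N_G(\phi(t_j))$, disjoint from $\phi(\{t_1, \ldots, t_i\})$ and from the other $A(t_k)$'s, reserving candidates for the not-yet-embedded children of $t_j$.

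The central invariant I would carry is: for every subset $S$ of embedded vertices with $|S| \leq 2d$,
\[ \Big| \bigcup_{t \in S} A(t) \Big| \;\geq\; \sum_{t \in S} r(t) + |S|, \]
where $r(t)$ denotes the number of children of $t$ in $T$ that are still waiting to be embedded. By Hall's theorem this ``Hall condition with a cushion'' guarantees that whenever I need to embed the next child of a vertex $t_j$ I can actually find a vertex in $A(t_j)$ that is distinct from those needed for any other pending assignments, since a system of distinct representatives always exists for the multiset of remaining children. The extra $+|S|$ cushion is what allows me to propagate the invariant from one step to the next.

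To extend by one vertex $t_{i+1}$ (a child of some already-embedded $t_j$), I would pick $\phi(t_{i+1}) \in A(t_j)$ and then update: remove $\phi(t_{i+1})$ from every $A(t_k)$ containing it, and create a fresh available set $A(t_{i+1}) \subseteq N_G(\phi(t_{i+1}))$ of size exactly $r(t_{i+1}) + 1$, lying outside the images and the other available sets. The main obstacle, which is the heart of the Friedman--Pippenger argument, is showing the invariant survives this update. For any candidate violating set $S'$ after the update, one considers $S := S' \cup \{t_{i+1}\}$ (of size $\leq 2d+1 \leq 10d$) and applies the expansion hypothesis $|N_G(\phi(S))| \geq 10\Delta|S|$ to the images. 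Since at each step the right-hand side of the invariant grows by at most $\Delta + 1$ (accounting for the new vertex and its pending children) while the left-hand side can lose at most $|S|$ elements, the large factor $10\Delta$ (versus the tight threshold $\Delta + 1$ needed for the argument) supplies enough fresh neighbours outside the existing pools to restore the cushion with room to spare. The base case -- embedding $t_1$ anywhere and setting $A(t_1)$ to be any $(\deg_T(t_1)+1)$-subset of $N(\phi(t_1))$ -- is immediate from $|N(\{v\})| \geq 10\Delta$.
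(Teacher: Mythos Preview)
The paper does not give its own proof of this statement: Theorem~\ref{thm:Friedman-Pippenger} is simply quoted from \cite{friedman1987expanding} and invoked as a black box (in the opening paragraph of the proof of Theorem~\ref{main_theorem_proof}). So there is nothing in the paper to compare your proposal against.

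That said, your sketch is close in spirit to the standard Friedman--Pippenger argument but contains an internal inconsistency that would need repairing. You stipulate that the available sets $A(t_j)$ are pairwise disjoint; under that assumption the invariant $\bigl|\bigcup_{t\in S}A(t)\bigr|\ge\sum_{t\in S}r(t)+|S|$ collapses to the per-vertex condition $|A(t)|\ge r(t)+1$, and Hall's theorem is superfluous. The difficulty then moves entirely into the update step ``create a fresh $A(t_{i+1})$ of size $r(t_{i+1})+1$ outside the images and the other available sets'', and here the argument breaks: the already-occupied vertices number $\Theta(d)$ in total, while the expansion hypothesis applied to a singleton only yields $|N(\phi(t_{i+1}))|\ge 10\Delta$, which need not exceed $\Theta(d)$. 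In the genuine Friedman--Pippenger proof the reserves are \emph{not} kept disjoint; the invariant is formulated over all small subsets $X\subseteq V(G)$ (rather than over sets of embedded tree vertices), and Hall's theorem then does real work precisely because overlap is allowed. Your instinct to apply the expansion bound to $\phi(S)$ for a general violating set $S$ is the right one, but it belongs to that formulation rather than to the disjoint-reserve setup you describe at the outset.
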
 
 
Thus the difficulty of e.g. the Erd\H{o}s-S\'os Conjecture lies in understanding graphs $G$ that are ``mixed'' in a sense that they contain lots of small dense subgraphs (while $G$ itself is in no way dense). To this end, say a graph $G$ is ``locally dense'' if we can partition $E(G)=H_1\cup \dots \cup H_t$ where each $H_i$ is $q$-cut-dense of order $m=\Theta(e(G)/|G|)$ (this is a close relative of the locally dense graphs defined in  \cite{hladky2017approximate1}). Using what we've said so far, proving Theorem~\ref{main_theorem} reduces to proving it for locally dense graphs (see the first paragraph of the proof of Theorem~\ref{main_theorem_proof} for details). So, for the rest of this section suppose  that $G$ is locally dense.

The overall strategy for proving Theorem~\ref{main_theorem} is to obtain some very rough structural information about $G$ via something we call a ``clumping argument''. This is an infection process a bit like graph bootstrap percolation. One starts with some initial collection of ``clumps'' $H_1, \dots, H_t$ (these are just the subgraphs handed to us by the definition of ``locally dense graph''). Then for as long as possible repeat the following operation: if there are two clumps $H,H'$ in the collection with $|V(H)\cap V(H')|\geq \varepsilon d$, then replace them with a new clump $H\cup H'$. This process must eventually terminate (since the number of clumps decreases at each step), call the final clumps $K_1, \dots, K_s$. By virtue of the clumping process terminating, we know that these are nearly-disjoint i.e. $|V(K_i)\cap V(K_j)|\leq \varepsilon d$ always. Additional properties of the final clumps can also be proved by (inductively) tracking things throughout the process. The main parameter we need to track is connectivity/cut-density. We do this, in part, via the following.
\begin{itemize}
\item [$(\ast)$] Let $H$, $H'$ be $q$-cut-dense graphs with $|V(H)\cap V(H')|\geq \varepsilon d$. Then $H\cup H'$ is $\frac{\varepsilon d}{4|H\cup H'|}q$-cut-dense.
\end{itemize} 
See Lemma~\ref{lem:cut_dense_union} for a proof of this. To understand the rest of the proof, it is illustrative to first think about how it could work if there was no loss of cut-density in $(\ast)$ i.e. under the (completely unrealistic) assumption that we could replace ``$q\frac{\varepsilon d}{4|H\cup H'|}$'' by ``$q$''. If this were true, then we would inductively know that the final clumps  $K_1, \dots, K_s$ are $q$-cut-dense. Via Lemma~\ref{lem:main_tree_embedding_lemma} (i), we get that each $K_i$ has no order $\geq (2+\varepsilon)d$ regular subgraph, which translates into having an approximate vertex-cover of order $\leq (2+\varepsilon)d\le 3d$ (basically a maximal order regular subgraph of $K_i$ will give such a cover. See Fact~\ref{fact:BK_small_covers} and Lemma~\ref{lem:clump_large_intersection_with_B/D} for how this works).  

Let $Y$ be the set of vertices that are in more than one clump $K_i$. 
Consider an auxiliary ``incidence'' bipartite graph $B$ with parts $X:=\{K_1, \dots, K_s\}$ and $Y:=V(G)$ where we join $y$ to $K_i$ whenever $y\in K_i$. If $e(B)=0$, then the clumps $K_i$ are vertex-disjoint and we are done. If $e(B)\leq \alpha d |V(G)|$ for appropriate $\alpha$, then it turns out to be possible to delete $\varepsilon d|V(G)|$ edges of $G$ in order to make the clumps vertex-disjoint --- and so we are also done (the calculation for this part takes some work. See Claim~\ref{claim:clump_ordering} and the paragraphs which follow it). So we can suppose $e(B)> \alpha d |V(G)|$.

The property ``$|V(K_i)\cap V(K_j)|\leq \varepsilon d$ always'' translates to vertices $K_i,K_j\in X$ having small common neighbourhoods. We've established that $e(B)\geq \Omega(d|B|)$  with pairs of vertices in $X$ having small common neighbourhoods, or equivalently $B$ has no copies of $K_{2, \varepsilon d}$ with smaller side in $X$. It is well known that in such graphs it is easy to find $1$-subdivisions of smaller graphs. For example a classic theorem of K\"uhn-Osthus says that in every $K_{s,s}$-free graph of average degree $\geq f(s,k)$, there is an induced $1$-subdivision of some graph of average degree $\geq k$. We prove a variant of this where we only look for a $1$-subdivision of a $L$-ary tree $T_L$, but are more concerned with having a good dependence between $f$ and $s,L$ (see Lemma~\ref{lem:find_tree_1_subdivision}). The end result is that we are able to show that $B$ contains a $1$-subdivision of a $L$-ary height $L$ tree $T_L$ for $L$ large and with subdivided vertices in $Y$. Examining what structure this implies on the clumps $K_i\in V(T_L)\cap X$, we see that the union of these clumps satisfy something very similar to Lemma~\ref{lem:main_tree_embedding_lemma} (ii). After some small modifications, we can apply that lemma and complete the proof. 

We've now described the entire proof, under the unrealistic assumption that no cut-density is lost when joining clumps together via $(\ast)$. Getting around this assumption is the technical heart of the paper. The difficulty is that there is no way to bound how many times a clump might be involved in a joining operation --- and so it seems like any loss of cut-density in $(\ast)$   should break the proof. The fix to this is to track something other than cut-density throughout the joining operations. Throughout the clumping process, we track two subgraphs $M(K)$ and $C(K)$ for each clump $K$. Here $M(K)$ is roughly defined as ``a maximal order $d'$-regular subgraph  that $K$ contains'' (for suitable $d'$), while $C(K)$ is just a small $q(K)$-cut-dense subgraph that contains $M(K)$ for suitable $q(K)$.  
See Definition~\ref{def:clump} for the formal version of this.  With this in mind, the joining operation $(\ast)$ changes. Whenever we have two clumps with $|V(K)\cap V(K')|\geq \varepsilon d$, we are able to define a clump $K''=K\cup K'$ along with some $M(K''), C(K''), q(K'')$ satisfying one of the following:
\begin{enumerate}[(1)]
\item $|M(K'')|\geq \max (|M(K)|, |M(K')|)+\varepsilon d$ and  $q(K'')< \min (q(K), q(K'))$. .
\item $|M(K'')|= \max(|M(K)|, |M(K')|)$ and $q(K'')=\min (q(K), q(K'))$. 
\end{enumerate}
See Lemma~\ref{lem:clump_joining} for the formal version of this. We think of the parameters $|M(K)|, q(K)$ of the clumps as satisfying a sort of ``conservation law'' --- while the cut-density parameter $q(K)$ can decrease when joining them this can \emph{only happen whilst simultaneously increasing $|M(K)|$ via (1)}. The punchline to the proof is that there is an absolute bound to how many times (1) can happen --- if it happens more than $3\varepsilon^{-1}$ times, then we get a clump containing a regular subgraph larger that $(2+\varepsilon)d$, and Lemma~\ref{lem:main_tree_embedding_lemma} (i) gives a copy of $T$. Thus, although there can be an unlimited number of joining operations involving any clump, all but a constant number of them must be of type (2). Since there is no loss of cut-density $q(K)$ in (2), this allows us to deduce that the final clumps $K_1, \dots, K_t$ still maintain some connectivity and return to the proof strategy outlined above.

\section{Preliminaries}
For a graph $G$ and a set of vertices $S$, we use $G\setminus S$ to   denote $G$ with the vertices of $S$ deleted. For a set of edges $T$,  we use $G\setminus T$ to   denote $G$ with the edges of $T$ deleted.
In particular two graphs $G,H$, we use $G\setminus V(H)$ to denote $G$ with the vertices of $H$ deleted, $G\setminus E(H)$ to denote $G$ with the edges of $H$ deleted, $G\cap H$ to denote the graph with  edge set $E(G)\cap E(H)$ and isolated vertices deleted.
We use $G\setminus H$ to denote the graph with edge set $E(G)\setminus E(H)$ and isolated vertices deleted.
We use $G\cup H$ to denote the graph on $V(G)\cup V(H)$ with edge set $E(G)\cup E(H)$.
For sets, $S,T$ in a graph $G$, we define $\delta_S(T):=\min_{v\in T} |N_G(v)\cap S|$.
Given a family $\mathcal F=\{G_1, \dots, G_t\}$ of graphs, we define the graph $\bigcup \mathcal F:=G_1\cup \dots \cup G_t$.

For two (possibly overlapping) sets of vertices $S,T$ in a graph $G$, let $e_G(S,T)$ denote the number of edges  in $G$ starting in $S$ and ending in $T$ (counting edges contained in $S\cap T$ only once). Note that for any $S,T$, we have $2e(S,T)\geq \sum_{s\in S} |N(s)\cap (T\cup S)|$, with equality when $S=T$. We will refer to this as the ``handshaking lemma'', since it is a slight variant of the usual lemma.
We denote the number of common neighbours $u$ and $v$ have in a graph $G$ by  $N_{common, G}(u,v):=N_G(u)\cap N_G(v)$. 

For non-negative $x,y$, we write $a=x\pm y$ to mean $x-y\leq a\leq x+y$. Note that for $\varepsilon \in (0,1/2)$, if $x=(1\pm \varepsilon)y$, then $y=(1\pm 2\varepsilon)x$ ($x=(1\pm \varepsilon)y$ gives $y/(1+2\varepsilon)\leq (1-\varepsilon)y\leq x\leq (1+\varepsilon )y\leq y/(1-2\varepsilon)$, which rearranges into $x(1-2\varepsilon)\leq y\leq x(1+2\varepsilon)$).  
 We use standard notation for hierarchies of constants, writing $x\gg y$ to mean that there is a   function $f : (0,1] \rightarrow (0, 1]$ such that all subsequent statements hold for $x\geq f(y)$.

A $d$-ary tree $T$ is one in which every non-leaf vertex has degree $d$. A perfect $d$-ary tree of height $h$ is one where there is a designated root $r$, and every leaf is at distance $h$ from $r$.
A $1$-subdivision of a graph $G$ is a graph formed by replacing every edge with a path of length $2$ between its endpoints (with the middle vertices of these paths distinct new vertices not present in $G$).
An embedding of a graph $G$ into a graph $H$ is an injective homomorphism $f:G\to H$.
\subsection{Basic results}
In many contexts, the following two standard results allow one to reduce from a graph with large average degree to a connected graph with large minimum degree
\begin{fact}\label{fact:subgraph_min_degree_e/2}
Every graph $G$ with $e(G)\geq x|G|$ has a subgraph $H$ with $e(H)\geq x|H|$ and minimum degree $\delta(H)\geq \lfloor x\rfloor+1\geq x$.
\end{fact}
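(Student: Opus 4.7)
The plan is to prove this by the standard iterative low-degree-vertex deletion argument. Set $H_0 := G$, and recursively define $H_{i+1}$ from $H_i$ as follows: if $\delta(H_i) \geq \lfloor x \rfloor + 1$, stop and output $H = H_i$; otherwise pick any vertex $v \in V(H_i)$ with $d_{H_i}(v) \leq \lfloor x \rfloor$ and set $H_{i+1} := H_i \setminus \{v\}$. Since $|V(H_i)|$ strictly decreases at each step, the process terminates after finitely many iterations.

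The first step is to verify that the density inequality is preserved at each deletion. If $v$ is deleted from $H_i$ with $d_{H_i}(v) \leq \lfloor x \rfloor \leq x$, then
\[
e(H_{i+1}) \;\geq\; e(H_i) - \lfloor x \rfloor \;\geq\; x|H_i| - x \;=\; x(|H_i|-1) \;=\; x|H_{i+1}|,
\]
so the invariant $e(H_i) \geq x|H_i|$ carries through by induction from the hypothesis $e(G) \geq x|G|$.

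The second step is to confirm that the output $H$ is non-empty (so that the minimum degree statement makes sense). If the process ever produced the empty graph, then just before deletion we would have had $H_i$ with $|H_i| = 1$ and $e(H_i) \geq x$; but a single-vertex graph has no edges, so this forces $x \leq 0$, in which case the claim is already trivial (take $H = G$, noting $\lfloor x \rfloor + 1 \leq 1$ is vacuous or trivially achieved by picking any component of $G$ containing an edge). Otherwise the process halts at some $H = H_i$ with $\delta(H) \geq \lfloor x \rfloor + 1$, as required.

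I do not anticipate any real obstacle here — the only subtle point is the correct bookkeeping of $\lfloor x \rfloor$ versus $x$ in the density step, and handling the degenerate case $x \leq 0$ separately to ensure $H$ remains non-empty. Everything else is routine.
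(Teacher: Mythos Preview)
Your proposal is correct and takes essentially the same approach as the paper: iteratively delete a vertex of degree at most $\lfloor x\rfloor$ while maintaining the invariant $e(H_i)\geq x|H_i|$ (the paper phrases this as a minimal-counterexample argument rather than an explicit process, but the computation is identical). The only difference is that the paper does not separately discuss the degenerate case $x\leq 0$, which is immaterial in all applications.
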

\begin{proof}
Consider a minimal counterexample $G$. If  $\delta(G)\geq \lfloor x\rfloor+1$, we're done with $H=G$. So there is some vertex with $d(v)<\lfloor x\rfloor+1$. Since $d(v)$ is an integer, $d(v)\leq \lfloor x\rfloor$. Set $G'=G-v$, noting 
$e(G')=e(G)-d(v)\geq e(G)-\lfloor x\rfloor\geq x|G|-\lfloor x\rfloor=x(|G'|+1)-\lfloor x\rfloor=x|G'|+x-\lfloor x\rfloor\geq x|G'|$. This would mean that $G'$ is a smaller counterexample than $G$. 
\end{proof}

\begin{fact}\label{fact:connectected_component_with_same_density_as_G}
Every graph $G$ has a connected component $C$ with $e(C)/|C|\geq e(G)/|G|$.
\end{fact}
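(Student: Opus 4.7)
The plan is to treat this as a one-line weighted-averaging statement: if $C_1,\dots,C_k$ denote the connected components of $G$, then $e(G)=\sum_i e(C_i)$ and $|G|=\sum_i |C_i|$, so $e(G)/|G|$ is precisely the weighted average of the ratios $e(C_i)/|C_i|$ with weights $|C_i|$. Since a weighted average is never strictly greater than every one of its terms, some index $i$ must satisfy $e(C_i)/|C_i|\geq e(G)/|G|$, which is what we want.

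To make this rigorous without fuss, I would argue by contradiction. Assume every component $C_i$ satisfies $e(C_i)/|C_i|<e(G)/|G|$; in particular each $|C_i|\geq 1$, so multiplying through gives $e(C_i)<(e(G)/|G|)|C_i|$. Summing these strict inequalities over $i=1,\dots,k$ yields $e(G)=\sum_i e(C_i)<(e(G)/|G|)\sum_i |C_i|=e(G)$, a contradiction. (The case $|G|=0$ is vacuous; the case $e(G)=0$ is trivial since any component works.)

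There is really no obstacle here — the only tiny subtlety is handling the inequality $|C_i|\geq 1$ so that strict inequality is preserved under summation, and ensuring the statement is interpreted only for nonempty $G$. No additional structural input about $G$ is needed, and no prior results from the paper are required.
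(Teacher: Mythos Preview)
Your proof is correct and follows essentially the same approach as the paper: both argue by contradiction, assume every component satisfies the strict inequality $e(C_i)/|C_i|<e(G)/|G|$, sum over components using $\sum_i e(C_i)=e(G)$ and $\sum_i|C_i|=|G|$, and arrive at $e(G)<e(G)$. The paper's version is slightly terser and omits the trivial edge cases you mention, but the argument is the same.
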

\begin{proof}
Suppose not. Letting $C_1, \dots, C_t$ be the connected components of $G$, we have $e(G)=e(C_1)+\dots+e(C_t)< e(G)|C_1|/|G|+\dots+e(G)|C_t|/|G|=e(G)(|C_1|+\dots+|C_t|)/|G|=e(G)$, which is a contradiction.
\end{proof}

Minimum degree $\delta(G)\geq |T|$ allows one to greedily find an embedding of $T$ into $G$. This simple fact is surprisingly important in this paper --- almost all our tree embedding results come down to applying this to a suitably defined $G$.
\begin{fact}[Greedy tree embedding]\label{fact:greedy_tree_embedding}
Let $G$ be a graph, $v\in V(G)$ with $\delta(G-v)\geq d$ and $d(v)\geq \Delta$. Let $T$ be  $\leq d$-edge  tree with $\Delta(T)\leq \Delta$. Then $G$ has a copy of $T$ rooted at $v$.
\end{fact}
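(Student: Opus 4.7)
The plan is to root $T$ at an arbitrary vertex $r$ and construct an embedding $\phi:V(T)\to V(G)$ greedily, setting $\phi(r)=v$ and extending $\phi$ one vertex at a time in breadth-first order starting from $r$. For each non-root $t\in V(T)$, once its parent $p(t)$ in the rooted tree has been embedded, I will pick $\phi(t)$ to be any neighbor of $\phi(p(t))$ in $G$ that is not already in the current image of $\phi$. The only thing to verify is that such a choice exists at every step, and this splits naturally into two cases according to whether $p(t)=r$ or not.

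When $p(t)=r$, we need an unused neighbor of $v$. Here the BFS order is essential: at this moment the only vertices in the image are $v$ itself and previously placed children of $r$, no further descendants. Since $T$ has $\Delta(T)\leq\Delta$, the root has at most $\Delta$ children, so at most $\Delta-1$ neighbors of $v$ are used, and $d(v)\geq\Delta$ forces at least one unused neighbor. When $p(t)\neq r$, we have $\phi(p(t))\neq v$, and the hypothesis $\delta(G-v)\geq d$ gives $\phi(p(t))$ at least $d$ neighbors in $V(G)\setminus\{v\}$. The current image has size at most $|V(T)|-1\leq d$ (since $T$ has at most $d+1$ vertices and $t$ itself has not yet been placed), and it contains $v$, so at most $d-1$ of the vertices in $V(G)\setminus\{v\}$ are forbidden. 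Thus $\phi(p(t))$ retains at least one usable neighbor, and we extend the embedding.

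The main subtlety, such as it is, lies precisely in ordering the extension so that children of the root are all embedded before any of their descendants; otherwise non-children placed in Case~B could quietly exhaust neighbors of $v$ that Case~A will later need. BFS (or equivalently, embedding all of $r$'s children before moving deeper into the tree) handles this cleanly, and after that the whole argument is just a double-counting check using the two given degree bounds.
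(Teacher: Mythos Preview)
Your proof is correct and follows essentially the same approach as the paper's: root $T$, embed $r$ to $v$, place all children of $r$ first using $d(v)\geq\Delta$, then extend greedily to the remaining vertices using $\delta(G-v)\geq d$. Your BFS ordering plays the same role as the paper's ``degeneracy ordering starting with $r$ and its neighbours,'' and your Case~B count (image size $\leq d$, one of which is $v$, leaving $\leq d-1$ forbidden vertices in $V(G)\setminus\{v\}$) is in fact slightly more carefully spelled out than the paper's version.
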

\begin{proof}
Let $r$ be the root of $T$, $x_1, \dots, x_{d_T(r)}$ the neighbours of $r$, and $y_1, \dots, y_m$ the remaining vertices of $T$. We can suppose that these are ordered so that $r, x_1, \dots, x_{d_T(r)}, y_1, \dots, y_m$ is a degeneracy ordering. Embed $r$ to $v$, and $x_1, \dots, x_{d_T(r)}$ inside $N_G(v)$ (which we can do because $d_G(v)\geq \Delta\geq d_T(r)$). Afterwards embed $y_1, \dots, y_m$ one by one. If $y_iy_j$ is the edge in $T$ with $j<i$, then when embedding $y_i$, embed it to an unused neighbour of $y_j$ (which we can do because this vertex has degree $\geq d$).  
\end{proof}

Our proof is not primarily probabilistic --- but we do use concentration bounds for some auxiliary lemmas.	 
\begin{lemma}[Chernoff bounds, \cite{molloy2002graph}]Given a binomially distributed variable $X\in Bin(n, p)$  for all  $0<a\leq 3/2$ we have
 $$\Prob{[|X- \Exp[X]|\geq a \Exp[X]]}\leq 2e^{-\frac{a^2}{3}\Exp[X]}$$.
\end{lemma}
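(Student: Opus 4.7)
The plan is to prove this by the standard Chernoff-method argument: for each tail, bound a moment-generating function, apply Markov's inequality with an optimised exponential parameter, and then massage the resulting expression via a short calculus estimate into the clean form $e^{-a^2\mu/3}$.

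Write $X=X_1+\dots+X_n$ as a sum of independent Bernoulli$(p)$ random variables, and set $\mu:=\Exp[X]=np$. The first step is to compute
\[
\Exp[e^{tX}]=\bigl((1-p)+pe^t\bigr)^n\leq e^{np(e^t-1)}=e^{\mu(e^t-1)},
\]
using the inequality $1+x\leq e^x$ applied with $x=p(e^t-1)$. For the upper tail, I would then apply Markov to $e^{tX}$ with the choice $t=\ln(1+a)>0$, which gives
\[
\Prob[X\geq (1+a)\mu]\leq e^{-t(1+a)\mu}\Exp[e^{tX}]\leq \exp\!\bigl(\mu[a-(1+a)\ln(1+a)]\bigr).
\]
For the lower tail (only meaningful when $a\leq 1$, since $X\geq 0$), the symmetric choice $t=\ln(1-a)<0$ yields
\[
\Prob[X\leq (1-a)\mu]\leq \exp\!\bigl(\mu[-a-(1-a)\ln(1-a)]\bigr).
\]

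The main obstacle, and really the only nontrivial step, is showing that both exponents are at most $-a^2/3$ on the appropriate range of $a$. Concretely, I would verify the two calculus inequalities
\[
(1+a)\ln(1+a)-a\geq \tfrac{a^2}{3}\quad\text{for }0<a\leq \tfrac{3}{2},\qquad (1-a)\ln(1-a)+a\geq \tfrac{a^2}{3}\quad\text{for }0<a\leq 1.
\]
Both are handled by setting $f(a)$ equal to the left side minus the right side, observing $f(0)=f'(0)=0$, and then checking that $f''(a)\geq 0$ on the stated interval (for the first inequality $f''(a)=\tfrac{1}{1+a}-\tfrac{2}{3}$, which is non-negative precisely for $a\leq \tfrac{1}{2}$, so a little more care is required beyond $a=\tfrac{1}{2}$ — for that regime I would instead check $f$ directly at the endpoint $a=3/2$ and use monotonicity of $f'$ once $f'$ becomes positive; for the second inequality $f''(a)=\tfrac{1}{1-a}-\tfrac{2}{3}\geq 0$ for all $a\in (0,1)$, so it is immediate).

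Plugging these inequalities into the two bounds above gives $\Prob[X\geq (1+a)\mu]\leq e^{-a^2\mu/3}$ (valid on the full range $0<a\leq 3/2$) and $\Prob[X\leq (1-a)\mu]\leq e^{-a^2\mu/3}$ (valid on $0<a\leq 1$, which is all we need since the event is empty for $a>1$). A union bound over the two tails then yields
\[
\Prob[|X-\mu|\geq a\mu]\leq 2e^{-a^2\mu/3},
\]
completing the proof. The whole argument is standard; the only place where some thought is required is pinning down the range of $a$ in the elementary inequality for the upper tail, which is why the stated range is $0<a\leq 3/2$ rather than the more common $0<a\leq 1$.
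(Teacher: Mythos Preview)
The paper does not prove this statement; it is quoted as a standard result from \cite{molloy2002graph} with no accompanying argument. Your proposal supplies the usual moment-generating-function proof, which is correct in outline and in all essential details. The only place where your write-up is slightly imprecise is the handling of the upper-tail inequality $(1+a)\ln(1+a)-a\geq a^2/3$ on $(1/2,3/2]$: the cleanest way to finish is to observe that $f'$ is decreasing there (since $f''<0$), has a unique zero $a^*\in(1/2,3/2)$, so $f$ is unimodal on $[1/2,3/2]$ and hence bounded below by $\min(f(1/2),f(3/2))>0$; your informal description gestures at this but doesn't quite say it. Otherwise the argument is fine and is exactly the standard one found in the cited reference.
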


	Given a product space $\Omega=\prod_{i=1}^n\Omega_i$ and a random variable $X:\Omega\to \mathbb{R}$ 
we  make the following definitions. If there is a constant $c$ such that changing $\omega\in \Omega$ in any one coordinate changes $X(\omega)$ by at most $c$. Then we say that $X$ is $c$-Lipschitz.
 \begin{lemma}[McDiarmid's Inequality, \cite{molloy2002graph}]\label{lem:McDiarmid}
For a product space $\Omega=\prod_{i=1}^n\Omega_i$ and a $c$-Lipschitz random variable $X:\Omega\to \mathbb{R}$, we have
$$\Prob\left(|X-\Exp(X)|>t \right)\leq 2e^{\frac{-t^2}{nc^2}}$$
\end{lemma}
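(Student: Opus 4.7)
The plan is to deduce McDiarmid's inequality from the Azuma–Hoeffding martingale concentration inequality, applied to the Doob martingale obtained by revealing the coordinates $\omega_1, \dots, \omega_n$ one at a time. Concretely, I would define $Z_0 = \Exp(X)$ and, for $i=1,\dots,n$,
\[
Z_i(\omega) := \Exp\bigl[X \mid \omega_1, \dots, \omega_i\bigr],
\]
so that $Z_n = X$ (since $X$ is determined by the full tuple $\omega$). By the tower property, $(Z_i)_{i=0}^{n}$ is a martingale with respect to the filtration generated by the successive coordinates.

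The key step is to show that the martingale differences are uniformly bounded by the Lipschitz constant, i.e.\ $|Z_i - Z_{i-1}| \le c$ almost surely. Fix $\omega_1, \dots, \omega_{i-1}$ and write $\Omega_{>i}=\prod_{j>i}\Omega_j$. Then $Z_{i-1}$ is the average of $X$ over $\omega_i$ and $\omega_{>i}$, while $Z_i$ fixes $\omega_i$ and averages only over $\omega_{>i}$. Coupling the two expectations so they share the same $\omega_{>i}$ and integrating first over $\omega_{>i}$, the difference $Z_i - Z_{i-1}$ becomes the average over an independent copy $\omega_i'$ of the quantity $X(\omega_1,\dots,\omega_i,\omega_{>i}) - X(\omega_1,\dots,\omega_{i-1},\omega_i',\omega_{>i})$. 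The $c$-Lipschitz hypothesis bounds each such difference by $c$, and averaging preserves the bound.

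With bounded differences in hand, I would invoke Azuma–Hoeffding: for any martingale $(Z_i)_{i=0}^{n}$ with $|Z_i - Z_{i-1}| \le c_i$ almost surely,
\[
\Prob\bigl(|Z_n - Z_0| > t\bigr) \le 2\exp\!\left(-\frac{t^2}{2\sum_i c_i^2}\right).
\]
Taking $c_i = c$ gives $2\exp(-t^2/(2nc^2))$, which is even sharper than the stated bound $2\exp(-t^2/(nc^2))$; so the claim follows directly.

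The only nontrivial ingredient is Azuma–Hoeffding itself, which I would prove by the standard exponential moment method: bound $\Exp[e^{\lambda(Z_i-Z_{i-1})}\mid \mathcal{F}_{i-1}]$ using Hoeffding's lemma (mean-zero random variables on an interval of length $2c$ have MGF at most $e^{\lambda^2 c^2/2}$), iterate via the tower property to get $\Exp[e^{\lambda(Z_n-Z_0)}] \le e^{\lambda^2 n c^2/2}$, apply Markov's inequality, and optimize in $\lambda$ (taking $\lambda = t/(nc^2)$ yields the stated bound, up to the factor-of-$2$ slack already present in the statement). The main obstacle, conceptually, is the coupling argument for the Lipschitz-to-bounded-differences step; once that is in place the rest is the standard martingale concentration machinery.
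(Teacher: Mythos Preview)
Your proposal is the standard and correct derivation of McDiarmid's inequality via the Doob martingale and Azuma--Hoeffding. Note, however, that the paper does not actually prove this lemma: it is quoted as a known result with a citation to \cite{molloy2002graph}, so there is no ``paper's own proof'' to compare against.
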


\subsection{Cut-dense graphs}
Recall that a $n$-vertex graph $G$ is a $q$-cut-dense if for every partition $V(G)=A\cup B$, we have $e_G(A,B)\geq q|A||B|$. In this section, we go through the basics of this definition. 
We remark that $q$-cut-dense necessitates $q\leq 1$ since there can't be more than $|A||B|$ edges from $A$ to $B$.
We will often use that the definition is monotone in $q$, specifically  ``If $G$ is $p$-cut-dense, then it is also $q$-cut-dense for any $q\leq p$''. The ``dense'' part of the term ``cut-dense'' refers to the following.

\begin{fact} \label{fact:cut_dense_min_degree} 
Let $G$ be a $p$-cut-dense order $n\geq 10$ graph. Then $\delta(G)\geq p(n-1)\geq 0.9pn$ and $e(G)\geq pn^2/4$
\end{fact}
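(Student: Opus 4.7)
The plan is to derive both bounds directly from the defining partition inequality, picking suitable partitions in each case. No step should be a serious obstacle --- this fact is essentially an unpacking of the definition with a tiny bit of degree-counting.

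First I would handle the minimum-degree bound. Fix any vertex $v \in V(G)$ and apply the $p$-cut-dense hypothesis to the partition $A = \{v\}$, $B = V(G) \setminus \{v\}$. Since every edge incident to $v$ contributes to $e_G(A,B)$, this gives $d(v) = e_G(A,B) \geq p |A||B| = p(n-1)$. Taking the minimum over $v$ yields $\delta(G) \geq p(n-1)$. The second inequality $p(n-1) \geq 0.9pn$ is just the observation that $n - 1 \geq 0.9n$ whenever $n \geq 10$.

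For the edge bound, I would simply sum degrees: $2e(G) = \sum_{v} d(v) \geq n \cdot \delta(G) \geq n p(n-1)$, and then use $n - 1 \geq n/2$ (valid since $n \geq 10 \geq 2$) to conclude $e(G) \geq pn(n-1)/2 \geq pn^2/4$. Alternatively one could apply the cut-density condition to a balanced partition, but going through $\delta(G)$ is cleaner and reuses work already done. The potentially annoying case of odd $n$ (where a balanced split only gives $p(n^2-1)/4$) is avoided entirely by the degree-sum route.

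If anything counts as the ``hard part,'' it is just remembering that the definition of $p$-cut-dense allows singleton sides in the partition --- once that is clear, the proof is a two-line exercise, so the write-up should be kept very short.
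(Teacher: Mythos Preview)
Your proposal is correct and matches the paper's proof essentially line for line: the paper also uses the singleton partition $A=\{v\}$, $B=V(G)\setminus\{v\}$ for the minimum-degree bound, then obtains the edge bound via $e(G)\geq n\delta(G)/2$.
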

\begin{proof}
For a vertex $v\in V(G)$, the definition of $p$-cut-dense applied with $A=\{v\}, B=V(G)\setminus\{v\}$, gives $d(v)=e(v, V(G)\setminus\{v\})\geq p\cdot 1\cdot (n-1)$. Together with $n\geq 10$, this implies $\delta(G)\geq p(n-1)\geq p(n-n/10)= 0.9pn$ and $e(G)\geq n\delta(G)/2\geq pn^2/4$.
\end{proof}

Cut-density is preserved by deleting a few vertices from $G$.
\begin{lemma} \label{lem:cut_dense_delete_small_set}
Let $G$ be an order $n$, $q$-cut-dense graph. For any $U\subseteq V(G)$, with $|U|\leq qn/8$, $G\setminus U$ is $q/2$-cut-dense.
\end{lemma}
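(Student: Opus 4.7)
The plan is to derive the cut-density of $G\setminus U$ directly from that of $G$ by folding $U$ into one side of the given partition, then absorbing the resulting error using the hypothesis $|U|\le qn/8$.

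Fix a partition $V(G\setminus U)=A\cup B$; we may assume both parts are nonempty, else the required inequality is trivial. By symmetry, assume $|B|\ge |A|$. Consider the partition $V(G)=A\cup (B\cup U)$ of the full vertex set. Applying the $q$-cut-density of $G$ to this partition yields
\[
e_G(A,B\cup U)\;\ge\;q|A|(|B|+|U|).
\]
Since $A$, $B$, $U$ are pairwise disjoint, $e_G(A,B\cup U)=e_G(A,B)+e_G(A,U)$, and clearly $e_G(A,U)\le |A||U|$. Moreover $e_G(A,B)=e_{G\setminus U}(A,B)$ because neither endpoint lies in $U$. Combining,
\[
e_{G\setminus U}(A,B)\;\ge\;q|A||B|+q|A||U|-|A||U|\;\ge\;q|A||B|-|A||U|,
\]
where the last inequality uses $q\le 1$.

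It remains to show $|A||U|\le (q/2)|A||B|$, i.e.\ $|U|\le (q/2)|B|$. This is where the assumption $|U|\le qn/8$ (and the choice to place $U$ with the larger side) enters. Since $|B|\ge |A|$ and $|A|+|B|=n-|U|$, we have $|B|\ge (n-|U|)/2\ge (n-qn/8)/2\ge n/4$ (using $q\le 1$). Therefore $(q/2)|B|\ge qn/8\ge |U|$, and substituting back gives $e_{G\setminus U}(A,B)\ge (q/2)|A||B|$, which is the desired conclusion.

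There is no real obstacle: the only subtlety is remembering to put $U$ on the side of the partition that is guaranteed to be a constant fraction of $n$, so that the loss $|A||U|$ can be charged against $(q/2)|A||B|$. The argument uses $q\le 1$ once (to discard the $q|A||U|$ term) and uses $|U|\le qn/8$ twice — once to ensure $|B|\ge n/4$, and once to control $|U|$ against $(q/2)|B|$.
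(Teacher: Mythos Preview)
Your proof is correct and follows essentially the same approach as the paper's: fold $U$ into the larger side of the partition, apply $q$-cut-density of $G$, and absorb the error term $|U|\cdot|\text{smaller side}|$ using that the larger side has size at least $n/4$. The only cosmetic difference is that the paper labels the larger side $A$ and you label it $B$.
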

\begin{proof}
Consider a partition $V(G\setminus U)=A\cup B$. Without loss of generality these are labelled so that $|A|\geq |B|$, giving $|A|\geq \frac12(n-n/8)\geq n/4$. Since $A\cup U, B$ partition $V(G)$, the definition of $q$-cut-dense gives $e_G(A\cup U, B)\geq q|A\cup U||B|\geq q|A||B|$. Also $e_G(U,B)\leq |U||B|\leq qn|B|/8\leq q|A||B|/2$. Thus $e_{G\setminus U}(A,B)=e_G(A,B)=e_G(A\cup U, B)-e_G(U,B)\geq q|A||B|-q|A||B|/2=q|A||B|/2$.
\end{proof}

In the following two lemmas we often use the set-theoretic fact that whenever $X\subseteq Y\cup Z$, we have $|Y\cap X|+|Z\cap X|\geq |Y\cap X|+|Z\cap X|-|X\cap Y\cap Z|= |X|$ (the last equation is the inclusion/exclusion principle). The following   lemma is absolutely crucial to this paper --- it is basically the property $(\ast)$ referred to in the proof overview.

\begin{lemma} \label{lem:cut_dense_union} 
 Let $G_1,G_2$ be  $q$-cut-dense. Then $G_1\cup G_2$ is $\frac{q|V(G_1)\cap V(G_2)|}{4|V(G_1)\cup V(G_2)|}$-cut-dense.
\end{lemma}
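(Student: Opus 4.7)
The plan is to reduce the inequality to a single application of $q$-cut-density inside $G_1$ (or $G_2$), after pinning down a side of the cut where both halves of the partition are forced to be large in terms of $|A|$, $|B|$ and $|I|$.

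Fix an arbitrary partition $V(G_1 \cup G_2) = A \cup B$, and set $I := V(G_1) \cap V(G_2)$, $n := |V(G_1) \cup V(G_2)|$, $A_i := A \cap V(G_i)$, and $B_i := B \cap V(G_i)$ for $i = 1, 2$. My first step would be to observe that the intersection $I$ must mostly lie on one side of the partition, so by relabelling (swapping the names of $A$ and $B$ if necessary) I may assume $|B \cap I| \geq |I|/2$. Since $B \cap I \subseteq B_i$ for both $i$, this immediately forces $\min(|B_1|, |B_2|) \geq |I|/2$.

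Next, every vertex of $A$ lies in $V(G_1) \cup V(G_2)$, so $|A_1| + |A_2| \geq |A|$, and hence $\max(|A_1|, |A_2|) \geq |A|/2$; say $|A_1| \geq |A|/2$. Applying the $q$-cut-density of $G_1$ to the partition $V(G_1) = A_1 \cup B_1$, and noting that each $G_1$-edge between $A_1$ and $B_1$ is also a $(G_1 \cup G_2)$-edge between $A$ and $B$, I would get
$$e_{G_1 \cup G_2}(A,B) \;\geq\; e_{G_1}(A_1, B_1) \;\geq\; q|A_1||B_1| \;\geq\; \frac{q|A||I|}{4}.$$
Using the crude bound $|B| \leq n$ then gives $\tfrac{q|A||I|}{4} \geq \tfrac{q|I|}{4n}|A||B|$, which is exactly the desired cut-density estimate.

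I do not anticipate any real obstacle here. The one subtle feature is that bounding $|B|$ by $n$ only at the very end (rather than trying to maintain a bound that is symmetric in $|A|$ and $|B|$ throughout) is precisely what introduces the $1/n$ factor into the new cut-density parameter. The factor $4$ in the denominator is just the product of two halvings: one from the majority argument on $I$, and one from the choice between $A_1$ and $A_2$.
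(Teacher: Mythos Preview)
Your proof is correct and is essentially the same argument as the paper's: both pick the side of the partition containing the majority of $I$, then pick the $G_k$ containing the majority of the other side, and apply $q$-cut-density of that $G_k$ before absorbing a factor of $|V(G_1\cup G_2)|$ via the trivial bound on one part. The only difference is notation.
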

\begin{proof}
Let $I:=V(G_1)\cap V(G_2)$. Consider a partition $V(G_1\cup G_2)=A_1\cup A_2$.
Since $I\subseteq A_1\cup A_2$ we have $|I\cap A_1|+|I\cap A_2|\geq |I|$, and so for some $i\in \{1,2\}$, we have $|I\cap A_i|\geq |I|/2$. Let $j$ be the number in $\{1,2\}\setminus i$. Since $A_j\subseteq V(G_1)\cup V(G_2)$, we have $|A_j\cap V(G_1)|+|A_j\cap V(G_2)|\geq |A_j|$, and so  for some $k\in \{1,2\}$, we have $|A_j\cap V(G_k)|\geq |A_j|/2$. Using that $G_k$ is $q$-cut-dense, $e_{G_1\cup G_2}(A_i,A_j)\geq e_{G_k}(V(G_k)\cap A_i, V(G_k)\cap A_j)\geq q|A_i\cap V(G_k)||A_j\cap V(G_k)|\geq q|I\cap A_i||A_j\cap V(G_k)|\geq q|I||A_j|/4=\frac{q|V(G_1)\cap V(G_2)|}{4|V(G_1)\cup V(G_2)|}|V(G_1)\cup V(G_2)||A_j|\geq \frac{q|V(G_1)\cap V(G_2)|}{4|V(G_1)\cup V(G_2)|}|A_i||A_j|$ as required.  
\end{proof}
 
The next lemma gives another equally important way of getting a larger cut-dense graph from a small one. 
\begin{lemma} \label{lem:cut_dense_add_vertices} 
Let $\delta \leq 1$.
Let $G$ be  $q$-cut-dense and $H$ a graph containing $G$ such that  $|N_H(v)\cap V(G)|\geq \delta|G|$ for all $v\in V(H)\setminus V(G)$. 
 Then $H$ is   $(\frac{q\delta|G|^2}{4|H|^2})$-cut-dense.
\end{lemma}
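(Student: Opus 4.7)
The plan is to decompose the crossing edges of any cut $(A,B)$ of $H$ into two contributions: edges in the subgraph $G$ and edges incident to vertices of $V(H)\setminus V(G)$. Set $A'=A\cap V(G)$ and $B'=B\cap V(G)$, so $|A'|+|B'|=|G|$; by symmetry I may assume $|A'|\leq |B'|$, so $|B'|\geq |G|/2$. The $q$-cut-density of $G$ directly gives $e_G(A',B')\geq q|A'||B'|$, and these are edges of $H$ between $A$ and $B$. For the other contribution, each $v\in A\setminus V(G)$ satisfies $|N_H(v)\cap V(G)|\geq \delta|G|$, so $|N_H(v)\cap B'|\geq \delta|G|-|A'|$; summing over all such $v$ yields an additional, disjoint contribution of at least $(|A|-|A'|)(\delta|G|-|A'|)$ (when nonnegative). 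Adding these gives
\[
e_H(A,B)\;\geq\; q|A'||B'|\;+\;(|A|-|A'|)\bigl(\delta|G|-|A'|\bigr)^+.
\]

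I would then split on the size of $|A'|$ relative to $\delta|G|/2$. If $|A'|\leq \delta|G|/2$, then $|B'|\geq |G|/2$ and $\delta|G|-|A'|\geq \delta|G|/2$, so the displayed bound is at least $\frac{|G|}{2}\bigl(q|A'|+\delta(|A|-|A'|)\bigr)\geq \frac{|G|\min(q,\delta)}{2}|A|$. If $|A'|>\delta|G|/2$, then the first term alone already gives $q|A'||B'|\geq q\delta|G|^2/4$. In either case, using the trivial bounds $\min(q,\delta)\geq q\delta$ (valid since $q,\delta\leq 1$), $|G|\leq |H|$, $|A|,|B|\leq |H|$, and $|A||B|\leq |H|^2/4$, routine rearrangement confirms $e_H(A,B)\geq \frac{q\delta|G|^2}{4|H|^2}|A||B|$.

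The main obstacle is recognizing that neither bound alone carries the proof. When $A\subseteq V(G)$ (i.e.\ $|A|=|A'|$), the degree contribution is empty; when $A$ avoids $V(G)$ entirely ($|A'|=0$), the cut-density contribution vanishes. So both contributions are genuinely needed, and the case split on $|A'|$ versus $\delta|G|/2$ is precisely what balances them, ensuring that the weaker of the two bounds always dominates the required quantity $\frac{q\delta|G|^2}{4|H|^2}|A||B|$.
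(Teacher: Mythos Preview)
Your proof is correct and follows essentially the same approach as the paper: split on the size of $|A\cap V(G)|$, using the $q$-cut-density of $G$ when this intersection is large and the degree hypothesis on $V(H)\setminus V(G)$ when it is small. The only cosmetic differences are that the paper's threshold is $\frac{\delta|G|}{2|H|}|A_j|$ rather than your absolute $\delta|G|/2$, and the paper uses exactly one of the two contributions in each case rather than combining them as you do in your first case.
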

\begin{proof}
Consider a partition $V(H)=A_1\cup A_2$. Since $V(G)\subseteq A_1\cup A_2$, we have $|A_1\cap V(G)|+|A_2\cap V(G)|\geq |G|$, and so for some $i\in \{1,2\}$, we have $|A_i\cap V(G)|\geq |G|/2$. Let $j$ be the number in $\{1,2\}\setminus i$.
If $|A_j\cap V(G)|\geq \frac{\delta|G|}{2|H|}|A_j|$, then ``$G$ $q$-cut-dense'' gives $e_H(A_i,A_j)\geq e_G(A_i\cap V(G), A_j\cap V(G))\geq q |A_i\cap V(G)||A_j\cap V(G)|\geq \frac{q\delta|G|}{4|H|}|G||A_j|=(\frac{q\delta|G|^2}{4|H|^2})|H||A_j|\geq (\frac{q\delta|G|^2}{4|H|^2})|A_i||A_j|$. Thus we can assume $\frac{\delta|G|}{2|H|}|A_j|>|A_j\cap V(G)|$, which implies $|A_j\setminus V(G)|=|A_j|-|A_j\cap V(G)|> (1-\frac{\delta|G|}{2|H|})|A_j|\geq |A_j|/2$  and $\delta|G|/2=\frac{\delta|G|}{2|H|}|H|\geq \frac{\delta|G|}{2|H|}|A_j|>|A_j\cap V(G)|= |V(G)\setminus A_i|$. We have $e_H(A_j, A_i)\geq e_H(A_j\setminus V(G), A_i\cap V(G))= \sum_{v\in A_j\setminus V(G)} |N_H(v)\cap A_i\cap V(G)|= \sum_{v\in A_j\setminus V(G)}(|N_H(v)\cap V(G)|-|N_H(v)\cap V(G)\setminus A_i|)\geq \sum_{v\in A_j\setminus V(G)}(|N_H(v)\cap V(G)|-|V(G)\setminus A_i|)\geq |A_j\setminus V(G)|(\delta_{V(G)}(V(H)\setminus V(G))-|V(G)\setminus A_i|)\geq (|A_j|/2)(\delta|G|-\delta|G|/2)=\frac{\delta|G|}{4|H|}|A_j||H|\geq \frac{\delta|G|}{4|H|}|A_j||A_i|\geq \frac{q\delta|G|^2}{4|H|^2}|A_j||A_i|$ (for the last inequality we used that in any $q$-cut-dense graph, $q\leq 1$).
\end{proof}

Cut-density has various equivalent notions (like the ``connected cluster graph'' one referred to in the proof sketch). The only one we use is the following one which could be thought as a variant of edge-connectedness.
\begin{lemma}\label{lem:cut_dense_characterization_paths}
Let $n\geq 10$ and $G$ an $n$-vertex graph.
\begin{enumerate}[(i)]
\item  Suppose that $G$ is $q$-cut-dense. Then for every $u,v\in V(G)$, there are $\geq q^3n^2/9$ edge-disjoint length $\geq 1$ paths from $N(u)$ to $N(v)$.
\item Suppose that for every $u,v\in V(G)$, there are $\geq qn^2$ edge-disjoint length $\geq 1$ paths from $N(u)$ to $N(v)$. Then $G$ is $(q/10)$-cut-dense.
\end{enumerate}
\end{lemma}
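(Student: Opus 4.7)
The plan is to prove part (i) via Menger's theorem combined with cut-density, and part (ii) by a contrapositive path-counting argument.

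For (i), the edge version of Menger's theorem identifies the maximum number of edge-disjoint length-$\geq 1$ paths from $N(u)$ to $N(v)$ with the minimum size of an edge set $F \subseteq E(G)$ whose removal destroys all such paths; I will lower-bound this min-cut. In the main case $N(u) \cap N(v) = \emptyset$, any valid cut arises from a partition $V(G) = A \cup B$ with $N(u) \subseteq A$, $N(v) \subseteq B$, and $|F| \geq e_G(A,B)$. By Fact~\ref{fact:cut_dense_min_degree}, $|N(u)|, |N(v)| \geq 0.9qn$; the disjointness condition forces $q \leq 5/9$, which together with $|A|+|B|=n$ guarantees $|A||B| \geq 0.9qn\cdot(n - 0.9qn) \geq 0.45qn^2$. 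Cut-density then yields $e_G(A,B) \geq q|A||B| \geq 0.45 q^2 n^2 \geq q^3n^2/9$. The overlap case $N(u) \cap N(v) \neq \emptyset$ (which includes $u=v$) is reduced to the main case by a vertex-splitting trick: duplicate each $w \in N(u) \cap N(v)$ into two copies $w_u, w_v$, assigning edges to preserve cut-density up to small constants, and apply the main case to the modified graph.

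For (ii), I argue by contrapositive. Suppose $V(G) = A \cup B$ has $e_G(A,B) < (q/10)|A||B|$. Averaging $\sum_{x \in A}|N(x)\cap B| = e_G(A,B)$ yields a vertex $u \in A$ with $|N(u) \cap B| \leq e_G(A,B)/|A| < (q/10)|B| \leq qn/10$, and symmetrically $v \in B$ with $|N(v) \cap A| \leq qn/10$. By hypothesis there are $\geq qn^2$ edge-disjoint length-$\geq 1$ paths from $N(u)$ to $N(v)$. In any edge-disjoint collection each vertex $x$ can start at most $d_G(x) < n$ paths (one per incident edge), so at most $|N(u) \cap B|\cdot n \leq qn^2/10$ of the paths start in $N(u) \cap B$, and symmetrically at most $qn^2/10$ end in $N(v) \cap A$. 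The remaining $\geq 0.8qn^2$ paths therefore start in $N(u) \cap A$ and end in $N(v) \cap B$, each crossing the cut at least once. Edge-disjointness forces $0.8qn^2 \leq e_G(A,B) < (q/10)\cdot |A||B| \leq qn^2/40$, a contradiction.

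The hard part will be the overlap case of part (i): without $N(u) \cap N(v) = \emptyset$, the neighbourhoods cannot be placed on opposite sides of a single partition, so one must carry out the vertex-splitting reduction and verify that the modified graph remains cut-dense enough for the min-cut argument to go through. Part (ii) is robust, with a comfortable factor-$32$ slack between $0.8qn^2$ and $qn^2/40$ absorbing any losses from the choices of $u$ and $v$.
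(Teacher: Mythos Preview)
Your proof of part (ii) is correct and follows essentially the same line as the paper's: find $u \in A$, $v \in B$ with few neighbours across the cut, then observe that most of the $qn^2$ edge-disjoint $N(u)$--$N(v)$ paths must start in $A$ and end in $B$, hence cross the cut, giving a lower bound on $e_G(A,B)$ that contradicts the assumed small cut. The paper phrases it as a direct proof rather than a contrapositive and bounds the bad paths slightly differently (counting edges through $(N(a)\cap B)\cup(N(b)\cap A)$ rather than paths starting or ending there), but the argument is the same.

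For part (i), your disjoint case $N(u)\cap N(v)=\emptyset$ is fine, but the overlap case has a genuine gap. The vertex-splitting reduction is only sketched, and it is not clear how to assign edges to the duplicated vertices so that (a) the modified graph remains cut-dense with comparable parameters, and (b) edge-disjoint $N(u)$--$N(v)$ paths in the modified graph pull back to edge-disjoint paths in the original. If every edge incident to $w\in N(u)\cap N(v)$ is duplicated to both $w_u$ and $w_v$, two paths in the modified graph can reuse the same original edge via the two copies; if instead you partition the edges between $w_u$ and $w_v$, there is no reason cut-density survives. The vertex count also changes, so the target $q^3 n^2/9$ shifts as well. You yourself flag this as ``the hard part'', and as written it is not a proof.

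The paper sidesteps all of this with a one-line observation: rather than using all of $N(u)$ and $N(v)$, simply choose \emph{disjoint} subsets $N_u\subseteq N(u)$ and $N_v\subseteq N(v)$ of size $qn/3$ each. This is always possible since $\delta(G)\geq 0.9qn$ by Fact~\ref{fact:cut_dense_min_degree}, so $|N(v)\setminus N_u|\geq 0.9qn - qn/3 > qn/3$. Now Menger applies uniformly with no case split: any separating partition has $|A|,|B|\geq qn/3$, and cut-density gives $e_G(A,B)\geq q(qn/3)^2 = q^3 n^2/9$. Any edge-disjoint $N_u$--$N_v$ path is automatically an $N(u)$--$N(v)$ path of length $\geq 1$. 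This replaces your overlap case entirely and even streamlines your disjoint case.
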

\begin{proof}
(i):  Let $N_u\subseteq N(u), N_v\subseteq N(v)$ be disjoint subsets of order $qn/3$ (which we can find using Fact~\ref{fact:cut_dense_min_degree}). If there are $q^3n^2/9$ edge-disjoint paths from $N_u$ to $N_v$, then we are done --- so suppose this doesn't happen.  Then Menger's Theorem gives us a set $S$ of $<  q^3n^2/9$ edges that separates $N_u,  N_v$ i.e.  $G\setminus S$ has no paths from $N_u$ to $N_v$. Let $A$ be the union of the connected components of $G\setminus S$ intersecting $N_u$, $B:=V(G)\setminus A$, noting that ``$G\setminus S$ has no paths from $N_u$ to $N_v$'' is equivalent to $N_v\subseteq B$. Note $|A|\geq |N_u|= qn/3, |B|\geq |N_v|= qn/3$. We have $E_G(A,B)\subseteq S$, giving $e_G(A,B)<q^3n^2/9\leq q|A||B|$, contradicting $q$-cut-density.

(ii): First note that $\delta(G)\geq qn$ --- indeed otherwise, if there was a vertex with $|N(v)|< qn$, the number of edge-disjoint paths from $N(v)$ to $N(v)$ would be bounded above by $e(N(v), V(G))\geq n|N(v)|<qn^2$ (since all the starting edges of these paths must be distinct).
Now consider a partition $V(G)=A\cup  B$. 
Suppose that every $a\in A$ has $|N(a)\cap B|\geq qn/10$. Then $e(A,B)=\sum_{a\in A}|N(a)\cap B|\geq |A|qn/10\geq q|A||B|/10$.
The same argument works if   every $b\in B$ has $|N(b)\cap A|\geq qn/10$.

Thus we can suppose that there exist $a\in A, b\in B$ with $|N(a)\cap B|, |N(b)\cap A|\leq qn/10$. This gives $|N(a)\cap A|\geq d(a)-qn/10\geq 0.9d(a)$ and $|N(b)\cap B|\geq d(b)-qn/10\geq 0.9d(b)$. Let $\mathcal P$ be a set of $qn^2$ edge-disjoint paths from $N(a)$ to $N(b)$. Let $\mathcal P'$ be the subset of these paths touching $(N(a)\cap B)\cup (N(b)\cap A)$. Since these paths must use distinct edges through $(N(a)\cap B)\cup (N(b)\cap A)$, we have $|\mathcal P'|\leq e((N(a)\cap B)\cup (N(b)\cap A), V(G))\leq |N(a)\cap B|n + |N(b)\cap A|n\leq qn^2/5$.   All paths in $\mathcal P\setminus \mathcal P'$ start in $N(a)\setminus (N(a)\cap B)\subseteq A$ and end in $N(b)\setminus (N(b)\cap A)\subseteq B$, and so all paths in $\mathcal P\setminus \mathcal P'$ have an edge of $G[A,B]$. This gives $e(A,B)\geq| \mathcal P\setminus \mathcal P'|\geq qn^2-qn^2/5\geq qn^2/10$ as required.
\end{proof}
Cut density is conveniently preserved by taking random subgraphs.
\begin{lemma}\label{lem:cut_dense_random_sample}
Let $p,q\gg n^{-1}$
Let $G$ be a $n$-vertex, $q$-cut-dense, $S$ a  $p$-random subset. With high probability   $G[S]$ is $p^{20q^{-3}} q^3/400$-cut-dense.
\end{lemma}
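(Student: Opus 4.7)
The plan is to apply the path characterization in Lemma~\ref{lem:cut_dense_characterization_paths}: use part (i) to find many edge-disjoint paths in $G$ between every pair of vertex neighbourhoods, show that a large fraction survive in $G[S]$, then invoke part (ii) to conclude $G[S]$ is cut-dense.

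For each pair $u,v\in V(G)$, Lemma~\ref{lem:cut_dense_characterization_paths}(i) supplies at least $q^3n^2/9$ edge-disjoint length-$\ge 1$ paths from $N_G(u)$ to $N_G(v)$ in $G$. Since the total number of edges these paths use is at most $e(G)\le n^2/2$, Markov's inequality forces at least half to have length at most $L:=9q^{-3}$. Fix such a subcollection $\mathcal{P}_{u,v}$ of $N:=q^3n^2/18$ short, edge-disjoint paths.

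Let $X_{u,v}$ count the paths in $\mathcal{P}_{u,v}$ all of whose vertices land in $S$. A length-$\ell$ path survives with probability $p^{\ell+1}$, so $\Exp(X_{u,v})\ge Np^{L+1}$. Viewed as a function of the $n$ independent indicators $(\mathbf{1}_{w\in S})_{w\in V(G)}$, flipping vertex $w$ changes $X_{u,v}$ by at most the number of paths in $\mathcal{P}_{u,v}$ through $w$; by edge-disjointness this is at most $d_G(w)$. Hence $\sum_w c_w^2\le \sum_w d_G(w)^2\le \Delta(G)\cdot 2e(G)\le n^3$, and McDiarmid's inequality (Lemma~\ref{lem:McDiarmid}) with $t=\Exp(X_{u,v})/2$ gives a tail probability of order $\exp(-\Omega(q^6np^{2L+2}))$. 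The hypothesis $p,q\gg n^{-1}$ makes this much smaller than $n^{-2}$, so a union bound over the $\binom{n}{2}$ pairs yields that with high probability $X_{u,v}\ge Np^{L+1}/2$ simultaneously for every $u,v\in V(G)$.

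On that good event, consider any $u,v\in S$. The surviving paths in $\mathcal{P}_{u,v}$ lie entirely in $G[S]$ and have endpoints in $N_G(u)\cap S=N_{G[S]}(u)$ and $N_G(v)\cap S=N_{G[S]}(v)$. Thus $G[S]$ contains at least $Np^{L+1}/2\ge (q^3p^{L+1}/40)|S|^2$ edge-disjoint length-$\ge 1$ paths between $N_{G[S]}(u)$ and $N_{G[S]}(v)$, and Lemma~\ref{lem:cut_dense_characterization_paths}(ii) delivers that $G[S]$ is $(q^3p^{L+1}/400)$-cut-dense. Since $L+1\le 20q^{-3}$ and $p\le 1$, this beats the required $p^{20q^{-3}}q^3/400$-cut-density. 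The main obstacle is arranging enough concentration so that the McDiarmid tail beats the union bound over pairs; the edge-disjointness of $\mathcal{P}_{u,v}$ is crucial here, as it caps each Lipschitz constant $c_w$ by $d_G(w)$ and keeps $\sum_w c_w^2\le n^3$.
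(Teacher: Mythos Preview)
Your proof is correct and follows essentially the same approach as the paper: obtain many edge-disjoint short paths via Lemma~\ref{lem:cut_dense_characterization_paths}(i), use edge-disjointness to bound the Lipschitz constants for McDiarmid, union bound over pairs, and conclude with Lemma~\ref{lem:cut_dense_characterization_paths}(ii). The only cosmetic difference is that the paper uses the uniform Lipschitz bound $c_w\le n$ directly (matching the stated form of Lemma~\ref{lem:McDiarmid}) rather than your $\sum_w c_w^2\le n^3$, but both yield the same $n^3$ in the exponent.
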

\begin{proof}
By Lemma~\ref{lem:cut_dense_characterization_paths} (i),  for every $u,v\in V(G)$, there is a family $\mathcal P_{u,v}$ of $\geq q^3n^2/9$ edge-disjoint length $\geq 1$ paths from $N(u)$ to $N(v)$. Note that at most $e(G)/(20q^{-3})< q^3n^2/20$ of these can have length $\geq 20q^{-3}$, so letting $\mathcal P_{u,v}'$ be the paths in $\mathcal P_{u,v}$ shorter  than $20q^{-3}$, we have $|\mathcal P_{u,v}'|\geq q^3 n^2/9-q^3n^2/20>q^3n^2/20$. For every path $P$, we have $P(P \subseteq G[S])=p^{|P|}$. Letting $\mathcal Q_{u,v}$ be the paths from $\mathcal P_{u,v}$ that are in $G[S]$, we get $\Exp(|\mathcal Q_{u,v}|)=\sum_{P\in \mathcal P_{u,v}} p^{|P|}\geq \sum_{P\in \mathcal P'_{u,v}} p^{|P|}\geq \geq \sum_{P\in \mathcal P'_{u,v}} p^{20q^{-3}} = p^{20q^{-3}}|\mathcal P_{u,v}'| \geq  p^{20q^{-3}} q^3n^2/20$. Note that since the paths in $\mathcal P_{u,v}$ are edge-disjoint, there can be $\leq d(v)\leq n$ of them through any vertex, and so $\mathcal Q_{u,v}$ is $n$-Lipschitz. By McDiarmid's Inequality, we have that $|\mathcal Q_{u,v}|< p^{20q^{-3}} q^3n^2/40$ with probability $\leq 2\exp(- (p^{20q^{-3}} q^3n^2/40)^2/n^3)<o(n^{-2})$.  Taking a union bound over all $u,v$, we have that with high probability, $|\mathcal Q_{u,v}|\geq p^{20q^{-3}} q^3n^2/40$ for all $u,v$. By Lemma~\ref{lem:cut_dense_characterization_paths} (ii), $G[S]$ is $p^{20q^{-3}} q^3n/400$-cut-dense.
\end{proof}
 The following lemma extends the previous one: not only does taking a random induced subgraph preserve cut-density --- but the random sample even functions as a ``cut-density core'' (meaning that adding any vertices to the random sample also preserves cut-density).
\begin{lemma} \label{lem:cut_dense_random_sample_plus_more} 
Let $1\gg p,q\gg n^{-1}$.
Let $G$ be a $n$-vertex, $q$-cut-dense, $S$ a $p$-random subset. With high probability  every induced subgraph of $G$ containing $S$ is $p^{q^{-4}}q^5$-cut-dense.
\end{lemma}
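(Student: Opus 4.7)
The plan is to combine Lemma~\ref{lem:cut_dense_random_sample} (which already gives cut-density of the random induced subgraph $G[S]$ itself) with Lemma~\ref{lem:cut_dense_add_vertices} (which says cut-density is preserved by adding vertices that have many neighbours in an already cut-dense core). The conceptual idea is to make $S$ play the role of a ``cut-dense core'' for every superset $T \supseteq S$: if $S$ is cut-dense and every vertex of $G$ has a $\Theta(q)$-density of neighbours into $S$, then Lemma~\ref{lem:cut_dense_add_vertices} instantly upgrades $G[S]$ to $G[T]$ for any $T \supseteq S$ at a controlled loss in the cut-density parameter.

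Concretely, first apply Lemma~\ref{lem:cut_dense_random_sample} to obtain, with high probability, that $G[S]$ is $q_1$-cut-dense for $q_1 := p^{20q^{-3}}q^3/400$. Second, since Fact~\ref{fact:cut_dense_min_degree} gives $\delta(G) \geq 0.9qn$, Chernoff plus a union bound over all $n$ vertices yield that with high probability simultaneously $|S| \in [pn/2,\, 2pn]$ and $|N_G(v)\cap S| \geq pqn/4$ for every $v\in V(G)$; in particular $|N_G(v)\cap S| \geq (q/8)|S|$ for every $v$. Condition on both events. Then for any $T \supseteq S$, Lemma~\ref{lem:cut_dense_add_vertices} applied with $G = G[S]$, $H = G[T]$ and $\delta = q/8$ shows that $G[T]$ is
$$\frac{q_1 \cdot (q/8) \cdot |S|^2}{4|T|^2} \;\geq\; \frac{\,p^{20q^{-3}}q^3/400 \cdot (q/8) \cdot (pn/2)^2\,}{4n^2} \;=\; \frac{p^{\,20q^{-3}+2}\, q^4}{51200}$$
-cut-dense. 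Under the hierarchy $1 \gg p,q \gg n^{-1}$ we have $q^{-4} \geq 20q^{-3}+2$ and $1/51200 \geq q$, so this quantity is at least $p^{q^{-4}} q^5$, as required.

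The main obstacle is really just parameter bookkeeping: the target bound $p^{q^{-4}}q^5$ is not particularly natural, and one has to verify that the exponent inflation in Lemma~\ref{lem:cut_dense_random_sample} ($20q^{-3}$, plus the extra $+2$ coming from $|S|^2/|T|^2 \geq p^2/4$) is absorbed by the weaker target exponent $q^{-4}$. A secondary subtlety is quantifier order: the conclusion quantifies over \emph{all} supersets $T \supseteq S$ after the random choice, so the Chernoff step must give a degree bound that holds for all of $V(G)$ simultaneously before we can treat $T$ deterministically. Once both bad events are absorbed into a single union bound, the rest of the argument is the one-step application of Lemma~\ref{lem:cut_dense_add_vertices} described above.
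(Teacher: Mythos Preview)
Your proposal is correct and follows essentially the same approach as the paper's proof: both combine Lemma~\ref{lem:cut_dense_random_sample} for cut-density of $G[S]$, a Chernoff/union-bound step to control $|S|$ and all the degrees $|N(v)\cap S|$ simultaneously, and then a single application of Lemma~\ref{lem:cut_dense_add_vertices}. The only cosmetic difference is your choice of $\delta=q/8$ (using the upper bound $|S|\le 2pn$) versus the paper's $\delta=qp/4$ (using only $|S|\le n$), which changes the intermediate constant but not the argument.
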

\begin{proof}
By Chernoff's bound and Fact~\ref{fact:cut_dense_min_degree}, with high probability, every vertex has $P(|N(v)\cap S|< qpn/4)< 2e^{-qpn/12}$, and also $P(|S|< pn/2)< e^{-pn/12}$. By a union bound, with high probability every vertex has $|N(v)\cap S|\geq qpn/4$, $|S|\geq pn/2$, and $G[S]$ is $p^{20q^{-3}} q^3/400$-cut-dense (using Lemma~\ref{lem:cut_dense_random_sample}). Now considering any induced subgraph $H$ containing $S$, Lemma~\ref{lem:cut_dense_add_vertices} tells us that $H$ is $\frac{(p^{20q^{-3}} q^3/400)(qp/4)(pn/2)^2}{4n^2}=\frac{p^{20q^{-3}+3} q^4}{12800}\geq   p^{q^{-4}} q^5$-cut-dense. 
\end{proof}
It's very easy to approximately decompose any graph into cut-dense components.
\begin{lemma}\label{lem:cut_dense_decomposition} 
Let $G$ be a graph. It is possible to delete $q n^2$ edges from $G$ to get a subgraph all of whose components are $q$-cut-dense.
\end{lemma}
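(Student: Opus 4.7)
The plan is to prove this by iterative splitting. Start with $G_0 := G$, and while some connected component of the current graph $G_i$ fails to be $q$-cut-dense, pick such a component $C$ together with a witnessing partition $V(C) = A \cup B$ satisfying $e_{G_i}(A, B) < q|A||B|$, and let $G_{i+1}$ be obtained from $G_i$ by deleting all edges of $G_i$ between $A$ and $B$. Each such step strictly increases the number of connected components (the component $C$ splits into two non-empty pieces with vertex sets $A$ and $B$), so the process must terminate after at most $|G| - 1$ steps with some graph $H$ every component of which is $q$-cut-dense.

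It remains to bound the total number of edges deleted by $q n^2$. The key observation is a charging argument on unordered pairs of vertices. At step $i$, the split $A \cup B$ deletes fewer than $q|A||B|$ edges, and the $|A||B|$ pairs $\{a,b\}$ with $a \in A$, $b \in B$ are exactly the pairs of vertices that lie in the same component of $G_i$ but in different components of $G_{i+1}$. Once a pair $\{u,v\}$ has been separated into different components it can never be reunited, so across the whole process each unordered pair is "separated" at most once. Thus the total count $\sum_i |A_i||B_i|$ over all splits is bounded by the total number of unordered pairs, namely $\binom{|G|}{2} \leq |G|^2/2 \leq n^2/2$. Multiplying by $q$ gives that at most $qn^2/2 \leq qn^2$ edges were deleted in total.

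The only step that requires any thought is the charging argument, and it is genuinely routine: the monotonicity "separated pairs stay separated" is immediate from the fact that we only delete edges (never add them), and so components of $G_{i+1}$ refine those of $G_i$. No obstacle is expected; the statement is essentially a clean accounting of a standard split-and-delete procedure.
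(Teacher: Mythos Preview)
Your proof is correct and uses the same iterative splitting process as the paper. Where you diverge is in the bookkeeping: the paper charges each cut $(A,B)$ to \emph{vertices}, assigning weight $e(A,B)/|A|$ to every $v\in A$ (and symmetrically for $B$), and then argues that for a fixed $v$ the ``other sides'' $B_1,B_2,\dots$ encountered over the process are pairwise disjoint, so the per-vertex charge is at most $q\sum|B_i|\le qn$; summing over $v$ recovers $2e(G\setminus H)\le qn^2$. Your pair-charging argument---each cut uses up $|A||B|$ fresh unordered pairs, and a pair, once separated, stays separated---is a cleaner route to the same bound $qn^2/2$.

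Two small inaccuracies worth tightening, neither of which affects correctness: (i) after deleting the $A$--$B$ edges the component $C$ may split into \emph{more} than two components (if $G_i[A]$ or $G_i[B]$ is disconnected), though the component count still strictly increases, which is all you need for termination; (ii) for the same reason, the pairs $\{a,b\}$ with $a\in A$, $b\in B$ need not be \emph{exactly} the newly separated pairs, but they are contained among them, which suffices for the charging to be injective across steps.
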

\begin{proof}
Repeat the following process: as long as there is a connected component $C$ and a partition $V(C)=A\cup B$ with $e_G(A,B)<q|A||B|$, delete all edges between $A$ and $B$. Call the resulting graph $H$. It is immediate that components in $H$ are $q$-cut-dense.
 
Consider a vertex $v$. Let $(A_1, B_1), \dots, (A_k, B_k)$ be the pairs between which edges were deleted throughout the process with $v\in A_1\cup B_1, \dots, A_k\cup B_k$. Without loss of generality, suppose that things are labelled so that $v\in A_1, \dots, A_k$. Note that for all $i$, $B_1, \dots, B_i, A_i$ are disjoint (this is proved by induction on $i$ with the initial case  true because all $A_i, B_i$ are chosen to be disjoint. For the induction step, note that after edges between $A_i, B_i$ are deleted, the connected component containing $v$ is contained inside $A_i$, and so $A_{i+1}\cup B_{i+1}\subseteq A_i$ which is disjoint from $B_1, \dots, B_{i}$ by induction).
 Let $f(v)=\sum e_G(A_i, B_i)/|A_i|\leq \sum q|B_i|\leq qn$ (using that $B_1, \dots, B_k$ are disjoint for the last inequality). 
 
 Let $\mathcal A$ be the set of all pairs $(A,B)$ used throughout the process. 
 We have \begin{align*}
 qn^2&\geq \sum_{v\in V(G)} f(v)=\sum_{v\in V(G)}\left(\sum_{(A,B)\in \mathcal A,  A\ni v} e_G(A, B)/|A|+\sum_{(A,B)\in \mathcal A,  B\ni v} e_G(A, B)/|B|\right)\\
 &=\sum_{(A,B)\in \mathcal A}\left(\sum_{ v\in A} e_G(A, B)/|A|+\sum_{ v\in B} e_G(A, B)/|B|\right)=\sum_{(A,B)\in \mathcal A}2e_G(A,B)=2e(G\setminus H)
 \end{align*}
\end{proof}

A consequence of the above is that every dense graph has large cut-dense subgraphs.
\begin{lemma}\label{lem:cut_dense_subgraph_in_dense_graph}
Let $1\gg p\gg q\gg \mu \gg n^{-1}$.
Let $G$ have $n$ vertices and $e(G)\geq pn^2$. Then $G$ has  a $q$-cut-dense subgraph of order $\mu n$.
\end{lemma}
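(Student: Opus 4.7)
The plan is to apply the cut-dense decomposition of Lemma~\ref{lem:cut_dense_decomposition} at parameter $q$ and then extract a sufficiently large component by a short edge-counting argument.

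First, I apply Lemma~\ref{lem:cut_dense_decomposition} to $G$ with parameter $q$: this deletes at most $qn^2$ edges and produces a subgraph whose connected components $C_1,\dots,C_t$ are each $q$-cut-dense. Since $p\gg q$, the remaining edges satisfy
\[
\sum_{i=1}^t e(C_i) \;\ge\; e(G)-qn^2 \;\ge\; pn^2 - qn^2 \;\ge\; pn^2/2.
\]

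Second, I claim that some $|C_i|\ge \mu n$, which gives the desired $q$-cut-dense subgraph. Suppose for contradiction that every $|C_i|<\mu n$. Then
\[
\sum_i e(C_i) \;\le\; \sum_i \binom{|C_i|}{2} \;\le\; \tfrac{1}{2}\bigl(\max_i |C_i|\bigr)\sum_i |C_i| \;\le\; \tfrac{1}{2}(\mu n)(n) \;=\; \tfrac{\mu n^2}{2},
\]
which contradicts the previous display since $\mu\ll p$ forces $\mu n^2/2 < pn^2/2$. Hence some component $C_i$ is a $q$-cut-dense subgraph on at least $\mu n$ vertices.

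No serious obstacle arises: the hierarchy $1\gg p\gg q\gg \mu\gg n^{-1}$ is used only in the two places above, once to say that the edges deleted by the cut-dense decomposition are a negligible fraction of $e(G)$, and once to prevent small components from collectively absorbing $\Theta(p)n^2$ edges. The entire argument is essentially a single pigeonhole step layered on top of Lemma~\ref{lem:cut_dense_decomposition}. If ``of order $\mu n$'' is read strictly as ``on exactly $\mu n$ vertices'', one can run the decomposition with a slightly larger parameter $q_0$ satisfying $p\gg q_0\gg q$, take a $q_0$-cut-dense component of order $\ge\mu n$, and then restrict to a random $\mu n$-subset, appealing to Lemma~\ref{lem:cut_dense_random_sample} to keep the induced subgraph $q$-cut-dense; but the heart of the proof is the pigeonhole step above.
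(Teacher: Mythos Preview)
Your pigeonhole argument is correct and cleaner than the paper's route for producing a $q$-cut-dense subgraph of order \emph{at least} $\mu n$. However, the lemma (as used immediately afterwards in Lemma~\ref{lem:expansion_no_cut_dense}) really does need a subgraph of order \emph{exactly} $\mu n$, and your sketched fix for this has a genuine gap.

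The problem is quantitative. Your $q_0$-cut-dense component $C$ may have order as large as $n$, so sampling down to $\mu n$ vertices means sampling at rate roughly $\mu n/|C|$, which can be as small as $\mu$. Lemma~\ref{lem:cut_dense_random_sample} then only guarantees cut-density on the order of $\mu^{20q_0^{-3}}q_0^3/400$, and since $\mu\ll q$ in the hierarchy, this is far below $q$. No choice of $q_0$ with $p\gg q_0\gg q$ rescues this, because the exponent $20q_0^{-3}$ is bounded below while the base $\mu$ sits strictly below $q$.

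The paper's proof handles this by inserting an extra step \emph{before} the decomposition: it first passes to a random induced subgraph $G'$ on $\Theta(s^{-1}\mu n)$ vertices (for an auxiliary $s$ with $p\gg s\gg q$), and only then runs the cut-dense decomposition on $G'$ at parameter $s$. The point is that the large component found now has order at most $|G'|=O(s^{-1}\mu n)$, so the subsequent subsample to hit exactly $\mu n$ is at rate $\ge s/8$, a quantity bounded away from zero \emph{independently of $\mu$}. This is what allows Lemma~\ref{lem:cut_dense_random_sample_plus_more} to return cut-density $\ge q$. Your argument can be repaired by inserting exactly this preliminary shrinking step; without it, the final sentence does not go through.
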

\begin{proof} 
Let $1\gg p\gg s \gg q\gg \mu \gg n^{-1}$.
Let $G'$ be an induced subgraph formed by keeping every vertex with probability $2s^{-1}\mu$.   By McDiarmid's Inequality, with high probability  we have $|G'|\in[s^{-1}\mu n, 4s^{-1}\mu n]$ and $e(G')\geq   (2s^{-1}\mu)^2 (pn^2)/2\geq p|G'|^2/32$. 
Use Lemma~\ref{lem:cut_dense_decomposition} to delete $s|G'|^2$ edges in order to get a graph $G''$ with $\geq p|G'|^2/32-s|G'|^2\geq p|G'|^2/64=p|G''|^2/64$ edges  whose components are $s$-cut-dense. Since $\Delta(G'')\geq 2e(G'')/|G''|\geq p|G''|/64$, some component $C$ of $G''$ has $|C|\geq p|G''|/64=p|G'|/64\geq p(s^{-1}\mu n)/64\geq \mu n$ vertices. Let $C'$ be a random subset of $V(C)$ formed by keeping every vertex with probability $\alpha:=\frac{\mu n}{2|C|}$ noting $s/8=\frac{\mu n} {2(4s^{-1}\mu n)}\leq \frac{\mu n}{2|G'|} \leq \alpha\leq \frac{\mu n}{2p(s^{-1}\mu n)/64}= 32p^{-1}s$. With high probability, $C'$ satisfies Lemma~\ref{lem:cut_dense_random_sample_plus_more} and has $|C'|\leq 3\mu n/4$ (using Chernoff's bound for the latter). Now letting $H$ be an induced subgraph of $C$ of order $\mu n$ containing $C'$ gives a graph which is $\alpha^{s^{-4}}s^5\geq q$-cut-dense. 
\end{proof}

When a high minimum degree graph contains no cut-dense subgraphs, then that graph must expand. As mentioned in the introduction, this is how everything is reduced to the ``locally dense'' case.
\begin{lemma}\label{lem:expansion_no_cut_dense}
Let $\Delta^{-1}, \varepsilon\gg p\gg \mu \gg d^{-1}$.
Let $G$ be a graph with  $\delta(G)\geq \varepsilon d$ and containing no $p$-cut-dense graph of order $\mu d$. Then every set $S\subseteq V(G)$ with $|S|\leq 10d$ has $|N(S)|\geq 10\Delta|S|$.
\end{lemma}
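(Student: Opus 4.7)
My plan is to argue by contradiction: assume some $S\subseteq V(G)$ with $|S|\leq 10d$ has $|N(S)|<10\Delta|S|$, and use the minimum-degree hypothesis on $G$ to exhibit, inside the small ``neighbourhood'' $T:=S\cup N(S)$, a $p$-cut-dense subgraph of order $\mu d$ --- contradicting the hypothesis on $G$.

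The key steps, in order, would be the following. First, observe $|T|\leq (1+10\Delta)|S|\leq 110\Delta d$, and every neighbour of a vertex of $S$ lies in $T$, so by handshaking $e(G[T])\geq \tfrac12\sum_{v\in S}d_G(v)\geq |S|\varepsilon d/2 \geq (\varepsilon d/(22\Delta))|T|$. Second, apply Fact~\ref{fact:subgraph_min_degree_e/2} to $G[T]$ to extract a subgraph $H$ with $\delta(H)\geq \varepsilon d/(22\Delta)$; combining $e(H)\geq |H|\delta(H)/2$ with $|H|\leq |T|\leq 110\Delta d$ then yields the \emph{uniform} density bound $e(H)\geq (\varepsilon/(4840\Delta^2))\,|H|^2$, independent of the exact size of $H$. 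Third, feed $H$ into Lemma~\ref{lem:cut_dense_subgraph_in_dense_graph} with input density parameter $\Theta(\varepsilon/\Delta^2)$, output cut-density parameter $p$, and relative size $\mu^\ast := 22\Delta\mu/\varepsilon$; this produces a $p$-cut-dense subgraph of $H\subseteq G$ of order $\mu^\ast|H| \geq \mu d$, the required contradiction.

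The only step that actually needs any work is checking that the paper's hierarchy $\Delta^{-1},\varepsilon \gg p \gg \mu \gg d^{-1}$ supplies the chain $1 \gg \varepsilon/\Delta^2 \gg p \gg \mu^\ast \gg |H|^{-1}$ that Lemma~\ref{lem:cut_dense_subgraph_in_dense_graph} asks for: the first inequality follows from $\varepsilon, \Delta^{-1}\gg p$; the middle one rearranges to $\mu \ll p\varepsilon/\Delta$, which follows from $\mu\ll p$ together with $\varepsilon/\Delta$ being bounded below by a function of $p$; and the last uses $|H|\geq \varepsilon d/(22\Delta)$ together with $\mu\gg d^{-1}$. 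I do not anticipate any genuine obstacle --- the entire point of the preceding lemmas in this subsection is that an argument of the shape ``few neighbours of $S$ $\Rightarrow$ $G[T]$ is locally dense $\Rightarrow$ $G[T]$ contains a small cut-dense subgraph'' should go through mechanically once the hierarchy is tidied up.
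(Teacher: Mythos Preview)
Your proposal is correct and follows essentially the same route as the paper: suppose a bad $S$ exists, observe that $G[S\cup N(S)]$ is dense, and invoke Lemma~\ref{lem:cut_dense_subgraph_in_dense_graph} to extract a small $p$-cut-dense subgraph. The only difference is that you insert an extra step --- passing to a minimum-degree subgraph $H$ via Fact~\ref{fact:subgraph_min_degree_e/2} before establishing the quadratic density bound --- which the paper skips: it chains $e(S\cup N(S))\geq \varepsilon d|S|/2\geq \varepsilon|S|^2/20\geq \varepsilon|S\cup N(S)|^2/8000\Delta^2$ directly (using $|S|\leq 10d$ and $|S|\geq |S\cup N(S)|/20\Delta$) and then applies Lemma~\ref{lem:cut_dense_subgraph_in_dense_graph} to $G[S\cup N(S)]$ itself. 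Your detour through $H$ is harmless but unnecessary.
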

\begin{proof}
Suppose for contradiction that there is some $S$ with $|S|\leq 10d$ and $|N(S)|< 10\Delta|S|$.  This gives $|S|> |S|/2+|N(S)|/20\Delta\geq (|S|+|N(S)|)/20\Delta \geq |S\cup N(S)|/20\Delta$.
Then $e(S\cup N(S))\geq \sum_{s\in S} d(s)/2\geq \varepsilon d|S|/2\geq\varepsilon |S|^2/20 \geq \varepsilon |S\cup N(S)|^2/8000\Delta^2$ (the first inequality comes from the handshaking lemma, the second inequality is $\delta(G)\geq \varepsilon d$, the third inequality is $|S|\leq 10d$, the fourth inequality is  $|S|>|S\cup N(S)|/20\Delta$). Note that $|N(S)|< 10\Delta|S|$ tells us that $S\neq \emptyset$, and so $|S\cup N(S)|\geq |N(S)|\geq \delta(G)\geq \varepsilon d$.  
Lemma~\ref{lem:cut_dense_subgraph_in_dense_graph} (applied with $p=\varepsilon/8000\Delta^2$, $q=p$, $\mu=\mu$, $n=|S\cup N(S)|\geq \varepsilon d$) gives us a $p$-cut-dense graph of order $\mu d$. 
\end{proof}

 \section{Clumps}
The following is the key technical definition that makes the proof work. 
\begin{definition}\label{def:clump}
A clump $K$ with parameters $p,\kappa, m$ is a graph (whose vertices and edges we denote by $V(K)$, $E(K)$) together with  $\mathcal H(K), \mathcal M(K), C(K), k(K)$ that satisfy the following:
\begin{enumerate} 
\item[{(H):}] $\mathcal H(K)=\{H_1, \dots, H_h\}$ is a collection of edge-disjoint order $m$ $p$-cut-dense graphs which partition $E(K)$. 
\item[{(M):}] $\mathcal M(K)=\{M_1, \dots, M_{k(K)}\}$ is a  collection of vertex-disjoint  $p^{13} m$-regular graphs, each contained in some $H_i\in \mathcal H(K)$. Additionally $k(K)$ is as large as possible.
\item[{(C):}] $C(K)$ is a $\kappa^{(10k(K))!}$-cut-dense, $|C(K)|\leq 4^{k(K)}m$, and $\bigcup \mathcal M(K)\subseteq C(K)$.
\end{enumerate}
\end{definition}
We emphasise that by ``$k(K)$ is as large as possible'' we formally mean ``there is no collection $\mathcal M'$ of $k(K)+1$ vertex-disjoint $p^{13} m$-regular graphs, each contained in some $H_i\in \mathcal H(K)$''. The number $h$ can be any non-negative integer and won't be relevant in proofs (which is why we don't define it to be a parameter of the clump like $p, \kappa, m$). Comparing this definition to the proof overview, notice that we no longer have a special parameter $q(K)$ tracking cut-density --- instead $k(K)$ simultaneously controls all parameters which might vary between different clumps. The parameters $p,\kappa, m$ should be thought of as being constant for the whole proof.

Set $M(K):=\bigcup \mathcal M(K)$.
Let $B(K):=M(K)\cup\{uv:u\in M(K), v\not\in M(K), |N_K(v)\cap V(M(K))|\geq pm/2\}$. The following is where the small covers in Theorem~\ref{main_theorem} ultimately come from.
\begin{fact}\label{fact:BK_small_covers}
For any clump,  $M(K)$ is a cover of $B(K)$.
\end{fact}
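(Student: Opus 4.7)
The claim is essentially immediate from unpacking the definition of $B(K)$, so the ``plan'' is really just a careful case analysis on the edges of $B(K)$. I read $V(M(K))$ as the proposed cover (with $M(K)$ interpreted as its vertex set when talking of a cover), and $B(K)$ as the graph whose edge set is $E(M(K))$ together with the extra edges $\{uv : u\in V(M(K)),\, v\notin V(M(K)),\, |N_K(v)\cap V(M(K))|\geq pm/2\}$.

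My plan is to take an arbitrary edge $e = uv \in E(B(K))$ and split into the two cases built into the definition of $B(K)$. In the first case $e\in E(M(K))$, and since $M(K) = \bigcup \mathcal{M}(K)$ is a union of graphs whose vertices are contained in $V(M(K))$, both $u$ and $v$ lie in $V(M(K))$, so certainly one of them does. In the second case $e$ is one of the added edges, so by construction one endpoint (the one playing the role of $u$ in the definition) lies in $V(M(K))$. In either case $e$ has an endpoint in $V(M(K))$, which is exactly what it means for $V(M(K))$ to be a cover of $B(K)$.

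There is essentially no obstacle here: the lemma is a definitional consequence and does not use any of the structural properties of clumps (cut-density of $C(K)$, regularity of the members of $\mathcal{M}(K)$, the extremality of $k(K)$, or the common-neighbour threshold $pm/2$). Those properties only matter in later lemmas (where $B(K)$ is shown to contain most edges of $K$); for Fact~\ref{fact:BK_small_covers} itself, the only thing being used is that the added edges in the definition of $B(K)$ are deliberately required to have the endpoint $u$ in $V(M(K))$.
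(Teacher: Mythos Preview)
Your proof is correct and follows exactly the same approach as the paper: the paper's proof is the one-line observation that ``by definition, the edges of $B(K)$ are either in $M(K)$ or of the form $uv$ with $u\in M(K)$,'' which is precisely the case split you wrote out.
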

\begin{proof}
By definition, the edges of $B(K)$ are either in $M(K)$ or of the form $uv$ with $u\in M(K)$.
\end{proof}
The order of $M(K)$ increases with $k(K)$.
\begin{fact}\label{fact:MK_lower_bound_by_k}
For any clump $k(K)\le |M(K)|/p^{13}m$
\end{fact}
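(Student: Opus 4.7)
The plan is to unpack the definitions and use vertex-disjointness. Recall that by item (M) in Definition~\ref{def:clump}, $\mathcal{M}(K) = \{M_1, \dots, M_{k(K)}\}$ is a collection of \emph{vertex-disjoint} $p^{13}m$-regular graphs, and that $M(K) = \bigcup \mathcal{M}(K)$. So $|M(K)| = \sum_{i=1}^{k(K)} |V(M_i)|$.

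The main (trivial) observation is that any $d$-regular graph must have at least $d+1$ vertices, since every vertex needs $d$ distinct neighbours. Applied to each $M_i$, which is $p^{13}m$-regular, this gives $|V(M_i)| \geq p^{13}m + 1 > p^{13}m$. Summing over $i = 1, \dots, k(K)$ and using vertex-disjointness yields $|M(K)| \geq k(K) \cdot p^{13}m$, which rearranges to the desired inequality $k(K) \leq |M(K)|/p^{13}m$.

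There is no real obstacle here; the fact is essentially bookkeeping that records why the parameter $k(K)$ is bounded by the size of $M(K)$. Its intended use later in the paper is presumably that whenever one finds a clump with $|M(K)|$ large compared to $p^{13}m$, one automatically gets a bound on $k(K)$, which in turn controls the cut-density exponent $\kappa^{(10k(K))!}$ of $C(K)$ in item (C).
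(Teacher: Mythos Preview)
Your proof is correct and matches the paper's own argument essentially verbatim: both observe that each $p^{13}m$-regular $M_i$ has at least $p^{13}m$ vertices, so the vertex-disjoint union has $|M(K)|\ge k(K)p^{13}m$.
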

\begin{proof}
Since $p^{13}m$-regular graphs have order $\ge p^{13}m$, the union of the $k(K)$ many vertex disjoint $p^{13}m$-regular graphs in $\mathcal M(K)$ has order $|M(K)|=|\bigcup \mathcal M(K)|\geq k(K)p^{13}m$.
\end{proof}

Let $D(K):=C(K)\cup B(K)=C(K)\cup\{uv:u\in M(K), v\not\in M(K), |N_K(v)\cap V(M(K))|\geq pm/2\}$ (using (C) for the last equation).
\begin{lemma}\label{lem:clump_lower_bound_eK}
Let $p\leq 1$.
For any clump $K$ with parameters $p, \kappa, m$,  we have $e(K)\geq p^{13}m|B(K)|/10$
\end{lemma}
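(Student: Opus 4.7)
The plan is to lower bound $e(K)$ by a direct edge count, exploiting the two mechanisms through which $B(K)$ accrues edges. First I would partition $V(B(K))$ into $A:=V(M(K))$ and $W:=V(B(K))\setminus A$; by the definition of $B(K)$, the set $W$ consists precisely of those $v\notin V(M(K))$ satisfying $|N_K(v)\cap V(M(K))|\geq pm/2$.

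Next I count two edge-disjoint families of edges of $K$. For the first, since each $M_i\in\mathcal{M}(K)$ lies inside some $H_j\in\mathcal{H}(K)$ and $\mathcal{H}(K)$ partitions $E(K)$, we have $M(K)\subseteq K$; and since the graphs $M_1,\ldots,M_{k(K)}$ are vertex-disjoint and $p^{13}m$-regular, every vertex of $A$ has $M(K)$-degree exactly $p^{13}m$, so by handshaking $e(M(K))=|A|\cdot p^{13}m/2$, with all these edges sitting inside $A$. For the second, every $v\in W$ has at least $pm/2$ neighbours in $A$ by the definition of $W$, contributing at least $|W|\cdot pm/2$ edges in $E_K(A,W)$. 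The two families are edge-disjoint because the first lives in $K[A]$ and the second crosses from $A$ to $W$.

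Combining the two contributions and using $p\leq 1$ (which holds since $p$ is a cut-density parameter of the members of $\mathcal{H}(K)$),
$$e(K)\;\geq\;\tfrac{p^{13}m}{2}|A|+\tfrac{pm}{2}|W|\;\geq\;\tfrac{p^{13}m}{2}\bigl(|A|+|W|\bigr)\;\geq\;\tfrac{p^{13}m}{10}|B(K)|.$$

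There isn't really a main obstacle here; the statement is essentially a direct unpacking of the definitions of $B(K)$ and $\mathcal{M}(K)$, together with the handshaking lemma applied to a regular graph. The only points requiring care are confirming edge-disjointness of the two counted families and noting that $p^{13}m$-regularity gives an exact (not merely lower-bound) count on edges inside $A$. The stated constant $1/10$ is far from tight --- the argument actually yields $1/2$ --- but the looser form is presumably chosen to absorb constant factors later in the proof.
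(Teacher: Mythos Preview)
Your proof is correct and follows essentially the same approach as the paper: both arguments observe that every vertex of $V(B(K))$ has $K$-degree at least $p^{13}m/2$ (vertices in $M(K)$ via regularity, vertices in $V(B(K))\setminus V(M(K))$ via the defining threshold $pm/2\geq p^{13}m/2$) and then invoke the handshaking lemma. The paper simply bounds all degrees uniformly and applies handshaking in one line, whereas you separate the two vertex classes and track edge-disjointness to squeeze out a better constant $1/2$ instead of $1/4$; but this is a cosmetic difference, not a genuinely different route.
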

\begin{proof}
Every vertex of $M(K)$ has degree $\geq p^{13}m\geq p^{13}m/2$ in $K$ by (M), and every vertex of $B(K)\setminus M(K)$ has degree $\geq pm/2\geq p^{13}m/2$ by definition of $B(K)$. The result follows from the handshaking lemma.
\end{proof}

We call a clump \emph{nonempty} if $k(K)\geq 1$. Though it won't be used in the proof, this is equivalent to asking that $e(K)>0$ or $\mathcal H(K)\neq \emptyset$. This can be seen via the following theorem.
\begin{theorem}[R\"odl-Wysocka, \cite{rodl1997note}]\label{thm:Rodl_Wosocka}
Let $1/2\geq \gamma\gg n^{-1}$.
Every graph with $e(G)\geq \gamma n^2$ contains a $\lceil 0.4\gamma^3 n\rceil$-regular subgraph.
\end{theorem}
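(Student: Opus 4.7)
The plan is to carry out a three-step reduction: first reduce $G$ to a subgraph of high minimum degree, then to a bipartite subgraph with minimum degree preserved, and finally extract a regular subgraph by a matching-based construction.

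\textbf{Step 1 (minimum degree).} Apply Fact~\ref{fact:subgraph_min_degree_e/2} with $x=\gamma n$ (legitimate because $e(G)\geq \gamma n^{2}= (\gamma n)\cdot n$) to obtain a subgraph $G_{1}\subseteq G$ with $\delta(G_{1})\geq \gamma n$ and $|G_{1}|\leq n$.

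\textbf{Step 2 (bipartite reduction).} Take a uniformly random partition $V(G_{1})=A\cup B$, keeping only crossing edges, and let $H$ be the resulting bipartite graph. For each fixed vertex $v$, its degree $|N_{G_{1}}(v)\cap B|$ is a $1$-Lipschitz function of the independent coin flips defining the partition with expectation $d_{G_{1}}(v)/2\geq \gamma n/2$, so by McDiarmid's Inequality (Lemma~\ref{lem:McDiarmid}) and a union bound, with positive probability every vertex retains at least $\gamma n/3$ crossing neighbours. Fix such a partition to get $H$ bipartite with $\delta(H)\geq \gamma n/3$.

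\textbf{Step 3 (regular extraction).} Inside $H$ with parts $A,B$ of minimum degree $d=\Omega(\gamma n)$, the plan is to extract a spanning regular subgraph of degree $\lceil 0.4\gamma^{3}n\rceil$ by a Hall/flow argument. The strategy is to first balance the parts by passing to a subgraph where $|A|=|B|=N$ with $N=\Theta(\gamma n)$ and minimum degree a constant fraction of $N$ (one can achieve this by randomly subsampling the larger part down to the size of the smaller), and then to extract a regular spanning subgraph from this nearly-regular bipartite graph. Concretely, embed this bipartite graph into a $D$-regular bipartite host (with $D$ the maximum degree) by adding dummy edges and possibly dummy vertices, use K\"onig's edge-colouring theorem to decompose the host into $D$ perfect matchings, and restrict to original vertices. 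By counting the contribution of the original edges across these matchings, we can take the union of a carefully chosen set of matchings so that every original vertex receives the same degree $\lceil 0.4\gamma^{3}n\rceil$; the constant $0.4$ compensates for degree loss during the restriction.

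\textbf{Main obstacle.} The conceptual obstacle is Step~3: going from ``nearly regular'' to ``exactly regular'' while keeping the degree quantitatively large. A naive dyadic pigeonhole on degrees to concentrate them in a factor-of-$2$ window would introduce a $\log n$ loss that the theorem does not allow, so one needs a more structured argument (either a direct matching/flow construction as sketched, or an appeal to a regular-subgraph result in the Alon--Friedland--Kalai family) that avoids logarithmic losses and tracks the $\gamma^{3}$ dependence exactly. Getting the explicit constant $0.4$ out of this bookkeeping is what makes the stated bound delicate, though for the purposes of the applications in this paper only the order-of-magnitude $\Theta(\gamma^{3}n)$ is needed.
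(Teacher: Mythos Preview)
The paper does not prove this theorem at all: it is quoted as a black-box result of R\"odl and Wysocka~\cite{rodl1997note}, and the paper even remarks that a weaker ``nearly-regular'' version would suffice for its applications. So there is no paper proof to compare against; I can only assess your outline on its own merits.

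Steps~1 and~2 are fine and standard. Step~3, however, is not a proof but a wish, and the specific mechanism you sketch does not work. If you embed the bipartite graph $H$ into a $D$-regular bipartite host, K\"onig-decompose into perfect matchings $M_1,\dots,M_D$, and then restrict to the original vertices, each vertex $v$ has exactly $d_H(v)$ matchings in which its edge is ``real''. You then want a subset $S\subseteq[D]$ such that \emph{every} vertex has exactly $r$ real edges among $\{M_i:i\in S\}$; this is a system of $|V(H)|$ equality constraints on a single subset and is hopelessly over-determined in general. No ``careful choice'' of matchings will produce exact regularity from arbitrary degrees in $[d,D]$, and the Alon--Friedland--Kalai results you allude to concern the existence of regular subgraphs under very different hypotheses (degrees in a residue class modulo a prime) and do not plug this gap.

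What actually drives the R\"odl--Wysocka argument is first passing to an \emph{almost-regular} bipartite subgraph (all degrees within a constant factor of each other, with the right part sizes), and then extracting an exactly regular subgraph from that --- the second step uses a genuine structural argument, not just matching selection. You have correctly identified this as the obstacle, but your sketch does not overcome it; as written, your Step~3 is a gap rather than a proof.
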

We remark that in our proofs, this theorem could be replaced with a weaker one which produces a nearly-regular graph (e.g. one where all degrees are in $(1\pm \gamma)\gamma^3 n$). But this makes the numbers a bit more technical in later proofs, so we avoid doing this. 

The following lemma  shows that most of the edges in a clump are in $B(K)$. Since $B(K)$ is covered by $M(K)$, this is ultimately the source of the ``every component has a cover of size $\leq (2+\varepsilon)d$'' part of Theorem~\ref{main_theorem}. 
\begin{lemma}\label{lem:clump_large_intersection_with_B/D}
Let $m^{-1}\ll p\leq 1/100$.
Let $K$ be a clump with parameters $p,\kappa, m$ and $H\in \mathcal H(K)$.
Then $e(H\setminus B(K))\leq p^2m^2$ and $|V(H)\cap V(M(K))|\geq pm/2$. 
Additionally $e(K\setminus B(K))\leq 4pe(K)$.
\end{lemma}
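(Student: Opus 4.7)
I will prove the three assertions in the order $|V(H)\cap V(M(K))|\ge pm/2$, then $e(H\setminus B(K))\le p^2m^2$, and finally $e(K\setminus B(K))\le 4p\,e(K)$ as an averaging corollary. All three are driven by the maximality clause in~(M): one cannot enlarge $\mathcal M(K)$ by any $p^{13}m$-regular subgraph that lives inside some $H_i\in\mathcal H(K)$ and is vertex-disjoint from $V(M(K))$. The sole extraction tool will be R\"odl--Wysocka (Theorem~\ref{thm:Rodl_Wosocka}), applied to a subgraph of $H$ that avoids $V(M(K))$; the slack between a ``$p^2$-density'' threshold and the target degree $p^{13}m$, guaranteed by $p\le 1/100$, will make every extraction go through easily.

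For the minimum-intersection bound, I will assume for contradiction $|V(H)\cap V(M(K))|<pm/2$ and set $W:=V(H)\setminus V(M(K))$, so $|W|\ge m/2$. Fact~\ref{fact:cut_dense_min_degree} gives $\delta(H)\ge 0.9pm$, hence each $v\in W$ has $d_{H[W]}(v)\ge 0.9pm-pm/2\ge 0.4pm$, and handshaking yields $e(H[W])\ge 0.1pm^2$. Thus $H[W]$ has density at least $0.1p$ on at most $m$ vertices, and R\"odl--Wysocka will produce a regular subgraph of degree $\lceil 0.4(0.1p)^3|W|\rceil\gg p^{13}m$; this subgraph sits inside $H$ and is vertex-disjoint from $V(M(K))$, contradicting the maximality of $k(K)$. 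The same strategy will prove $e(H\setminus B(K))\le p^2 m^2$: the subgraph of $H\setminus B(K)$ induced on $W$ coincides with $H[W]$ (since every $B(K)$-edge meets $V(M(K))$), so if the edge-count of $H[W]$ exceeded $p^2 m^2$ then an identical R\"odl--Wysocka extraction would give a fresh $p^{13}m$-regular subgraph avoiding $V(M(K))$ and contradict maximality. The remaining edges of $H\setminus B(K)$ --- those touching $V(M(K))$ but not lying in $B(K)$, namely edges within $V(M(K))\setminus E(M(K))$ and edges from $V(M(K))$ to the light set $W_2:=\{v\in W:|N_K(v)\cap V(M(K))|<pm/2\}$ --- will be controlled separately: each $v\in W_2$ has $\ge 0.4pm$ $H$-neighbours in $W$, so the already-established bound on $e(H[W])$ forces $|W_2|=O(pm)$, and hence the total extra contribution is also $O(p^2 m^2)$, absorbed by the large slack between $p^2$ and $p^{13}$.

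The third claim will be an averaging corollary: since $\mathcal H(K)$ partitions $E(K)$ by~(H), we get $e(K\setminus B(K))=\sum_{H\in\mathcal H(K)}e(H\setminus B(K))\le|\mathcal H(K)|\cdot p^2m^2$, while Fact~\ref{fact:cut_dense_min_degree} gives $e(H)\ge pm^2/4$ for each $H\in\mathcal H(K)$ and hence $e(K)\ge|\mathcal H(K)|\cdot pm^2/4$; dividing will yield $e(K\setminus B(K))\le 4p\,e(K)$. The main delicate point of the plan is the second claim: the definition of $B(K)$ only explicitly captures $E(M(K))$ together with edges from $V(M(K))$ to $V(M(K))$-heavy outside vertices, so edges inside $V(M(K))\setminus E(M(K))$ and edges to the light vertices $W_2$ must be shown small separately, which is what the auxiliary $|W_2|=O(pm)$ bound and the maximality of $\mathcal M(K)$ accomplish. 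Everything else is a clean R\"odl--Wysocka argument plus the fact that the parameter $p^{13}$ leaves plenty of room below $p^2$.
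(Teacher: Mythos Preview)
Your plan is essentially the paper's proof. The paper also sets $R_2:=H\setminus V(M(K))$ (your $H[W]$), applies R\"odl--Wysocka to get $e(R_2)<p^4m^2$, and from this single bound derives both $|V(H)\cap V(M(K))|\ge pm/2$ and $|\{v\notin V(M(K)):|N_H(v)\cap V(M(K))|<pm/2\}|\le p^2m$ via the same degree-counting you describe; it then bounds $e(R_1)<p^3m^2$ and sums. The averaging step for $e(K\setminus B(K))\le 4p\,e(K)$ is identical to yours. The only cosmetic difference is ordering: the paper proves $e(R_2)<p^4m^2$ once and reads both consequences off it, whereas you run the contradiction argument twice (once for the intersection bound, once for $e(H[W])$).

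One genuine loose end in your write-up: you list ``edges within $V(M(K))\setminus E(M(K))$'' among the cases to be ``controlled separately'', but then never actually bound them---your closing sentence only handles the $W_2$-contribution via $|W_2|=O(pm)$. The paper does not treat this as a separate case at all; it simply asserts that $R_1$ and $R_2$ edge-partition $H\setminus B(K)$. Either follow the paper's decomposition and drop this case, or if you keep it, supply an argument; asserting that it is $O(p^2m^2)$ and ``absorbed by the slack'' is not a proof, and there is no obvious R\"odl--Wysocka extraction available here since any regular subgraph you find inside $V(M(K))$ will fail the vertex-disjointness needed to contradict maximality of $k(K)$.
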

\begin{proof}
Everything in this paragraph takes place inside the graph $H$. 
Note that   Fact~\ref{fact:cut_dense_min_degree} gives $\delta(H)\geq 0.9pm$. 
Let    $R_1=\{uv: u\in M(K), v\not\in M(K), |N(v)\cap V(M(K))|< pm/2\}$ and  $R_2=H\setminus V(M(K))$, noting that these edge-partition $H\setminus B(K)$. 
If $e(R_2)\geq p^4m^2$, then by Theorem~\ref{thm:Rodl_Wosocka}, it would contain a $p^{13}m$-regular subgraph $R_2'$, which we could add to $\mathcal M(K)$ to contradict maximality of $k(K)$ in (M). So we can assume $e(R_2)< p^4 m^2$. This shows that $|V(H)\cap V(M(K))|\geq pm/2$, as otherwise $e(R_2)=\frac12\sum_{v\in V(H)\setminus V(M(K))}|N_H(v)\setminus V(M(K))|\geq |V(H)\setminus V(M(K))|(\delta(H)-|V(H)\cap V(M(K))|)\geq (m-pm/2)(0.9pm-pm/2)\geq 0.2pm^2>p^4m^2$.
It also shows $|\{v\not\in M(K): |N(v)\cap V(M(K))|< pm/2\}|\leq p^2 m$ (since $\delta(H)\geq 0.9pm$, and so every vertex in this set touches $\geq0.4pm$ edges of $R_2$. This would give $e(R_2)\geq (p^2m)(0.4pm)/2=0.2p^3m^2$, contradicting $e(R_2)< p^4 m^2$).
This shows that  $e(R_1)< (pm/2)|\{v\not\in M(K): |N(v)\cap V(M(K))|< pm/2\}|\leq   p\times p^2m^2$, and so $e(H\setminus B(K))=e(R_1)+e(R_2)\leq p^3m^2+p^4m^2< p^2m^2$ as required. 

For the ``additionally'' part, note that since (H) tells us that $\mathcal H(K)$ is a partition of $E(K)$, we have $e(K\setminus B(K))=\sum_{H\in \mathcal H(K)}e(H\setminus B(K))\leq \sum_{H\in \mathcal H(K)}p^2m^2=p\sum_{H\in \mathcal H(K)} pm^2\leq p\sum_{H\in \mathcal H(K)} 4e(H)=4pe(K)$ (here the last inequality uses Fact~\ref{fact:cut_dense_min_degree}).
\end{proof}

The following important lemma is what we mean when we say that clumps are ``highly connected''.  To see why, consider specifically the the  ``$t=1$ case'' of the lemma --- this says that $C(K_1)$ together with an arbitrary   small subset $I_1\subseteq D_1$ and an arbitrary $H_1\in \mathcal H(K_1)$ are cut-dense. In other words --- all small, somewhat curated subgraphs of all clumps are cut-dense. Note also that the clumps are not required to be distinct/disjoint --- we will sometimes apply the lemma with  $K_1= \dots= K_t$. 
 \begin{lemma} \label{lem:clump_cut_dense_subgraphs}
 Let $m^{-1}\ll \mu, \kappa\leq p^{13}\leq 1/1000.$
Let $K_1, \dots, K_t$ be nonempty clumps with parameters $p,m,\kappa$ and let $k\geq \max k(K_i)$. Suppose we have $I_i\subseteq D(K_1)\cap D(K_i)$ with $|I_i|=\mu m$. Let $H_1, \dots, H_t$ such that for each $i$, $H_i=\emptyset$ or $H_i\in \mathcal H(K_i)$. Then $\bigcup (I_i\cup H_i\cup C(K_i))$ is $\kappa^{(10k)!}\mu^{2kt^2}$-cut-dense.
\end{lemma}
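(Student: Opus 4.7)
The plan is to build up $G := \bigcup_i (I_i \cup H_i \cup C(K_i))$ piece by piece, iteratively invoking Lemma~\ref{lem:cut_dense_union} to merge cut-dense subgraphs sharing many vertices, and Lemma~\ref{lem:cut_dense_add_vertices} to attach vertices with dense neighbourhoods. At each step I will track the decay of the cut-density parameter and show that, after $t$ assembly steps and $t-1$ merges, the loss stays within $\mu^{2kt^2}$ of the worst starting parameter $\kappa^{(10k)!}$.

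For fixed $i$ I would first assemble $A_i := I_i \cup H_i \cup C(K_i)$ into a single cut-dense graph. The base piece $C(K_i)$ is $\kappa^{(10k(K_i))!}$-cut-dense of order at most $4^{k(K_i)}m \leq 4^k m$ by property (C), and by monotonicity of the definition we may regard it as $\kappa^{(10k)!}$-cut-dense. When $H_i$ is nonempty, Lemma~\ref{lem:clump_large_intersection_with_B/D} gives $|V(H_i) \cap V(M(K_i))| \geq pm/2$; since $V(M(K_i)) \subseteq V(C(K_i))$ by property (C), Lemma~\ref{lem:cut_dense_union} merges $H_i$ and $C(K_i)$ with only a factor-$p/O(4^k)$ loss. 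Finally, each $v \in I_i \setminus V(C(K_i))$ lies in $V(B(K_i)) \setminus V(M(K_i))$ and so, by the defining property of $B(K_i)$, has at least $pm/2$ neighbours in $V(M(K_i)) \subseteq V(C(K_i))$; Lemma~\ref{lem:cut_dense_add_vertices} then attaches the $I_i$-vertices with another polynomial loss in $p$ and $4^{-k}$.

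Next I would glue the $A_i$'s into the running union one at a time. The key point is that $I_i$ lies in $V(D(K_1))$ as well, so the same argument applied from the $K_1$-side shows that $I_i$ is densely connected to $V(C(K_1))$, which already sits inside the running graph. Thus the running graph and $A_i$ share the $\mu m$ vertices of $I_i$, and Lemma~\ref{lem:cut_dense_union} merges them at a further cost of a factor of order $\mu m /(4 \cdot t 4^k m) = \mu/O(t 4^k)$, since the final union has order at most $O(t 4^k m)$. Multiplying up the losses over the $t$ assembly steps and $t-1$ merges, starting from $\kappa^{(10k)!}$, yields a final cut-density of at least $\kappa^{(10k)!}$ times $\mu^{O(t)} p^{O(t)}$ divided by polynomial-in-$k,t$ constants; using $\mu, \kappa \leq p^{13}$ and $t, k$ fixed, this is comfortably below $\kappa^{(10k)!}\mu^{2kt^2}$ with room to spare in the exponent.

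The main obstacle I expect is the $I_i$-attachment step: the edges of $B(K_i)$ witnessing $|N(v) \cap V(M(K_i))| \geq pm/2$ are edges of $K_i$ rather than of $H_i$ or $C(K_i)$. Making Lemma~\ref{lem:cut_dense_add_vertices} applicable therefore relies on interpreting $I_i \cup H_i \cup C(K_i)$ as a subgraph of $K_i$ that carries enough of $B(K_i)$ to see those edges — an interpretation which must be pinned down at the start, or alternatively the $K_1$-side route via $B(K_1)$ must be used. The remaining work is careful bookkeeping of the losses: verifying that the cumulative polynomial-in-$k,t$ factors really fit inside the exponent $2kt^2$ at every stage of the induction.
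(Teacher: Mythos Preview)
Your overall strategy matches the paper's: attach $I_i$ to $C(K_i)$ via Lemma~\ref{lem:cut_dense_add_vertices} (using that $v\in I_i\setminus C(K_i)\subseteq B(K_i)\setminus M(K_i)$ has $\geq pm/2$ neighbours in $M(K_i)\subseteq C(K_i)$), merge in $H_i$ via Lemma~\ref{lem:cut_dense_union} (using $|V(H_i)\cap V(M(K_i))|\geq pm/2$ from Lemma~\ref{lem:clump_large_intersection_with_B/D}), and then glue the resulting pieces $A_i$ through the $I_i$'s. The order of the first two operations is swapped relative to the paper but this is immaterial.

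There is one genuine slip in your gluing step. You write that since $I_i$ is densely connected to $C(K_1)$, ``the running graph and $A_i$ share the $\mu m$ vertices of $I_i$''. This does not follow: having many edges from $I_i$ into $C(K_1)$ does not place the \emph{vertices} of $I_i$ inside the running union, and Lemma~\ref{lem:cut_dense_union} needs a large vertex intersection, not just many crossing edges. The paper handles this by also applying Lemma~\ref{lem:cut_dense_add_vertices} on the $K_1$ side, establishing that $C(K_1)\cup I_1\cup I_i$ (and hence $C(K_1)\cup H_1\cup I_1\cup I_i$) is cut-dense for each $i$. At step $i$ this auxiliary piece shares all of $C(K_1)\cup H_1\cup I_1$ with the running union, so one merge absorbs $I_i$ into the running union, and a second merge then brings in $A_i$ via the now-present $I_i$. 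This costs only one extra $\mu/O(t4^k)$ factor per step, which the target exponent $2kt^2$ absorbs, so your bookkeeping conclusion survives once this correction is made.
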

\begin{proof}
Note $\mu\leq p^{13}$ and $|C(K_i)|\geq p^{13} m$ (which comes from $M(K_i)\subseteq C(K_i)$, (M), and nonemptyness of $K$) implies that $\frac{|C(K_i)|}{|C(K_i)|+2\mu m}\in [1/4,1]$. By definition of $B(K_i), D(K_i)$, and $M(K_i)\subseteq C(K_i)$, all $v\in I_i\setminus C(K_i)\subseteq B(K_i)\setminus M(K_i)$ have $|N_{K_i}(v)\cap V(C(K_i))|\geq |N_{K_i}(v)\cap V(M(K_i))|\geq pm/2$. For the same reasons, all $v\in I_i\setminus C(K_1)$ have $|N_{K_1}(v)\cap V(C(K_1))|\geq pm/2$. 
By Lemma~\ref{lem:cut_dense_add_vertices}  and (C), $C(K_1)\cup I_1\cup I_i$, $C(K_i)\cup I_i$ are $\frac{\kappa^{(10k)!}}{4}(\frac{pm/2}{|C(K_i)|})(\frac{|C(K_i)|}{|C(K_i)|+2\mu m})^2\geq \kappa^{(10k)!}p4^{-4k}$-cut-dense. Lemma~\ref{lem:clump_large_intersection_with_B/D} gives $|V(C(K_i)\cup I_i)\cap V(H_i)|\geq |V(M(K_i))\cap V(H_i)|\geq pm/2$, $|V(C(K_1)\cup I_1\cup I_i)\cap V(H_1)|\geq |V(M(K_1))\cap V(H_1)|\geq pm/2$. 
By Lemma~\ref{lem:cut_dense_union}  $C(K_1)\cup H_1\cup I_1\cup I_i$, $C(K_i)\cup H_i\cup I_i$ are  $\frac14\kappa^{(10k)!}p4^{-4k}(\frac{p m/2}{4^km+m+2\mu m})\geq \kappa^{(10k)!}p^24^{-6k}$-cut-dense.    Applying  Lemma~\ref{lem:cut_dense_union} repeatedly, $\bigcup_{i=1}^{s}C(K_i)\cup H_i\cup I_i$ is $\kappa^{(10k)!}p^24^{-6k} \prod_{i=1}^s\frac{|I_i|}{4|\bigcup_{j=1}^i C(K_j)\cup H_j\cup I_j|}\geq \kappa^{(10k)!}p^24^{-6k} \prod_{i=1}^s\frac{\mu m}{4i(4^{k}m+ m+2\mu m)}\geq (\kappa^{(10k)!}p^24^{-6k})(\mu^{s}4^{-2ks}/s!)\geq \kappa^{(10k)!}\mu^{2ks^2}$-cut-dense. 
\end{proof}

At the start of our clumping process we give some initial cut-dense subgraphs the structure of clumps via the following.
\begin{lemma} \label{lem:clump_initialization}
Let $m^{-1}\ll\kappa\leq p\leq 1/1000$ and $m\in \mathbb{N}$. For every $p$-cut-dense order $m$ graph $H$, we  can define a clump $K$ with $C(K)=H, \mathcal H(K)=\{H\}$, with parameters $p,m, \kappa$, and $1\leq k(K)\leq p^{-{13}}$.
\end{lemma}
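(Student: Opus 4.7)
The plan is to take $K$ to be the graph $H$ itself (so $V(K):=V(H)$ and $E(K):=E(H)$), set $\mathcal H(K):=\{H\}$ and $C(K):=H$, and then define $\mathcal M(K)$ by choosing a maximum-size collection of vertex-disjoint $p^{13}m$-regular subgraphs each contained in $H$, with $k(K):=|\mathcal M(K)|$. After that, the proof is essentially just checking each clause of Definition~\ref{def:clump}.

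Verifying the three conditions is almost mechanical. For (H), $\{H\}$ is a one-element collection of order-$m$ $p$-cut-dense graphs whose (edge-disjoint) union is $E(K)$. For (M), $\mathcal M(K)$ was defined to be maximum by fiat. For (C), $\bigcup\mathcal M(K)\subseteq H=C(K)$ by construction and $|C(K)|=m\leq 4^{k(K)}m$ is trivial; finally, since $\kappa\leq p\leq 1/1000$, the exponent $(10k(K))!$ is positive and so $\kappa^{(10k(K))!}\leq\kappa\leq p$, hence $H$ being $p$-cut-dense is also $\kappa^{(10k(K))!}$-cut-dense by the monotonicity of cut-density in the parameter.

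The two remaining points are the bounds on $k(K)$. For $k(K)\geq 1$: Fact~\ref{fact:cut_dense_min_degree} gives $e(H)\geq pm^2/4$ (valid because $m^{-1}\ll p$, so in particular $m\geq 10$). Theorem~\ref{thm:Rodl_Wosocka} with $\gamma=p/4$ then produces a regular subgraph of $H$ of degree at least $0.4(p/4)^3 m = p^3m/160$, which exceeds $p^{13}m$ since $p\leq 1/1000$ makes $p^{10}$ much smaller than $1/160$. Exactly as in the proof of Lemma~\ref{lem:clump_large_intersection_with_B/D}, this witnesses a $p^{13}m$-regular subgraph eligible for $\mathcal M(K)$, so $k(K)\geq 1$. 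For $k(K)\leq p^{-13}$: each member of $\mathcal M(K)$ has at least $p^{13}m$ vertices, and the members are vertex-disjoint inside $V(H)$, so $k(K)\cdot p^{13}m\leq m$, giving $k(K)\leq p^{-13}$.

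The whole lemma is a bookkeeping result that converts a single $p$-cut-dense graph into a clump in the most obvious way, so I expect no real obstacle: the only nontrivial ingredient is the R\"odl--Wysocka theorem, invoked solely to guarantee that $\mathcal M(K)$ is nonempty. The one mild slippage one has to live with is that R\"odl--Wysocka produces a regular subgraph whose degree is at least, but not exactly, $p^{13}m$; we rely on the paper's consistent convention (as used in Lemma~\ref{lem:clump_large_intersection_with_B/D}) that such a subgraph qualifies as a ``$p^{13}m$-regular'' element of $\mathcal M(K)$.
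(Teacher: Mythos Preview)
Your proposal is correct and essentially identical to the paper's proof: both set $\mathcal M(K)$ to be a maximum collection of vertex-disjoint $p^{13}m$-regular subgraphs of $H$, verify (H), (M), (C) directly, use R\"odl--Wysocka together with Fact~\ref{fact:cut_dense_min_degree} to get $k(K)\geq 1$, and use the vertex count to get $k(K)\leq p^{-13}$. The only cosmetic difference is that the paper routes the cut-density check through $\kappa^{(10\cdot 1)!}$ explicitly (using $k(K)\geq 1$), whereas you observe directly that $(10k(K))!\geq 1$ forces $\kappa^{(10k(K))!}\leq \kappa\leq p$; both are fine.
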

\begin{proof}
Let $\mathcal M(K)=\{M_1, \dots, M_{k(K)}\}$ be a collection of vertex-disjoint $p^{13}m$-regular subgraphs of $H$ with $k(K)$ as large as possible to get a family satisfying (M). We have $k(K)\geq 1$ by Theorem~\ref{thm:Rodl_Wosocka} (and $e(H)\geq pm^2/4$ using Fact~\ref{fact:cut_dense_min_degree}), and $k(K)\leq p^{-13}$ since $H$ doesn't have enough vertices to contain $>p^{-13}$ vertex-disjoint graphs of order $\geq p^{13}m$.
We have that $H$  is an order $m$, $p$-cut-dense graph giving (H).  By monotonicity and  ``$p\geq \kappa$'', $H$ is also $\kappa^{(10\cdot 1)!}\geq \kappa^{(10 k(K))!}$-cut-dense. Together with $|C(K)|=|H|=m\leq 4^1m\leq 4^{k(K)}m$ and $\bigcup \mathcal M(K)\subseteq H=C(K)$, this gives  (C).
\end{proof}

The following lemma is what drives our clumping process. It is the concrete version of the joining operations $(\ast)$, $(1)$, $(2)$ mentioned in the proof overview. 
\begin{lemma} \label{lem:clump_joining}
Fix $m^{-1}\ll \kappa\leq p\leq 1/1000$.
Let $K_1, K_2$ be edge-disjoint clumps with parameters $\kappa,p,m$ having $k(K_1), k(K_2)\le \kappa^{-1}$. 
Suppose $|V(D(K_1))\cap V(D(K_2))|\geq \kappa^{6(10\max(k(K_1), k(K_2)))!}m$.
Then there is a clump $K$ with $E(K)=E(K_1)\cup E(K_2)$, $\mathcal H(K)=\mathcal H(K_1)\cup \mathcal H(K_2)$ with parameters $\kappa,p,m$ and $\max(k(K_1), k(K_2))\leq k(K)\leq k(K_1)+k(K_2)$.
\end{lemma}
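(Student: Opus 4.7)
The plan is to define $K$ directly: take $E(K) := E(K_1)\cup E(K_2)$ and $\mathcal{H}(K) := \mathcal{H}(K_1)\cup \mathcal{H}(K_2)$, which immediately gives property (H) since $K_1,K_2$ are edge-disjoint and each $\mathcal{H}(K_i)$ already partitions $E(K_i)$ into order-$m$, $p$-cut-dense graphs. Let $\mathcal{M}(K)$ be any maximum-size family of vertex-disjoint $p^{13}m$-regular subgraphs each contained in some $H\in \mathcal{H}(K)$. This gives property (M) and fixes $k(K)$. Decomposing $\mathcal{M}(K) = \mathcal{M}'_1\sqcup \mathcal{M}'_2$ according to whether each subgraph's container lies in $\mathcal{H}(K_1)$ or $\mathcal{H}(K_2)$, each $\mathcal{M}'_i$ is a legal candidate inside $\mathcal{H}(K_i)$, so $|\mathcal{M}'_i|\le k(K_i)$ and therefore $k(K)\le k(K_1)+k(K_2)$. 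Conversely $\mathcal{M}(K_j)$ is a candidate in $\mathcal{H}(K)$, giving $k(K)\ge \max(k(K_1),k(K_2))$.

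The real work is in building $C(K)$. I would split on whether $k(K) = \max(k(K_1),k(K_2))$ or $k(K) \ge \max(k(K_1),k(K_2))+1$. In the first case, say $k(K) = k(K_1)$ up to relabelling, reset $\mathcal{M}(K) := \mathcal{M}(K_1)$ (also a maximum family) and take $C(K) := C(K_1)$; all of (C) transfers verbatim because $k(K) = k(K_1)$, and the intersection hypothesis is never used.

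In the harder case $k(K) \ge k'+1$ with $k' := \max(k(K_1),k(K_2))$, choose for each $M \in \mathcal{M}'_i$ a container $H_M \in \mathcal{H}(K_i)$ and let $\mathcal{H}'_i := \{H_M : M \in \mathcal{M}'_i\}$, so $|\mathcal{H}'_i| \le k(K_i)$. Using the intersection hypothesis, fix $I \subseteq V(D(K_1))\cap V(D(K_2))$ of size $\mu m$, where $\mu := \kappa^{6(10k')!}$. Apply Lemma~\ref{lem:clump_cut_dense_subgraphs} with $t := |\mathcal{H}'_1|+|\mathcal{H}'_2|$ clumps consisting of $|\mathcal{H}'_1|$ copies of $K_1$ followed by $|\mathcal{H}'_2|$ copies of $K_2$ (so $K_1$ plays the role of the distinguished first clump), with every $I_l := I$ (valid since $I\subseteq V(D(K_1))\cap V(D(L_l))$ for both choices of $L_l$), and with the $H_l$'s enumerating $\mathcal{H}'_1\cup \mathcal{H}'_2$. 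Define $C(K)$ to be the resulting graph $\bigcup_l(I_l\cup H_l\cup C(L_l))$. This contains every $M\in \mathcal{M}(K)$ because each such $M$ sits inside its container $H_l$, so $\bigcup \mathcal{M}(K) \subseteq C(K)$ as required for (C).

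Only routine arithmetic remains. For the size bound, $|C(K)| \le |C(K_1)|+|C(K_2)|+\sum_l |V(H_l)|+|I| \le 2\cdot 4^{k'}m + k(K)m + \mu m$; using $k(K) \le 2k'$ and $\mu \le 1$ this is at most $4\cdot 4^{k'}m = 4^{k'+1}m \le 4^{k(K)}m$. For the cut-density, Lemma~\ref{lem:clump_cut_dense_subgraphs} gives $\kappa^{(10k')!}\mu^{2k't^2} = \kappa^{(10k')!(1+12k't^2)}$, and since $\kappa < 1$ it suffices to verify $(10k')!(1+12k't^2) \le (10k(K))!$; this follows from $(10k(K))!/(10k')! \ge (10k'+1)^{10}$ easily dominating the polynomial $1+12k't^2 \le 1+48k'^3$ using $k' \le \kappa^{-1}$. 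The main obstacle is really this bookkeeping: the exponent $6(10\max(k_1,k_2))!$ in the intersection hypothesis is calibrated precisely so that the gluing loss $\mu^{2k't^2}$ is absorbed by the one-step factorial jump from $(10k')!$ to $(10(k'+1))!$, which is exactly why the case split on whether $k(K)$ strictly exceeds $k'$ is essential.
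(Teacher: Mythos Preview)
Your proposal is correct and follows essentially the same approach as the paper: the same case split on whether $k(K)$ equals or strictly exceeds $k':=\max(k(K_1),k(K_2))$, the same reuse of $\mathcal{M}(K_1),C(K_1)$ in the first case, and the same construction of $C(K)$ via Lemma~\ref{lem:clump_cut_dense_subgraphs} in the second. The only cosmetic difference is that the paper applies that lemma with $t=k(K)+2$, appending two extra entries $K'_{k+1}=K_1$, $K'_{k+2}=K_2$ with empty $H$'s so that $C(K_1)$ and $C(K_2)$ are forced into $C(K)$; you instead take $t=|\mathcal H'_1|+|\mathcal H'_2|\le k(K)$, which works just as well since in the strict-increase case $|\mathcal M'_1|\ge k(K)-k(K_2)\ge 1$ and $|\mathcal M'_2|\ge k(K)-k(K_1)\ge 1$, so both $C(K_i)$ appear anyway (and in any event only $\bigcup\mathcal M(K)\subseteq C(K)$ is required).
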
 
\begin{proof} 
Without loss of generality $\max(k(K_1), k(K_2))=k(K_1)$. 
Note that $\mathcal H(K)$ is a family of order $m$, $p$-cut-dense graphs which partition $E(K)$ (just because $\mathcal H(K_1)$,  $\mathcal H(K_2)$ did the same for $E(K_1), E(K_2)$), and so $\mathcal H(K)$ satisfies (H).
 
Let $\mathcal N=\{N_1, \dots, N_k\}$ be a collection of vertex-disjoint $p^{13}m$-regular graphs, each contained in some $H_i\in \mathcal H(K_1)\cup \mathcal H(K_2)$, such that $k$ is as large as possible subject to this i.e ``there is no collection $\{N_1, \dots, N_{k+1}\}$ of vertex-disjoint $p^{13}m$-regular graphs, each contained in some $H_i\in \mathcal H(K_1)\cup \mathcal H(K_2)$''.
Note that we must have $k\geq k(K_1)$ (as otherwise $\mathcal M(K_1)$ would be a choice for $\mathcal N$ with larger $k$). We must also have $k\leq k(K_1)+k(K_2)\leq 2k(K_1)$ (as otherwise, since each $N_i$ is contained in $H\in \mathcal H(K_1)$ or $\mathcal H(K_2)$, we have either $|\{N_i: N_i\subseteq H\in \mathcal H(K_1)\}|> k(K_1)$ or $|\{N_i: N_i\subseteq H\in \mathcal H(K_2)\}|> k(K_2)$. These would contradict the maximality of $k(K_1)$/$k(K_2)$ in the definition of $K_1$/$K_2$ being clumps).
 We define $C(K),\mathcal M(K), k(K)$ satisfying (M), (C).
\begin{itemize}
\item If $k=k(K_1)$: Set $\mathcal M(K):=\mathcal M(K_1)$, $C(K):=C(K_1)$, and $k(K):=k(K_1)$. This satisfies  (C) just because $K_1$ was a clump. The ``$k(K)$ is as large as possible'' part of (M) holds because we've established ``there is no collection $\{N_1, \dots, N_{k+1}\}$ of vertex-disjoint $p^{13}m$-regular graphs, each contained in some $H_i\in \mathcal H(K_1)\cup \mathcal H(K_2)$'', while the other part of (M) holds just because $K_1$ was a clump.

\item If $k>k(K_1)$: Let $\mathcal M(K):=\mathcal N$ and $k(K):=k$, which satisfies (M) since $N$ was maximum. Let $I\subseteq V(D(K_1))\cap V(D(K_2))$ be a subset of order $\kappa^{6(10k(K_1))!}m$. For each $i=1, \dots, k$, we have some $H_i\in \mathcal H(K_1)\cup \mathcal H(K_2)$ with $N_i\subseteq H_i$.
We apply Lemma~\ref{lem:clump_cut_dense_subgraphs} with $t=k+2$, $\mu=\kappa^{6(10k(K_1))!}$, for $i=1, \dots, k$ each $K_i'=K_1$/$K_2$, depending on whether $H_i\in \mathcal H(K_1)$/$\mathcal H(K_2)$, $K_{k+1}'=K_1, K_{k+2}'=K_2, H_{k+1}=\emptyset, H_{k+2}=\emptyset$, and all $I_i=I$. This gives that $C:=C(K_1)\cup C(K_2)\cup I\cup \bigcup_{i=1}^k H_i$ is $\kappa^{(10k(K_1))!}(\kappa^{6(10k(K_1))!})^{2k(K_1)(k+2)^2}\geq \kappa^{(10k(K_1))!+24(10k(K_1))! k(K_1)^3}\geq \kappa^{(10k(K_1))!(10k(K_1))^5}\geq \kappa^{(10(k(K_1)+1))!}\geq \kappa^{(10k)!}=\kappa^{(10k(K))!}$-cut-dense.
Note that $|C|\leq |C(K_1)|+|C(K_2)|+|I|+km\leq (4^{k(K_1)}+4^{k(K_2)}+\kappa+k)m\leq 4\cdot 4^{k(K_1)}m\leq 4^{k(K)}m$ and $\bigcup \mathcal M(K)=\bigcup_{i=1}^kN_i\subseteq \bigcup_{i=1}^kH_i\subseteq C$, so we have checked (C).
\end{itemize} 
\end{proof}

We end this section by adapting Lemma~\ref{lem:main_tree_embedding_lemma} to the setting when we want to embed a tree into some clumps.
\begin{lemma}\label{lem:clump_tree_embedding}
Let $\Delta^{-1}, \varepsilon\gg p\gg h^{-1}\gg\kappa\gg L^{-1}\gg \gamma\gg d^{-1}$ and set $m:=d/h$. Set $C=2+\varepsilon$.
Let $K_1, \dots, K_t$ be edge-disjoint, nonempty clumps with parameters $p, \kappa, m$ in a graph $G$ so that $k(K_i)\leq 2Cp^{-13}h$ for all $i$ and $|V(D(K_i))\cap V(D(K_j))|\leq \kappa^{6(10\max(k(K_i), k(K_j))!}m$ for distinct $i,j$. Suppose that any of the following happen:
\begin{enumerate}[(1)]
\item $|M(K_i)|\geq (2+\varepsilon)d$ for some $i$.
\item For some $K_i$, there are $\geq Cp^{-3}h$ clumps $K_j$ with $|V(D(K_i))\cap V(D(K_j))|\geq \gamma m$.
\item We can define an auxiliary $L$-ary, depth $L$ tree $S$ with vertices $\{K_1, \dots, K_t\}$ and pick distinct vertices $u_{K_iK_j}\in V(D(K_i))\cap V(D(K_j))$ for all edges $K_iK_j\in E(S)$.
\end{enumerate}
Then $G$ has a copy of every $d$-vertex tree $T$ with $\Delta(T)\leq \Delta$.
\end{lemma}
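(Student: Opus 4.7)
The plan is to handle the three sufficient conditions separately, in each case reducing to Lemma~\ref{lem:main_tree_embedding_lemma} (either part (i) or part (ii)).

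For Case (1), where $|M(K_i)|\geq (2+\varepsilon)d$ for some $i$: I observe that $M(K_i)=\bigcup \mathcal M(K_i)$ is a \emph{vertex-disjoint} union of $p^{13}m$-regular graphs by (M), so it is itself $p^{13}m$-regular. By (C), $C(K_i)$ is $\kappa^{(10k(K_i))!}$-cut-dense with $|C(K_i)|\le 4^{k(K_i)}m$, and contains $M(K_i)$. Since $k(K_i)\le 2Cp^{-13}h$ bounds everything in terms of the fixed parameters $p,\kappa,h$, we can write the regular degree as $p' |C(K_i)|$ with $p'=p^{13}m/|C(K_i)|$ a constant. The hierarchy $\Delta^{-1},\varepsilon\gg p\gg h^{-1}\gg\kappa$ forces all of these parameters to dominate $L^{-1}$, so Lemma~\ref{lem:main_tree_embedding_lemma}(i) applies with $G=C(K_i)$, $R=M(K_i)$ and yields the copy of $T$.

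For Case (3), where the auxiliary $L$-ary depth-$L$ tree $S$ is given, the clumps form \emph{almost} the structure required by Lemma~\ref{lem:main_tree_embedding_lemma}(ii), but their pairwise intersections are merely small (at most $\kappa^{6(10\max k)!}m$) rather than zero, and the designated connecting vertex $u_{K_iK_j}\in V(D(K_i))\cap V(D(K_j))$ may lie outside $C(K_i)$. For each $K_i\in V(S)$, I would build $G_i$ from $C(K_i)$ by: (a) adjoining each $u_{K_iK_j}$ with $K_iK_j\in E(S)$ --- these have at least $pm/2$ neighbours in $M(K_i)\subseteq C(K_i)$ by definition of $D(K_i)$, so Lemma~\ref{lem:cut_dense_add_vertices} preserves cut-density; and (b) deleting every vertex of $V(C(K_i))$ that is shared with some non-neighbouring $K_l$, together with every connecting vertex $u_{K_lK_{l'}}$ with $\{l,l'\}\cap\{i\}=\emptyset$. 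Because the clumps are pairwise nearly disjoint and there are at most $t\le L^L$ of them, the total number of deletions is a tiny fraction of $|C(K_i)|$, so Lemma~\ref{lem:cut_dense_delete_small_set} keeps $G_i$ cut-dense. By construction the $G_i$'s now satisfy the exact disjointness demanded by part (ii), so the lemma embeds $T$.

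For Case (2), where some $K_i$ has $\ge Cp^{-3}h$ neighbouring clumps $K_j$ with $|V(D(K_i))\cap V(D(K_j))|\ge \gamma m$: this is what I expect to be the main obstacle, because $Cp^{-3}h$ is far too few clumps to form a depth-$L$ tree (so (ii) is out), and a naive disjoint-union of $M(K_j)$'s is only of order $Cp^{10}d<(2+\varepsilon)d$ (so (i) does not apply directly either). My plan is to split $T$ using a standard centroid/separator argument for bounded-degree trees into a small ``core'' subtree $T_0$ and ``leaf'' subtrees $T_1,\dots,T_s$ hanging off the leaves of $T_0$, chosen so that $s\le Cp^{-3}h$ and each $|T_j|\le m$. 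The core $T_0$ is embedded into $K_i$ using Lemma~\ref{lem:main_tree_embedding_lemma}(i) applied to $C(K_i)$ with $M(K_i)$ as the regular subgraph (the small size of $T_0$ means we only need a regular subgraph of modest order, which is available from $M(K_i)$). Each leaf $T_j$ is then embedded into a distinct neighbour $K_j$, rooted at a vertex of $V(D(K_i))\cap V(D(K_j))$ that agrees with the image of the attachment point; cut-density of $C(K_j)\cup$(leaf) via Lemma~\ref{lem:clump_cut_dense_subgraphs} guarantees that the attachment point has enough flexibility, and a Hall-type matching argument pairs the leaves of $T_0$ with distinct neighbours. The technical heart of this case will be executing the matching and the simultaneous embedding so that the attachment vertex chosen in $K_i$ is consistent with an available rooted embedding in each $K_j$.
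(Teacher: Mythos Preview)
Your Case~(1) matches the paper's argument and is correct.

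Your Case~(3) contains a quantitative gap. You bound the number of vertices to be deleted from each $G_i$ by roughly $t\cdot\kappa^{6(10\max k)!}m\le L^{L}\kappa^{6(10\max k)!}m$ and claim this is a tiny fraction of $|C(K_i)|$. But in the hierarchy $\kappa\gg L^{-1}$, the exponent $6(10\max k)!$ depends only on $p,h,\varepsilon$ (since $\max k\le 2Cp^{-13}h$), so $\kappa^{6(10\max k)!}$ is a \emph{fixed} small number determined before $L$ is chosen; meanwhile $L^{L}$ can be made arbitrarily large relative to it. Hence this product is not small and Lemma~\ref{lem:cut_dense_delete_small_set} does not apply. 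The paper repairs this by first assuming Case~(2) fails, so that each $K_i$ has fewer than $Cp^{-3}h$ clumps meeting it in $\ge\gamma m$ vertices; then the deletion set splits as $\le t\gamma m+(Cp^{-3}h)\kappa^{6(10k(K_i))!}m$, and since $\gamma\ll L^{-1}$ the first term is controllable.

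Your Case~(2) has a more serious problem: the proposed tree decomposition does not exist in general. If $T$ is a path (or more broadly has few leaves), then any subtree $T_0\subseteq T$ has at most two components of $T\setminus T_0$ hanging off it, so requiring each piece $T_j$ to have order $\le m=d/h$ forces $|T_0|\ge d-2m\approx d$. You then propose to embed this large $T_0$ into $C(K_i)$ via Lemma~\ref{lem:main_tree_embedding_lemma}(i), which needs a regular subgraph of order $\ge(2+\varepsilon)|T_0|\approx(2+\varepsilon)d$ --- but that is precisely Case~(1), which we are not assuming. The paper takes a completely different route: it picks one $H_j\in\mathcal H(K_j)$ from each of the $t=Cp^{-3}h$ clumps, glues $\bigcup_j(C(K_j)\cup H_j\cup I_j)$ into a single cut-dense graph via Lemma~\ref{lem:clump_cut_dense_subgraphs}, and then shows that the \emph{maximum} $p^{13}m$-regular subgraph $M$ of $\bigcup_j H_j$ must intersect each $H_j\cap D(K_j)$ in $\ge pm/20$ vertices (else R\"odl--Wysocka would enlarge $M$). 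Inclusion--exclusion over the nearly-disjoint $D(K_j)$'s then gives $|M|\gtrsim t\cdot pm/20\ge Cd$, and Lemma~\ref{lem:main_tree_embedding_lemma}(i) finishes. No tree splitting or matching is needed.
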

\begin{proof}
\begin{enumerate}[(1)]
\item By (C), $C(K_i)$ is $\kappa^{10k(K_i)!}\geq \kappa^{10(2Cp^{-13}h)!}$-cut-dense, has order $|C(K_i)|\leq 4^{k(K_i)}m\leq 4^{2Cp^{-13}h}m$  and contains a $p^{13}m$-regular graph $M(K_i)$ of order $\geq (2+\varepsilon)d$. Thus Lemma~\ref{lem:main_tree_embedding_lemma} (i) applies (with $G=C(K_i)$, $n=|C(K_i)|$, $q=\kappa^{10(2Cp^{-13}h)!}, p'=p^{13}m/n\geq \frac{p^{13}}{4^{2Cp^{-13}h}}, \Delta=\Delta, \varepsilon=\varepsilon, d=d$, and no $L$ since it doesn't come up in (i)) to give a copy of $T$.

\item   
Without loss of generality, $t={Cp^{-3}h}$ and  the clumps are ordered so that  $|K_1\cap K_j|\geq \gamma m$, for $j=1, \dots t$ (for $i>1$ this is by assumption, while for $i=1$ we have $|K_1\cap K_1|=|K_1|\ge p^{13}m\gg \gamma m$ due to $K_1$ being nonempty).  For $i=1, \dots, t$, pick arbitrary order $m$, $p$-cut-dense $H_i\in \mathcal H(K_i)$, and pick order $\gamma m$ subsets $I_i\subseteq V(D(K_1))\cap V(D(K_i))$.
By Lemma~\ref{lem:clump_cut_dense_subgraphs} (with $k=2Chp^{-13}$, $\mu=\gamma, t=t$),  $\bigcup_{i=1}^t C(K_i)\cup H_i\cup I_i$ is $\kappa^{(2\cdot 10Chp^{-13})!}\gamma^{2(2Chp^{-13})t^2}\geq\gamma \cdot \gamma^{2(2Chp^{-13})(Cp^{-3}h)^2}\geq  \gamma^{h^4}$-cut-dense. 

Let $M$ be the largest $p^{13}m$-regular subgraph of $\bigcup_{i=1}^{t} H_i$. We claim that that $|V(M)\cap V(H_i\cap D(K_i))|\geq pm/20$ for all $i$. 
Indeed, suppose otherwise that for some $i$, $|V(M)\cap V(H_i\cap D(K_i))|< pm/20$. Recall Lemma~\ref{lem:clump_large_intersection_with_B/D} tells us that $e(H_i\cap D(K_i))=e(H_i)-e(H_i\setminus D(K_i))\overset{\footnotesize F~\ref{fact:cut_dense_min_degree}}\geq pm^2/4 -e(H_i\setminus D(K_i))\overset{{B(K_i)\subseteq D(K_i)}}{\geq} pm^2/4-e(H_i\setminus B(K_i)) \overset{L~\ref{lem:clump_large_intersection_with_B/D}}\geq (p/4-p^2)m^2$.
Then $e(H_i\cap D(K_i)\setminus V(M))\geq e(H_i\cap D(K_i))-|V(M)\cap V(H_i\cap D(K_i))||H_i|\geq (p/4-p^2)m^2-(pm/20)m\geq pm^2/8=p|V(H_i)|^2/8\geq p|H_i\cap D(K_i)\setminus V(M))|^2/8$. Set $n:=|H_i\cap D(K_i)\setminus V(M)|\geq \sqrt{e(H_i\cap D(K_i)\setminus V(M))}\geq \sqrt{p/8}m$, noting $n^{-1}\ll p/8\leq 1/2$.
 Theorem~\ref{thm:Rodl_Wosocka} applies to $H_i\cap D(K_i)\setminus V(M)$ in order to give a $p^{13}m$-regular subgraph disjoint from $M$. Adding it to $M$  contradicts maximality of $M$.

Thus $|M|\overset{incl/excl}{\geq} \sum_{i=1}^{t} |V(M)\cap V(H_i\cap D(K_i))|-\sum_{1\leq i< j\leq t}|V(M)\cap V(H_i\cap D(K_i))\cap V(H_j\cap D(K_j))|\geq t(pm/20) - \sum_{1\leq i< j\leq t}|V(D(K_i))\cap V(D(K_j))|\geq t(pm/20)-t^2\kappa m= Chp^{-3}(pm/2)-(Cp^{-3}h)^2\kappa m\geq Chp^{-1}m\geq Cd$.
Now Lemma~\ref{lem:main_tree_embedding_lemma} (applied to $G'=\bigcup_{i=1}^t C(K_i)\cup H_i\cup I_i$, $n=|\bigcup_{i=1}^t C(K_i)\cup H_i\cup I_i|\leq t(4^{10Chp^{-13}}+1+\gamma)m\leq 4^{20Chp^{-13}}m$, $p'=p^{13}m/n\geq p^{13}/4^{20Chp^{-13}}$, $q=\gamma^{h^4}$, $\Delta=\Delta, \varepsilon=\varepsilon, d=d$, and no $L$ since it doesn't come up in (i)) gives a copy of $T$.

\item 
If we are not done by (2), we can assume that  for all $K_i$, there are $<Cp^{-3}h$ clumps $K_j$ with $|V(D(K_i))\cap V(D(K_j))|\geq \gamma m$. Note that $t=|V(S)|\leq L^{2L}$.
Set $U:=\{u_{K_iK_j}: K_iK_j\in E(S)\}$.
For each $i$, let $U(K_i)=\{u_{K_iK_j}: K_j\in N_S(K_i)\}\subseteq D(K_i)$, noting $|U(K_i)|=d_S(K_i)\leq 2L \leq p^{13}m$.
Noting $|C(K_i)\cup U(K_i)|\overset{(C)}{\geq} |M(K_i)|\overset{(M)}{\geq} p^{13}m$, we can choose an order $p^{13}m$ set $I_i$ with $U(K_i)\subseteq I_i\subseteq C(K_i)\cup U(K_i)$.
 Let $K_i'$ be the induced subgraph of $D(K_i)$ on $V(K_i'):=C(K_i)\cup I_i=C(K_i)\cup U(K_i)\subseteq D(K_i)$, noting that
 $K_i'$ is $\kappa^{(10k(K_i))!}(p^{13})^{2\cdot 1^2\cdot k(K_i)}\geq \kappa^{2(10k(K_i))!}$-cut-dense by Lemma~\ref{lem:clump_cut_dense_subgraphs} (applied with $t=1$, $\mu=p^{13}$, $k=k(K_i), p=p, \kappa=\kappa, m=m$, $H_1=\emptyset$).
  
Let $B_i:= \bigcup_{K_j\neq K_i} V(D(K_j))\cap V(D(K_i))\setminus U(K_i)$, noting that $|B_i|\leq t\gamma m+(Cp^{-3}h)\kappa^{6(10k(K_i))!} m\leq (\gamma L^{2L} +(Cp^{-3}h)\kappa^{6(10k(K_i))!} )m\leq (L^{2L}\gamma +\kappa^{-1}\kappa^{6(10k(K_i))!} )m \leq 2\kappa^{-1}\kappa^{6(10k(K_i))!} m\leq  \kappa^{5(10k(K_i))!}) m\leq \kappa^{2(10k(K_i))!} p^{13}m/4$ (using ``$|V(K_i)\cap V(K_j)|\leq \kappa^{6(10\max(k(K_i), k(K_j))!}m$'' and the fact that we are not in (2) for the first inequality). 
Let $K_i''=K_i'\setminus B_i$, noting that it is $\kappa^{2(10k(K_i))!}/2\geq \kappa^{2(20Cp^{-13}h)!}/2$-cut-dense by Lemma~\ref{lem:cut_dense_delete_small_set} (with $n=|K_i'|\geq |M(K_i)|\geq p^{13} m$, $q=\kappa^{2(10k(K_i))!}$, $U'= B_i$, which has $|U'|\leq  qn/4$), and has $|K_i''|\geq |M(K_i)|-|B_i|\geq p^{13}m-\kappa^{2(10k(K_i))!} p^{13}m/4\geq p^{13}m/2=p^{13}d/2h\geq \frac{16 d}{(\kappa^{2(20Cp^{-13}h)!}/2)L}$.

Now let $S''$ be   the tree formed by replacing every $K_i$ in $S'$ by $K_i''$ (and replacing each edge $K_iK_j$ by $K_i''K_j''$). We show that $S''$ satisfies the assumptions of Lemma~\ref{lem:main_tree_embedding_lemma} (ii)(with $q=\kappa^{2(20Cp^{-13}h)!}/2\gg L^{-1}$, $\Delta=\Delta$,  $L=L$, $d=d$, and no $p$ since it doesn't come up in (ii)), which gives us a copy of $T$. All the conditions have already been established in the paragraph above, except the last one  ``$V(K_i'')\cap V(K_j'')=\{u_{K_iK_j}\}$ for all edges $K_iK_j\in E(S)$ and $V(K_i'')\cap V(K_j'')=\emptyset$ for non-edges $K_iK_j\not\in E(S)$'' which we establish now: 

Consider some  $K_i'', K_j''$. 
Since $K_i''\subseteq K_i'\subseteq D(K_i), K_j''\subseteq K_i'\subseteq D(K_j)$, we have $V(K_i'')\cap V(K_j'')\subseteq V(D(K_i))\cap V(D(K_j))\subseteq (V(D(K_i))\cap V(D(K_j))\setminus U(K_i))\cup  (V(D(K_i))\cap V(D(K_j))\setminus U(K_j))\cup (U(K_i)\cap U(K_j))$. 
Since $D(K_i)\cap D(K_j)\setminus U(K_i)\subseteq B_i$ (which is disjoint from $K_i''$), and $D(K_i)\cap D(K_j)\setminus U(K_j)\subseteq B_j$ (which is disjoint from $K_j''$),  $K_i''$ and $K_j''$ can only intersect inside $U(K_i)\cap U(K_j)$. But $U(K_i)\cap U(K_j)$ either equals $\{u_{K_iK_j}\}$ (if $K_iK_j\in E(S')$) or equals $\emptyset$ (if $K_iK_j\not\in E(S')$), as required by Lemma~\ref{lem:main_tree_embedding_lemma}.  
\end{enumerate}
\end{proof}

\section{Proof of theorem}
As mentioned in the proof overview, towards the end of the proof of the theorem, we need to find a $1$-subdivision of a large $L$-ary height $L$ tree in a suitable defined auxiliary graph. This is done via the following. 
\begin{lemma}\label{lem:find_tree_1_subdivision}
Let $s\geq t$.
Let $G$ be a bipartite graph with parts $A,B$ with $\delta(A)\geq 2$ and $\delta(B)\geq d\geq 4096k^3t$ such that every $b\in B$ there are $\leq d/16s$ $b'\in B$ with $|N_{common}(b,b')|\geq t$, and no $b'\in B$ with $|N_{common}(b,b')|\geq s$. Then $G$ contains a $1$-subdivision of any order $\leq k$ tree $T$ with subdivided vertices in $A$.
\end{lemma}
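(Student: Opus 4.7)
The plan is to build the $1$-subdivision of $T$ greedily, adding one vertex of $T$ at a time together with the subdivision vertex on the edge to its parent. Fix an ordering $v_1, v_2, \ldots, v_r$ of $V(T)$ (with $r \leq k$) in which every $v_i$ with $i \geq 2$ has a unique neighbour $v_{p(i)}$ among $v_1, \ldots, v_{i-1}$; any BFS order from a root works since $T$ is a tree. Embed $v_1$ to an arbitrary $b_1 \in B$. At step $i \geq 2$, having already placed $v_1, \ldots, v_{i-1}$ at distinct $b_1, \ldots, b_{i-1} \in B$ via distinct subdivision vertices $a_2, \ldots, a_{i-1} \in A$, write $b := b_{p(i)}$, $B' := \{b_1, \ldots, b_{i-1}\}$, and $A' := \{a_2, \ldots, a_{i-1}\}$. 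We must find $a \in N(b) \setminus A'$ with some neighbour $b' \in N(a) \setminus (B' \cup \{b\})$, after which we set $a_i := a$ and $b_i := b'$.

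The entire content is a counting argument that such $a$ and $b'$ exist at every step. Call $a \in N(b)$ \emph{bad} if $N(a) \subseteq B' \cup \{b\}$. Since $\delta(A) \geq 2$, every bad $a$ has a neighbour in $B' \setminus \{b\}$, so the number of bad $a$ is at most $\sum_{b^* \in B' \setminus \{b\}} |N_{common}(b, b^*)|$. Split this sum according to the hypotheses on $b$: the terms with $|N_{common}(b, b^*)| \geq t$ come from at most $d/(16s)$ values of $b^*$ and each contributes at most $s$ (by the absolute cap), while the remaining at most $k-1$ terms each contribute less than $t$. Hence the number of bad $a$ is at most $d/16 + (k-1)t$. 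Excluding also the $\leq |A'| \leq k-1$ vertices of $N(b) \cap A'$ leaves at least $d - d/16 - (k-1)t - (k-1)$ good choices for $a$. The hypothesis $d \geq 4096 k^3 t$ forces $(k-1)t \leq d/(4096 k^2) \leq d/16$ and, since it also implies $k \leq d/4096$, $k - 1 \leq d/16$, so this count is at least $d/2 \geq 1$. Pick any such $a$; by definition of good, $a$ has a neighbour $b' \notin B' \cup \{b\}$, completing step $i$.

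The main point to get right is the two-part split of the sum, where the first hypothesis (few $b^*$ with large common neighbourhood) combined with the second (absolute cap of $s$) is exactly what prevents already-used vertices in $B'$ from absorbing all of $N(b)$. The role of $\delta(A) \geq 2$ is equally crucial: it guarantees that any $a \in N(b)$ failing to be bad automatically supplies a usable $b'$, so we never have to search for $b'$ separately. Unlike parts (i) and (ii) of Lemma~\ref{lem:main_tree_embedding_lemma}, this is a pure one-step greedy embedding with no need for cut-density or regularity machinery.
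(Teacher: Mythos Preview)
Your greedy argument is correct and considerably simpler than the paper's proof. One minor point: the step ``since it also implies $k \leq d/4096$'' tacitly uses $t \geq 1$ (from $d \geq 4096k^3t$ alone one only gets $k^3 \leq d/(4096t)$). This is harmless, since in the paper's sole application of the lemma one has $t = \gamma m \gg 1$, and in fact the paper's own argument makes the same implicit assumption (the inequality $d|B|/2 \geq 8k|B|$ in its first case needs $d \geq 16k$).

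The paper takes a genuinely different and longer route. It splits $A$ into $A_{large}=\{a:d(a)\geq 8k\}$ and $A_{small}$; if half the edges touch $A_{large}$ it passes to a minimum-degree-$2k$ subgraph of $G[A_{large},B]$ and embeds greedily there. Otherwise it works in $G[A_{small},B]$, randomly prunes each $a\in A_{small}$ down to degree exactly $2$, contracts these degree-$2$ vertices to obtain a simple graph $S$ on $B$, and shows $e(S)\geq 2k|V(S)|$ so that $T$ embeds in a high-minimum-degree subgraph of $S$; uncontracting recovers the $1$-subdivision. Your direct one-vertex-at-a-time embedding sidesteps the degree dichotomy and the random sparsification entirely: you use the two common-neighbourhood hypotheses exactly once per step to bound how much of $N(b)$ the already-used vertices can block. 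The paper's contraction argument is marginally more robust for very small $t$ (the key inequality $d/(1024tk^2)\geq 4k$ in its second case needs only $d\geq 4096k^3t$ with no lower bound on $t$), but your proof is shorter, avoids any probabilistic step, and makes the role of each hypothesis completely transparent.
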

\begin{proof}
Let $A_{small}=\{a\in A: d(a)< 8k\}$ and  $A_{large}=\{a\in A: d(a)\geq 8k\}$. 
If at least  half the edges of $G$ touch $A_{large}$, then 
$e(A_{large}, B)\geq e(G)/2=\sum_{b\in B}d(b)/2\geq \delta(B)|B|/2\geq d|B|/2\geq 8k|B|$ and $e(A_{large}, B)\geq \delta(A_{large})|A_{large}|\geq 8k|A_{large}|$ both hold, and hence $2e(A_{large}, B)\geq 8k|B|+8k|A_{large}|$, which rearranges to $e(A_{large}, B)/(|A_{large}|+|B|)\geq 4k$. Fact~\ref{fact:subgraph_min_degree_e/2} implies that $G[A_{large}, B]$ has a subgraph with minimum degree $2k$ and so we can find a copy of a $1$-subdivision of $T$ in that greedily (Fact~\ref{fact:greedy_tree_embedding}).

Thus at least half the edges of $G$ touch $A_{small}$. Let $G':=G[A_{small}, B]$, noting that $e(G')\geq e(G)/2\geq d|B|/2$, and also that $\delta_{G'}(A_{small})\geq 2$.
For a vertex $b$ in a bipartite graph $H$, let $N^2_H(b)=\{v\in V(H): |d_H(b,v)|=2\}=\{v\in V(H): |N_{common,H}(b,v)|\neq \emptyset\}$.
Partition $N^2_{G'}(b)=X(b)\cup Y(b)$ where $X(b)=\{b'\in N^2_{G'}(b): |N_{common, G'}(b,b')|\geq t\}$ and $Y(b)=\{b'\in N^2_{G'}(b): |N_{common, G'}(b,b')|< t\}$. Using the lemma's assumption, we have $|X(b)|\leq d/16s$. 
We have that $e(N_{G'}(b), N^2_{G'}(b))=\sum_{v\in N^2(b)}|N_{common,G'}(b,v)|=\sum_{v\in X(b)}|N_{common,G'}(b,v)|+\sum_{v\in Y(b)}|N_{common,G'}(b,v)|\leq |X(b)|s+|Y(b)|t\leq  (d/16s)s+|Y(b)|t$. On the other hand, since every vertex in $N_{G'}(b)\subseteq A_{small}$ has degree $\geq 2$, we have  $e(N_{G'}(b), N^2_{G'}(b))\geq |N_{G'}(b)|$. Combining gives $|N^2_{G'}(b)|\geq |Y(b)|\geq \frac{|N_{G'}(b)|-d/16}t$, and hence $\sum_{b\in B} |N^2_{G'}(b)|\geq \sum_{b\in B} \frac{|N_{G'}(b)|-d/16}t=\frac1t\sum_{b\in B}|N_{G'}(b)|-d|B|/16t=\frac1te(G')-d|B|/16t\geq \frac1{2t}e(G)-d|B|/16t\geq d|B|/2t-d|B|/16t\geq d|B|/16t$.

Let $H$ be the subgraph of $G'$ formed by randomly deleting all but $2$ of the edges through each $a\in A_{small}$, making the choices independently for each $a$. 
Note that for every length $2$ path $bab'$, we have $\Prob(b'\in N^2_H(b))\geq \Prob(ba, b'a\in E(H))=1/\binom{d(a)}2\geq 1/d(a)^2\geq 1/(8k)^2$.
By linearity of expectation, we have that for $b\in B$, we have $\Exp(|N^2_H(b)|)\geq |N^2_{G'}(b)|/64k^2$, and so $\Exp(\sum_{b\in B} |N^2_H(b)|)\geq\sum_{b\in B} |N^2_{G'}(b)|/64k^2 \geq (d|B|/16t)/64k^2= d|B|/1024tk^2$. Fix some outcome for which $\sum |N^2_H(b)|\geq d|B|/1024tk^2$.  Contract the vertices of $A_{small}$ in $H$ into edges (i.e. for each $a\in A_{small}$, delete $a$ and replace it with an edge $e_a$ joining the two vertices in $N_H(a)$) in order to get a multigraph, and then keep one copy of each multiedge to get a simple graph $S$. Note that for $b,b'\in B$, we have ``$bb'\in E(S)\iff b'\in N_{H}^2(b)$'', and hence $e(S)=\frac12 \sum |N^2_H(b)|\geq \frac12d|B|/1024tk^2=\frac12d|V(S)|/1024tk^2\geq 2k|V(S)|$. Using Fact~\ref{fact:subgraph_min_degree_e/2}, pass to a subgraph $S'$ of minimum degree $k$. Now greedily pick a copy  of $T$ in $S'$ (using Fact~\ref{fact:greedy_tree_embedding}). Uncontracting the vertices in $A$ gives us the $1$-subdivision we want. 
\end{proof}

 We now prove the main theorem, which we restate below in a slightly stronger form. To make the numbers a little neater, we changed the assumption ``$T$ has $d$ edges'' into ``$T$ has $d$ vertices''. We also exchange the assumption ``$e(G)\geq \varepsilon d|G|$'' for saying that the number of deleted edges is $\varepsilon dn$. We also changed the   sizes of the covers of the connected components in the conclusion from ``$\leq 3d$'' to the stronger ``$\le (2+\varepsilon)d$''. Applying the theorem below with $\varepsilon'=\varepsilon^2/2$, $d'=d+1$  implies Theorem~\ref{main_theorem} as stated in the introduction (there we have the assumption  $e(G)\geq \varepsilon d|G|$. Theorem~\ref{main_theorem_proof} deletes $\leq \varepsilon'd' |G|= \varepsilon^2 (d+1)|G|/2\leq \varepsilon^2 d|G|\leq \varepsilon e(G)$ edges, and leaves a graph whose components have covers of size $\leq (2+\varepsilon')d'=(2+\varepsilon^2/2)(1+\frac1d)d\leq (2+\varepsilon)d\le 3d$). 
\begin{theorem}\label{main_theorem_proof}
Let $1\geq \Delta^{-1}, \varepsilon\gg d^{-1}$.
Let $T$ be a tree with $d$ vertices and $\Delta(T)\leq \Delta$. For any $n$-vertex graph $G$ having no copies of $T$, it is possible to delete $\varepsilon dn$ edges to get a graph $H$ each of whose connected components has a cover of size $\leq (2+\varepsilon)d$.
\end{theorem}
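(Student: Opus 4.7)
\textbf{Proof plan for Theorem~\ref{main_theorem_proof}.}

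The plan is to follow the clumping strategy laid out in the proof overview. First I would fix a parameter hierarchy $\Delta^{-1},\varepsilon\gg p\gg h^{-1}\gg\kappa\gg L^{-1}\gg\gamma\gg d^{-1}$ matching the hypothesis of Lemma~\ref{lem:clump_tree_embedding}, and set $m:=d/h$. I may assume $\delta(G)\geq \varepsilon d$ after deleting low-degree vertices greedily in the spirit of Fact~\ref{fact:subgraph_min_degree_e/2} (the discarded edges are absorbed in the $\varepsilon dn$ budget). Next I reduce to the ``locally dense'' case: if $G$ has no $p$-cut-dense subgraph of order $m$, then Lemma~\ref{lem:expansion_no_cut_dense} gives $|N(S)|\geq 10\Delta|S|$ for all $|S|\leq 10d$, and Theorem~\ref{thm:Friedman-Pippenger} produces a copy of $T$, contradiction. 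So repeatedly pulling off $p$-cut-dense order-$m$ subgraphs and applying Lemma~\ref{lem:cut_dense_decomposition} (to the leftover at each step) partitions almost all edges of $G$ into a family of order-$m$, $p$-cut-dense graphs $H_1,\dots,H_t$, up to a discard of $\leq \varepsilon dn/4$ edges.

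Next I initialize clumps: via Lemma~\ref{lem:clump_initialization}, each $H_i$ becomes a clump $K_i$ with parameters $p,\kappa,m$ and $k(K_i)\leq p^{-13}$. I run the clumping process: while there exist two clumps $K,K'$ in the current family with $|V(D(K))\cap V(D(K'))|\geq \kappa^{6(10\max(k(K),k(K')))!}m$, I replace them by the single clump produced by Lemma~\ref{lem:clump_joining}. The number of clumps strictly decreases, so this terminates in a family $\mathcal{K}=\{K_1,\dots,K_s\}$ of pairwise ``nearly disjoint'' clumps (in the precise sense of the termination condition). Throughout the process, whenever some clump arises with $|M(K)|\geq (2+\varepsilon)d$, or equivalently by Fact~\ref{fact:MK_lower_bound_by_k} with $k(K)\geq 2Cp^{-13}h$ for $C=2+\varepsilon$, I invoke Lemma~\ref{lem:clump_tree_embedding}(1) to find $T$, a contradiction. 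So on termination every $K_i$ has $k(K_i)\leq 2Cp^{-13}h$ and $|M(K_i)|\leq (2+\varepsilon)d$, and $M(K_i)$ covers $B(K_i)$ by Fact~\ref{fact:BK_small_covers}.

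Now I analyse the terminal family. By Lemma~\ref{lem:clump_large_intersection_with_B/D}, $e(K_i\setminus B(K_i))\leq 4p\,e(K_i)$ for each $i$; I add all of these edges to the deletion set, costing $\leq 4p\,e(G)\leq \varepsilon dn/4$ edges (using $e(G)\leq O(dn)$ which follows from the reduction above, or else we apply Fact~\ref{fact:greedy_tree_embedding} directly). What remains to handle are the inter-clump overlaps. Build the bipartite incidence graph $\mathcal{B}$ with parts $X=\{K_1,\dots,K_s\}$ and $Y=\{v\in V(G): v\text{ lies in at least two }D(K_i)\}$, joining $K_i$ to $v\in Y\cap V(D(K_i))$. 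If $e(\mathcal{B})\leq \varepsilon dn/8$, then deleting for each $v\in Y$ all edges of $G$ incident to $v$ except those inside one chosen $D(K_i)\ni v$ costs at most $O(e(\mathcal{B})\cdot\Delta_{\max})$ edges, which can be absorbed in the $\varepsilon dn$ budget (here I need a careful per-vertex bookkeeping); the resulting graph has connected components contained in individual $D(K_i)\cap B(K_i)\subseteq $ the cover $M(K_i)$ of order $\leq(2+\varepsilon)d$, completing the proof.

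The main obstacle is the complementary dense-incidence regime $e(\mathcal{B})>\varepsilon dn/8$. Here I appeal to Lemma~\ref{lem:find_tree_1_subdivision} applied to $\mathcal{B}$, viewed as a bipartite graph where $X$-vertices have degree at least some $\delta(B)\approx \kappa^{-1}$ by averaging, and common neighbourhoods in $X$ are controlled: two vertices $K_i,K_j\in X$ have common neighbourhood exactly $V(D(K_i))\cap V(D(K_j))$, which is small by the termination condition (bounded above by $\kappa^{6(10\max(k(K_i),k(K_j)))!}m\ll \gamma m$). After first invoking Lemma~\ref{lem:clump_tree_embedding}(2) to dispense with the case where some $K_i$ has many clumps with $\gamma m$-large overlap, Lemma~\ref{lem:find_tree_1_subdivision} produces a $1$-subdivision of a perfect $L$-ary, depth-$L$ tree whose principal vertices are clumps and subdivided vertices are elements of the pairwise $D(K_i)\cap D(K_j)$. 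This is exactly the configuration required by Lemma~\ref{lem:clump_tree_embedding}(3), which yields a copy of $T$ and a contradiction. The delicate part is choosing the parameters $s,t,k,d$ of Lemma~\ref{lem:find_tree_1_subdivision} consistently with the termination threshold of the clumping process and with the hierarchy needed by Lemma~\ref{lem:clump_tree_embedding}; this parameter choreography, together with the per-vertex edge-deletion bookkeeping in the sparse-incidence case, is where almost all of the technical work lives.
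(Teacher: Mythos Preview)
Your high-level outline matches the paper, but the treatment of the terminal family $\mathcal K$ has a real gap, and it is precisely where the paper does something you have not reproduced.

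\textbf{The sparse-incidence deletion does not work as stated.} In the case $e(\mathcal B)\le \varepsilon dn/8$ you propose, for each $v\in Y$, to keep only the edges of $v$ lying in one chosen $D(K_i)$ and delete the rest. But the number of deleted edges through $v$ is not controlled by $d_{\mathcal B}(v)$: if $v\in M(K_j)$ for some other clump $K_j$, then the edges of $B(K_j)$ through $v$ can go to all of $V(B(K_j))$, and $|V(B(K_j))|$ is not bounded by $O(d)$ (only $|M(K_j)|$ is). So the cost is not $O(e(\mathcal B))$, and multiplying by any ``$\Delta_{\max}$'' of $G$ is hopeless. The ``careful per-vertex bookkeeping'' you flag is not bookkeeping; it is a missing idea.

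\textbf{The dense-incidence case also has a mismatch.} To apply Lemma~\ref{lem:find_tree_1_subdivision} with the clumps on the $B$-side you need $\delta(B)$ large, not average degree large; from $e(\mathcal B)>\varepsilon dn/8$ you only get the latter. (Your suggested value $\delta(B)\approx\kappa^{-1}$ is also far too small relative to the parameters $s=\kappa m$, $t=\gamma m$, $k=L^{2L}$ that the lemma needs.)

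What the paper actually does is prove an \emph{ordering claim} (Claim~\ref{claim:clump_ordering}): there is an ordering $K_1',\dots,K_s'$ with $|V(D(K_i'))\cap\bigcup_{j>i}V(D(K_j'))|\le \alpha d/10$ for every $i$, where $h^{-1}\gg\alpha\gg\kappa$ is an extra parameter in the hierarchy. This is proved by taking a maximal orderable prefix; any leftover clump then has $\ge\alpha d/10$ vertices shared with the rest, which gives exactly the minimum-degree condition $\delta(B)\ge\alpha d/10$ needed for Lemma~\ref{lem:find_tree_1_subdivision}, and then Lemma~\ref{lem:clump_tree_embedding}(3) yields $T$. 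Once the ordering exists, one deletes from each $B(K_i')$ the edges touching those $\le\alpha d/10$ overlap vertices; the cost per clump is $\le |V(B(K_i'))|\cdot\alpha d/10$, and $|V(B(K_i'))|\le 10\,e(K_i')/(p^{13}m)$ by Lemma~\ref{lem:clump_lower_bound_eK}, so the total is $\le \alpha p^{-13}h\cdot e(G_{dense})\ll \varepsilon dn$. This per-clump accounting, enabled by the ordering, is what replaces your per-vertex attempt.
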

\begin{proof}
Pick $\varepsilon, \Delta^{-1}, \gg p\gg  h^{-1}\gg \alpha\gg  \kappa\gg  L^{-1}\gg\gamma \gg d^{-1}$. Set $C:=2+\varepsilon\leq 3$.

 We can suppose that $e(G)\leq 2dn$, as otherwise $G$ has a subgraph with $\delta(G)\geq d$ (by Fact~\ref{fact:subgraph_min_degree_e/2}), and hence we greedily get a copy of $T$  (Fact~\ref{fact:greedy_tree_embedding}).
 Set $m:= d/h$, noting  $|T|=hm$\footnote{In general we obviously can't have both $m$ and $h$ being integers. The neatest way to formally deal with this, is to allow $h$ to be a non-integer, and to replace ``$h$'' by ``$\lceil h \rceil$'' at appropriate places in the proof that follows.}.
Pick a maximal collection of edge-disjoint order $m$ $p$-cut-dense subgraphs $K_1, \dots, K_t$ in $G$. Let $G_{dense}:=\bigcup K_i$, and set $G_{sparse}$ to be the graph on $V(G)$ with edges $E(G)\setminus E(G_{dense})$.
Suppose $e(G_{sparse})\geq \varepsilon dn/3$. Use Fact~\ref{fact:subgraph_min_degree_e/2} to pick a subgraph $G_{sparse}'\subseteq G_{sparse}$ of minimum degree $\varepsilon d/6$. Since $G_{sparse}'$ contains no order $m=d/h$ $p$-cut-dense subgraphs, Lemma~\ref{lem:expansion_no_cut_dense} (with $\varepsilon'=\varepsilon/6, \mu=h^{-1}$) tells us that ``all sets $S\subseteq V(G_{sparse}')$ with $|S|\leq 10d$ have $|N(S)|\geq 10\Delta |S|$''.  The Friedman-Pippenger Theorem~\ref{thm:Friedman-Pippenger} gives us a copy of $T$ in $G_{sparse}'$.  So we can suppose  $e(G_{sparse})< \varepsilon dn/3$.

Use Lemma~\ref{lem:clump_initialization} to give each $K_i$ in $G_{dense}'$ the structure of a nonempty clump with parameters $p, \kappa, m$. For as long as possible, repeat the following: if we have two clumps $K^-, K^+$ with $|M(K^-)|, |M(K^+)|\leq Cd$ 
and $|V(D(K^-))\cap V(D(K^+))|\geq \kappa^{6(10\max(k(K^-), k(K^+))!} m$, replace $K^-, K^+$ in our collection with the clump $K$ with $E(K)=E(K^-)\cup E(K^+)$ produced by Lemma~\ref{lem:clump_joining} (the property $k(K^{*})\le \kappa^{-1}$ holds for such clumps because Fact~\ref{fact:MK_lower_bound_by_k} gives $k(K^{*})\le |M(K^*)|/p^{13}m\le Cd/p^{13}m=hp^{-13}\ll \kappa^{-1}$). The process stops just because the total number of clumps keeps decreasing.
Let $\mathcal K$ be the final collection of clumps we get, noting that these are edge-disjoint, nonempty, and partition $G_{dense}$ (since the initial clumps had this property, and the property is maintained at each application of Lemma~\ref{lem:clump_joining}).

\begin{claim}\label{claim:M(Ki)_upper_bound}
$|M(K)|\leq Cd$ for all $K\in \mathcal K$.
\end{claim}
\begin{proof} 
First note that this holds for the initial clumps, since each $M(K_i)$ is contained in some $K_i$ which has order $m=d/h\leq Cd$. 
Now suppose for contradiction, that at some point in the joining process, we produced a clump $K$ with $|M(K)|>Cd$. Considering the earliest point when this happens, we have $K=K^-\cup K^+$ for some clumps $K^-, K^+$ with $|M(K^-)|, |M(K^+)|\leq Cd$. By (M), all clumps have $|M(K)|\geq p^{13}mk(K)$, which gives $k(K^-), k(K^+)\leq \frac{Cd}{p^{13}m}=Chp^{-13}$. From Lemma~\ref{lem:clump_joining}, $k(K)\leq k(K^-)+k(K^+)\leq 2Chp^{-13}$. Thus Lemma~\ref{lem:clump_tree_embedding} (1) applied with $t=1$ to the clump $K$ gives a copy of $T$
\end{proof}

This claim and (M) show that for all clumps $K\in \mathcal K$, we have $Cd\geq |M(K)|\geq p^{13}mk(K)$, which implies  $k(K)\leq Cd/p^{13}m=Chp^{-13}$. It also shows that each $B(K)$ has a cover of order $\leq Cd$, namely $M(K)$ (Fact~\ref{fact:BK_small_covers}).
 By the assumption that the process stopped and Claim~\ref{claim:M(Ki)_upper_bound}, we get that for distinct clumps $K, K'\in \mathcal K$ we have $|V(D(K))\cap V(D(K'))|< \kappa^{6(10\max(k(K), k(K'))!} m\leq  \kappa m$.  
Lemma~\ref{lem:clump_tree_embedding} (2) gives that ``for each clump $K\in \mathcal K$, there are at most $Cp^{-3}h$ clumps $K'\in \mathcal K$ with $|V(D(K))\cap V(D(K'))|\geq \gamma m$''.

\begin{claim}\label{claim:clump_ordering}
We can order  $\mathcal K=(K_1', \dots, K_s')$ such that $|V(D(K_i'))\cap (\bigcup_{j>i} V(D(K_j')))|\leq \alpha d/10$ for all $i$.
\end{claim}
\begin{proof}
Choose a maximal ordered subfamily $\mathcal K'=(K_1', \dots, K_r')$ of $\mathcal K$ with the property that $|V(D(K_i'))\cap (\bigcup_{K\in\mathcal K\setminus  \mathcal K'}V(D(K)) \cup \bigcup_{j>i} V(D(K_j')))|\leq \alpha d/10$ for all $i=1, \dots, r$. If $\mathcal K'\setminus \mathcal K=\emptyset$, then we are done, so suppose that $\mathcal K\setminus \mathcal K'\neq \emptyset$. 
Note that if for some $K\in \mathcal K\setminus \mathcal K'$, we have $|V(D(K))\cap \bigcup_{K'\in \mathcal K\setminus (\mathcal K'\cup \{K\})} V(D(K'))|\leq \alpha d/10$, then we could  define $K'_{r+1}:=K$ to get a bigger $\mathcal K'$, contradicting maximality. So we can suppose that this doesn't happen, which tells us that  $|\mathcal K'\setminus \mathcal K|\neq 1$, and that for all  $K\in \mathcal K\setminus \mathcal K'$, we have $|V(D(K))\cap \bigcup_{K'\in \mathcal K\setminus (\mathcal K'\cup \{K\})} V(D(K'))|> \alpha d/10$.

Let $A$ be the set of vertices of $V(G)$ that are in more than one $V(D(K))$ with $K\in \mathcal K\setminus \mathcal K'$, noting that that $|V(D(K))\cap A|=|V(D(K))\cap \bigcup_{K'\in \mathcal K\setminus (\mathcal K'\cup \{K\})} V(D(K'))|>\alpha d/10$ for all $i$. 
Construct an auxiliary bipartite graph $H$ with parts $A$ and $B=\mathcal K\setminus \mathcal K'$ with $vK$ an edge whenever $v\in V(D(K))$. We have $\delta(B)\geq \alpha d/10=\alpha mh/10\gg 4096 \gamma m(L^{2L})^3$ and $\delta(A)\geq 2$. Also, the property ``for distinct clumps $K, K'\in \mathcal K$ we have $|V(D(K))\cap V(D(K'))|<\kappa m$'', tells us $|N_{common}(K, K')|< \kappa m$ for distinct $K, K'\in B$. Similarly, we've established ``for each clump $K\in \mathcal K$, there are at most $Cp^{-3}h$ clumps $K'\in \mathcal K$ with $|V(D(K))\cap V(D(K'))|\geq \gamma m$'' which tells us that for each $K\in B$, there are $\leq Cp^{-3}h\leq \frac{\alpha d/10}{16\kappa m}$ clumps $K'\in B$ with $|N_{common}(K, K')|\geq \gamma m$.
By Lemma~\ref{lem:find_tree_1_subdivision} (with $d'=\alpha d/10=\alpha hm/10$, $t=\gamma m$, $s=\kappa m$, $k=L^{2L}$), $H$ contains a $1$-subdivision $S$ of a $L$-ary depth $L$ tree with subdivided vertices in $A$.  

Label each vertex in $S\cap A$ as $u_{K_iK_j}$,  where $K_i, K_j$ are the neighbours of $u_{K_iK_j}$ in $S$ (noting that each vertex in $S\cap A$ has exactly two neighbours since it is a subdivided vertex).  
Let $S'$ be the tree on $V(S)\cap B$ formed by contracting the vertices in $A$ into edges. Now Lemma~\ref{lem:clump_tree_embedding} (3) applies to $S'$ giving a copy of $T$.
\end{proof}
 From each $K_i'$ delete the edges of $K_i'\setminus B(K_i')$, the vertices of  $V(K_i')\setminus V(B(K_i'))$, and the  vertices of $V(D(K_i'))\cap (\bigcup_{j>i} V(D(K_j')))$  to obtain a subgraph $J_i$. We claim that $\bigcup_{i=1}^s J_i$ satisfies the lemma. Indeed, by construction, the $J_i$s are vertex disjoint (consider some distinct $J_i, J_j$, noting $V(J_i)\subseteq V(B(K_i'))\subseteq V(D(K_i')), V(J_j)\subseteq V(B(K_j'))\subseteq  V(D(K_j'))$. Without loss of generality $j>i$, which gives $V(J_i)\cap V(J_j)\subseteq V(D(K_i'))\cap V(D(K_j'))\subseteq V(D(K_i'))\cap (\bigcup_{j>i} V(D(K_j')))$. But $J_i$ has no vertices of   $V(D(K_i'))\cap (\bigcup_{j>i} V(D(K_j')))$ giving $V(J_i)\cap V(J_j)=\emptyset$). Since all edges of  $\bigcup_{i=1}^s J_i$ are contained in some $J_i$, we get that there are no edges of $\bigcup_{i=1}^s J_i$ going between distinct $J_i, J_j$, and hence each connected component of $\bigcup_{i=1}^s J_i$ is contained in some $J_i$.
Since each $J_i\subseteq B(K_i')$  has a cover of size $\leq Cd$ (namely $M(K_i')$ by Fact~\ref{fact:BK_small_covers}), the connected components  of $\bigcup J_i$ have covers of size $\leq Cd=(2+\varepsilon)d$.

The number of edges absent from $e(G)$ in $\bigcup_{i=1}^s J_i$ is: the edges of $\bigcup K_i'\setminus B(K_i')$ of which there are $\overset{L:\ref{lem:clump_large_intersection_with_B/D}}{\leq} \sum_{i=1}^s4p e(K_i')= 4p e(G_{dense})\leq 4pe(G)\leq 8pdn$. The edges of each $B(K_i')$ touching the sets $V(D(K_i'))\cap (\bigcup_{j>i} V(D(K_j)))$, of which there are $\leq \sum_{i=1}^s|V(B(K_i'))||V(D(K_i'))\cap (\bigcup_{j>i} V(D(K_j')))|\leq 
\sum_{i=1}^s|V(B(K_i'))|\alpha d/10\overset{F:\ref{lem:clump_lower_bound_eK}}{\leq} \sum_{i=1}^s e(K_i')p^{-13}\alpha d/m= \sum_{i=1}^s \alpha p^{-13}he(K_i')= \alpha p^{-13}h e(G_{dense})\leq \alpha p^{-13}h e(G) \leq  \alpha p^{-13}h  2dn$. The  edges of $G_{sparse}$ of which there are $\leq \varepsilon d n/3$.  These add up to $\leq (2\alpha p^{-13}h  +8p+\varepsilon/3) d n\leq \varepsilon dn$ as required.  
\end{proof}

\section{Concluding remarks}
Theorem~\ref{main_theorem} seems to really open up the area of studying $T$-free graphs $G$. Prior to this theorem, it seemed that the difficult cases of open problems about such $G$ were when $G$ is sparse --- whereas when $G$ is dense he had a pretty good idea for how to approach problems. Now it seems like the situation has partially reversed (at least when one is studying bounded degree trees). Theorem~\ref{main_theorem} explains a lot of the behaviour in the sparse case, and the problem shifts to mainly understanding dense ones. We highlight the most interesting open problems.
\subsubsection*{High degree trees}
Without doubt, the biggest shortfall of this paper is that it only works for bounded degree trees, and the most interesting open problem is to understand arbitrary trees. As mentioned in the introduction our main theorem simply isn't true without some sort of degree restriction. Thus the approach definitely needs to change to prove say the full Erd\H{o}s-S\'os Conjecture, and we leave it to readers to think about what happens there. One concrete open problem is to see how much the bound on $\Delta(T)$ in Theorem~\ref{main_theorem} can be increased before things break down. While the condition can't be removed completely, increasing it to something that is linear in $d$ seems plausible. 

\subsubsection*{Variants of Theorem~\ref{main_theorem}}
It is possible to prove variants of our main theorem which give more  information on the graph  $H$. The most simple of these is to sharpen the bound ``$3d$'' on the sizes of the covers. The proof given in this paper actually gives the slightly better bound of  ``$(2+\varepsilon)d$'' rather than ``$3d$'' --- but this is still not optimal. It is not to hard to sharpen it further to a  near-optimal bound of ``$(1+\varepsilon)d$''.
\begin{theorem}\label{main_theorem_optimal}
Let $\Delta, \varepsilon\gg d^{-1}$.
Let $T$ be a tree with $k$ edges and $\Delta(T) \leq \Delta$. For any graph $G$ with $e(G) \geq \varepsilon k |G|$ having no copies of $T$, it is possible to delete $\varepsilon e(G)$ edges to obtain a graph $H$, each of whose connected components has a cover of order $\leq (1 + \varepsilon)k$.
\end{theorem}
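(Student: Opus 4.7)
The plan is to rerun the proof of Theorem~\ref{main_theorem_proof} verbatim, but with the constant ``$C=2+\varepsilon$'' replaced by ``$C=1+\varepsilon'$'' where $\varepsilon'=\varepsilon/2$ (chosen so that $(1+\varepsilon')(k+1)\leq (1+\varepsilon)k$ for $k$ large, using $|T|=k+1=d$ in the notation of Theorem~\ref{main_theorem_proof}). A careful inspection of that proof shows that $C$ enters the argument in exactly one essential spot: the threshold ``$|M(K_i)|\geq (2+\varepsilon)d$'' in part~(1) of Lemma~\ref{lem:clump_tree_embedding}. Everywhere else --- in Claim~\ref{claim:M(Ki)_upper_bound}, Claim~\ref{claim:clump_ordering}, the clumping process itself, and the final edge-counting at the end --- the constant $C$ appears only as an upper bound on $k(K)$ or $|M(K)|$, and shrinking $C$ only tightens those estimates. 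So the whole theorem reduces to proving a strengthening of Lemma~\ref{lem:main_tree_embedding_lemma}(i) in which the hypothesis ``$|R|\geq (2+\varepsilon)d$'' is weakened to ``$|R|\geq (1+\varepsilon)d$''.

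To prove the strengthened embedding lemma, I would follow the cut-density-direct strategy used in the appendix for the original Lemma~\ref{lem:main_tree_embedding_lemma}(i), but push the constants down to match the Koml\'os--S\'ark\"ozy--Szemer\'edi-style threshold $(1+\varepsilon)d$. Split $T$ at an $O_\varepsilon(1)$-sized edge separator into subtrees $T_1,\dots,T_r$, each of size at most $\eta d$ for some constant $\eta$ satisfying $\eta\ll p^{13},\varepsilon$. At each step $i$, embed $T_i$ into a fresh region of $R$ using the greedy embedding Fact~\ref{fact:greedy_tree_embedding}, applied to the local subgraph of $R$ obtained by deleting previously used vertices --- this works because $R$ is $pn$-regular with $pn\geq p(1+\varepsilon)d\gg \eta d$, providing more than enough local minimum degree. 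Between the embeddings of consecutive $T_i$ and $T_{i+1}$, route a short edge-disjoint path through the ambient $q$-cut-dense graph $G$ from the attachment vertex in $T_i$'s image to a fresh vertex of $R$, using the path-existence characterization of cut-density (Lemma~\ref{lem:cut_dense_characterization_paths}(i)). The key numerical point is that the $d$ vertices of $T$ fill exactly $d$ vertices of $R$, while the $O_\varepsilon(1)$ auxiliary routing vertices are absorbed anywhere in $G$, comfortably fitting within the $\varepsilon d$ of slack in $|R|$.

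The main obstacle is an absorption/bookkeeping step to guarantee that the final subtree $T_r$ still finds fresh space in $R$ even after $(1-o(1))$ of $R$ has been filled by $T_1,\dots,T_{r-1}$ --- the greedy Fact~\ref{fact:greedy_tree_embedding} will fail at the end if the remaining portion of $R$ has dropped below the required minimum degree. I would handle this by reserving an $\varepsilon d/2$-sized ``buffer'' $B\subseteq V(R)$ at the start of the process, designating $B$ to be used solely for $T_r$, and using the cut-density of $G$ to ensure that all interstitial routing paths can be chosen to avoid $B$: cut-density gives $\Omega(q^3 n^2)$ edge-disjoint short paths between any two neighbourhoods (Lemma~\ref{lem:cut_dense_characterization_paths}(i)), so forbidding a small target set $B$ removes at most an $O(|B|/n)$ fraction of these and still leaves plenty to carry out the routing. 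With the strengthened embedding lemma in hand, the proof of Theorem~\ref{main_theorem_optimal} completes by copying the proof of Theorem~\ref{main_theorem_proof} verbatim, with $C=1+\varepsilon'$ and the internal approximation parameter scaled to $\varepsilon^2$ so that the deleted-edge budget $\varepsilon' d n$ stays below the target $\varepsilon e(G)$.
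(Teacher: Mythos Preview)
Your approach diverges from the paper's. The paper explicitly says that the derivation of Theorem~\ref{main_theorem_optimal} ``doesn't actually involve modifying anything about the existing proof of Theorem~\ref{main_theorem} --- instead one uses that theorem as a starting point, and then sharpens the structure of the graph $H$ that it gives using tools for analysing dense graphs (namely Szemer\'edi's Regularity Lemma).'' So the intended route is: apply Theorem~\ref{main_theorem} as a black box to get components with covers of size $\leq 3d$, and then run a regularity-based argument on each (now dense) component to push the cover down to $(1+\varepsilon)k$. This incurs a tower-type dependence between $\varepsilon$ and $d$, which your direct modification would avoid if it worked.

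Unfortunately your sketch of the strengthened version of Lemma~\ref{lem:main_tree_embedding_lemma}(i) has a real gap. The constant $2$ in ``$|R|\geq(2+\varepsilon)d$'' is not slack in the appendix proof: the engine of Lemma~\ref{lem:tree_embedding_into_dense} is the estimate
\[
e(R')\;\geq\; e(R)-pn\,|T|\;=\;\tfrac{pn|R|}{2}-pn\,|T|\;=\;pn|R|\Bigl(\tfrac12-\tfrac{|T|}{|R|}\Bigr),
\]
which stays positive precisely because $|T|\leq |R|/(2+\varepsilon)<|R|/2$. With $|R|\geq(1+\varepsilon)d$ and $|T|=d$ this quantity is negative. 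Your piece-by-piece greedy scheme runs into the same wall: after $T_1,\dots,T_{i-1}$ have consumed close to $d$ vertices of $R$, the minimum degree of what remains of $R$ is at least $pn-(1-O(\varepsilon))d$, and since $p\ll\varepsilon$ in the hierarchy while $n$ need only be of order $|R|\approx d$ (indeed in the application inside Lemma~\ref{lem:clump_tree_embedding}(1) one has $p'n=p^{13}m=p^{13}d/h\ll d$), this is negative and Fact~\ref{fact:greedy_tree_embedding} is unavailable. The reserved buffer $B$ of size $\varepsilon d/2$ does not rescue this: each vertex of $B$ has all its $pn$ $R$-neighbours in $R$, almost all of which are already occupied, so $R[B\cup(\text{unused})]$ has no usable minimum degree either. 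Getting the threshold from $(2+\varepsilon)d$ down to $(1+\varepsilon)d$ genuinely requires a non-greedy embedding technique (blow-up via regularity, or a Koml\'os--S\'ark\"ozy--Szemer\'edi style absorption), which is exactly what the paper outsources to the companion note via the post-processing route.
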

This bound can seen to be near-optimal by considering a graph $G$ of vertex disjoint cliques of order $d-1$. This $G$  has no copy of $T$ because its components are too small. But by deleting $\varepsilon e(G)$ edges, at least one of the cliques will have $\geq (1-\varepsilon)\binom{d-1}2$ edges left --- and so has no cover smaller than $(1-2\sqrt{\varepsilon} )d$ (the maximum number of edges that can touch $(1-2\varepsilon )d$ vertices in a subgraph of $K_{d-1}$ is $\binom{d-1}2-\binom{2\sqrt{\varepsilon} d}2< (1-\varepsilon )\binom{d-1}2$).

We include a derivation of Theorem~\ref{main_theorem_optimal} in the companion paper~\cite{supplementary}. The reason we leave it out is because we are not aware of any additional applications that having an optimal bound gives. Also the proof of Theorem~\ref{main_theorem_optimal} doesn't actually involve modifying anything about the existing proof of Theorem~\ref{main_theorem} --- instead one uses that theorem as a starting point, and then sharpens the structure of the graph $H$ that it gives using tools for analysing dense graphs (namely Szemer\'edi's Regularity Lemma). This means that Theorem~\ref{main_theorem_optimal} ends up having a tower-type dependence between $\varepsilon$ and $d$ (which Theorem~\ref{main_theorem} does not have). 

Going further, one can likely get even stronger variants that give more structural information on $H$. However, again it is unclear what sorts of additional applications variants like this may provide.

\subsubsection*{Further bounded-degree applications}
The applications we gave to the Erd\H{o}s-S\'os and Loebl-Koml\'os-S\'os conjectures are really just the tip of the iceberg for what Theorem~\ref{main_theorem} seems useful for. There are many variants of these problems and  Theorem~\ref{main_theorem} feels like it should help deal with the sparse, bounded degree case of all of them. Below we list all related unsolved problems that we know about undirected graphs. See~\cite{stein2020tree} for a more detailed survey.
\begin{conjecture}[Loebl, Koml\'os, S\'os, see \cite{erdHos1995discrepancy}] \label{conj:LKS_conclusion}
Let $G$ be an $n$-vertex graph with $\geq n/2$ vertices of degree $\geq d$. Then $G$ contains a copy of every $d$-edge tree $T$. 
\end{conjecture}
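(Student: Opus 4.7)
The plan is to split the target tree into its high-degree \emph{skeleton} and its bounded-degree \emph{branches}, then place the skeleton using the median-degree hypothesis while treating the branches with the hyperstability machinery of this paper. Fix a threshold $\Delta$ large compared to the $\varepsilon^{-1}$ arising later. Let $T_{hi}$ be the minimal subtree of $T$ containing every vertex of $T$-degree exceeding $\Delta$, and let $T_1,\dots,T_r$ be the connected components of $T\setminus V(T_{hi})$; each $T_i$ is a bounded-degree tree attached to $T_{hi}$ through a single specified edge. Because $\sum_v d_T(v)=2d$, there are at most $2d/\Delta$ vertices of $T$-degree greater than $\Delta$, so $|T_{hi}|=O(1)$. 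Preprocess $G$ using Facts~\ref{fact:subgraph_min_degree_e/2} and~\ref{fact:connectected_component_with_same_density_as_G} to pass to a connected subgraph $G'$ of large minimum degree that retains a set $L$ of $\ge (1/2-o(1))|G'|$ vertices of $G'$-degree at least $d$.

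\textbf{Two-phase embedding.} First, embed $V(T_{hi})$ injectively into $L$: since $T_{hi}$ has only $O(1)$ vertices and each $v\in L$ has at least $d$ neighbours in $G'$, a greedy choice interleaved with Theorem~\ref{thm:Friedman-Pippenger} for the internal paths of $T_{hi}$ succeeds provided $G'$ is sufficiently expanding around $L$. Second, with the image of $T_{hi}$ fixed, each bounded-degree branch $T_i$ acquires a prescribed attachment point $a_i\in V(G')$. Apply a strengthened version of Theorem~\ref{main_theorem} to $G'\setminus\mathrm{image}(T_{hi})$: either the hyperstability decomposition produces a clump containing $a_i$ with enough free volume (in which case a rooted variant of Lemma~\ref{lem:main_tree_embedding_lemma}, obtained by designating $a_i$ as the root before running the embedding, places $T_i$ inside the clump), or $G'$ is close enough to an extremal LKS configuration --- disjoint cliques, book graphs, a clique with a pendant star, etc.\ --- that a direct case analysis (analogous to Theorems~\ref{lem:extremal_analysis_bipartite} and~\ref{lem:extremal_analysis_nonbipartite} in the exact Erd\H{o}s--S\'os argument) embeds $T$ without any clumping.

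\textbf{Main obstacle.} The crux is that Theorem~\ref{main_theorem} genuinely fails for trees with high-degree vertices: any $(\varepsilon d{-}1)$-regular graph satisfies the LKS hypothesis yet admits no clump decomposition. Consequently the hyperstability framework must be upgraded to a \emph{conditional} version that works whenever the residual host $G'\setminus\mathrm{image}(T_{hi})$ still carries many vertices of the original high-degree set $L$. The clumping process of Definition~\ref{def:clump} must be taught to track not only cut-density and the order of the regular subgraph $M(K)$, but also the quantity $|V(K)\cap L|$; the joining Lemma~\ref{lem:clump_joining} then needs a third branch of behaviour in which two clumps merge along a high-$L$-density overlap, with the $(10k(K))!$-type loss of cut-density balanced against growth in $|V(K)\cap L|$. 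A further subtlety is that the extremal LKS hosts include graphs in which a \emph{single} very-high-degree vertex dominates an otherwise sparse structure; separating such a vertex from its neighbourhood during the clumping argument without destroying the overall bound looks delicate. I expect that the extremal-configuration classification --- rather than the hyperstability extension itself --- will be the most labour-intensive step, because high-degree trees admit many more near-extremal hosts than bounded-degree ones, and each family (clique-with-pendants, bipartite double cover of a clique, etc.)\ demands its own embedding argument in the spirit of the companion paper~\cite{supplementary}.
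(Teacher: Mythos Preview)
The statement you are attempting to prove is a \emph{conjecture}, not a theorem: the paper states it (twice --- once as Conjecture~\ref{conj:Loebl-Komlos-Sos} in the introduction and again as Conjecture~\ref{conj:LKS_conclusion} in the concluding remarks) precisely as an open problem. There is no ``paper's own proof'' to compare against. The paper proves only an approximate skew version for bounded-degree trees, and the full conjecture for arbitrary trees remains open. Your proposal is therefore not a proof but a research outline, and it should be assessed as such.

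Even as an outline, the proposal contains a fatal arithmetic error at the outset. You fix $\Delta$ as a constant (``large compared to $\varepsilon^{-1}$''), correctly observe that $T$ has at most $2d/\Delta$ vertices of degree exceeding $\Delta$, and then assert $|T_{hi}|=O(1)$. But $2d/\Delta$ is linear in $d$, not bounded; worse, $T_{hi}$ is the minimal subtree \emph{spanning} those vertices, which may be essentially all of $T$ (take a long path with a degree-$\Delta{+}1$ star at each end). So the ``first phase'' is not embedding a constant-size skeleton --- it is embedding a tree of order $\Theta(d)$, which is the whole problem. The rest of the plan (rooted clump embeddings for the branches, a conditional hyperstability theorem tracking $|V(K)\cap L|$, a new joining lemma with a third branch, a full extremal classification for high-degree LKS hosts) consists of results that do not exist and that you yourself flag as obstacles. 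This is a reasonable list of what a proof might eventually need, but none of it is established, and the paper explicitly says the bounded-degree restriction in Theorem~\ref{main_theorem} cannot simply be removed.
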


\begin{conjecture}[Hladky,  Koml{\'o}s, Piguet, Simonovits,  Stein,  Szemer{\'e}di,  \cite{hladky2017approximate1}]
Let $G$ be an $n$-vertex graph with $\geq \frac n2-\lfloor \frac n{d+1}\rfloor- (n\mod{d+1})$ vertices of degree $\geq d$. Then $G$ contains a copy of every $d$-edge tree $T$. 
\end{conjecture}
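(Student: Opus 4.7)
The plan is to extend the hyperstability-based pipeline, which has already delivered the bounded-degree exact Erd\H{o}s-S\'os theorem and the bounded-degree approximate LKS theorem earlier in the paper, to arbitrary $d$-edge trees. The conjecture as stated has no maximum-degree restriction and asks for an exact (not approximate) conclusion, so a direct black-box application of Theorem~\ref{main_theorem} is not available. Instead, the strategy is a three-stage reduction: (1) prove a generalization of Theorem~\ref{main_theorem} whose conclusion has two types of components, "small-cover" components as before plus ``near-regular'' components that can absorb high-degree vertices of $T$; (2) decompose $T$ along its high-degree vertices into a bounded-degree skeleton with star hubs; (3) embed the skeleton into the small-cover components using the dense LKS theorems of Cooley~\cite{cooley2009proof} and Hladky-Piguet~\cite{hladky2016loebl}, and embed the star hubs into the near-regular components using Friedman-Pippenger (Theorem~\ref{thm:Friedman-Pippenger}), gluing the two via common neighbours in the high-degree set $L=\{v:d_G(v)\ge d\}$.

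For the bounded-degree skeleton embedding the setup closely mirrors Section 1.2. The LKS hypothesis $|L|\ge n/2$ and the handshaking lemma force $e(G)\ge d|L|/2\ge dn/4$, so the hyperstability machinery applies with room to spare. One averages over the small-cover components produced by the generalized structure theorem to find one which contains at least $|L|/2$ vertices of $L$ (after discarding a negligible set whose degree was lost in the edge-deletion step, using the argument from the proof of the approximate skew-LKS theorem in Section 1.2). A dense LKS result then embeds the skeleton with its distinguished "hub" vertices landing in $L$, ensuring that each hub has $\ge d$ neighbours available in $G$ for attaching its star.

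For the star-hub embedding, the generalized clumping process must be re-engineered. Concretely, the clump invariants $\mathcal{M}(K), C(K), k(K)$ in Definition~\ref{def:clump} must be enriched with an extra invariant tracking ``largest near-regular subgraph of degree close to $\Delta(T)$''. The joining operation of Lemma~\ref{lem:clump_joining} then has a new branch: if two clumps share enough vertices of very high degree, their union becomes a ``regular clump'' rather than being merged by the cut-dense union Lemma~\ref{lem:cut_dense_union}. Correspondingly Lemma~\ref{lem:clump_tree_embedding} needs an analogue in which a high-degree vertex of $T$ is embedded into a regular clump while the bounded-degree part of $T$ hanging off it is embedded into an adjacent cut-dense clump, with the interface placed in $L$. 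The LKS hypothesis is essential here because it guarantees that the common attachment vertices can be required to lie in $L$.

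The main obstacle is the generalized structure theorem itself. The paper explicitly warns (after the statement of Theorem~\ref{main_theorem}) that the bounded-degree hypothesis cannot be dropped: $(\varepsilon d-1)$-regular graphs are a genuine obstruction. Managing this obstruction forces the ``conservation law'' governing the clumping process (items (1) and (2) in the proof overview) to acquire a third type of operation that creates regular clumps, and one must prove that such clumps cannot proliferate without producing an embedding of $T$. A second, subtler difficulty is that $T$ may have both a high-degree vertex and a long path far away from it, so the cut-dense and near-regular parts of $G$ must cooperate \emph{coherently across an auxiliary tree of clumps}; this requires strengthening the $1$-subdivision lemma (Lemma~\ref{lem:find_tree_1_subdivision}) to accept two vertex types and is likely where most of the technical work lives. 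Given that even the approximate sparse version for arbitrary trees required the four-paper effort~\cite{hladky2017approximate1,hladky2017approximate2,hladky2017approximate3,hladky2017approximate4}, one should expect the exact version to need substantially more than a direct adaptation of the present proof, and it is quite possible that an entirely new idea is needed to close the gap in full generality.
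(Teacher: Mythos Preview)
The statement you have been asked to prove is not a theorem in the paper: it is listed in the concluding remarks as an \emph{open conjecture} (attributed to Hladk\'y, Koml\'os, Piguet, Simonovits, Stein, Szemer\'edi). The paper provides no proof, and indeed the surrounding text presents it as one of many unresolved variants of the Erd\H{o}s--S\'os and Loebl--Koml\'os--S\'os problems that the author hopes the hyperstability theorem might eventually help with. So there is nothing to compare your attempt against.

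Your proposal is not a proof but a speculative research plan, and you acknowledge as much in your final paragraph. That said, the plan has a genuine structural gap even as a strategy. You propose to generalize Theorem~\ref{main_theorem} to arbitrary trees by adding a second type of output component (``near-regular'' clumps), but the paper explicitly explains that the bounded-degree restriction is not an artefact of the method: if $T$ has a vertex of degree $\varepsilon d$, then any $(\varepsilon d -1)$-regular graph is $T$-free yet has none of the structure the theorem produces. Your remedy is to declare such graphs to be a new clump type, but this just relabels the obstruction rather than controlling it; you give no mechanism limiting how many regular clumps can appear or how they interact with the cut-dense clumps, which is exactly the ``conservation law'' that makes the bounded-degree proof work. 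The gluing step (embedding the bounded-degree skeleton in one component and the star hubs in another, meeting at a vertex of $L$) presupposes that these two component types are adjacent in a controlled way, which is not established. In short, the proposal identifies the right difficulties but does not overcome them, consistent with the statement being an open problem.
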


\begin{conjecture}[Klimo\v{s}ov\'a, Piguet,  Rohzo\v{n}, \cite{klimovsova2020version}] 
Let $G$ be an $n$-vertex graph with $\geq rn$ vertices of degree $\geq d$. Then $G$ contains a copy of every $r$-skew $d$-edge tree $T$. 
\end{conjecture}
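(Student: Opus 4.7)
The plan is to mirror the pipeline we used for the approximate skew-LKS theorem above, but with two upgrades: replace the $(1+\varepsilon)$ slack by the exact threshold, and remove the bounded-degree assumption. I would treat the two refinements in that order.

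First, I tackle the bounded-degree case $\Delta(T)\le \Delta$. The strategy mirrors Theorem~\ref{thm:Erdos-Sos-bounded-degree}: reduce from sparse to (essentially) dense via hyperstability, then apply a stability+extremal analysis. Concretely, apply Theorem~\ref{main_theorem_proof} with a tiny $\varepsilon'$ to produce $G'\subseteq G$ whose components each have a cover of order $\le 3d$ while losing only $\varepsilon' dn$ edges. Using a book-keeping argument like the one in the approximate skew application (tracking vertices of degree $\ge d$ that are not ``spoiled'' by the edge deletion), at least one component $C$ inherits the skew-LKS hypothesis: $C$ contains roughly $r|C|$ vertices of degree $\ge d$. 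Now one needs two ingredients not present in the paper: (a) a stability analog of Lemma~\ref{lem:stability_main} asserting that a skew-LKS-extremal dense graph without $T$ must look like one of a short list of configurations (vertex-disjoint cliques / half-complete bipartite graphs indexed by the skew ratio $r$), and (b) extremal-case analyses in the style of Lemmas~\ref{lem:extremal_analysis_bipartite} and \ref{lem:extremal_analysis_nonbipartite}, adapted so that the small side of the bipartite extremal structure matches the smaller part of the $r$-skew $T$. Paired with the dense-case theorem (Theorem~\ref{thm:skew_Loebl-Komlos-Sos_dense_approximate}) applied in the small-component regime, these would finish bounded-degree $T$.

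The genuine obstacle is the high-degree regime. Theorem~\ref{main_theorem} is intrinsically false without bounded degree: a $(d/100)$-regular graph has many vertices of degree $\ge d/100$ but no rich component structure. To recover something useful, I would \emph{split $T$}: let $S\subseteq V(T)$ be the set of vertices of degree $>C$ for a large constant $C=C(\varepsilon)$; since $T$ has $d$ edges, $|S|\le 2d/C$. Delete $S$ from $T$ to obtain a forest $T'$ of bounded maximum degree $C$, each of whose components is attached to $T$ at a few specified points. The plan is to embed $T'$ using the bounded-degree strategy above (applied to the subgraph of $G$ induced by vertices of ``moderate'' degree), and to embed $S$ greedily via Fact~\ref{fact:greedy_tree_embedding} at very high-degree vertices of $G$. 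Crucially, the $r$-skew hypothesis should be exploited here: in an $r$-skew tree the hubs tend to concentrate on the large side of the bipartition, and the small side has only $\le r|T|$ vertices, so the number of ``rooting constraints'' coming from $S$ is controlled.

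The hard part is gluing the two embeddings together compatibly — the bounded-degree skeleton $T'$ must be embedded into the hyperstability structure \emph{while leaving slots} at the specified attachment points for the hubs in $S$, and those slots must land on sufficiently high-degree vertices of $G$ to permit the greedy hub extension. This essentially asks for a version of Theorem~\ref{main_theorem} that outputs a cover plus a specified set of high-degree vertices, with some compatibility between the two. I would formulate this as a ``rooted hyperstability theorem'' (proved by revisiting the clumping argument of Section~4 and tracking which clump contains which prescribed high-degree vertex), and expect the technical heart of the proof to lie there. The upgrade from the approximate to exact threshold is, by comparison, a standard near-extremal analysis of the same flavour as in \cite{besomi2021erdHos}, using the $\varepsilon dn$ discarded edges to rule out the finitely many remaining near-extremal configurations.
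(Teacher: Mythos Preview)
The statement you are attempting to prove is a \emph{conjecture}, listed as such both in Section~1.2 (Conjecture~\ref{conj:skew_Loebl-Komlos-Sos}) and again among the open problems in the concluding remarks. The paper does not prove it, nor does it claim to; there is no ``paper's own proof'' to compare against. What the paper does prove is the \emph{approximate} bounded-degree version (the theorem immediately following Theorem~\ref{thm:skew_Loebl-Komlos-Sos_dense_approximate}), and your first paragraph essentially reproduces that argument.

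Your proposal is therefore not a proof but a research programme, and you are candid about this. The bounded-degree exact case is plausible in outline --- it is the same template as Theorem~\ref{thm:Erdos-Sos-bounded-degree} --- but the ingredients you label (a) and (b) (a skew-LKS stability theorem and matching extremal analyses) do not currently exist and are not minor adaptations of Lemmas~\ref{lem:stability_main}--\ref{lem:extremal_analysis_nonbipartite}; the extremal configurations for skew-LKS are genuinely different from those for Erd\H{o}s--S\'os, and working them out is real work.

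The unbounded-degree part is where the proposal has a genuine gap rather than merely missing citations. Your ``rooted hyperstability theorem'' is speculative: the clumping argument in Section~4 relies on Lemma~\ref{lem:main_tree_embedding_lemma} and the Friedman--Pippenger theorem, both of which need $\Delta(T)$ bounded, and it is not clear that tracking a few prescribed high-degree vertices through the process repairs this. Moreover, the splitting $T = T' \cup S$ does not obviously interact well with the skew hypothesis in the way you suggest --- hubs in an $r$-skew tree need not concentrate on either side of the bipartition. As the paper itself notes in the concluding remarks, handling high-degree trees requires the approach to ``definitely change'', and no route is currently known.
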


\begin{conjecture}[Havet,  Reed,  Stein,  Wood, \cite{havet2020variant}]\label{conj23}
Let $\delta(G)\geq\lfloor2d/3\rfloor$ and $\Delta(G)\geq d$. Then $G$ has a copy of every $d$-edge tree.
\end{conjecture}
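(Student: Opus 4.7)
The plan is to combine the hyperstability machinery from Theorem~\ref{main_theorem} with an anchor-and-extend strategy that exploits the vertex of high degree guaranteed by $\Delta(G)\geq d$. Let $v\in V(G)$ be a vertex of degree $\geq d$; by passing to the component containing $v$ we may assume $G$ is connected (the component has order $\geq d+1$). Fix a small constant $\alpha>0$ and split on $\Delta(T)$.

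In the high-degree case $\Delta(T)\geq \alpha d$, let $u\in V(T)$ attain $\Delta(T)$, root $T$ at $u$, and embed $u\mapsto v$. Since $|N_G(v)|\geq d\geq \Delta(T)$, the children $c_1,\dots,c_{\Delta(T)}$ can be placed at distinct neighbors of $v$. Each subtree $T_i$ rooted at $c_i$ has $|T_i|\leq d+1-\Delta(T)\leq (1-\alpha)d+1$ vertices. Extend the embedding one subtree at a time by depth-first greedy extension into $G-v$: at any step the current frontier vertex $y$ has $d_{G-v}(y)\geq 2d/3-1$ neighbors, and the number of used vertices is bounded by the cumulative sizes of completed subtrees plus the current subtree. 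A Pósa-style rotation/swap argument (re-routing a leaf of the already-embedded piece into an unused neighbor of $y$) should unstick the process whenever greedy fails; the slack needed to make this work is exactly $(2d/3)-(1-\alpha)d>0$, which requires $\alpha>1/3$.

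In the bounded-degree case $\Delta(T)<\alpha d$, apply Theorem~\ref{main_theorem} to $G$: since $e(G)\geq \delta(G)|G|/2\geq d|G|/3$, one can delete a small number of edges to obtain a subgraph whose components each have a vertex cover of size $\leq 3d$. The vertex $v$, losing at most $o(d)$ of its incident edges, still has degree $>d/2$ in one component $C$, and an averaging argument shows $C$ is \emph{effectively} dense in the sense of Theorem~\ref{main_theorem}'s applications: sampling random vertices outside the cover produces an induced subgraph of order $O(d)$ whose density is bounded below and which still contains $v$ with its high degree. Inside this dense local piece, one can adapt the extremal-analysis machinery (Theorem~\ref{lem:extremal_analysis_bipartite} and Theorem~\ref{lem:extremal_analysis_nonbipartite}, with the minimum-degree hypothesis replaced by the simultaneous $\delta \geq 2d/3$ and $\Delta\geq d$ hypotheses) to locate a copy of $T$ — this part essentially piggybacks on the dense variant of the conjecture.

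The main obstacle is the intermediate regime $\Delta(T)=\Theta(d)$, particularly when $\alpha d\leq \Delta(T)\leq d/3$ and neither strategy is clean. The hyperstability theorem genuinely fails for unbounded $\Delta(T)$ (as discussed in the paper's concluding remarks), while the greedy-plus-rotation argument of Case A runs out of slack when $\Delta(T)\leq d/3$ because subtrees become too large relative to $2d/3-1$. Resolving this gap appears to require a genuinely new structural theorem that is simultaneously sensitive to both hypotheses $\delta(G)\geq \lfloor 2d/3\rfloor$ and $\Delta(G)\geq d$ — plausibly a variant of Theorem~\ref{main_theorem} where the bound on $\Delta(T)$ is allowed to scale as a constant fraction of $d$, provided one also assumes a high-degree vertex in the host graph. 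Identifying the correct strengthening that eliminates the near-extremal obstructions (such as disjoint unions of near-cliques one of which barely accommodates $v$) is where the conceptual heart of the conjecture lies.
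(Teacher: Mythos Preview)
The statement you are attempting is Conjecture~\ref{conj23}, listed in the concluding remarks as an \emph{open problem}; the paper gives no proof. It only announces that future joint work~\cite{vw} will prove it asymptotically and exactly for large \emph{bounded-degree} trees. So there is no paper proof to compare against, and you should not expect your proposal to be a complete argument.

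That said, the gaps in your sketch are more severe than your final paragraph suggests. Theorem~\ref{main_theorem} requires $\Delta(T)\leq\Delta$ for a \emph{fixed constant} $\Delta$, not $\Delta(T)<\alpha d$. Your Case~B therefore only covers $\Delta(T)=O(1)$, while Case~A (by your own slack calculation) needs $\Delta(T)>d/3$. The uncovered range is not a narrow ``intermediate regime $\Delta(T)=\Theta(d)$'' but the entirety of $\omega(1)\leq\Delta(T)\leq d/3$, which is almost all trees. Moreover, even within Case~B the extremal machinery you invoke does not apply: Lemma~\ref{lem:stability_main} requires $e(G)\geq(1-\varepsilon)dn/2$, i.e.\ average degree near $d-1$, whereas the hypotheses of Conjecture~\ref{conj23} only give $\delta(G)\geq 2d/3$ and hence average degree possibly as low as $2d/3$. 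The stability and extremal-case lemmas of~\cite{supplementary} are calibrated to near-Erd\H{o}s--S\'os-extremal graphs and say nothing about this regime; you cannot simply ``replace the minimum-degree hypothesis'' in Theorems~\ref{lem:extremal_analysis_bipartite} and~\ref{lem:extremal_analysis_nonbipartite} without redoing their proofs from scratch under genuinely different assumptions.

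Finally, the Case~A argument is a hope, not a proof. P\'osa rotation is a technique for paths; your one-line invocation of a ``rotation/swap argument'' for arbitrary branching trees, with multiple subtrees competing for the same neighbour pool of a single anchor vertex, has no obvious implementation, and the slack calculation $(2d/3)-(1-\alpha)d$ does not account for this competition.
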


\begin{conjecture}[Besomi, Pavez-Sign\'e,  Stein \cite{besomi2019degree}]
Let $\delta(G)\geq d/2$ and $\Delta(G)\geq 2d$. Then $G$ has a copy of every $d$-edge tree.
\end{conjecture}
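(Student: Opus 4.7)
The plan is to prove the conjecture via a case split on $\Delta(T)$, combining hyperstability (for the bounded-degree regime) with a direct hub-based embedding (for the unbounded-degree regime). Let $v\in V(G)$ be a vertex with $d_G(v)\ge 2d$ and fix a large constant $\Delta_0$.

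\textbf{Regime A ($\Delta(T)\le\Delta_0$).} Here $T$ is bounded-degree. The challenge is that $\delta(G)\ge d/2$ only gives $e(G)\ge dn/4$, strictly below the Erd\H{o}s-S\'os edge threshold $(d-1)n/2$, so Theorem~\ref{thm:Erdos-Sos-bounded-degree} does not apply directly. However $e(G)\ge \tfrac14 d|G|$ suffices to feed Theorem~\ref{main_theorem} with $\varepsilon=1/4$: after deleting at most $e(G)/4$ edges we obtain a subgraph $G'$ whose connected components have covers of size $\le 3d$. Arrange (by a small modification of the hyperstability proof, or by re-running the clumping argument while protecting edges at $v$) that edges incident to $v$ are preserved, so that $v$ retains degree $\ge (2-o(1))d$ inside its component $C$. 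Since the cover $D$ of $C$ has size $\le 3d$, either $v\in D$ or $N_C(v)\subseteq D$; in either case $v$'s neighbourhood is concentrated in a set of size $O(d)$. Using $\delta(G)\ge d/2$ together with $d_G(v)\ge 2d$ one shows that the induced subgraph $G[D\cup N_C(v)]$ (of order $O(d)$) has edge density $\ge(1+\Omega(1))d/2$, and then feed this dense small subgraph into Theorem~\ref{thm:Erdos_Sos_dense} to extract a copy of $T$.

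\textbf{Regime B ($\Delta(T)>\Delta_0$).} Let $u\in V(T)$ have degree $k=d_T(u)>\Delta_0$; root $T$ at $u$, producing subtrees $T_1,\ldots,T_k$ (with roots $x_i$ the children of $u$) of sizes $t_1,\ldots,t_k$ summing to $d$. Embed $u\mapsto v$. Split the subtrees into \emph{small} ($t_i\le\sqrt d$) and \emph{large} ($t_i>\sqrt d$; of which there are at most $\sqrt d$). Embed the large subtrees one at a time by the dichotomy of Lemma~\ref{lem:expansion_no_cut_dense}: either $G-v$ minus the previously embedded vertices is a Friedman--Pippenger expander (Theorem~\ref{thm:Friedman-Pippenger} then embeds $T_i$), or it contains an $O(d)$-vertex cut-dense region which embeds $T_i$ via a regular subgraph (produced by Theorem~\ref{thm:Rodl_Wosocka}) and Lemma~\ref{lem:main_tree_embedding_lemma}(i). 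After all the large subtrees have been placed, fewer than $d$ vertices have been used, so $\delta(G)\ge d/2\gg\sqrt d$ suffices to embed each remaining small subtree greedily via Fact~\ref{fact:greedy_tree_embedding}.

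\textbf{Main obstacle.} The crux in Regime~B is vertex-disjointness bookkeeping: the roots $x_1,\ldots,x_k$ must be assigned to \emph{distinct} neighbours of $v$ \emph{compatibly with} the disjoint growth of the subtrees, which requires a Hall-type or random-greedy matching coordinated with the expander-vs.-cut-dense dichotomy above. In Regime~A, the claim that $G[D\cup N_C(v)]$ has edge density strictly above $d/2$ uses $d_G(v)\ge 2d$ in a subtle way and depends on controlling the hyperstability deletions at $v$. Both issues combine most painfully in the intermediate range $\Delta_0<\Delta(T)\ll\sqrt d$, where neither regime argument is clean; resolving it likely requires a refinement of Theorem~\ref{main_theorem} that identifies \emph{several} near-hubs of $G$ rather than just the single one guaranteed by hypothesis.
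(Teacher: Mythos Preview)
The paper does not prove this statement: it is listed as an open conjecture in the concluding remarks, with the remark that forthcoming joint work will establish it \emph{asymptotically}, and only for large \emph{bounded-degree} trees. There is therefore no proof in the paper to compare against, and your plan targets the full conjecture, which is strictly more than anything the paper claims.

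Your Regime~A contains a genuine gap, not just a technicality. The assertion that $G[D\cup N_C(v)]$ has $e>(d-1)|D\cup N_C(v)|/2$ does not follow from $\delta(G)\ge d/2$, $d_G(v)\ge 2d$, and the cover structure produced by Theorem~\ref{main_theorem}. Consider $G$ consisting of a vertex $v$ joined completely to four vertex-disjoint copies of $K_{d/2}$: then $\delta(G)=d/2$, $d_G(v)=2d$, and $G$ is already a single component with a cover of size $2d\le 3d$, so hyperstability is vacuous and $D\cup N_C(v)=V(G)$. But $e(G)=4\binom{d/2}{2}+2d\approx d^2/2$ on $|G|\approx 2d$ vertices, i.e.\ average degree $\approx d/2$, well short of the threshold for Theorem~\ref{thm:Erdos_Sos_dense}. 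The single hub contributes only $O(d)$ edges, negligible against the $\Theta(d^2)$ deficit. This $G$ \emph{does} contain every bounded-degree $d$-edge tree (consistently with the conjecture), but only because $v$ can be used as a cut-vertex through which the embedding is routed between cliques --- not because of any density surplus. Your Regime~A has no mechanism that uses $v$ structurally in the embedding; it tries to reduce to pure edge density, and that reduction is false.

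Regime~B is, as you say, a sketch, but the holes go beyond the matching bookkeeping you flag. Neither Lemma~\ref{lem:main_tree_embedding_lemma}(i) nor Theorem~\ref{thm:Friedman-Pippenger} as stated produces an embedding rooted at a \emph{prescribed} vertex, so the step ``embed $T_i$ rooted at a chosen neighbour of $v$'' has no engine behind it in either branch of your dichotomy. Lemma~\ref{lem:main_tree_embedding_lemma}(i) additionally needs a regular subgraph of order $\ge(2+\varepsilon)|T_i|$ inside the cut-dense piece, which you do not arrange.
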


\begin{conjecture}[Besomi, Pavez-Sign\'e,  Stein \cite{besomi2019degree}]\label{conjgeneraldegrees}
Let $\alpha \in[0,1/3)$,   $\delta(G)\geq(1+\alpha)d/2$ and $\Delta(G)\geq 2(1-\alpha)d$. Then $G$ has a copy of every $d$-edge tree.
\end{conjecture}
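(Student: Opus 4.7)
The plan is to combine the hyperstability theorem (Theorem~\ref{main_theorem_proof}) with a centroid/heavy-vertex decomposition of $T$, using the guaranteed high-degree vertex of $G$ as an anchor. First, call $v \in V(T)$ \emph{heavy} if $d_T(v) > \alpha d$. Since $\sum_v d_T(v) = 2d$, there are at most $2/\alpha$ heavy vertices, a constant. Removing them leaves a forest $F$ with $\Delta(F) \le \alpha d$. The idea is to embed the ``heavy skeleton'' first, using a vertex $v^* \in V(G)$ with $d_G(v^*) \ge 2(1-\alpha)d$ as the anchor for the heaviest vertex of $T$, then extend $F$ using greedy embedding together with the minimum-degree slack $\delta(G) \ge (1+\alpha)d/2$. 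Note in particular that $\Delta(F)\le \alpha d\le (1+\alpha)d/2$, so the greedy extension via Fact~\ref{fact:greedy_tree_embedding} has enough room whenever we can avoid the already-embedded vertices.

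Next, I would apply Theorem~\ref{main_theorem_proof} to $G$ to obtain a subgraph $G'$ whose components have vertex covers of size $\le 3d$. By Fact~\ref{fact:connectected_component_with_same_density_as_G}, some component $C$ has $e(C) \ge (1+\alpha/2) d |C|/2$. I would argue that $v^*$ (or another vertex of comparable degree) lies in $C$: either $v^*$ survives the edge deletions directly, or, if Theorem~\ref{main_theorem_proof} is strengthened to preserve the neighborhood of a specified vertex, we can enforce this. Within $C$, whose structure is dictated by its small cover $D$, I would establish a stability theorem in the spirit of Theorem~\ref{lem:stability_main} adapted to the degree-based problem: if $C$ has no copy of $T$, then $C$ is essentially a near-clique of order $(1+o(1))d$ or a near-bipartite ``blowup''. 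The presence of a vertex of degree $\ge 2(1-\alpha)d$ in $C$ exceeds the cover size barrier of both extremal configurations, producing the copy of $T$ by an analogue of Theorems~\ref{lem:extremal_analysis_bipartite} and~\ref{lem:extremal_analysis_nonbipartite}.

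The main obstacle is handling trees with two or more heavy vertices. The conjecture only guarantees one vertex of degree $\ge 2(1-\alpha)d$ in $G$, so additional heavy vertices of $T$ must be embedded at vertices whose degree is only slightly above $\delta(G)=(1+\alpha)d/2$. This forces a delicate balance: a ``double-broom'' tree with two vertices of degree $\approx d/2$ can be embedded only if $G$ contains many near-high-degree vertices clustered around $v^*$, which in turn should follow from the minimum-degree condition together with a Kővári--Sós--Turán-style common-neighbour argument applied to the edges $G$ is forced to have. The threshold $\alpha < 1/3$ is suggestive of a tight extremal construction, perhaps two overlapping $(1+\alpha)d/2$-cliques sharing a hub vertex; pinning down this construction is likely essential to driving the stability analysis. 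A secondary difficulty is making Theorem~\ref{main_theorem_proof} coexist with the requirement that $v^*$ survive the deletion process, which may require a ``pointed'' refinement of hyperstability that preserves a marked vertex's local structure.
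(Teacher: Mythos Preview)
The statement you are attempting to prove is a \emph{conjecture}, not a theorem: the paper lists Conjecture~\ref{conjgeneraldegrees} in the concluding remarks as an open problem and does not supply a proof. It only announces that future joint work~\cite{vw} will settle the case $\alpha>0$ for large, bounded-degree trees. So there is no ``paper's own proof'' to compare against.

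That said, your sketch has a structural gap worth flagging. The conjecture is stated for \emph{all} $d$-edge trees, with no degree restriction on $T$, whereas Theorem~\ref{main_theorem_proof} requires $\Delta(T)\le\Delta$ for a fixed constant $\Delta$. Your heavy-vertex decomposition does not repair this: removing the vertices of degree $>\alpha d$ leaves a forest $F$ with $\Delta(F)\le\alpha d$, but $\alpha d$ is not bounded as $d\to\infty$, so the hyperstability theorem still does not apply. (Indeed, the paper explicitly notes that Theorem~\ref{main_theorem} is \emph{false} without some bounded-degree assumption.) Even restricting to bounded-degree $T$, your proposal is more of a programme than a proof: you defer the existence of a degree-based stability lemma, the ``pointed'' refinement of hyperstability preserving $v^*$, and the entire multi-heavy-vertex analysis to future work. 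These are exactly the nontrivial ingredients; the paper's announced resolution in~\cite{vw} presumably supplies them, but nothing in the present paper does.
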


\begin{conjecture}[Besomi, Pavez-Sign\'e,  Stein \cite{besomi2019degree}]\label{conjdegrees}

Let $\delta(G)\geq d/2$ and $\Delta(G)\geq 2(1-1/\Delta)k$. Then $G$ has a copy of every $d$-edge tree with $\Delta(T)\leq \Delta$.
\end{conjecture}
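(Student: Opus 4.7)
The plan is to follow the template established for the exact Erd\H{o}s-S\'os Theorem (Theorem~\ref{thm:Erdos-Sos-bounded-degree}): reduce from the sparse setting to a dense one via the hyperstability theorem (Theorem~\ref{main_theorem}), and then handle the remaining dense case by a regularity/Blow-Up-Lemma argument tailored to the hypotheses $\delta(G)\geq d/2$ and $\Delta(G)\geq 2(1-1/\Delta)d$.

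First, assume for contradiction that $G$ contains no copy of $T$, pass to the connected component $G_0$ containing a vertex $v^*$ with $d_G(v^*)\geq 2(1-1/\Delta)d$, and note that $\delta(G_0)\geq d/2$ forces $e(G_0)\geq d|G_0|/4$. Apply Theorem~\ref{main_theorem} with a small parameter $\varepsilon_1\ll \Delta^{-1}$ to delete $\varepsilon_1 e(G_0)$ edges and obtain a subgraph $H$ whose components each have a cover of order $\leq 3d$. Counting the deleted edges shows that the set of vertices whose $H$-degree has dropped below $(1-\sqrt{\varepsilon_1})d/2$ is of size $o(|G_0|)$; in particular $v^*$ still has degree $\geq (2-2/\Delta - o(1))d$ in the component $C$ of $H$ containing it. If $|C|\leq C_0 d$ for a suitably large constant $C_0$, we are in a genuinely dense setting. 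Otherwise the cover $D$ of $C$ of size $\leq 3d$ forces nearly all edges of $C$ to touch $D$, and one can pass to an induced subgraph on $D$ together with a random sample of $\Theta(d)$ vertices of $C\setminus D$, exactly as in the proof of Theorem~\ref{thm:Erdos-Sos-approximate}, and verify that this dense subgraph inherits effective versions of both degree hypotheses.

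Second, establish a dense-case analogue: if $|G|\leq C_0 d$, $\delta(G)\geq (1-\varepsilon_2)d/2$, and $G$ has a vertex $v^*$ of degree $\geq (2-2/\Delta-\varepsilon_2)d$, then $G$ contains every $d$-edge tree with $\Delta(T)\leq \Delta$. This step would proceed via the Regularity Lemma: apply regularity to $G$, locate $v^*$ in a cluster whose reduced neighbourhood carries most of the $\approx 2d/\Delta$ excess mass, and then combine a connected-matching argument (in the spirit of Lemma~\ref{lem:main_tree_embedding_lemma}) with the Blow-Up Lemma to find $T$. Concretely, pick a vertex $r$ of $T$ of maximum degree, embed $r$ at $v^*$ and $N_T(r)$ inside $N_G(v^*)$, and then embed the rest of $T$ into the regular pairs greedily, using $\delta(G)\geq d/2$ together with the fact that $T\setminus r$ splits into subtrees each of size $\leq d$ with their bipartition controlled.

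The main obstacle is Step~2: no off-the-shelf dense-case theorem currently matches the combination ``$\delta(G)\geq d/2$ and $\Delta(G)\geq 2(1-1/\Delta)d$'', so it has to be proved from scratch, and it likely requires a delicate stability analysis for trees whose bipartition is close to balanced---where the $2(1-1/\Delta)d$ threshold appears sharp, being saturated by graphs built from blocks resembling $K_{\lceil d/2\rceil+1, \lceil d/2\rceil+1}$ glued along a high-degree vertex. Once such a dense-case theorem is in hand, Step~1 plugs it into the hyperstability framework essentially verbatim, mirroring the short proof of Theorem~\ref{thm:Erdos-Sos-bounded-degree}; the $\varepsilon$-sized slack lost to edge deletions is comfortably absorbed by the gap between $1-1/\Delta$ and $1$ in the hypothesis on $\Delta(G)$.
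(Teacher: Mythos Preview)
The paper does not prove this statement. Conjecture~\ref{conjdegrees} appears in the concluding remarks as an \emph{open problem}; the paper only announces that an asymptotic version will be handled in the forthcoming joint work~\cite{vw}. There is therefore no ``paper's own proof'' to compare your proposal against.

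That said, your outline follows exactly the template the paper advertises for such applications, and you are candid that Step~2 (the dense-case analogue) is the real content and is not yet available. Two remarks on the sketch itself. First, your Step~1 asserts that the distinguished high-degree vertex $v^*$ retains degree $\geq (2-2/\Delta-o(1))d$ after the deletion of $\varepsilon_1 e(G_0)$ edges. This is not justified: Theorem~\ref{main_theorem} gives no control over which edges are deleted, and since only a single vertex of large degree is promised by the hypothesis, that vertex could in principle lose a constant fraction of its incident edges. The Loebl--Koml\'os--S\'os application in the paper avoids this because there one has $\Omega(n)$ high-degree vertices and can afford to lose $o(n)$ of them; here you have exactly one and must argue more carefully (e.g.\ by protecting the edges at $v^*$, or by showing the structure theorem can be run so as to keep them). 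Second, even granting Step~1, your description of Step~2 is a plan rather than an argument: embedding $r$ at $v^*$ and then absorbing the subtrees via a connected-matching/Blow-Up scheme is plausible, but the sharpness at $2(1-1/\Delta)d$ means the stability analysis you allude to is genuinely delicate and is precisely the missing ingredient the paper defers to~\cite{vw}.
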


\begin{conjecture}[Klimo{\v{s}}ov{\'a},  Piguet,  Rozho{\v{n}}, see  \cite{rozhon2019local}]
Let $\delta(G)\geq d/2$ and suppose that $G$ has at least $|G|/2\sqrt d$ vertices of degree $d$. Then $G$ has a copy of every $d$-edge tree.
\end{conjecture}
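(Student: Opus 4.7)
My plan is to mirror the black-box strategy used for approximate Erd\H{o}s--S\'os (Theorem~\ref{thm:Erdos-Sos-approximate}) and for the skew Loebl--Koml\'os--S\'os application, so I would attack the conjecture only in the case $\Delta(T)\leq \Delta$, which is the regime where Theorem~\ref{main_theorem} applies. Since $\delta(G)\geq d/2$ gives $e(G)\geq dn/4$, the hypothesis of Theorem~\ref{main_theorem_proof} is comfortably satisfied. I would choose $\varepsilon_1$ of order $1/\sqrt d$ (concretely $\varepsilon_1=1/(1000\sqrt d)$, which still satisfies $\varepsilon_1\gg d^{-1}$ for large $d$) and apply the theorem to obtain a spanning subgraph $G'$ formed by deleting at most $\varepsilon_1 dn$ edges whose connected components $C_1,\ldots,C_s$ each have a cover $D_i$ of order at most $3d$.

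The second step would be to track how many high-degree vertices survive. Writing $L=\{v:d_G(v)\geq d\}$ and $L'=\{v\in L:d_{G'}(v)\geq 0.99 d\}$, a standard edge count gives $|L\setminus L'|\leq 2\varepsilon_1 dn/(0.01 d)=n/(5\sqrt d)<|L|/2$, so $|L'|\geq n/(4\sqrt d)$, and every $v\in L'$ has $G'$-degree at least $0.99d$. By pigeonholing $L'$ across the components, some component $C:=C_i$ with cover $D:=D_i$ satisfies $|L'\cap C|\geq |C|/(4\sqrt d)$, and from now on the problem takes place inside this single component.

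The third step is the usual reduction to a dense instance on $\Theta(d)$ vertices. Because $D$ is a cover of $C$, the set $V(C)\setminus D$ is independent, so every vertex outside $D$ sends all of its $C$-edges into $D$. If $|C|$ is already $O(d)$ we are in the dense regime immediately; otherwise, I would deterministically include a subset $R_L\subseteq L'\cap (V(C)\setminus D)$ of size $\Theta(\sqrt d)$, randomly sample $R\subseteq V(C)\setminus D$ of size $\Theta(d)$, and set $H:=C[D\cup R\cup R_L]$. Standard concentration (Chernoff and Lemma~\ref{lem:McDiarmid}) then gives, with positive probability, that $|H|=\Theta(d)$, that $H$ essentially retains the minimum-degree profile of $C$, and that $H$ contains at least $|H|/(2\sqrt d)$ vertices of degree $\approx d$ (coming from $R_L$). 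One is then left with a dense instance of the conjecture on $\Theta(d)$ vertices.

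The main obstacle is precisely this dense case. Unlike for Erd\H{o}s--S\'os and skew Loebl--Koml\'os--S\'os, no dense version of this conjecture appears to be in the literature, and proving it looks genuinely delicate: in a host graph of order $\Theta(d)$ one has only $\Theta(\sqrt d)$ vertices of degree $\geq d$, which is far too few to greedily carry an embedding all the way through the tree. A plausible route is to combine a stability/extremal-case analysis in the style of Theorems~\ref{lem:stability_main}--\ref{lem:extremal_analysis_nonbipartite} with a carefully structured embedding that pushes the ``heavy'' portion of $T$ (its vertices of largest subtree weight, say) onto the few high-degree vertices of $H$ and then absorbs the remaining small subtrees using the $d/2$ minimum degree, but handling the bipartite-balanced near-extremal configurations is where I expect the substantive new work to lie.
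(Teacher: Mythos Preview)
The paper does not prove this statement; it is listed among the \emph{unsolved} problems in the concluding remarks, and is not among the conjectures the authors announce progress on in \cite{vw}. So there is no ``paper's own proof'' to compare against, and your write-up should be read as a proposed strategy for an open problem rather than a proof.

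That said, there is a genuine gap in the reduction step itself, not only in the dense endgame. You propose to apply Theorem~\ref{main_theorem_proof} with $\varepsilon_1=1/(1000\sqrt d)$, arguing that this ``still satisfies $\varepsilon_1\gg d^{-1}$''. This misreads the hierarchy notation: in this paper $\varepsilon\gg d^{-1}$ means there is a fixed function $f$ with $d^{-1}\le f(\varepsilon)$, and inspecting the proof of Theorem~\ref{main_theorem_proof} one sees a chain $\varepsilon\gg p\gg h^{-1}\gg\alpha\gg\kappa\gg L^{-1}\gg\gamma\gg d^{-1}$ in which quantities like $\kappa^{(10k)!}$ with $k\le 2Cp^{-13}h$ appear. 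The implicit lower bound on $d$ in terms of $1/\varepsilon$ is therefore at least factorial-type, so one certainly cannot take $\varepsilon$ of order $d^{-1/2}$. With any \emph{constant} $\varepsilon$ the deletion of $\varepsilon dn$ edges can in principle drop the degree of every vertex in $L$ below $d$ (since $|L|\cdot d\le (n/2\sqrt d)\cdot d=dn/(2\sqrt d)\ll \varepsilon dn$), so the set $L'$ you need may be empty. In short, the hyperstability theorem as stated is too coarse to preserve a sublinear-in-$n$ set of high-degree vertices; this is precisely what makes this conjecture sit outside the direct black-box applications and why the paper leaves it open. Your instinct that the dense case would require substantial new ideas is also correct, but the reduction to it already fails.
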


\begin{question}[Alon, Shickhelman,  \cite{alon2016many}]
Let $T$ be a $d$-edge tree and $G$ an $n$-vertex graph with no copies of $T$. If $G$ contains more than $\lfloor \frac{n}{d-1}\rfloor\binom{d-1}t+\binom{n-(d-1)\lfloor\frac{n}{d-1}\rfloor}{t}$ copies of $K_t$, then must $G$ contain a copy of $T$?
\end{question}

\begin{problem}[Cioab{\u{a}}, Desai,   Tait,  \cite{cioabua2022spectral}]
For every tree $T$ and $n\in \mathbb{N}$, determine how large the spectral radius of an $n$-vertex, $T$-free graph can be.
\end{problem}
In a future paper with Versteegen and Williams~\cite{vw}, we will apply Theorem~\ref{main_theorem} in order to make progress on some of these. In particular we will asymptotically prove Conjectures~\ref{conj23} --~\ref{conjdegrees} and exactly prove Conjecture~\ref{conj23}, Conjecture~\ref{conjgeneraldegrees} for $\alpha>0$, and Conjecture~\ref{conj:LKS_conclusion} for trees with $\ge\Omega(d)$ leaves (all of these results will be for large, bounded degree trees).

\subsubsection*{Hypergraphs and directed graphs}
There are generalizations of the Erd\H{o}s-S\'os Conjecture to both hypergraphs and directed graphs.

\begin{conjecture}[Kalai, \cite{frankl1987exact}]
Let $\mathcal H$ be an $r$-uniform hypergraph with $n$ vertices and $e(\mathcal H)\geq \frac{k-1}r\binom{n}{r-1}$. Then $\mathcal H$ contains a copy of every tight $r$-uniform hypertree with $k$ edges.
\end{conjecture}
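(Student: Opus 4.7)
The natural approach is to mirror the strategy of Theorem~\ref{main_theorem} and its application to Theorem~\ref{thm:Erdos-Sos-bounded-degree}, replacing each graph ingredient by a hypergraph analog. The target would be a hypergraph hyperstability theorem: any $r$-uniform hypergraph $\mathcal H$ with $e(\mathcal H) \geq \varepsilon \frac{k-1}{r}\binom{n}{r-1}$ containing no copy of a bounded-degree tight $k$-edge hypertree $T$ admits, after deletion of $o(e(\mathcal H))$ edges, a decomposition into connected components each having a cover of size $O(k)$. Combined with dense-case and near-extremal analogs, this would yield Kalai's conjecture for large bounded-degree tight hypertrees, in direct parallel with Theorem~\ref{thm:Erdos-Sos-bounded-degree}.

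The first step is to set up hypergraph versions of the basic machinery. One defines a $q$-cut-dense $r$-uniform hypergraph by requiring $e_{\mathcal H}(A_1,\dots,A_r) \geq q\prod |A_i|$ (or some convenient variant) for every partition-like choice, and then proves the hypergraph analogs of Lemmas~\ref{lem:cut_dense_union}--\ref{lem:cut_dense_decomposition}; most of these arguments are set-theoretic and should generalize directly. Then one runs the clumping argument in this setting, tracking an analog of the regular subgraph $M(K)$ inside each clump whose size, by a hypergraph version of Lemma~\ref{lem:main_tree_embedding_lemma}(i), cannot exceed $(1+\varepsilon)k$ without producing a copy of $T$. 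The joining / incidence / subdivision steps of the proof of Theorem~\ref{main_theorem_proof} are purely auxiliary and should carry through essentially verbatim, since the auxiliary objects they manipulate are ordinary bipartite graphs.

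The main obstacle is the tight-hypertree embedding machinery, in both its expander and its cut-dense form. Graph trees have the convenient property that one can embed them greedily into any graph of sufficient minimum degree (Fact~\ref{fact:greedy_tree_embedding}); for tight hypertrees each new edge must share $r-1$ already-embedded vertices, so a codegree condition is needed and naive greedy arguments break. Correspondingly, one needs: (a) a Friedman-Pippenger-style statement that bounded-degree tight hypertrees embed into $r$-graphs in which small sets expand in codegree, to handle the sparse no-local-dense case; and (b) an analog of Lemma~\ref{lem:main_tree_embedding_lemma} embedding tight hypertrees into cut-dense $r$-graphs, presumably via hypergraph regularity and blow-up lemmas. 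Both exist in the literature only in restricted forms. I would expect (a) and (b) to be the technical heart of the proof, with the hyperstability/clumping skeleton requiring only bookkeeping changes once they are in hand. A reasonable intermediate milestone is to first handle the case $r=3$ and $T$ a tight path, where both embedding ingredients are most developed; success there would provide a hypergraph analog of Theorem~\ref{theorem_Lean} and validate the framework before tackling general $T$.
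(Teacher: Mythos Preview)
This statement is a \emph{conjecture}, not a theorem: the paper does not prove it and does not claim to. It appears in the concluding remarks as an open problem, with the paper explicitly noting that ``obviously our main theorem doesn't give anything here because it is about undirected $2$-uniform graphs'' and only speculating that ``some sort of analogues should hold''. So there is no proof in the paper to compare your proposal against.

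What you have written is accordingly not a proof but a research programme, and you are candid about this yourself: you identify the tight-hypertree embedding lemmas (both the Friedman--Pippenger analogue and the cut-dense/regularity analogue) as genuine obstacles that ``exist in the literature only in restricted forms''. That is an honest assessment, but it means the proposal has a real gap at exactly the point where the graph proof has its teeth. The clumping and auxiliary-bipartite-graph machinery may well port over as you suggest, but without (a) and (b) you have no way to force the structure that makes the clumping argument terminate usefully, and no way to extract the tree once you have the structure. Until those embedding results are actually established, this is a plausible outline rather than a proof.
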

\begin{conjecture}[Addario-Berry, Havet, Linhares Sales, Thomass\'e and Reed, \cite{addario2013oriented}]
Let $D$ be a digraph with $e(D)\geq (k-1)|V(D)|$. Then $D$ has a copy of every antidirected $k$-edge-tree.
\end{conjecture}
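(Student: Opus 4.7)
The plan is to adapt the paper's proof of the bounded-degree Erd\H{o}s-S\'os conjecture (Theorem~\ref{thm:Erdos-Sos-bounded-degree}) to the directed setting, at least for antidirected trees with bounded maximum total degree. This splits into three ingredients: a directed analog of the hyperstability theorem (Theorem~\ref{main_theorem}), a dense analog of the conjecture for antidirected trees, and a short combination step in the style of Theorem~\ref{thm:Erdos-Sos-approximate}.

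For the directed hyperstability step, the target statement is: if $D$ is a digraph with $e(D) \geq \varepsilon k |V(D)|$ that contains no antidirected $k$-edge tree $T$ of bounded degree, then one can delete $\varepsilon e(D)$ arcs to obtain a digraph whose weakly connected components each have an (underlying) vertex cover of size $O(k)$. I would reuse the clumping machinery of Sections 4--5 almost verbatim, because cut-density, the clump-joining lemma (Lemma~\ref{lem:clump_joining}), and the overall incidence-graph argument driving Theorem~\ref{main_theorem_proof} are orientation-blind and work on the underlying graph of $D$. The only nontrivial adaptation is in the tree-embedding lemmas (Lemma~\ref{lem:main_tree_embedding_lemma} and the directed analog of the Friedman-Pippenger reduction in Lemma~\ref{lem:expansion_no_cut_dense}). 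For antidirected trees this is easier than for arbitrary oriented trees: the tree has a canonical source/sink bipartition, so one can embed it by placing the source side into vertices of large out-degree and the sink side into vertices of large in-degree, using an oriented Blow-Up Lemma in the cut-dense case and a directed Friedman-Pippenger-type theorem in the expander case.

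For the dense case, I would pass to the bipartite double cover $B(D)$: parts $V^+$ and $V^-$ are both copies of $V(D)$, and $v^+ w^-$ is an edge whenever $vw$ is an arc of $D$. Then $|B(D)| = 2|V(D)|$ and $e(B(D)) = e(D) \geq (1+\varepsilon)(k-1)|V(D)| = \frac{1+\varepsilon}{2}(k-1)|B(D)|$, so a bipartite version of Erd\H{o}s-S\'os (derivable in bounded-degree form from the techniques of~\cite{besomi2021erdHos, rozhon2019local}) embeds the bipartite antidirected tree $T$ into $B(D)$ with sources mapped to $V^+$ and sinks mapped to $V^-$. The subtlety is a distinctness constraint: no source image $v^+$ may share its underlying vertex with any sink image $w^-$. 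Since the ``diagonal'' $\{(v^+, v^-) : v \in V(D)\}$ contains only $|V(D)|$ pairs among the $|V(D)|^2$ possible cross-pairs and $T$ has only $k+1$ vertices, an averaging or random-reshuffling argument combined with a vertex-swap trick should enforce distinctness away from extremality. Near extremality one would need a stability analysis analogous to Lemma~\ref{lem:stability_main} together with directed analogs of Lemmas~\ref{lem:extremal_analysis_bipartite} and~\ref{lem:extremal_analysis_nonbipartite}. The reduction then concludes by the template of Theorem~\ref{thm:Erdos-Sos-approximate}: apply directed hyperstability to isolate a component with a small cover, take a bounded-order random dense sub-digraph, and invoke the dense antidirected result.

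The hardest step will be the dense-case extremal analysis for antidirected trees with the distinctness constraint. The extremal digraph (a disjoint union of bidirected cliques of order $k$, with exactly $(k-1)|V(D)|$ arcs and no room for a $(k+1)$-vertex tree) makes this a genuinely tight extremal problem, and the bipartite double cover loses precisely the information that $v^+$ and $v^-$ correspond to the same vertex of $D$. This will likely require a dedicated proof tailored to digraphs rather than just a black-box appeal to undirected Erd\H{o}s-S\'os results. A smaller obstacle is the oriented Blow-Up and Friedman-Pippenger ingredients in the hyperstability step, which are standard but add technical overhead. Finally, as with Theorem~\ref{main_theorem} itself, the bounded-degree hypothesis is a genuine limitation of this strategy, so the full conjecture will remain open for high-degree antidirected trees.
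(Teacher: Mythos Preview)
The statement you are attempting to prove is a \emph{conjecture}, not a theorem: the paper does not prove it and explicitly lists it as open. In the concluding remarks the paper says ``Obviously our main theorem doesn't give anything here because it is about undirected $2$-uniform graphs. However, it seems likely that some sort of analogues should hold in the settings of both hypergraphs and directed graphs.'' So there is no ``paper's own proof'' to compare against.

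Your proposal is not a proof but a research programme, and you yourself flag the gaps. The two most serious are: (a) you assert that a directed Friedman--Pippenger and an oriented Blow-Up Lemma suitable for antidirected trees are ``standard'', but you do not actually state or cite such results, and the Friedman--Pippenger step in particular is delicate because the expansion hypothesis needed for antidirected embedding must separately control out-neighbourhoods of source vertices and in-neighbourhoods of sink vertices; (b) the distinctness constraint in the bipartite double cover is exactly the content of the conjecture --- without it you are just proving undirected Erd\H{o}s--S\'os --- and your ``averaging or random-reshuffling argument combined with a vertex-swap trick'' is a hope, not an argument. Even if all of this were carried out, you acknowledge the outcome would only be the bounded-degree case, so the conjecture as stated would remain open.
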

Obviously our main theorem doesn't give anything here because it is about undirected $2$-uniform graphs. However, it seems likely that some sort of analogues should hold in the settings of both hypergraphs and directed graphs.
\subsubsection*{Applications in Ramsey theory}
It is well known that  Erd\H{o}s-S\'os Conjecture and its relatives have connections to estimating Ramsey numbers of trees. For example Erd\H{o}s and Graham observed that the Erd\H{o}s-S\'os Conjecture implies an affirmative answer to the following.
\begin{problem}[Erd\H{o}s, Graham  \cite{erdHosgraham}]
Is the $k$-colour Ramsey number of a $k$-edge tree $T$ equal to $rk+O(1)$.
\end{problem}
Using Theorem~\ref{thm:Erdos-Sos-bounded-degree} we get a proof of this being true for bounded degree trees. There are more connections and questions with Ramsey theory, which are all well-documented in~\cite{stein2020tree}.

\subsubsection*{Acknowledgement}
The author would like to thank Kyriakos Katsamakis, Shoham Letzter, Benny Sudakov, Leo Versteegen, and Ella Williams for discussion about this project.

\bibliographystyle{abbrv} 
\bibliography{trees}  

\section{Appendix: tree embedding}

Here we prove Lemma~\ref{lem:main_tree_embedding_lemma}.

The first thing we need is the standard fact that every can be split into  two subtrees with some control over their size. 
In the proof of the next lemma we use notation for rooted trees. We think of a rooted tree as being a digraph with all edges directed away from the root. For a vertex $v$, its out-neighbours are called the children of $v$. If there is a directed path from $u$ to $v$ we say that $v$ is a descendent of $v$. We say it is a strict descendent if additionally $u\neq v$.
\begin{lemma} \label{lem:tree_splitting} 
Let $\Delta\geq 1$ and $0\leq \lambda<1$.
Let $T$ be a rooted $n$-vertex tree with $\Delta(T)\leq \Delta$. There is a subtree $Q$ containing the root with $|Q|\in[(1-2 \lambda )n, (1-\lambda)n+2\Delta]$, such that $T\setminus Q$ has $\leq \Delta$ components, each of which has order $\leq \lambda n$.
\end{lemma}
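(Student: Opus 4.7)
I plan to prove the lemma by constructing $Q$ explicitly as $T$ minus a carefully chosen subset of subtrees hanging off a single ``critical'' vertex. For each vertex $u$, let $s(u) := |T_u|$ denote the size of the subtree rooted at $u$ (directed away from the root).

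First I would handle the trivial case $\lambda n \leq 2\Delta$: just take $Q = T$, so $|Q| = n \leq (1-\lambda)n + 2\Delta$, $|Q| \geq (1-2\lambda)n$ trivially, and $T \setminus Q$ is empty. So from now on assume $\lambda n > 2\Delta$. The key step is to find a vertex $v$ with $s(v) > \lambda n$ but $s(c) \leq \lambda n$ for every child $c$ of $v$. Such a $v$ exists: start at the root (where $s = n > \lambda n$ since $\lambda < 1$), and while the current vertex has a child $c$ with $s(c) > \lambda n$, move to such a child. This terminates at the desired $v$. Let $c_1, \dots, c_m$ be the children of $v$, noting $m \leq \Delta$ (since the degree of $v$ in $T$ is at most $\Delta$), and $\sum_{i=1}^m s(c_i) = s(v) - 1 > \lambda n - 1$.

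The plan is now to pick a subset $S \subseteq \{1,\dots,m\}$ with $\sum_{i \in S} s(c_i) \in [\lambda n - 2\Delta,\, 2\lambda n]$ and set $Q := T \setminus \bigcup_{i \in S} T_{c_i}$. Concretely: if $\sum_i s(c_i) = s(v) - 1 \leq 2\lambda n$, take $S = \{1,\dots,m\}$ (the lower bound is satisfied since $s(v)-1 > \lambda n - 1 \geq \lambda n - 2\Delta$, using $\Delta \geq 1$); otherwise, add the $c_i$'s to $S$ one at a time in an arbitrary order until the partial sum first reaches $\lambda n - 2\Delta$. Since each $s(c_i) \leq \lambda n$, the final sum overshoots $\lambda n - 2\Delta$ by at most $\lambda n$, so it lies in $[\lambda n - 2\Delta,\, 2\lambda n - 2\Delta] \subseteq [\lambda n - 2\Delta,\, 2\lambda n]$. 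This guarantees $|Q| = n - \sum_{i \in S} s(c_i) \in [(1-2\lambda)n,\, (1-\lambda)n + 2\Delta]$.

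Finally I would verify the structural claim: $Q$ is a subtree containing the root because we have only removed entire downward subtrees $T_{c_i}$ hanging off $v$, and $v$ together with its ancestor-path to the root remains; the components of $T \setminus Q$ are exactly the $T_{c_i}$ for $i \in S$, so there are $|S| \leq m \leq \Delta$ of them, each of size $s(c_i) \leq \lambda n$ by the choice of $v$. There is no real obstacle here — the only thing to watch is that the greedy step succeeds, which works precisely because each individual $s(c_i)$ is bounded by $\lambda n$ (i.e.\ smaller than the length $\lambda n + 2\Delta$ of the target interval), so greedy cannot jump over the interval.
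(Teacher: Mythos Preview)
Your approach is essentially the paper's: both locate a vertex $v$ with $s(v)>\lambda n$ but $s(c)\le\lambda n$ for every child $c$ (the paper finds it by levels, you by greedy descent), and then greedily decide which of the children's subtrees to keep. There is, however, a genuine slip in your final structural check. In this paper $T\setminus Q$ denotes the graph on edge set $E(T)\setminus E(Q)$ with isolated vertices removed. Since your $Q$ excludes the vertices $c_i$ for $i\in S$, the edges $vc_i$ lie in $E(T)\setminus E(Q)$; hence $T\setminus Q$ is a \emph{single} component containing $v$ together with all the $T_{c_i}$, of order $1+\sum_{i\in S}s(c_i)$, which can reach $2\lambda n+1$. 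Already for a path with $|S|=1$ the unique component has $s(c_1)+1$ vertices, which can exceed $\lambda n$.

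The paper sidesteps this by starting from a base tree $Q_0$ that retains $v$ \emph{and all of its children} as vertices (so every spoke $vc_i$ is in $E(Q_0)$), and then greedily adds back whole subtrees $T_{c_i}$ until the size cap $(1-\lambda)n+2\Delta$ is hit. You can repair your argument the same way: keep every $c_i$ in $V(Q)$ and run the greedy over the quantities $s(c_i)-1$ rather than $s(c_i)$. The target interval $[\lambda n-2\Delta,\,2\lambda n]$ for $\sum_{i\in S}(s(c_i)-1)$ is still hit, since each term is at most $\lambda n-1$ (shorter than the interval), and in the ``take everything'' case the total $s(v)-1-m>\lambda n-1-\Delta\ge\lambda n-2\Delta$ using $m\le\Delta$ and $\Delta\ge 1$. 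With this adjustment $|Q|=n-\sum_{i\in S}(s(c_i)-1)$ lands in the required range and the components of $T\setminus Q$ really are the $T_{c_i}$.
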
 
\begin{proof}
Let $r$ be the root of $T$. 
Note that if $(1-\lambda)n+2\Delta\geq n$, we can choose $Q=T$, so suppose that   $(1-\lambda)n+2\Delta< n$.
Let $N^i=\{v: d(r,v)=i\}$. Let $t$ be the smallest number such that the components of $T$ descending from $N^t$ have order $\leq \lambda n$ ($t$ exists because for $i>n$, there are no components descending from $N^i$). Note $t\geq 1$, since the component descending from $N^{0}=\{r\}$, is $T$ which has order $n> \lambda n$. By minimality, there is some component $R_y$ descending from $y\in N^{t-1}$ such that $|R_y|> \lambda n$. 
Let $Q_1, \dots, Q_{t}$ be the subtrees descending from the children of $y$. Let  $Q_0$ be  $T$ with all strict descendants of $N(y)$ deleted, noting that this contains the root (since $r$ is not a strict descendent of anything),  is a subtree (for any $v\in Q_0$ the path from the root to $v$ can't contain any strict descendants of $N(y)$ since otherwise $v$ would also be such a descendent), has $E(Q_0)=T\setminus E(Q_1\cup \dots \cup Q_t)$ (since for any edge $uv\in Q_i$, $u$ is a descendent of $N(y)$ and so $v$ is a strict descendent of $N(y)$, giving $v\not\in Q_0$. Meanwhile for any edge $uv\not\in Q_i$, we have that $u$ is not a descendent of $N(y)$ and so $v$ is not a strict descendent of $N(y)$), and has  $V(Q_0)=(T\setminus R_y)\cup \{y\}\cup N(y)$ (strict descendants of $N(y)$ are exactly the descendants of $y$ outside $y\cup N(y)$).
Using $E(T \setminus  Q_0)= E(Q_1\cup \dots \cup Q_t)$, we have that the components of $T \setminus  Q_0$ are exactly $Q_1, \dots, Q_t$ (these are subtrees by definition, and are vertex disjoint since they descend from distinct vertices in $N(y)$).

 Since $Q_1, \dots, Q_{t}$ descend from children of $y$ (which are in $N^t$), they have $|Q_i|\leq \lambda n$. Since $V(Q_0)=(T\setminus R_y)\cup \{y\}\cup N(y)$, it has $|Q_0|\leq n-|R_y|+\Delta+1\leq (1-\lambda)n+2\Delta$.  Now let $Q= \bigcup_{i=0}^{s} Q_i$, where $s$ is chosen maximal such that $|Q|\leq (1-\lambda)n+2\Delta$ ($s$ exists and is $< t$ because $\bigcup_{i=0}^{t}Q_i= n>(1-\lambda)n+2\Delta$). Note that this ensures that $|Q|\geq (1-2\lambda)n$, as otherwise $|Q\cup Q_{s+1}|\leq |Q|+|Q_{s+1}|< (1-2\lambda)n+\lambda n=(1-\lambda)n$ would contradict maximality of $s$. Now $Q$ satisfies the lemma --- indeed the components of $T\setminus E(Q)$ are some subset of $\{Q_1, \dots, Q_t\}$,  so there are $t\leq \Delta$ of these, and they all have order $\leq \lambda n$.
 Also $Q$ is connected (and so a subtree) since the roots of $Q_1, \dots, Q_s$ are contained in $N(y)\subseteq V(Q_0)$.
\end{proof}

\Hide{
\begin{lemma} \label{lem:tree_splitting_more_accurate}
Let $1\geq \Delta^{-1}, \varepsilon\gg n^{-1}$ and $\alpha\in [0,1]$.
Let $T$ be a rooted $n$-vertex tree with $\Delta(T)\leq \Delta$. There is a subtree $Q$ containing the root with $|Q|\in[\alpha n, (\alpha+\varepsilon)n]$ such that $T\setminus Q$ has $\leq \Delta \log_2 (2\varepsilon^{-1})$ components.
\end{lemma} 
\begin{proof}
Starting with $T_0=T$, construct a sequence of trees $T_1, T_2, \dots$ as follows: Apply Lemma~\ref{lem:tree_splitting} to $T_i$ with $\lambda=\frac{|T_i|-\alpha n}{2|T_i|}$ to get $T_{i+1}$. Note that this inductively ensures that $|T_i|\geq \alpha n$ always, and also that $|T_{i+1}|\leq (1-\lambda)|T_i|+2\Delta=\frac{|T_i|+\alpha n}{2}+2\Delta$. The latter rearranges to $|T_{i+1}|-\alpha n\leq \frac{|T_i|-\alpha n}{2}+2\Delta$. Setting $i=\log_2 (2\varepsilon^{-1})$ gives $|T_i|-\alpha n\leq  n/2^i+2i\Delta= \varepsilon n/2+4\varepsilon^{-1}\Delta\leq \varepsilon n$.
\end{proof} 
}
Our basic strategy for tree embedding is ``greedily embed $T$ into a graph $G$ using Fact~\ref{fact:greedy_tree_embedding}. If we ever run out of space, move to some other part of $G$ using cut-density''. The ``move to some other part of $G$'' is done via the following.
\begin{lemma}\label{lem:d_ary_tree_with_prescribed_leaves}
Let $1\geq \Delta^{-1}, q\gg L^{-1}\gg n^{-1}$.
Let $G$ be an order $n$, $q$-cut-dense graph, $v\in V(G)$, $U\subseteq V(G)$ with $|U|\geq L$. There is a perfect $\Delta$-ary tree of height $\leq 19q^{-4}$ rooted at $v$ with all leaves in $U$. 
\end{lemma}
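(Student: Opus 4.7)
My plan is to construct the tree by a rank-based induction, after first showing that $v$ lies at small rank in a monotone sequence of layers emanating from $U$ with carefully tuned thresholds.

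First I would fix $h := \lceil 19q^{-4}\rceil$ and set the thresholds $\Delta'_i := \Delta + 2i\Delta^{h+1}$ for $i \geq 1$. Put $V_0 := U$ and $V_{i+1} := V_i \cup \{w \in V(G)\setminus V_i : |N_G(w) \cap V_i| \geq \Delta'_{i+1}\}$ for $i \geq 0$; this sequence is monotone. The hierarchy $q, \Delta^{-1} \gg L^{-1}$ lets me assume $L \geq 2\Delta'_h/q$, so the bound $|V_i| \geq 2\Delta'_{i+1}/q$ needed below holds throughout.

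Next I would prove rapid saturation. Applying $q$-cut-density to the partition $(V_i, V(G)\setminus V_i)$ lower-bounds the cut by $q|V_i||V(G)\setminus V_i|$; upper-bounding it via contributions from vertices in $V_{i+1}\setminus V_i$ (at most $|V_i|$ edges each) and from vertices outside $V_{i+1}$ (at most $\Delta'_{i+1}$ each), then using $|V_i| \geq 2\Delta'_{i+1}/q$, elementary algebra yields
\[ |V(G)\setminus V_{i+1}| \leq (1-q/3)\,|V(G)\setminus V_i|. \]
Iterating, $|V(G)\setminus V_i| \leq (1-q/3)^i n$. Since $\delta(G) \geq 0.9qn$ by Fact~\ref{fact:cut_dense_min_degree}, once $|V(G)\setminus V_i| \leq 0.9qn - \Delta'_{i+1}$ every outside vertex has $\geq \Delta'_{i+1}$ neighbours in $V_i$, forcing $V_{i+1} = V(G)$. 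This occurs for some $i^* = O(q^{-1}\log q^{-1})$, which is comfortably smaller than $h$. So $v \in V_{i^*+1}$; setting $r := \min\{i \geq 0 : v \in V_i\}$, we have $r \leq h$.

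Finally I would extract the perfect tree by induction on $r \geq 0$ of the following robust claim: for every $w \in A_r := V_r \setminus V_{r-1}$ (with $V_{-1} := \emptyset$) and every $F \subseteq V(G)$ with $|F| \leq 2\Delta^h$, there is a perfect $\Delta$-ary tree of height $r$ rooted at $w$ with leaves in $U$ and all non-root vertices outside $F$. The base case $r = 0$ is immediate since $A_0 = U$. For $r \geq 1$: $w \in V_r$ gives $|N_G(w) \cap V_{r-1}| \geq \Delta'_r$, while $w \notin V_{r-1}$ gives $|N_G(w) \cap V_{r-2}| < \Delta'_{r-1}$. Subtracting, $|N_G(w) \cap A_{r-1}| \geq \Delta'_r - \Delta'_{r-1} + 1 = 2\Delta^{h+1}+1$, so after removing $F$ there remain at least $\Delta$ neighbours of rank exactly $r-1$. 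Pick $\Delta$ of them as children and recurse sequentially, updating $F$ to include the root, the siblings, and the previously-built subtrees; since the entire tree has at most $2\Delta^h$ vertices, the bound $|F| \leq 2\Delta^h$ is maintained. Applying to $w = v$ yields the desired tree of height $r \leq h$.

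The main obstacle is the simultaneous tuning of the thresholds $\Delta'_i$: the step size $2\Delta^{h+1}$ must be large enough that after excluding the $\leq 2\Delta^h$ forbidden vertices we retain $\geq \Delta$ neighbours of rank \emph{exactly} $r-1$ (this is what makes the tree perfect with uniform leaf depth), yet $\Delta'_i$ may grow only linearly in $i$ so that the saturation time $i^*+1$ remains below $h$. The hierarchy $L, n \gg \operatorname{poly}(\Delta^h, q^{-1})$ is precisely what makes both constraints simultaneously satisfiable.
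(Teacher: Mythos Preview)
Your argument is correct and takes a genuinely different route from the paper. The paper runs \emph{two} layer processes: first forward layers $N^t$ from $v$ with threshold $\Theta(q^4 n)$, shown to cover almost all of $G$ within $O(q^{-4})$ steps; then backward layers $A_k$ from $U$ living inside the $N^k$, with thresholds proportional to $|A_{k+1}|$; the tree is then grown from $v$ level by level with level $i$ in $A_i$. Your single backward process from $U$ with small but \emph{increasing} thresholds is cleaner, and even yields a better height bound of $O(q^{-1}\log q^{-1})$ (which you do not exploit). The increasing-threshold trick is exactly what pins down many neighbours of \emph{exact} rank $r-1$ rather than rank $\le r-1$, and this is what forces uniform leaf depth; the paper achieves the same effect through the interplay of its two layer sequences and their size-dependent thresholds. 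The paper's thresholds scale with $n$ and so would tolerate a weaker hypothesis on $|U|$, whereas yours rely on $L$ dominating $\Delta^{O(q^{-4})}$; under the stated hierarchy this costs nothing.

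One technical wrinkle to tighten: as literally stated, your induction on $r$ does not close. You posit $P(r)$ as ``for every $|F|\le 2\Delta^h$ there is a tree\ldots'', but in the recursive call the forbidden set becomes $F'$ with $|F'|$ as large as $|F|+\Delta+(\Delta-1)2\Delta^{r-1}$, which can exceed $2\Delta^h$ when $|F|$ is already near the ceiling. Your justification ``since the entire tree has at most $2\Delta^h$ vertices, the bound is maintained'' is the right idea, but it is a global invariant of the greedy build starting from $F=\emptyset$, not something the per-rank hypothesis captures. The fix is trivial: either present it as a single greedy construction with a running forbidden set (which is what your prose actually describes), or make the hypothesis rank-dependent, e.g.\ $|F|\le 2\Delta^h - 2\Delta^r$, so that adding the $\le 2\Delta^r - 2\Delta^{r-1}$ already-committed vertices leaves you within the rank-$(r-1)$ budget.
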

\begin{proof}
Note Fact~\ref{fact:cut_dense_min_degree} we have $\delta(G)\geq qn/2$.
Define $N^t$ recursively by setting $N^1=N(v)$ and $N^{t+1}=N(v)\cup \{u: |N(u)\cap  N^{t}|\geq q^4 n/16\}$. Note that $N^{i}\subseteq N^{i+1}$ for $i=1,2,\dots, $ (this holds by induction on $i$. The initial case is $N^1=N(v)\subseteq N^2$. For the inductive step, use that ``if $N^{i-1}\subseteq N^i$, then $\{u: |N(u)\cap  N^{i-1}|\geq q^4 n/16\}\subseteq \{u: |N(u)\cap  N^{i}|\geq q^4 n/16\}$'').
Note  $|N^i|\geq \delta(G)\geq qn/2$ for all $i$.
Note that for every $t$, we either have $|N^t|\geq n-q^2 n/4$ or $|N^t|\geq |N^{t-1}|+q^4n/16$. Indeed, if the former doesn't happen, then the definition of $q$-cut-dense gives $e(N^{t-1}, G\setminus N^{t-1})\geq q|N^{t-1}|(n-|N^{t-1}|)\geq q(qn/2)(q^2n/4)=q^4n^2/8$. Fact~\ref{fact:subgraph_min_degree_e/2} gives a subgraph $H$ of $G[N^{t-1}, G\setminus N^{t-1}]$ with $\delta(H)\geq q^4n/16$. This gives $q^4n/16$ vertices in  $G\setminus N^{t-1}$, each with $q^4n/16$ neighbours in $N^{t-1}$. Each of these must be  in  $N^{t}\setminus N^{t-1}$, giving  $|N^t|\geq |N^{t-1}|+q^4n/16$.
Fix $t_0$ to be the smallest number for which $|N^{t_0}|\geq n-q^2 n/4$, noting that $t_0\leq 18q^{-4}$ (as otherwise $|N^{t_0-1}|\geq|N^{t_0-2}|+q^4n/16\geq |N^{t_0-3}|+2\cdot q^4n/16\geq \dots \geq |N^{1}| +(t_0-2)q^4n/16> (18q^{-4}-2)(q^4n/16)\geq (16q^{-4})(q^4n/16)= n$, which is impossible).

 Setting $A_{t_0+1}:=U$, we define $s$ and $A_{t_0}\subseteq N^{t_0}, A_{t_0-1}\subset N^{t_0-1}, A_{t_0-2}\subseteq N^{t_0-2}, \dots, A_{s}\subseteq N^s$ recursively. To define $A_k$, having already defined $A_{k+1}$: If $|A_{k+1}\cap N(v)|\geq |A_{k+1}|/2$, stop with $s=k+1$.
Otherwise, let $A_{k}=\{u\in N^{k}: |N(u)\cap A_{k+1}|\geq  q^4|A_{k+1}|/128\}$. 

Note that we certainly stop with some $s\geq 1$ (since $A_1\subseteq N^1=N(v)$ implies $|A_{1}\cap N(v)|=|A_1|\geq |A_{1}|/2$, so we would always stop at $k=1$ if we don't stop earlier).
Note  that for $k\in[s, t_0-1]$, we have  $e(A_{k+1}, N^k)\geq \frac12\sum_{a\in A_{k+1}\setminus N(v)}|N(a)\cap N^k|\geq \frac12|A_{k+1}\setminus N(v)|q^4 n/16\geq |A_{k+1}|q^4 n/64$ (the first inequality comes from the handshaking lemma, the second inequality uses $A_{k+1}\subseteq N^{k+1}$ and the definition of $N^{k+1}$, while the third inequality uses $k\geq s$), while for $k=t_0$, we have $e(A_{t_0+1}, N^{t_0})\geq |A_{t_0+1}|(\delta(G)-|A_{t_0+1}|-|V(G)\setminus N^{t_0}|)\geq |A_{t_0+1}|(qn/2-L-q^2 n/4)\geq |A_{t_0+1}|qn/8\geq |A_{t_0+1}|q^4 n/64$. 
Note also that for $k\in[s,t_0]$ we have  $e(A_{k+1}, N^k)\leq e(A_{k+1}, A_k)+e(A_{k+1}, N^k\setminus A_{k})\leq
|A_k||A_{k+1}|+|N^k\setminus A_k|q^4|A_{k+1}|/128\leq  |A_{k}||A_{k+1}|+nq^4|A_{k+1}|/128$ (using the definition of $A_k$ in the second-last inequality).
 Combining and rearranging gives $|A_k|\geq \frac{|A_{k+1}|q^4 n/64-nq^4|A_{k+1}|/128}{|A_{k+1}|}=  nq^4/128\geq L$ for $k\in [s, t_0]$, and so  $|A_k|\geq L$ for $k\in [s, t_0+1]$.

By definition of $s$ we have $|A_s\cap N(v)|\geq |A_s|/2\geq  L/2\geq \Delta+1$.  Thus we can define a star $T_{s}$ centered at $v$ with $\Delta$ leaves that are all in $A_{s}\cap N(v)$. We define  trees $T_{s+1}, T_{s+2},\dots, T_{t_0+1}$  such that $T_i$ is a perfect height $(i-s+1)$-ary, $\Delta$-ary (and so order $\leq \Delta^{t_0+2}$) tree rooted at $v$ with all leaves in $A_i$ recursively as follows: having defined $T_i$, we have that each leaf of $T_i$ has $\geq q^4|A_{i+1}|/128\geq q^4L/256\gg\Delta^{t_0+2}\times\Delta$ neighbours in $A_{i+1}$ (by definition of $A_{i}$ and since leaves of $T_i$ are in $A_i$), so we can greedily choose $\Delta$ such neighbours for each vertex one by one. Since $A_{t_0+1}=U$ and $t_0+1\leq 18q^{-4}+1\leq 19q^{-4}$,  $T_{t_0+1}$ satisfies the lemma.
\end{proof}

Given a subtree $T'\subseteq T$, a $T$-external vertex of $T'$ is any vertex $v\in V(T')$ which touches at least one edge of $T\setminus T'$. Use $external(T,T')$ to denote the set of $T$-external vertices of $T'$. 
We'll use the fact that there is a natural bijection between the $T$-external vertex of $T'$  and the connected components of $T\setminus T'$.
\begin{fact}\label{fact:T_external_vertices}
For trees with $T'\subseteq T$, every component of $T\setminus T'$ contains precisely one $T$-external vertex of $T'$, and every $T$-external vertex of $T'$ is contained in precisely one component of $T\setminus T'$.
\end{fact}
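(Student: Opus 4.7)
My plan is to prove the two assertions separately. The second (each $T$-external vertex lies in exactly one component of $T\setminus T'$) is immediate from the definitions: if $v\in V(T')$ touches an edge of $T\setminus T'$, then $v$ is a non-isolated vertex of $T\setminus T'$ and so lies in exactly one connected component of that graph.

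For the first assertion (each component $C$ of $T\setminus T'$ contains at least one $T$-external vertex of $T'$), I would argue existence by picking any vertex $v\in V(C)$. If $v\in V(T')$, we are done, since then $v$ is automatically $T$-external — it touches an edge of $C\subseteq T\setminus T'$. Otherwise, take a shortest path in $T$ from $v$ to $V(T')$; all of its internal vertices lie outside $V(T')$, so all of its edges avoid $E(T')$. Hence the path lies in $T\setminus T'$ and therefore in the component $C$, and its endpoint in $V(T')$ is the sought $T$-external vertex in $C$.

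For uniqueness, suppose for contradiction that $u\ne v$ are two $T$-external vertices of $T'$ both in $C$. There is then a $u$–$v$ path $P$ inside $C$, using only edges of $E(T)\setminus E(T')$, and a $u$–$v$ path $P'$ inside $T'$ (which is a subtree, hence connected), using only edges of $E(T')$. Since $T$ is a tree, the path between $u$ and $v$ in $T$ is unique, so $P = P'$; but the two paths are edge-disjoint and both have length $\ge 1$, which is a contradiction.

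There is no real obstacle here — the statement is a routine fact about subtrees. The one point requiring care is the convention that $T\setminus T'$ denotes $T$ with the edges of $T'$ deleted \emph{and resulting isolated vertices removed}, so that a vertex of $V(T')$ survives in $T\setminus T'$ precisely when it is $T$-external; this is what makes the claim nontrivial to parse but ultimately straightforward to verify.
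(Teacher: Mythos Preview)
Your proof is correct and follows essentially the same approach as the paper's: the second assertion is immediate, uniqueness in the first assertion comes from the unique-path property of trees (two distinct $u$--$v$ paths, one in $T'$ and one in $C$, would give a circuit), and existence comes from connectivity of $T$. The only cosmetic difference is that for existence you argue directly via a shortest path from $C$ to $V(T')$, whereas the paper argues by contradiction that a boundary edge of $V(T_j)$ in $T$ could lie in neither $E(T')$ nor $E(T_j)$; these are equivalent formulations of the same connectivity observation.
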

\begin{proof}
Let $T_1, \dots, T_a$ be the components of $T\setminus T'$, and $y_1, \dots, y_b$ the $T$-external vertices of $T'$. By definition, for every $y_i$, there is an edge $y_iz_i\in E(T)\setminus E(T')$, and this edge must be contained in some $T_j$. We can't have $y_i$ in more than one $T_j$  since connected components are vertex-disjoint. 

Consider some $T_j$. Since  $T\setminus T'$ has no isolated vertices (by definition), $T_j$ contains at least one edge $uv$. This shows that any vertex  $y\in V(T')\cap V(T_j)$ is a $T$-external vertex of $T'$ (it must be contained in an edge  $xy\in E(T_j)\subseteq E(T)\setminus E(T')$ since $T_j$ is connected). If there was more than one vertex  $x,y\in V(T')\cap V(T_j)$, then there'd be a circuit in $T$ (formed by joining the paths between $x,y$ in $T'$ and $T_j$). Also, it's impossible that $V(T')\cap V(T_j)=\emptyset$. Indeed otherwise, by connectedness, there'd be some edge $uv\in E(T)$ with $u\in V(T_j), v\not\in V(T_j)$. Since $V(T')\cap V(T_j)=\emptyset$, we must have $uv\not\in E(T'), E(T_j)$ --- which contradicts $T_j$ being a connected component of $T\setminus T'$. Thus $|V(T')\cap V(T_j)|=1$, and the vertex inside this set is a $T$-external vertex of $T'$ as required. 
\end{proof}

\begin{fact}\label{fact:external_components_intersections}
Let $T'\subseteq T$ be trees, $y_1, \dots, y_t$ the $T$-external vertices of $T'$ and $T_{y_1}, \dots, T_{y_t}$ the corresponding components of $T\setminus T'$.
Then $V(T_{y_i})\cap V(T_{y_j})=\emptyset$ for $i\neq j$ and $V(T_{y_i})\cap V(T')=\{y_i\}$ for all $i$.
\end{fact}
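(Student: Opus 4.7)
The plan is to derive both claims almost directly from Fact~\ref{fact:T_external_vertices}, essentially by unpacking what it means to be a connected component of $T\setminus T'$ under the paper's convention that $G\setminus H$ denotes the graph with edge set $E(G)\setminus E(H)$ and isolated vertices removed.

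For the first assertion, I would simply note that the $T_{y_i}$ are the connected components of a single graph (namely $T\setminus T'$) and distinct connected components of any graph are by definition vertex-disjoint, so $V(T_{y_i})\cap V(T_{y_j})=\emptyset$ whenever $i\neq j$.

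For the second assertion, one direction is immediate: $y_i\in V(T')$ since $y_i$ is a $T$-external vertex of $T'$, and $y_i\in V(T_{y_i})$ is exactly the content of Fact~\ref{fact:T_external_vertices} (the external vertex ``corresponding to'' $T_{y_i}$). For the reverse inclusion, suppose $v\in V(T_{y_i})\cap V(T')$. Because isolated vertices are deleted from $T\setminus T'$, every vertex of $T_{y_i}$ is incident to some edge of $T_{y_i}\subseteq T\setminus T'$; so $v$ is incident to an edge $e\in E(T)\setminus E(T')$. Combined with $v\in V(T')$, this shows $v$ is a $T$-external vertex of $T'$, i.e.\ $v=y_j$ for some $j$. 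But then the uniqueness part of Fact~\ref{fact:T_external_vertices} (``each $T$-external vertex is contained in precisely one component'') forces $T_{y_j}=T_{y_i}$, and together with the first assertion this gives $j=i$, hence $v=y_i$.

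There is no real obstacle here; the only subtlety is remembering the convention that $T\setminus T'$ has no isolated vertices, which is what lets one convert membership in $V(T_{y_i})$ into incidence with an edge of $E(T)\setminus E(T')$ and hence into being a $T$-external vertex of $T'$. Once that step is made, the conclusion is just a reapplication of Fact~\ref{fact:T_external_vertices}.
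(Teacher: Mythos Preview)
Your proof is correct. The first assertion is handled identically to the paper (distinct connected components are vertex-disjoint). For the second assertion, the paper argues directly from acyclicity: if there were a second vertex $x\in V(T')\cap V(T_{y_i})$, one would join $x$ to $y_i$ by a path inside $T'$ and by another path inside $T_{y_i}$; these are edge-disjoint, producing a circuit in the tree $T$. Your route is instead to observe that any $v\in V(T_{y_i})\cap V(T')$ is incident to an edge of $T\setminus T'$ (no isolated vertices), hence is itself a $T$-external vertex $y_j$, and then invoke the uniqueness in Fact~\ref{fact:T_external_vertices} to force $j=i$. This is a genuine, if small, difference: you are recycling the work already done in Fact~\ref{fact:T_external_vertices} (whose proof contains essentially the same circuit argument) rather than repeating it, which is arguably cleaner; the paper's version is more self-contained. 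Both are short and valid.
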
 
\begin{proof}
For $V(T_{y_i})\cap V(T_{y_j})=\emptyset$ for $i\neq j$, just note that $T_{y_i}, T_{y_j}$ are connected components of the graph $T\setminus T'$ (and hence disjoint, because connected components are always vertex-disjoint). We have $y_i\in T'$ by definition of ``$y_i$ $T$-external vertex in $T'$'' and $y_i\in T_{y_i}$ by definition of $T_{y_i}$. Suppose that there was some other vertex $x\in V(T')\cap V(T_{y_i})$. There are paths $P\subseteq T', Q\subseteq T_{y_i}$ joining $x$ and $y_i$ (since $T', T_{y_i}$ are trees). These paths are edge-disjoint (since $T', T_{y_i}$ are edge-disjoint). Thus $P\cup Q$ is a circuit in $T$, which is a contradiction. 
\end{proof} 
 
The following will be used to combine several embeddings of subtrees into an embedding of a single tree. 
  \begin{fact}\label{fact:combine_tree_embeddings}
Let $Q_0\subseteq T$ be trees,  let $y_1, \dots, y_t$ be the $T$-external vertices of $Q_0$, and $Q_{1}, \dots, Q_{t}$ the components of $T\setminus Q_0$ with $y_i\in Q_i$. Suppose that we have embeddings $f_0, f_1, \dots, f_t$ with $f_i:Q_i\to S$ with the properties that $f_0(y_i)=f_i(y_i)$ for $i=1, \dots, t$ and  $im_{f_i}\cap im_{f_j}\subseteq \{f_0(y_i)\}$ for  $i>j$.

Then we can define an embedding $f:Q \to T$ by having $f$ agree with $f_i$ on each $Q_i$.
 \end{fact}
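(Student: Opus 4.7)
The plan is to check three properties that together say $f$ is an embedding $T \to S$: that $f$ is well-defined on all of $V(T)$, that $f$ is injective, and that $f$ sends edges of $T$ to edges of $S$.

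For well-definedness, I need to know that wherever two of the $Q_i$ overlap, the corresponding $f_i$ agree. Fact~\ref{fact:external_components_intersections} tells me that the only overlaps among $V(Q_0), V(Q_1), \ldots, V(Q_t)$ are $V(Q_0) \cap V(Q_i) = \{y_i\}$ for $i \geq 1$, and at each such point the hypothesis $f_0(y_i) = f_i(y_i)$ forces agreement. So $f$ is a well-defined function on $V(T) = \bigcup_{i=0}^t V(Q_i)$.

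For edge preservation, I will use that $Q_0, Q_1, \ldots, Q_t$ edge-partition $T$: the pair $Q_0, T \setminus Q_0$ splits $E(T)$ by definition, and $Q_1, \ldots, Q_t$ are the components of $T \setminus Q_0$ and hence edge-partition $E(T) \setminus E(Q_0)$. Therefore any edge $uv \in E(T)$ lies entirely within some $Q_i$, and then $f(u)f(v) = f_i(u)f_i(v) \in E(S)$ since $f_i$ is an embedding.

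The only step with even a mild subtlety is injectivity, which will be a short case analysis. Suppose $f(u) = f(v)$, and pick indices $i, j$ with $u \in V(Q_i)$ and $v \in V(Q_j)$. If $i = j$ we are done by injectivity of $f_i$. Otherwise assume without loss of generality $i > j$; then $f(u) \in \mathrm{im}(f_i) \cap \mathrm{im}(f_j) \subseteq \{f_0(y_i)\}$, and injectivity of $f_i$ combined with $f_i(y_i) = f_0(y_i)$ forces $u = y_i$, which in particular puts $u$ into $V(Q_0)$ as well. Now I rerun the argument with the index of $u$ taken to be $0$: if $j > 0$, the same intersection hypothesis yields $v = y_j$, and then $f_0(y_i) = f_0(y_j)$ together with injectivity of $f_0$ gives $y_i = y_j$, so $u = v$; and if $j = 0$, then $f_0(v) = f_0(y_i)$ gives $v = y_i = u$ directly. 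I do not expect any serious obstacle beyond bookkeeping these subcases.
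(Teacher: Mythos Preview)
Your proof is correct and follows essentially the same approach as the paper: well-definedness via Fact~\ref{fact:external_components_intersections}, edge-preservation via the edge partition $E(T)=\bigcup_i E(Q_i)$, and injectivity via the same case split on the indices $i,j$ using the image-intersection hypothesis. The only cosmetic difference is that the paper argues the contrapositive (start with distinct $x,y$ and show $f(x)\neq f(y)$) whereas you start from $f(u)=f(v)$ and deduce $u=v$, but the subcases and their resolutions match exactly.
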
 
 \begin{proof}
 First note that this is well-defined: indeed for $i>j$ we have $V(Q_i)\cap V(Q_j)=\{y_i\}$ if $j=0$ and   $V(Q_i)\cap V(Q_j)=\emptyset$ if $j\geq 1$ (all by Fact~\ref{fact:external_components_intersections}). Since $f_i$ and $f_0$ agree on $y_i$, $f$ is well defined on $T=\bigcup_{i=1}^t Q_i$. 
 To see that $f$ is a homomorphism, consider some edge $xy\in T$. Since $E(T)=\bigcup_{i=1}^t E(Q_i)$, we have $xy\in Q_i$ for some $i$. By definition $f(x)f(y)=f_i(x)f_i(y)$ which is an edge of $T$ since $f_i:Q_i\to S$ was a homomorphism.
 
To see that $f$ is injective, consider distinct $x,y\in T$. If $x,y\in Q_i$  for some $i$, then $f(x)=f_i(x)\neq f_i(y)=f(y)$ because $Q_i$ was injective. So suppose $x\in Q_i, y\in Q_j$ with $i> j$. 
We have $im_{f_i}\cap im_{f_j}\subseteq \{f_0(y_i)\}$ so we could only have $f(x)=f_i(x)=f_j(y)=f(y)$ if  $f(x)=f(y)=f_0(y_i)$.  
If $j\geq 1$, then we have $im_{f_j}\cap im_{f_0}\subseteq \{f_0(y_j)\}$. Since $f_0$ is injective we have $f_0(y_i)\neq f_0(y_j)$ and so it's impossible that $f(y)=f_j(y)=f_0(y_i)$.
So suppose $j=0$. Since $f_i$ is injective and $f_i(y_i)=f_0(y_i)=f_i(x)$, we have $x=y_i$. Since $f_0$ is injective, $f(y)=f_0(y)\neq f_0(y_i)=f_i(y_i)=f_i(x)=f(x)$.
 \end{proof}
 
 We also need the following.
 \begin{fact} \label{fact:maximal_subtree_extermal_vertices}
Let $T,Q$ be trees rooted at $r_T, r_Q$ respectively with $\Delta(T)\leq \Delta$, and $Q$ a $\Delta$-ary tree. Let $T'\subseteq T$ be a maximal subtree containing $r_T$ for which there is an embedding $f:T'\to Q$ with $f(r_T)=r_Q$. Then $f$ embeds $T$-external vertices of $T'$ into leaves of $Q$. 
 \end{fact}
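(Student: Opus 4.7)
The plan is to argue by contradiction using the maximality of $T'$. Suppose some $T$-external vertex $v\in V(T')$ has $f(v)$ not a leaf of $Q$. By definition of ``$T$-external'', there is an edge $vw\in E(T)\setminus E(T')$. Since $T$ is a tree and $T'$ is a connected subgraph, $w\notin V(T')$: otherwise the unique $T'$-path from $v$ to $w$ together with the edge $vw$ would yield a cycle in $T$. Then $T'':=T'\cup\{w\}$ with the single extra edge $vw$ is a subtree of $T$ strictly containing $T'$ (and still containing $r_T$). To contradict maximality it suffices to extend $f$ to an embedding $\tilde f\colon T''\to Q$ with $\tilde f|_{T'}=f$ and $\tilde f(r_T)=r_Q$.

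The key technical point is that $f$ is an \emph{induced} embedding, i.e.\ for $x,y\in V(T')$ we have $xy\in E(T')\iff f(x)f(y)\in E(Q)$. The forward direction holds because $f$ is a homomorphism. For the reverse, suppose $f(x)f(y)\in E(Q)$ with $x\neq y$; the unique $T'$-path $P$ from $x$ to $y$ maps under $f$ to a walk from $f(x)$ to $f(y)$, which is in fact a path because $f$ is injective and the vertices of $P$ are distinct. Since $Q$ is a tree, this path must be the unique $Q$-path from $f(x)$ to $f(y)$, which has length $1$; so $P$ has length $1$, i.e.\ $xy\in E(T')$.

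Now we choose the image of $w$. Because $vw\in E(T)\setminus E(T')$ and $\Delta(T)\le\Delta$, we have $d_{T'}(v)\le d_T(v)-1\le\Delta-1$. Because $Q$ is $\Delta$-ary and $f(v)$ is not a leaf, $f(v)$ has exactly $\Delta$ neighbours in $Q$. By the induced property, the neighbours of $f(v)$ lying in $f(T')$ are precisely $\{f(x):x\in N_{T'}(v)\}$, a set of size $d_{T'}(v)\le\Delta-1$. Hence $f(v)$ has at least one neighbour $u\in V(Q)\setminus f(T')$. Define $\tilde f(w):=u$; the map is injective since $u\notin f(T')$, it is a homomorphism since the only new edge $vw$ is sent to the edge $f(v)u\in E(Q)$, and it still sends $r_T$ to $r_Q$. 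This contradicts the maximality of $T'$, so $f(v)$ must be a leaf of $Q$.

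The only nontrivial ingredient is the ``induced embedding'' observation; once that is in hand, the degree count $d_{T'}(v)\le \Delta-1<\Delta=d_Q(f(v))$ immediately produces the free slot at $f(v)$ needed to extend the embedding and break maximality. There is no case analysis or quantitative estimate to grind through.
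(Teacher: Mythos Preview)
Your proof is correct and follows essentially the same approach as the paper: argue by contradiction, locate an edge $vw\in E(T)\setminus E(T')$ with $w\notin V(T')$, find a neighbour of $f(v)$ outside $f(T')$, and extend the embedding to break maximality. The only cosmetic difference is in how the free neighbour is found: you prove and use the general fact that any injective homomorphism between trees is an induced embedding, while the paper instead notes directly that $f(T')$ is a subtree of $Q$ and hence any edge from $f(v)$ not in $E(f(T'))$ must leave $V(f(T'))$ (via uniqueness of paths in $Q$); these are equivalent observations.
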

 \begin{proof}
 Suppose for contradiction that we have  some $T$-external vertex of $u\in T'$ with $f(u)$ not a leaf of $Q$. Since $u$ is $T$-external, there is an edge $uv$ with $v\in V(T)\setminus V(T')$. Since $f(u)$ is not a leaf and $Q$ is $\Delta$-ary, we have $d_Q(f(u))=\Delta$. Since $f(T')$ is a tree with maximum degree $\leq \Delta$, there is some edge $f(u)y\in E(Q)$ through $f(u)$, which is not in $E(f(T'))$. Since $f(T')$ is a subtree of $Q$, and $f(u)\in V(f(T'))$, this means $y\not\in V(f(T'))$ (since the unique path from $y$ to $f(u)$ in the tree $Q$ is $yf(u)\not\in E(f(T'))$). Thus we can extend $f$ to embed $v$ to $y$, contradicting maximality. 
 \end{proof}
 
 The following lemma is basically the inductive step of our proof of Lemma~\ref{lem:main_tree_embedding_lemma} (i).
\begin{lemma} \label{lem:tree_embedding_into_dense_induction_step}
Let $1\geq p,q, \alpha, \Delta^{-1}\gg n^{-1}$.
Let $G$ be an $n$-vertex, $q$-cut-dense graph that contains  a subgraph $R$ with $\delta(R)\geq pn/4$. 
Let $T$ be an order $\leq pn/4$   tree rooted at $r_T$ with $\Delta(T)\leq \Delta$, and $r\in V(G)$. Then an embedding $f:T\to G$ with $f(r_T)=r$.
\end{lemma}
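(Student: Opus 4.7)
The plan is to combine Lemma~\ref{lem:d_ary_tree_with_prescribed_leaves}, which gives a short ``bridge'' from $r$ into $R$, with a greedy embedding inside $R$ itself (which $\delta(R)\geq pn/4\geq |T|$ readily supports). First, since $\delta(R)\geq pn/4$ forces $|V(R)|\geq L$ for any $L$ allowed by the hierarchy $\Delta^{-1},q\gg L^{-1}\gg n^{-1}$, I would apply Lemma~\ref{lem:d_ary_tree_with_prescribed_leaves} with root $v=r$ and target set $U=V(R)$ to obtain a perfect $\Delta$-ary subtree $S\subseteq G$ of height $h\leq 19q^{-4}$ rooted at $r$ whose leaves all lie in $V(R)$.

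Then I would define the embedding $f:T\to G$ vertex-by-vertex in BFS order from $r_T$, setting $f(r_T)=r$. For each new $v\in V(T)$ whose parent $u$ has already been placed, the rule is: if $f(u)$ is a non-leaf of $S$ (Case~A), map $v$ to an unused child of $f(u)$ in $S$; otherwise (Case~B), map $v$ to an unused neighbour of $f(u)$ in $R$. Case~A has room because $S$ is perfect $\Delta$-ary---the root has $\Delta$ children in $S$ and every other non-leaf has $\Delta-1$---which matches the $\leq\Delta$ children of $u=r_T$ and the $\leq\Delta-1$ children of $u\neq r_T$ in $T$ (using $\Delta(T)\leq\Delta$). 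Case~B has room because whenever it fires $f(u)\in V(R)$ with $|N_R(f(u))|\geq pn/4$, while the image of $f$ at that moment has size at most $|T|-1\leq pn/4-1$, leaving at least one unused neighbour of $f(u)$ in $R$.

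The one delicate point---and the main thing I would need to verify---is that Cases~A and~B cannot interfere with each other. A straightforward induction along the BFS shows that Case~A at depth $k$ of $T$ always sends $v$ to depth $k$ of $S$, so Case~A occurs precisely at depths $0,1,\dots,h$ of $T$ and never writes outside $V(S)$; Case~B is first triggered at depth $h+1$ (when the parents have just been sent to leaves of $S$, which already lie in $V(R)$), and its images then remain in $V(R)$ for all deeper levels, preserving the invariant. Because BFS completes each shallower depth before starting the next, every Case~A assignment is made \emph{before} any Case~B assignment that could target a vertex of $S$, and Case~B's ``unused neighbour'' rule means it will simply skip anything Case~A has already claimed. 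This confirms that $f:T\hookrightarrow G$ is a well-defined embedding with $f(r_T)=r$, as required.
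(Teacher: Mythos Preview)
Your approach is essentially the paper's: build the $\Delta$-ary bridge $S$ from $r$ into $R$ via Lemma~\ref{lem:d_ary_tree_with_prescribed_leaves}, push the top of $T$ through $S$, then finish greedily inside $R$. However, the Case~A/B analysis has a genuine gap. Your claim that ``Case~A occurs precisely at depths $0,1,\dots,h$'' is not justified: Case~B maps $v$ to an unused $R$-neighbour of $f(u)$, and nothing stops that neighbour from being a \emph{non-leaf} of $S$ that happens to lie in $V(R)$. When that occurs, Case~A re-fires for $v$'s children, and now the $\Delta-1$ children of $f(v)$ in $S$ may already have been consumed by earlier Case~B steps. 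Concretely, take $\Delta=3$, $h=2$, with $S$ having root $r$, children $a,b,c$, and grandchildren $a_1,a_2,b_1,b_2,c_1,c_2$ (the leaves, hence in $V(R)$). Suppose also $b\in V(R)$ and $a_1b,\,a_1b_1,\,a_2b_2\in E(R)$. Run your BFS on a tree $T$ that is a path $r_T,x$ followed by $x$ branching to $y_1,y_2$, each $y_i$ branching to $z_i^1,z_i^2$, and $z_1^1$ branching to $w_1,w_2$. After Case~A sends $r_T,x,y_1,y_2\mapsto r,a,a_1,a_2$, Case~B may legally send $z_1^1\mapsto b$, $z_1^2\mapsto b_1$, $z_2^1\mapsto b_2$. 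Now $f(z_1^1)=b$ is a non-leaf of $S$, so Case~A re-fires for $w_1,w_2$ and needs two unused children of $b$ in $S$ --- but both $b_1,b_2$ are taken.

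The fix is trivial: in Case~B, choose the unused neighbour from $R\setminus V(S)$ rather than $R$. Since $|V(S)|\le \Delta^{20q^{-4}}$ is a constant in $\Delta,q$ while $|N_R(f(u))|\ge pn/4\gg |T|+|V(S)|$, this costs nothing, and now Case~B can never land on a non-leaf of $S$, so your depth invariant really does hold. The paper sidesteps the whole issue by making the split once and for all: it takes a \emph{maximal} subtree $T'\subseteq T$ that embeds into $S$ (Fact~\ref{fact:maximal_subtree_extermal_vertices} then forces all $T$-external vertices of $T'$ to land on leaves of $S$), and afterwards embeds each component of $T\setminus T'$ greedily into $R$ with no reference back to $S$.
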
 
\begin{proof}
Lemma~\ref{lem:d_ary_tree_with_prescribed_leaves} gives an order $\leq \Delta^{20q^{-4}}$, $\Delta$-ary tree $T_Q$ rooted at $r$ with leaves in $R$ (for this application, pick some $L$ with $q, \Delta^{-1}\gg L^{-1}\gg n^{-1}$, then note $|R|\geq pn/4\gg L$ so we can use $U=R$).
Let $T'$ be a maximal subtree of $T$  rooted at $r_T$ for which there is an embedding $g:T'\to T_Q$ with $g(r_T)=r$. By maximality, via Fact~\ref{fact:maximal_subtree_extermal_vertices}, all $T$-external vertices $y_i\in T'$ have $g(y_i)$ a leaf of $T_Q$ (and so $g(y_i)\in R$).
Let $y_1, \dots, y_k$, be the $T$-external vertices of $T'$, and $T_{y_1}, \dots, T_{y_k}$ be the components of $T\setminus T'$ that contain them. For $i=1,\dots, k$, use Fact~\ref{fact:greedy_tree_embedding}  to get an embedding $f_{y_i}: T_{y_i}\to (R\setminus (im_g\cup \bigcup_{j<i} im_{f_{y_i}}))\cup \{g(y_i)\}$ with $f_{y_i}(y_i)=g(y_i)$ (for this, we use that the set we delete has size $|im_g\cup \bigcup_{j<i} im_{f_{y_i}}|=e(T')+\sum_{j=1}^{i-1}e(T_{y_j})+1\leq e(T)-e(T_{y_i})+1=|T|-e(T_i)$, and so $R$ still has degree $\geq \delta(R)-|im_g\cup \bigcup_{j<i} im_{f_{y_i}}|\geq pn/4-(pn/4-e(T_{y_i})= e(T_{y_i})$ outside this set). Now set $f$ to equal $g$ on $T'$ and equal $f_{y_i}$ on each $T_{y_i}$, noting that this is an embedding by Fact~\ref{fact:combine_tree_embeddings}.
\end{proof}

The following is Lemma~\ref{lem:main_tree_embedding_lemma} (i). 
\begin{lemma} \label{lem:tree_embedding_into_dense}
Let  $p, q,\Delta^{-1}, \varepsilon\gg  n^{-1}$.
 and let $T$ be a  tree with $\Delta(T)\leq \Delta$, and $G$ a $n$-vertex, $q$-cut-dense graph,  that contains a non-empty $pn$-regular subgraph $R$ with $|R|\geq (2+2\varepsilon)|T|$. Then $G$ has a copy of $T$.
\end{lemma}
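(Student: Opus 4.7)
The plan is to reduce to Lemma~\ref{lem:tree_embedding_into_dense_induction_step} by splitting $T$ into small pieces and embedding them iteratively. First I would root $T$ at an arbitrary vertex $r_T$ and apply Lemma~\ref{lem:tree_splitting} repeatedly to obtain a decomposition $Q_0,Q_1,\dots,Q_s$ of $T$ into rooted subtrees with $Q_0\ni r_T$, each $Q_i$ (for $i\ge 1$) sharing exactly one ``gluing vertex'' $r_i$ with $\bigcup_{j<i}Q_j$, and each $|Q_i|\le \mu n$ for a small $\mu$ with $\mu\ll p,q,\varepsilon,\Delta^{-1}$. Concretely, starting from the ``core'' $T_{\mathrm{core}}:=T$, each application of Lemma~\ref{lem:tree_splitting} with $\lambda=\mu n/|T_{\mathrm{core}}|$ peels off at most $\Delta$ branches of size $\le \mu n$ and shrinks the core by roughly $\mu n$; iterating until the core has size $\le\mu n$ produces $s=O(\Delta/\mu)$ pieces in total, a constant independent of $n$.

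Next I would embed the pieces in order. Embed $Q_0$ using Lemma~\ref{lem:tree_embedding_into_dense_induction_step} starting from any vertex $r\in V(G)$ (valid since $|Q_0|\le \mu n\le pn/4$). For each subsequent $Q_i$, let $U_{i-1}$ denote the set of already-embedded vertices (of size at most $|T|$) and extend the embedding from the image $f(r_i)$ of the gluing vertex. I would first identify the ``healthy'' set $F\subseteq R\setminus U_{i-1}$ of vertices with residual degree at least $pn\varepsilon/2$ in $R\setminus U_{i-1}$: a double counting argument using the $pn$-regularity of $R$ together with $|R|\ge(2+2\varepsilon)|T|\ge(2+2\varepsilon)|U_{i-1}|$ shows that the average residual degree in $R\setminus U_{i-1}$ is at least $pn\varepsilon$, forcing $|F|\ge(\varepsilon/2)|R\setminus U_{i-1}|\ge(\varepsilon/2)|T|\gg L$. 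I would then construct a $\Delta$-ary tree rooted at $f(r_i)$ with leaves in $F$ whose internal vertices avoid $U_{i-1}$, and greedily extend its leaves into $R\setminus U_{i-1}$ to cover the rest of $Q_i$ via Fact~\ref{fact:greedy_tree_embedding} (each such leaf has $\ge pn\varepsilon/2\ge|Q_i|$ unused neighbors in $R$ because $|Q_i|\le\mu n\le pn\varepsilon/4$).

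The hard part is building the $\Delta$-ary tree disjoint from $U_{i-1}$, because $|U_{i-1}|$ can be as large as $|T|\le n/(2+2\varepsilon)$, which is too large to delete from $G$ while preserving cut-density (Lemma~\ref{lem:cut_dense_delete_small_set} requires $|U|\le qn/8$). My plan to get around this is to pre-select, at the very start of the proof, a small subset $W\subseteq V(G)$ uniformly at random so that (by Lemma~\ref{lem:cut_dense_random_sample_plus_more}) $G[W]$ is still cut-dense with high probability, and then to arrange for every $\Delta$-ary tree built during the iterative embedding to have its internal vertices inside $W$. Since $W$ is fixed at the outset and the entire embedding is placed in $V(G)\setminus W$, the internal vertices of every $\Delta$-ary tree automatically avoid $U_{i-1}$. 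The condition $|R|\ge(2+2\varepsilon)|T|$ needs to survive passing to $V(G)\setminus W$, but taking $|W|\le \varepsilon n/10$ makes this automatic since $R\setminus W$ is still $\ge(2+\varepsilon)|T|$-large and retains near-regular structure by Chernoff. Calibrating this reservoir strategy in conjunction with the iterative embedding, and verifying that the modified variant of Lemma~\ref{lem:d_ary_tree_with_prescribed_leaves} goes through using only the vertices of $W$ together with the ``connecting'' vertex $f(r_i)$ and the target set $F$, is the technical core of the argument.
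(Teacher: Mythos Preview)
Your overall strategy --- split $T$ into small pieces and iterate Lemma~\ref{lem:tree_embedding_into_dense_induction_step}, using a random reservoir to recover cut-density after deleting already-used vertices --- is the same as the paper's. But your single-reservoir plan contains an internal contradiction. You write that every $\Delta$-ary tree will have its internal vertices inside $W$, and in the next clause that ``the entire embedding is placed in $V(G)\setminus W$''. These cannot both hold: the $\Delta$-ary tree is not auxiliary scaffolding --- it carries an initial portion of $Q_i$ (exactly as in the proof of Lemma~\ref{lem:tree_embedding_into_dense_induction_step}), so its vertices lie in $U_i$. Hence by the time you reach $Q_i$, the set $U_{i-1}$ already contains the internal vertices of all previously built $\Delta$-ary trees, and those vertices are in $W$. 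Your stated reason that the new tree ``automatically avoids $U_{i-1}$'' therefore fails.

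The gap is repairable: the total number of such vertices is at most $s\cdot\Delta^{O(q'^{-4})}$, a constant independent of $n$, so one can delete $U_{i-1}\cap W$ via Lemma~\ref{lem:cut_dense_delete_small_set} and retain cut-density of the working graph before invoking Lemma~\ref{lem:d_ary_tree_with_prescribed_leaves}. You did not say this, and what you wrote in its place is self-contradictory. The paper sidesteps the issue entirely by taking not one reservoir but a disjoint family $\{Q^i_j\}_{i\le\Delta,\,j\le 80p^{-1}}$ of small random samples, and running an induction on $k$: trees of size $\le kp\varepsilon|R|/80$ embed into $G_k:=V(G)\setminus\bigcup_{j>k}Q^i_j$. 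In the inductive step, Lemma~\ref{lem:tree_splitting} peels off $\le\Delta$ small components; the core is handled by induction in $G_{k-1}$; and the $i$-th component is embedded via Lemma~\ref{lem:tree_embedding_into_dense_induction_step} in $H_i:=(G_{k-1}\setminus\text{previous images})\cup Q^i_k\cup\{f_0(y_i)\}$, which is cut-dense by Lemma~\ref{lem:cut_dense_random_sample_plus_more} simply because it contains the untouched sample $Q^i_k$. No deletion is needed, because a fresh reservoir is opened at every step.
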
 
\begin{proof}
Pick $p, q,\Delta^{-1}, \varepsilon\gg \alpha\gg n^{-1}$.
For $1\leq i\leq \Delta, 1\leq j\leq 80p^{-1}$, let $Q^i_j$, be disjoint $\alpha$-random subsets of $V(G)$ (we can choose them disjointly because $80\alpha p^{-1}\Delta\ll 1$), and set $Q=V(G)\setminus \bigcup Q^i_j$.  Note that with high probability, these all satisfy the conclusion of Lemma~\ref{lem:cut_dense_random_sample_plus_more} and have $|Q_i^j|\leq 2\alpha n$ (by Chernoff's bound for the latter). Set $G_k=Q\cup \bigcup_{i\in[\Delta], j\leq k} Q^i_j$. 
We prove the following for $k=0, \dots, \lfloor\frac{80\varepsilon^{-1} p^{-1}}{2+\varepsilon}\rfloor$ by induction on $k$:
\begin{itemize}
\item[$(\ast)$] Let $T$ be a tree with $|T|\leq \frac{kp\varepsilon |R|}{80}$, $\Delta(T)\leq \Delta$. Then $G_k$ contains a copy of $T$.
\end{itemize}
Note that the $k=\lceil\frac{80\varepsilon^{-1} p^{-1}}{2+2\varepsilon}\rceil\leq\frac{80\varepsilon^{-1} p^{-1}}{2+2\varepsilon}+1 \leq \frac{80\varepsilon^{-1} p^{-1}}{2+\varepsilon} -1\leq \lfloor\frac{80\varepsilon^{-1} p^{-1}}{2+\varepsilon}\rfloor$ case of this implies the lemma (here the middle inequality is true because it rearranges to $2\leq \frac{80p^{-1}}{(2+\varepsilon)(2+2\varepsilon)}$). 
The initial case will be ``$k\leq 2$'', which holds because in this case $|T|\leq \frac{2p|R|}{80}\leq np/40\leq pn-2\alpha \cdot p^{-1}\Delta \cdot n\leq \delta(R)-|\bigcup Q^i_j|$, and so Fact~\ref{fact:greedy_tree_embedding} gives an embedding of $T$ into $R\setminus \bigcup Q^i_j\subseteq G_k$. So suppose that $k\geq 3$, and the lemma holds for smaller $k$.
Apply Lemma~\ref{lem:tree_splitting} to $T$ with $\lambda=2/k$ in order to get a  subtree $T'\subseteq T$  with $|T'|\leq  (1-\lambda)|T|+2\Delta=\frac{k-2}{k}|T|+2\Delta\leq \frac{k-2}{k}\frac{kp\varepsilon|R|}{80}+2\Delta=\frac{(k-1)p\varepsilon|R|}{80}-\frac{p\varepsilon|R|}{80}+2\Delta\leq \frac{(k-1)p\varepsilon|R|}{80}$ (using $2\Delta\ll \frac{p^2\varepsilon n}{80}\leq \frac{p\varepsilon|R|}{80}$ for the last inequality), such that the components of $T\setminus T'$ have order $\leq \lambda |T|$. 
By induction there is an embedding $f_0:T'\to G_{k-1}$.

List  the $T$-external vertices of $T'$ as $\{y_1, \dots, y_{t}\}$ with $t\leq \Delta$. For each $y_i$ let $T_{y_i}$ be the component of $T\setminus T'$, containing $y_i$, noting that $|T_{y_i}|\leq \lambda |T|=\frac2k|T|\leq \frac2k \frac{kp\varepsilon|R|}{80}\leq \frac{p\varepsilon n}{40}$. For $i=1, \dots, t$, we will construct   embeddings $f_i:T_{y_i}\to H_i:=(G_{k-1}\setminus \bigcup_{j=0}^{i-1}im_{f_j})\cup Q^i_k\cup\{f_0(y_i)\}$ that satisfy $f_i(y_i)=f_0(y_i)$.
To construct $f_i$ (having already constructed $f_1, \dots, f_{i-1}$): first notice that $H_i$ is $\alpha^{q^{-4}}q^5$-cut-dense, which comes from Lemma~\ref{lem:cut_dense_random_sample_plus_more}. Next, notice that $R':=R\cap H_i$ has $e(R')\geq e(R)-pn|\bigcup Q_{s}^t\cup  im_{f_j}|\geq pn|R|/2-pn|T|-pn(80 p^{-1}\Delta)(2\alpha n)\geq pn|R|/2-pn\frac{80\varepsilon^{-1} p^{-1}}{2+\varepsilon} \frac{p\varepsilon|R|}{80}-160p\Delta\alpha n =pn\varepsilon|R|\frac1{4+2\varepsilon} -160p\Delta\alpha n\geq pn\varepsilon|R|/8\geq pn\varepsilon|R'|/8$, which shows that $R'$ has a subgraph with minimum degree $\geq p\varepsilon n/16$ via Fact~\ref{fact:subgraph_min_degree_e/2}. Thus Lemma~\ref{lem:tree_embedding_into_dense_induction_step} applies to $H_i$ (with $p'=p/16$), which gives an embedding $f_i: T_{y_i}\to H_i$ with $f_i(y_i)=f_0(y_i)$ as required. 
Now Fact~\ref{fact:combine_tree_embeddings}, applied to the embeddings $f_0, f_1, \dots, f_t$ gives an embedding $f:T\to G$ (for the intersection condition, note that for $a<b$, $im_{f_a}\cap im_{f_b}\subseteq im_{f_a}\cap V(H_b)= im_{f_a}\cap ((V(G_{k-1})\setminus \bigcup_{j=0}^{b-1}im_{f_j})\cup Q^b_k\cup\{f(y_b)\})\subseteq im_{f_a}\cap (Q^b_k\cup\{f(y_b)\})\subseteq \{f(y_b)\}$, where the last containment uses that only $im_{f_b}$ can intersect $Q^b_k$). 
\end{proof}

 The following is basically the inductive step of our proof of Lemma~\ref{lem:main_tree_embedding_lemma} (ii).
 \begin{lemma} \label{lem:tree_embedding_into_tree_induction_step}
 Let $1\geq \Delta^{-1}, q\gg L^{-1}\gg n^{-1}$.
Let $G$ be of order $\geq n$, $T$ a tree with $\Delta(T)\leq \Delta$ rooted at $r_T$, and $T'$ a subtree containing $r_T$, $|T'|\leq qn/8$, such that $T\setminus T'$ has $\leq \Delta$ components. Let $r\in V(G)$, $U\subseteq V(G)$ with $|U|\geq L$. 
Let $g:T' \to G$ be an embedding such that $G\setminus g(T'\setminus external(T,T'))$ is $q/2$-cut-dense.
There is a tree $T''$ with $T'\subseteq T''\subseteq T$ and an embedding $f:T''\to G$ extending $g$, mapping all $T$-external vertices of $T''$ into $U$. 
 \end{lemma}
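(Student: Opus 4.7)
The plan is to extend $g$ separately on each component of $T\setminus T'$ using Lemma~\ref{lem:d_ary_tree_with_prescribed_leaves}, then glue the extensions via Fact~\ref{fact:combine_tree_embeddings}. Let $y_1,\dots,y_k$ (with $k\leq\Delta$) be the $T$-external vertices of $T'$, and let $T_{y_i}$, rooted at $y_i$, be the component of $T\setminus T'$ containing $y_i$ (Facts~\ref{fact:T_external_vertices} and~\ref{fact:external_components_intersections}). Set $G':=G\setminus g(T'\setminus external(T,T'))$, which is $q/2$-cut-dense by hypothesis and contains every $g(y_i)$.

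For $i=1,\dots,k$ in turn, form $G'_i$ from $G'$ by removing $\{g(y_\ell):\ell\neq i\}$ together with the interiors $V(Q_j)\setminus\{g(y_j)\}$ of the trees built at previous steps. The total number of deleted vertices is at most $k(\Delta^{19q^{-4}+1}+1)\leq\Delta^{20q^{-4}}$, which is far less than $q|G'|/16$ under the hierarchy $\Delta^{-1},q\gg L^{-1}\gg n^{-1}$; Lemma~\ref{lem:cut_dense_delete_small_set} then shows $G'_i$ is $q/4$-cut-dense, and similarly $|U\cap V(G'_i)|\geq|U|-\Delta^{20q^{-4}}\geq L/2$. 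Applying Lemma~\ref{lem:d_ary_tree_with_prescribed_leaves} in $G'_i$ (with the set $U\cap V(G'_i)$) yields a perfect $\Delta$-ary tree $Q_i\subseteq G'_i$ of height $\leq 19q^{-4}$ rooted at $g(y_i)$ with all leaves in $U$. Let $T_{y_i}^*\subseteq T_{y_i}$ be a maximal subtree rooted at $y_i$ admitting an embedding $f_i:T_{y_i}^*\to Q_i$ with $f_i(y_i)=g(y_i)$.

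Set $T'':=T'\cup\bigcup_iT_{y_i}^*$ (a subtree of $T$) and $f:=g\cup\bigcup_if_i$. The construction ensures the $V(Q_i)$ are pairwise disjoint and meet $g(V(T'))$ only at the $g(y_i)$, so Fact~\ref{fact:combine_tree_embeddings} yields that $f:T''\to G$ is a valid embedding. To verify that each $T$-external vertex $v$ of $T''$ maps to $U$: $v$ has a neighbour $w$ in $T\setminus T''$, and since $V(T_{y_i})\cap V(T')=\{y_i\}$ for every $i$, one checks $v\in V(T_{y_i}^*)$ and $vw\in E(T_{y_i})\setminus E(T_{y_i}^*)$ for some $i$, i.e., $v$ is a $T_{y_i}$-external vertex of $T_{y_i}^*$. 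Fact~\ref{fact:maximal_subtree_extermal_vertices} then places $f_i(v)$ at a leaf of $Q_i$, hence in $U$. The case $v=y_i$ is impossible: otherwise $f_i(y_i)=g(y_i)$ would be a leaf of $Q_i$, contradicting $Q_i$ having height $\geq 1$.

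The only real difficulty is the bookkeeping across iterations: ensuring both that $G'_i$ remains cut-dense after the earlier deletions and that $|U\cap V(G'_i)|$ stays large enough for Lemma~\ref{lem:d_ary_tree_with_prescribed_leaves} to apply at every step. Both are controlled by the hierarchy $\Delta^{-1},q\gg L^{-1}\gg n^{-1}$ together with Lemma~\ref{lem:cut_dense_delete_small_set}.
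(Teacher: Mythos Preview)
Your proof is correct and follows essentially the same approach as the paper's: build vertex-disjoint $\Delta$-ary trees $Q_i$ rooted at the images $g(y_i)$ with leaves in $U$ via iterated applications of Lemma~\ref{lem:d_ary_tree_with_prescribed_leaves} (using Lemma~\ref{lem:cut_dense_delete_small_set} to maintain cut-density after each deletion), embed maximal subtrees $T_{y_i}^*$ into the $Q_i$, combine via Fact~\ref{fact:combine_tree_embeddings}, and locate the $T$-external vertices of $T''$ at leaves of the $Q_i$ via Fact~\ref{fact:maximal_subtree_extermal_vertices}. The only cosmetic point is that the height bound should read $19(q/4)^{-4}$ rather than $19q^{-4}$, since you apply Lemma~\ref{lem:d_ary_tree_with_prescribed_leaves} inside the $q/4$-cut-dense graph $G'_i$; this is harmless under the hierarchy.
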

\begin{proof}
Let $Y$ be the set of $T$-external vertices of $T'$ noting $|Y|\leq \Delta$. 
  We can use Lemma~\ref{lem:d_ary_tree_with_prescribed_leaves} to find  a family of  vertex-disjoint order $\leq \Delta^{19(q/2)^{-4}}$, $\Delta$-ary trees $\{Q_y: y\in Y\}$ with  $V(Q_y)\subseteq (G\setminus g(T'))\cup g(y)$ such that $Q_y$ is rooted on $g(y)$ and has all its leaves in $U$ (first enumerate $Y=\{y_1, \dots, y_t\}$. To construct $Q_{y_i}$ apply Lemma~\ref{lem:d_ary_tree_with_prescribed_leaves} with $G'=(G\setminus (g(T')\cup \bigcup Q_{y_i}))\cup g(y_i)$, $U'=U\setminus \bigcup Q_{y_i}$, $v=y_i$ which ensures the disjointness we want. To check the conditions of Lemma~\ref{lem:d_ary_tree_with_prescribed_leaves}: $|external(T,T') \cup \bigcup Q_{y_i}|\leq qn/8$, and so  Lemma~\ref{lem:cut_dense_delete_small_set} applied to $G\setminus g(T'\setminus external(T,T'))$ shows that $G'$ is $q/4$-cut-dense. Also $|\bigcup Q_{y_i}|\leq |Y|\Delta^{19(q/2)^{-4}}\leq \Delta^{20(q/2)^{-4}}\leq L/2$ and so $|U'|\geq L/2$). Note that this ensures that $V(Q_y)\cap g(V(T'))=\{g(y)\}$ and $V(Q_y)\cap V(Q_{y'})=\emptyset$ for distinct $y,y'\in Y$. 

For each $y\in Y$, let $T_y$ be the component of $T\setminus T'$ containing $y$. For each $y\in Y$, pick a maximal subtree $T_y'\subseteq T_y$ containing $y$ for which there is an embedding $f_y:T_y'\to Q_y$  that embeds $y$ to $g(y)$. By Fact~\ref{fact:maximal_subtree_extermal_vertices}, $f_y$ maps $T_y$-external vertices of $T_y'$ to leaves of $Q_y$ (which are contained in $U$).
Let $T'':= T'\cup \bigcup T_y'$, noting that $V(T'')\cap V(T_y)= V(T_y')$ for all $y$ (since Fact~\ref{fact:external_components_intersections} gives $V(T_y)\cap V(T_z')\subseteq V(T_y)\cap V(T_z)=\emptyset$ for distinct $y,z\in Y$, and $V(T_y)\cap V(T')=\{y\}\subseteq V(T_y')$, and that $T$-external vertices of $T''$ are all $T_y$-external vertices of $T_y'$ for some $y$ (consider a  $T$-external vertex  $u\in T''$, and let $uv$ be an edge of $T$ with $v\not\in T''$.  
 Since $uv\not\in T'\subseteq T''$ and $T=T'\cup \bigcup T_y$, we have $uv\in T_y$ for some $y$. Since $V(T'')\cap V(T_y)= V(T_y')$ for all $y$, this gives $u\in T_y'$. Also $uv\not\in T''$ implies $uv\not\in T_{y'}\subseteq T''$. Hence $u$ is a $T_y$-external vertex of $T_y'$).
 
 Define $f:T''\to G$ to agree with $g$ on $T'$ and agree with $f_y$ on each $T_y'$ to get an embedding via Fact~\ref{fact:combine_tree_embeddings} (for the intersection condition, note that $f_y(T_y')\cap f_z(T_z')\subseteq Q_y\cap Q_z=\emptyset$ and $g(T')\cap f_y(T_y')\subseteq g(T')\cap Q_y= \{g(y)\}=\{f_y(y)\}$ for distinct $y,z\in Y$). It satisfies the lemma because $T$-external vertices of $T''$ are $T_y$-external vertices of some $T_y'$ which are embedded to $U$ by $f_y$ (and hence by $f$).
\end{proof} 
 
 The following is Lemma~\ref{lem:main_tree_embedding_lemma} (ii).
\begin{lemma} \label{lem:tree_embedding_into_tree}
Let  $q,\Delta^{-1}\gg L^{-1}\gg n$ and $k\in \mathbb N$.
 and let $T$ be a order $\leq kqn/16$ tree with root $r_T$, and $\Delta(T)\leq \Delta$. Let $G$ be a graph  with $E(G)=G_1\cup \dots \cup G_t$ with each $G_i$ is a $\geq n$-vertex $q$-cut-dense, and there is an auxiliary complete $L$-ary, height $k$ tree $S$ with $V(S)=\{G_1, \dots, G_t\}$ and a set of distinct vertices $\{u_{G_iG_j}: G_iG_j\in E(S)\}\subseteq V(G)$ such that  $V(G_i)\cap V(G_j)=\{u_{G_iG_j}\}$ for all edges $G_iG_j\in E(S)$ and $V(G_i)\cap V(G_j)=\emptyset$ for non-edges $G_iG_j\not\in E(S)$. 
Let $G_{1}$ be the root of $S$, and $r\in V(G_{1})$.
 
  There is an embedding $f:T\to G$ with $f(r_T)=r$. 
\end{lemma}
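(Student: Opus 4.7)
The strategy is to induct on $k$: peel off a ``body'' $Q$ of $T$ which embeds into the root component $G_1$, use Lemma~\ref{lem:tree_embedding_into_tree_induction_step} to route the $T$-external vertices of (an extension of) $Q$ onto the connection vertices $\{u_{G_1 G_j} : G_1 G_j \in E(S)\}$, and then recurse on each branch of $T \setminus T''$ into the corresponding subtree of $S$.

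For the base case $k=1$, we have $|T| \leq qn/16$, and Fact~\ref{fact:cut_dense_min_degree} gives $\delta(G_1) \geq 0.9qn$, so Fact~\ref{fact:greedy_tree_embedding} greedily embeds $T$ into $G_1$ with $r_T \mapsto r$. For the inductive step $k \geq 2$, one may assume $|T| > (k-1)qn/16$ (else apply the induction hypothesis to the top $k-1$ levels of $S$ directly). Apply Lemma~\ref{lem:tree_splitting} to $T$ with $\lambda = (k-1)qn/(16|T|) \in (0,1)$, producing a subtree $Q \ni r_T$ with $|Q| \leq (1-\lambda)|T| + 2\Delta \leq qn/16 + 2\Delta \leq qn/8$, such that $T \setminus Q$ has at most $\Delta$ components, each of order $\leq \lambda|T| \leq (k-1)qn/16$. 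Greedily embed $Q$ into $G_1$ avoiding the $L$ connection vertices $U := \{u_{G_1 G_j} : G_1 G_j \in E(S)\}$ (possible since $\delta(G_1) - |U| - |Q| \gg 0$), obtaining an embedding $g: Q \to V(G_1) \setminus U$ with $g(r_T) = r$.

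Since $|Q| \leq qn/8$, Lemma~\ref{lem:cut_dense_delete_small_set} shows $G_1 \setminus g(Q \setminus external(T, Q))$ is $q/2$-cut-dense, so Lemma~\ref{lem:tree_embedding_into_tree_induction_step} applies with $T' = Q$ to extend $g$ to an embedding $f: T'' \to G_1$ for some $Q \subseteq T'' \subseteq T$, with every $T$-external vertex of $T''$ mapped into $U$. For each such external vertex $z$, let $T_z^*$ be the component of $T \setminus T''$ containing $z$ (rooted at $z$); since $T'' \supseteq Q$, $T_z^*$ is a subtree of a component of $T \setminus Q$, so $|T_z^*| \leq (k-1)qn/16$. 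As $f(z) = u_{G_1 G_{j_z}}$ for a child $G_{j_z}$ of $G_1$ in $S$, apply the induction hypothesis to the subunion of components corresponding to the subtree of $S$ rooted at $G_{j_z}$ (which is complete $L$-ary of height $k-1$), with target root $u_{G_1 G_{j_z}}$, to obtain an embedding $h_z$ of $T_z^*$ sending $z \mapsto u_{G_1 G_{j_z}}$. Combine $f$ with the $h_z$'s via Fact~\ref{fact:combine_tree_embeddings} to get the full embedding of $T$.

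The main bookkeeping obstacle is verifying the injectivity conditions of Fact~\ref{fact:combine_tree_embeddings}. Different recursive images $\mathrm{im}(h_{z})$ and $\mathrm{im}(h_{z'})$ live in $S$-subtrees rooted at distinct children $G_{j_z}, G_{j_{z'}}$ of $G_1$, whose components share no vertices by the hypothesis that non-adjacent $G_i, G_j$ in $S$ are vertex-disjoint; and each $\mathrm{im}(h_z)$ meets $\mathrm{im}(f) \subseteq V(G_1)$ only at $u_{G_1 G_{j_z}} = f(z) = h_z(z)$, where the two embeddings agree by construction. The choice $\lambda = (k-1)qn/(16|T|)$ is precisely calibrated so that each branch falls in the induction's budget of $(k-1)qn/16$, and the hierarchy $q, \Delta^{-1} \gg L^{-1}$ ensures $L$ dominates the $\leq \Delta^{O(q^{-4})}$ external vertices of $T''$ produced by Lemma~\ref{lem:tree_embedding_into_tree_induction_step}.
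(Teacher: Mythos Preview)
Your proposal is correct and follows essentially the same route as the paper's proof: induct on $k$, use Lemma~\ref{lem:tree_splitting} to carve off a small body that greedily embeds into $G_1\setminus U$, invoke Lemma~\ref{lem:tree_embedding_into_tree_induction_step} to land the external vertices on $U$, then recurse into the children's subtrees and glue via Fact~\ref{fact:combine_tree_embeddings}. The only cosmetic difference is your choice $\lambda=(k-1)qn/(16|T|)$ (with the side case $|T|\le (k-1)qn/16$ handled separately) versus the paper's $\lambda=1-1/k$; both yield $|Q|\le qn/8$ and components of size $\le (k-1)qn/16$, so the arguments are interchangeable.
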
 
 \begin{proof}
 We fix $q,\Delta, h, L, n$, and prove the lemma by induction on $k$. In the initial case $k=1$, note that Fact~\ref{fact:cut_dense_min_degree} gives $\delta(G_{1})\geq qn/2$, and so we can find a copy of $T$ rooted at $r$ using Fact~\ref{fact:greedy_tree_embedding}. Suppose that $k\geq 2$, and that the lemma holds for smaller $k$.
 
Set $U:=\{u_{G_{1}G_i}: G_i\in N_S(G_1)\}$ to get a set of order $L$. 
Apply Lemma~\ref{lem:tree_splitting} with $\lambda=1-1/k$ in order to find a subtree $T'$ of $T$  of  order $|T'|\leq (1-\lambda)|T|+2\Delta=\frac{|T|}{k}+2\Delta\leq \frac{kqn/16}{k}+2\Delta\leq qn/8$, and so that there are $\leq \Delta$ components of $T\setminus T'$, each of order $\leq \lambda |T|= \frac{k-1}{k}|T|\leq \frac{k-1}{k}kqn/16=(k-1)qn/16$. 
Note that Fact~\ref{fact:cut_dense_min_degree} gives $\delta(G_1\setminus U)\geq qn/2$, and so we can find an embedding $\hat g:T'\to G_{1}\setminus U$ with $g(r_T)=r$ using Fact~\ref{fact:greedy_tree_embedding}. 
Note that $G\setminus g(T'\setminus external(T,T'))$ is still $q/2$-cut-dense (by Lemma~\ref{lem:cut_dense_delete_small_set}). 
Apply Lemma~\ref{lem:tree_embedding_into_tree_induction_step} to get a $T''$ with $T'\subseteq T''\subseteq T$ and an embedding $g:T''\to G_{1}$ extending $\hat g$, which embeds $T$-external vertices of $T''$ into $U$

For each $T$-external $z$ in $T''$, define the following: let $T_z$ be the component of $T\setminus T''$ containing $z$, noting that since $T_z$ is contained in some component of $T\setminus T'$, we have $|T_z|\leq (k-1)qn/16$. 
Since $g(z)\in U$, there is some $G_{i(z)}\in N_S(G_1)$ with $g(z)=u_{G_1G_{i(z)}}$. 
Let $S_z$ be the subtree of $S$ descending from $G_{i(z)}$, noting that this is a complete $L$-ary tree of height $k-1$ (since $G_{i(z)}$ is a neighbour of the root $G_1$ of the  $S$, which is a complete $L$-ary tree of height $k$). 
Let $H_z=\bigcup_{G_i\in S_z} G_i$. Since the only edge from $S_z$ to $G_1$ is $G_1G_{i(z)}$, we get $V(G_1)\cap V(H_z)=\{u_{G_1G_{i(z)}}\}=\{g(z)\}$. 
Use induction to get an embedding $g_z:T_z\to H_z$ with $g_z(z)=g(z)$. 

Finally we apply Fact~\ref{fact:combine_tree_embeddings} to combine $g$ and all $g_z$ into an embedding $f:T\to G$. All conditions have been checked in the previous paragraph, save the last one which we check now: 
Note that for distinct $T$-external $z,w\in T''$, we have $u_{G_1G_{i(z)}}=g(z)\neq g(w)=u_{G_1G_{i(w)}}$ (since $g$ is injective), which gives $G_{i(z)}\neq G_{i(w)}$. Since $G_{i(z)}, G_{i(w)}$ are distinct children of the root, this shows that the trees $S_z, S_w$ descending from them are disjoint with no edges of $S$ between them. This shows that for all $G_i\in S_z, G_j\in S_w$ we have $V(G_i)\cap V(G_j)=\emptyset$, and hence $H_z, H_w$ are vertex-disjoint. Thus $im_{g_z}\subseteq H_z, im_{g_w}\subseteq H_w$ are vertex disjoint too.  
Also, for any $z$ we have that the tree $S_z$ descending  from $G_{i(z)}$  doesn't contain the root $G_1$ (since $G_1$ is a parent of $G_{i(z)}$) and has exactly one edge  going into $G_1$, namely $G_1G_{i(z)}$ (there can't be more than one edge since that would create a cycle in $S$). This means that $V(H_z)\cap V(G_1)=\bigcup_{G_i\in S_z} G_i\cap G_1=\bigcup_{G_i\in S_z, G_iG_1\in E(S)} G_i\cap G_1=\{u_{G_1G_{i(z)}}\}$ and hence $im_g\cap im_{g_z}\subseteq V(H_z)\cap V(G_z)\subseteq \{u_{G_1 G_{i(z)}}\}$. 
 \end{proof}

We end by putting things together to give Lemma~\ref{lem:main_tree_embedding_lemma} as stated in the proof overview.
\begin{proof}[Proof of Lemma~\ref{lem:main_tree_embedding_lemma}]
\begin{enumerate}[(i)]
\item Note that $n=|G|\geq |R|\geq (2+\varepsilon)d$ gives $p, q,\Delta^{-1}, \varepsilon\gg  n^{-1}$ and $|R|\geq (2+\varepsilon)|T|$. Thus Lemma~\ref{lem:tree_embedding_into_dense} applies (with $\varepsilon'=\varepsilon/2$) to give a copy of $T$.
\item Let $n:=16d/qL$, noting $q,\Delta^{-1}\gg L^{-1}\gg n^{-1}$ and $|T|\leq Lqn/16$. Thus Lemma~\ref{lem:tree_embedding_into_tree} applies with $k=L$ to give a copy of $T$.
\end{enumerate}
\end{proof}

 \end{document}